\newif\ifPDF
\newtheorem{thm}{Theorem}[section]
\newtheorem{cor}[thm]{Corollary}
\newtheorem{lem}[thm]{Lemma}
\newtheorem{prop}[thm]{Proposition}
\theoremstyle{definition}
\newtheorem{defn}[thm]{Definition}
\theoremstyle{remark}
\newtheorem{rem}[thm]{Remark}
\newtheorem{example}[thm]{Example}
\numberwithin{equation}{section}
\newcommand{\norm}[1]{\left\Vert#1\right\Vert}
\newcommand{\abs}[1]{\left\vert#1\right\vert}
\newcommand{\Real}{\mathbb R}
\newcommand{\Int}{\mathbb Z}
\newcommand{\Comp}{\mathbb C}
\newcommand{\eps}{\varepsilon}
\begin{document}


\title[Comparison radius and mean dimension]{Comparison radius and mean topological dimension: Rokhlin property, comparison of open sets, and subhomogeneous C*-algebras}

\author{Zhuang Niu}
\address{Department of Mathematics, University of Wyoming, Laramie, WY, 82071, USA}
\email{zniu@uwyo.edu}

\subjclass[2010]{46L35, 54H20} 

\thanks{The research is partially supported by a Simons Collaboration Grant (Grant \#317222) and partially supported by an NSF grant (DMS-1800882)}
\keywords{comparison radius, crossed-product C*-algebra, mean topological dimension}
\date{\today}


\begin{abstract}
Let $(X, \Gamma)$ be a free minimal dynamical system, where $X$ is a compact separable Hausdorff space and $\Gamma$ is a discrete amenable group. It is shown that,  if $(X, \Gamma)$ has a version of Rokhlin property (uniform Rokhlin property) and if $\mathrm{C}(X)\rtimes\Gamma$ has a Cuntz comparison on open sets, then the comparison radius of the crossed product C*-algebra $\mathrm{C}(X) \rtimes  \Gamma$ is at most half of the mean topological dimension of $(X, \Gamma)$. 

These two conditions  are shown to be  satisfied if $\Gamma = \Int$ or if $(X, \Gamma)$ is an extension of a free Cantor system and $\Gamma$ has subexponential growth. The main tools being used are Cuntz comparison of diagonal elements of a subhomogeneous C*-algebra and small subgroupoids.
\end{abstract}

\maketitle

\setcounter{tocdepth}{1}
\tableofcontents

\section{Introduction}

Consider a topological dynamical system $(X, \Gamma)$, where $X$ is a compact Hausdorff space and $\Gamma$ is a discrete amenable group. The mean (topological) dimension of $(X, \Gamma)$, denoted by $\mathrm{mdim}(X, \Gamma)$, was introduced by Gromov (\cite{Gromov-MD}), and then was developed and studied systematically by Lindenstrauss and Weiss (\cite{Lindenstrauss-Weiss-MD}). It is a numerical invariant, taking value in $[0, +\infty]$, to measure the complexity of $(X, \Gamma)$ in terms of dimension growth with respect to partial orbits.

On the other hand, the concept of dimension growth also appears in the classification theory of C*-algebras, and it is used to provide a condition for a certain C*-algebra to be classified by its Elliott invariant.  For example, 
consider a unital inductive system of C*-algebras $$A_1 \to A_2 \to \cdots \to\varinjlim A_i,$$ where  each $A_i = \mathrm{M}_{n_i}(\mathrm{C}(X_i))$ is the algebra of $\mathrm{M}_{n_i}(\Comp)$-valued continuous functions on some compact separable Hausdorff space $X_i$;  
its dimension growth is defined as $$\liminf_{i\to\infty}\frac{\mathrm{dim}(X_i)}{n_i},$$ where $\mathrm{dim}(X_i)$ is the topological covering dimension of $X_i$. 
If the dimension growth is $0$, then the isomorphism classes of the limit C*-algebras $\varinjlim A_i$ are classified by their Elliott invariant (\cite{Gong-AH}, \cite{EGL-AH}, \cite{Toms-SDG}). (See \cite{GLN-TAS}, \cite{EGLN-ASH} \cite{EN-K0-Z}, and \cite{EGLN-DR} for further developments of the classification of C*-algebras in this direction.)

For a general abstract C*-algebra, it might not have an obvious inductive limit decomposition in terms of homogeneous or subhomogeneous C*-algebras as the example above. In such cases, comparison radius, introduced by Toms (\cite{RC-Toms}) and denoted by $\mathrm{rc}(A)$, plays a role as the dimension growth of such a C*-algebra $A$. 

Let us start with some comparison theory of C*-algebras. Let $a, b$ be positive elements of a matrix algebra over $A$. Then $a$ is said to be Cuntz sub-equivalent to $b$ if there are sequences $(x_n), (y_n)$ in a matrix algebra over $A$ such that $$x_nby_n \to a,\quad\mathrm{as}\quad n\to\infty.$$ In the case that $a, b$ are projections, the Cuntz sub-equivalence relation recovers the classical Murray-von Neumann sub-equivalence relation; and moreover, if $A=\mathrm{C}(X)$, $a$ is Cuntz sub-equivalent to $b$ if and only if the vector bundle over $X$ induced by $a$ is isomorphic to a sub-bundle of the the vector bundle induced by $b$.

If $a$ is Cuntz sub-equivalent to $b$, then the rank function induced by $a$ is always dominated by the rank function induced by $b$, just like the case above, the rank of the vector bundle induced by $a$ is at most the rank of the vector bundle induced by $b$. The converse is not true in general, as there are many examples of a pair of vector bundles such that the bundle with smaller rank is not  isomorphic to a sub-bundle of the other one.   

However, if the gap between the ranks of two (complex) vector bundles is sufficiently large (larger than $\frac{1}{2}\mathrm{dim(X)}$), then the converse is true; that is,  the vector bundle with small dimension is isomorphic to a sub-bundle of the vector bundle with larger dimension.

The comparison radius is the infimum of all the gaps of the ranks so that the converse does hold (see Definition \ref{definition-RC}), and a typical example is that the comparison radius of $\mathrm{M}_n(\mathrm{C}(X))$ is at most $\frac{1}{2} \frac{\mathrm{dim}(X)}{n}$, that is,  half of the dimension ratio of $\mathrm{M}_n(\mathrm{C}(X))$. Thus, in general, the comparison radius is regarded as a version of dimension growth for an abstract C*-algebra.

For the given topological dynamical system $(X, \Gamma)$, the natural C*-algebra to be considered is the crossed product C*-algebra $\mathrm{C}(X) \rtimes \Gamma$. It would be interesting to compare the mean dimension of  $(X, \Gamma)$ (the dynamical dimension growth) with the comparison radius of $\mathrm{C}(X) \rtimes \Gamma$ (the C*-algebraic dimension growth), and this is the main motivation of this paper. Indeed, Phillips and Toms conjectured that the comparison radius of $\mathrm{C}(X) \rtimes \Gamma$ should equal half of the mean dimension of $(X, \Gamma)$.

Many results in this direction have been obtained: For free minimal $\Int$-actions, if $X$ is finite dimensional (hence the dynamical system has mean dimension zero), it was shown in \cite{TW-Dym-1} that the algebra $\mathrm{C}(X)\rtimes\Int$ has finite nuclear dimension; therefore the C*-algebra has strict comparison of positive elements, and  the radius of comparison is zero. Still with the assumption that $X$ is finite dimensional, this result was generalized to free minimal actions by $\Int^d$ in \cite{Szabo-Z} or by a group with comparison property in \cite{KS-comparison}.

Without assuming $X$ to be finite dimensional, for a free minimal $\Int$-action with zero mean dimension, it was shown in \cite{EN-MD0} that the crossed product C*-algebra absorbs the Jiang-Su algebra (and therefore has finite nuclear dimension by \cite{ENST-ASH}). In particular, this implies that the comparison radius is $0$. For  free minimal $\Int$-actions without zero mean dimension, Phillips showed in \cite{NCP-fmd} that the radius of comparison of $\mathrm{C}(X) \rtimes \Int$ is at most $1+36\mathrm{mdim}(X, \sigma, \Int)$.

The argument in \cite{TW-Dym-1}, \cite{EN-MD0}, or \cite{NCP-fmd} relies on the Putnam's orbit-cutting algebra (or the large sub-algebra) $A_y$;  and in the case of zero mean dimension, the argument in \cite{EN-MD0} also heavily  depends on the small boundary property (which is equivalent to mean dimension zero in the case of $\mathbb Z$-actions). 

However, beyond the $\mathbb Z$-action case, it is not clear in general how to construct large sub-algebras;  moreover, once the dynamical system does not have mean dimension zero, the small boundary property does not hold anymore. 

To deal with these difficulties, in this paper we consider the following properties of the dynamical systems and the crossed product C*-algebras: 
\begin{itemize}
\item {\em Uniform Rokhlin Property (URP)}. The topological dynamical system $(X, \Gamma)$ is said to have Uniform Rokhlin Property (URP) if there exist disjoint towers with shapes arbitrarily invariant and all level sets are open, such that the complement of the towers has arbitrarily small orbit capacity (see Definition \ref{UPR}). 
\item {\em Cuntz comparison of open sets (COS)}.  Consider the C*-algebra $\mathrm{C}(X) \rtimes\Gamma$. Note that any open set $E\subseteq X$ represents a Cuntz equivalence class of $\mathrm{C}(X)$ and hence a Cuntz equivalence class of $\mathrm{C}(X) \rtimes\Gamma$. Denote it by $[E]$. Then the dynamical system $(X, \Gamma)$ is said to have Cuntz comparison of open sets (COS) if there are $\lambda\in \Real^+$ and $m\in \mathbb N$ such that for any open sets $$E, F\subseteq X\quad \mathrm{with}\quad \mu(E) < \lambda \mu(F)$$ for all ergodic measures $\mu$, one has that $$[E] \leq m[F]$$ in the Cuntz semigroup of $\mathrm{C}(X) \rtimes\Gamma$. (See Definition \ref{definition-comp}.)
\end{itemize}

A consequence of the (URP) is that the crossed product C*-algebra $\mathrm{C}(X)\rtimes\Gamma$ can be (weakly) tracially approximated by homogeneous C*-algebras with dimension ratio at most the mean dimension of $(X, \Gamma)$ (see Theorem \ref{T-SDG}); roughly speaking, any element of $\mathrm{C}(X)\rtimes\Gamma$ can be approximately decomposed as the (not necessary orthogonal) sum of an element in a homogeneous C*-algebra with dimension ratio at most $\mathrm{mdim}(X, \Gamma)$ and an element which is uniformly small under all traces (i.e., under all ergodic measures). 

This tracial approximation property, together with Cuntz comparison of open sets, implies that the radius of comparison of the crossed product C*-algebra is at most half of the mean dimension:
\theoremstyle{theorem}
\newtheorem*{thmA}{Theorem}
\begin{thmA}[Theorem \ref{main-thm}]
Let $(X, \Gamma)$ be a free and minimal topological dynamical system satisfying the (URP) and (COS). 
Then, 
\begin{equation}\label{rc-md-comp}
\mathrm{rc}(\mathrm{C}(X)\rtimes \Gamma) \leq \frac{1}{2}\mathrm{mdim}(X, \Gamma).
\end{equation}
\end{thmA}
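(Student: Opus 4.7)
The plan is to combine two consequences of the hypotheses. The (URP), through Theorem \ref{T-SDG}, provides a tracial approximation of $A := \mathrm{C}(X)\rtimes\Gamma$ by homogeneous subalgebras whose dimension ratio is controlled by $\mathrm{mdim}(X,\Gamma)$, while the (COS) lets one absorb the tracially small remainder into a cut-down of the larger positive element.

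The first step is to fix positive $a, b$ in a matrix algebra over $A$ satisfying $d_\tau(a) + r < d_\tau(b)$ for every tracial state $\tau$, where $r > \frac{1}{2}\mathrm{mdim}(X,\Gamma)$, and to reduce the goal to showing $a \precsim b$. Choose $\epsilon > 0$ so that $r$ still exceeds $\frac{1}{2}\mathrm{mdim}(X,\Gamma) + (m+1)\epsilon$, where $m$ is the constant from (COS). Invoking Theorem \ref{T-SDG}, decompose $a$ and $b$ approximately as sums of elements in a homogeneous subalgebra $B \cong \bigoplus_i \mathrm{M}_{n_i}(\mathrm{C}(Y_i)) \subseteq A$, with $\mathrm{dim}(Y_i)/n_i \leq \mathrm{mdim}(X,\Gamma)+\epsilon$, plus remainders supported on the complement $U \subseteq X$ of the associated tower partition, where $\mu(U)$ can be arranged arbitrarily small uniformly over invariant measures.

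Within each summand of $B$, Cuntz comparison of positive elements reduces to comparison of Murray--von Neumann classes of vector bundles over $Y_i$, for which a rank gap exceeding $\frac{1}{2}\mathrm{dim}(Y_i)/n_i$ is sufficient. Since the rank gap between the approximants of $a$ and $b$ still exceeds $\frac{1}{2}(\mathrm{mdim}(X,\Gamma)+\epsilon)$, this produces a Cuntz subequivalence inside $B$ of the approximant of $a$ into a cut-down of the approximant of $b$, leaving a ``residual'' in $b$ of tracial size at least $(m+1)\epsilon$. The leftover piece of $a$ is majorised by $[U]$ in the Cuntz semigroup of $A$; by (COS), once $\mu(U)$ is small enough, $[U]$ is in turn dominated by $m$ copies of the class of an open set $V \subseteq X$ whose class fits inside the residual by the same vector-bundle argument. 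Chaining these Cuntz comparisons yields $a \precsim b$.

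The main obstacle lies at the interface between the two tools: one has to ensure that the error left by Theorem \ref{T-SDG} is not merely tracially small but is genuinely Cuntz-dominated by an open subset of $X$, so that (COS) applies directly, and one has to manage the $\epsilon$-accounting so that the rank-gap hypothesis on $(a,b)$ in $A$ passes to the approximants in $B$ with enough surplus to absorb both the vector-bundle comparison error and the $m$ copies of $[V]$ furnished by (COS). The precise form of Theorem \ref{T-SDG} also dictates whether the chain of subequivalences closes up in a single pass or whether the argument must be iterated over a cofinal sequence of approximations.
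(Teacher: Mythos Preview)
Your high-level plan is right: use Theorem~\ref{T-SDG} to approximate by a homogeneous subalgebra with controlled dimension ratio, compare there, and mop up the remainder with (COS). But there is a genuine gap at the step where you write ``the rank gap between the approximants of $a$ and $b$ still exceeds $\frac{1}{2}(\mathrm{mdim}(X,\Gamma)+\epsilon)$.'' This does not follow. The hypothesis $\mathrm{d}_\tau(a)+r<\mathrm{d}_\tau(b)$ is only for traces of $A$, i.e.~invariant measures on $X$. The traces you need for comparison inside $C\cong\bigoplus_s\mathrm{M}_{K_s}(\mathrm{C}_0(Z_s))$ are point evaluations at $z\in Z_s$, and these do \emph{not} extend to traces of $A$. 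So the tracial gap in $A$ gives no direct pointwise rank gap in $C$, and the vector-bundle comparison cannot be invoked as you describe.

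The paper closes this gap with an idea you are missing entirely. It first materialises the gap $r$ as an actual element: using (URP) (Lemma~\ref{div-element}) it builds $h\in\mathrm{M}_\infty(\mathrm{C}(X))^+$ with $\mathrm{d}_\tau(h)\approx\frac{1}{2}\mathrm{mdim}(X,\Gamma)+\delta'$, so that $\mathrm{d}_\tau(a\oplus h)<\mathrm{d}_\tau(b)$ for all $\tau$. Then, since $A$ is simple, R\o rdam's argument upgrades this tracial inequality to a genuine Cuntz relation $(a\oplus h)\otimes 1_{N+1}\precsim b\otimes 1_N$ in $A$. This algebraic subequivalence is witnessed by a concrete element $x$, and a technical cut-down lemma (Lemma~\ref{AC-cut}) transfers it to a Cuntz relation inside $\mathrm{M}_m(C)$. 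Now the pointwise rank inequality in $C$ is automatic, and the role of $h$ is that $\mathrm{rank}((ph'p-\frac{1}{4})_+(z))>\frac{1}{2}\dim([Z_s])$ at every $z\in[Z_s]$ (Property~(\ref{R-cut-6}) of Theorem~\ref{T-SDG}), which supplies exactly the extra rank needed for the vector-bundle comparison of $pa'p$ against $c$. Your outline also glosses over how the ``residual'' in $b$ is produced: the paper first splits $b$ into $b_1\perp b_2$ (Lemma~\ref{small-decp}), pushes $b_1$ down to $\mathrm{C}(X)$ via Lemma~\ref{small-domain} and divides it into $m'$ equivalent pieces (Lemma~\ref{Cu-DIV}), and only then applies (COS) to absorb the two error terms $1-p^2$ and $1-g^2$ separately.
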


All minimal $\Int$-actions have the (URP)  (see Lemma \ref{LRT-Z}); and if $(X, \Gamma)$ is an extension of a free minimal system with small boundary property, it has the (URP) (see Corollary \ref{URP-ext-SBP}).

To investigate when $\mathrm{C}(X)\rtimes\Gamma$ has the (COS), small subgroupoids of $X \rtimes \Gamma$ are considered. These are the open and relatively compact subgroupoids of $X \rtimes \Gamma$, and they can be regarded as a local version of the orbit-cutting subalgebra $A_y$. It is well known that the C*-algebra of a small subgroupoid is subhomogeneous (i.e., the dimensions of its irreducible representations are uniformly bounded). 

It turns out that the C*-algebra of a small subgroupoid is rather special:  it is a recursive subhomogeneous C*-algebra with diagonal maps (see Theorem \ref{diag-SHA}); and with a revised argument of \cite{EN-MD0},  the Cuntz class of diagonal elements of such a C*-algebra are shown to be determined by their ranks (see Theorem \ref{DSH-comp}), provided that the dimensions of the irreducible representations are sufficiently large, but regardless of the topological dimension of its spectrum. 

This comparison property of diagonal elements leads to a Cuntz comparison of open sets for $(X, \Gamma)$: if $\Gamma$ is amenable and $X \rtimes\Gamma$ has small subgroupoids with each orbit arbitrarily invariant, then the dynamical system $(X, \Gamma)$ has the Cuntz-comparison property on open sets (see Corollary \ref{orb-comp-measure}). Such small subgroupoids always exist if 
\begin{itemize}
\item  $\Gamma=\Int$, or
\item  $(X, \Gamma)$ is an extension of a free Cantor system and $\Gamma$ has subexponential growth. 
\end{itemize}
Therefore, in these cases, the dynamical system $(X, \Gamma)$ has the (COS) (see Corollary \ref{comp-open-sets-Z} and Corollary \ref{comp-open-set-ext}); in particular, the estimate \eqref{rc-md-comp} holds (see Corollary \ref{main-thm-Z} and Corollary \ref{main-thm-G}).

\begin{rem}
In \cite{Niu-MD-Zd}, using the adding-one-dimension and going-down argument of \cite{GLT-Zk}, the (COS) and (URP) (and hence the estimate \eqref{rc-md-comp}) are obtained for arbitrary minimal and free $\Int^d$-actions. Hence \eqref{rc-md-comp} holds for all minimal free $\mathbb Z^d$-actions.

In \cite{Niu-MD-Z-absorbing}, still under the assumption of the (COS) and (URP), it is shown that $\mathrm{mdim}(X, \Gamma)=0$ implies that the C*-algebra $\mathrm{C}(X)\rtimes\Gamma$ is classified by its Elliott invariant. 


\end{rem}

\section{Notation and preliminaries}

\subsection{Topological Dynamical Systems}

\begin{defn}
A topological dynamical system $(X, \Gamma)$ consists of a separable compact Hausdorff space $X$, a discrete group $\Gamma$, and a homomorphism $\Gamma \to \mathrm{Homeo}(X)$, where $\mathrm{Homeo}(X)$ is the group of homeomorphisms of $X$, acting on $X$ from the right. In this paper, we frequently omit the word topological, and just refer it as a dynamical system. 

The dynamical system $(X, \Gamma)$ is said to be free if $x\gamma=x$ implies $\gamma=e$, where $x\in X$ and $\gamma\in\Gamma$.

A closed set $Y \subseteq X$ is said to be invariant if $$Y\gamma = Y,\quad \gamma \in\Gamma,$$ and the dynamical system $(X, \Gamma)$ is said to be minimal if $\varnothing$ and $X$ are the only invariant closed subsets.

If $\Gamma = \Int$, then $\sigma$ is induced by a single homeomorphism of $X$, which is still denoted by $\sigma$. In this case, the dynamical system is denoted by $(X, \sigma)$.
\end{defn}

\begin{rem}
In the case $\Gamma = \Int$, it is well known that $(X, \sigma)$ is minimal if and only if one of the following holds:
\begin{enumerate}
\item if $Y \subseteq X$ is closed and $\sigma(Y) \subseteq Y$, then $Y=X$ or $Y=\varnothing$;
\item for any $x\in X$, the forward orbit $\{x, \sigma(x), \sigma^2(x)... \}$ is dense;
\item for any $x\in X$, the orbit $\{..., \sigma^{-1}(x), x,  \sigma(x), ... \}$ is dense;
\item for any non-empty open set $U\subseteq X$, there is $n\in\mathbb N$ such that $$U\cup\sigma^{-1}(U)\cup\cdots\cup\sigma^{-n}(U)=X.$$
\end{enumerate}
\end{rem}

\begin{defn}
A Borel measure $\mu$ on $X$ is invariant if for any Borel set $E\subseteq X$, one has $$\mu(E) = \mu(E\gamma),\quad \gamma\in\Gamma.$$ Denote by $\mathcal M_1(X, \Gamma)$ the set of all invariant Borel probability measures on $X$. It is a Choquet simplex under the weak* topology.
\end{defn}

\begin{defn}\label{defn-amenable}
Let $\Gamma$ be a (countable) discrete group. Let $K\subseteq\Gamma$ be a finite set and let $\delta>0$. Then a finite set $F \subseteq \Gamma$ is said to be $(K, \eps)$-invariant if $$\frac{\abs{FK\Delta F}}{\abs{F}} < \eps.$$

The group $\Gamma$ is amenable if there is a sequence  $(\Gamma_n)$ of finite subsets of $\Gamma$ such that for any $(K, \eps)$, the set $\Gamma_n$ is $(K, \eps)$-invariant if $n$ is sufficiently large. The sequence $(\Gamma_n)$ is called a F{\o}lner sequence.

The $K$-interior of a finite set $F\subseteq\Gamma$ is defined as $$\mathrm{int}_K(F) = \{\gamma\in F: \gamma K \subseteq F\}.$$ Note that $$\abs{F\setminus\mathrm{int}_K(F)} \leq\abs{K}\abs{FK\setminus F}\leq \abs{K}\abs{FK\Delta F} ,$$ and hence for any $\eps>0$, if $F$ is $(K, \frac{\eps}{\abs{K}})$-invariant, then $$\frac{\abs{F\setminus\mathrm{int}_K(F)}}{\abs{F}} < \eps.$$

\end{defn}

\begin{defn}[see \cite{Lindenstrauss-Weiss-MD}]
Consider a topological dynamical system $(X, \Gamma)$, where $\Gamma$ is amenable, and let $E\subseteq X$. The orbit capacity of $E$ is defined by
$$\mathrm{ocap}(E):=\lim_{n\to\infty}\frac{1}{\abs{\Gamma_n}}\sup_{x\in X}\sum_{\gamma\in \Gamma_n} \chi_E(x\gamma),$$
where $(\Gamma_n)$ is a F{\o}lner sequence, and $\chi_E$ is the characteristic function of $E$.
The limit always exists and is independent from the choice of the F{\o}lner sequence $(\Gamma_n)$.
\end{defn}
\begin{rem}\label{ocp-prop}
Orbit capacity has the following properties:
\begin{enumerate}
\item If $E$ is a closed set, then $\mathrm{ocap}(E) \leq \eps$ if and only if $\mu(E) \leq \eps$, $\mu \in\mathcal M_1(X, \Gamma)$.
\item The orbit capacity is semicontinuous in the sense that for any closed set $D \subseteq X$ and any $\eps>0$, there is an open neighbourhood $U\supseteq D$ such that $\mathrm{ocap}(U) < \mathrm{ocap}(D) + \eps$.
\end{enumerate}
\end{rem}

\begin{defn}[see \cite{Gromov-MD} and \cite{Lindenstrauss-Weiss-MD}]
Let $\mathcal U$ be an open cover of $X$. Define
$$D(\mathcal U)=\min\{\mathrm{ord}(\mathcal V): \textrm{$\mathcal V$ is an open cover of $X$ and $\mathcal V \preceq U$}\},$$ where $$\mathrm{ord}(\mathcal V)=-1 + \sup_{x\in X}\sum_{V\in\mathcal V} \chi_V(x),$$ and $\mathcal V \preceq \mathcal U$ means that, for any $V\in\mathcal V$, there is $U\in\mathcal U$ with $V\subseteq U$.

Consider a topological dynamical system $(X, \Gamma)$, where $\Gamma$ is a discrete amenable group. The mean topological dimension is defined by
$$\mathrm{mdim}(X, \Gamma):=\sup_{\mathcal U}\lim_{n\to\infty}\frac{1}{\abs{\Gamma_n}}D(\bigvee_{\gamma\in\Gamma_n} \gamma^{-1}(\mathcal U)),$$
where $\mathcal U$ runs over all finite open covers of $X$, $(\Gamma_n)$ is a F{\o}lner sequence (the limit is independent from the choice of $(\Gamma_n)$), and $\alpha \vee \beta$ denotes the open cover $$\{U \cap V: U\in \alpha,\ V\in\beta\}$$ for any open covers $\alpha$ and $\beta$.

Note that in the case $\Gamma = \Int$, one has
$$\mathrm{mdim}(X, \sigma)=\sup_{\mathcal U}\lim_{n\to\infty}\frac{1}{n}D(\mathcal U \vee \sigma^{-1}(\mathcal U) \vee \cdots\vee \sigma^{-n+1}(\mathcal U)).$$

\end{defn}

\begin{rem}
It is shown in \cite{Dou-MD} that, if $\Gamma$ is a countable infinite amenable group, then each number in $[0, +\infty]$ can be realized as the mean dimension  of a minimal dynamical system $(X, \Gamma)$.
\end{rem}

\subsection{Crossed product C*-algebras}
Consider a topological dynamical system $(X, \Gamma)$. 
The (full) crossed product C*-algebra $A=\mathrm{C}(X)\rtimes\Gamma$ is defined to be the universal C*-algebra
$$\textrm{C*}\{f, u_\gamma;\ u_\gamma fu_\gamma^*=f(\cdot\gamma) = f\circ\gamma,\ u_{\gamma_1}u^*_{\gamma_2} = u_{\gamma_1\gamma_2^{-1}},\ u_e=1,\ f\in \mathrm{C}(X),\ \gamma, \gamma_1, \gamma_2 \in \Gamma\}.$$
The C*-algebra $A$ is nuclear (Corollary 7.18 of \cite{Williams}) if $\Gamma$ is amenable. If, moreover, $(X, \Gamma)$ is minimal and topologically free, the C*-algebra $A$ is simple (Theorem 5.16 of \cite{Effros-Hahn} and  Th\'{e}or\`{e}me 5.15 of \cite{ZM-prod}), i.e., $A$ has no non-trivial two-sided ideals.  

\subsection{Comparison for positive elements of a C*-algebra}

\begin{defn}
Let $A$ be a C*-algebra, and let $a, b\in A^+$. We say that $a$ is Cuntz sub-equivalent to $b$, denoted by $a \precsim b$, if there are $x_n$, $y_n$, $n=1, 2, ...$, such that $$\lim_{n\to\infty} x_nby_n = a,$$ and we say that $a$ is Cuntz equivalent to $b$ if $a\precsim b$ and $b \precsim a$.

Denote by $\mathrm{M}_n(A)$ the C*-algebra of $n\times n$ matrices over $A$. Regard $\mathrm{M}_n(A)$ as the upper-left conner of $\mathrm{M}_{n+1}(A)$, denote by $$\mathrm{M}_\infty(A) = \bigcup_{n=1}^\infty \mathrm{M}_n(A),$$ the algebra of all finite matrices over $A$, and denote by $\mathrm{Tr}$ the standard (unbounded) trace of $\mathrm{M}_\infty(A)$.

Let $\tau: A \to\Comp$ be a tracial state. Then define the rank function
$$\mathrm{d}_\tau(a):=\lim_{n\to\infty}(\tau\otimes\mathrm{Tr})(a^{\frac{1}{n}})=\mu_{\tau\otimes\mathrm{Tr}}(\mathrm{sp}(a)\cap(0, +\infty)),\quad a\in \mathrm{M}_\infty(A)^+,$$ where $\mu_{\tau\otimes\mathrm{Tr}}$ is the Borel measure induced by ${\tau\otimes\mathrm{Tr}}$ on $\mathrm{sp}(a)$, the spectrum of $a$.  


It is well known that for any $a, b\in \mathrm{M}_n(A)^+$ for some $n\in\mathbb N$, if $a\precsim b$, then
$$\mathrm{d}_\tau(a) \leq \mathrm{d}_\tau(b).$$

\end{defn}
\begin{example}
Consider $f\in \mathrm{C}(X)^+$ and let $\mu$ be a Borel probability measure on $X$. Then
$$\mathrm{d}_{\tau_\mu} = \mu(f^{-1}(0, +\infty)),$$
where $\tau_\mu$ is the trace of $\mathrm{C}(X)$ defined by $$\tau_\mu(f) = \int f\mathrm{d}\mu,\quad f\in\mathrm{C}(X).$$

Let $f, g\in\mathrm{C}(X)$ be positive functions, and define the open sets $$E = f^{-1}(0, +\infty)\quad\mathrm{and}\quad F=g^{-1}(0, +\infty).$$ Then $f \precsim g$ if, and only if, $E \subseteq F$. That is, the Cuntz equivalence classes of $f$ and $g$ are determined by their open supports. 

For each open set $E \subseteq X$, pick a continuous function $\varphi_{E}: X \to [0, +\infty)$ satisfying
\begin{equation}\label{defn-phi-E}
 E = \varphi_E^{-1}(0, +\infty).
\end{equation} 
For instance, one can pick $\varphi_E(x) = d(x, X\setminus E)$, where $d$ is a compatible metric on $X$. The notation $\varphi_E$ will be used throughout the paper. Note that the Cuntz equivalence class of $\varphi_E$ is independent of the choice of the individual function $\varphi_E$.
\end{example}

\begin{defn}\label{defn-p-cut}
Let $a\in A^+$, where $A$ is a C*-algebra, and let $\eps>0$. Define
$$(a - \eps)_+ =f(a) \in A,$$
where $f(t)=\max\{t-\eps, 0\}$.
\end{defn}
The following is a frequently used fact on Cuntz comparison.
\begin{lem}[Section 2 of \cite{RorUHF2}]
Let $a, b$ be positive elements of a C*-algebra $A$. Then $a \precsim b$ if and only if $(a-\eps)_+ \precsim b$ for all $\eps >0$.
\end{lem}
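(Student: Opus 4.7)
The plan is to prove the two implications separately; both rest on nothing more than continuous functional calculus combined with an $\eps/2$ estimate, and I do not anticipate any real obstacle. The key auxiliary fact used in the forward direction is that $(a-\eps)_+ \precsim a$ for every $\eps > 0$, after which the hypothesis $a \precsim b$ finishes the argument directly.

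For the forward implication, assume $a \precsim b$ and fix $\eps > 0$. I would define the function $g_\eps \colon [0, \|a\|] \to [0, +\infty)$ by $g_\eps(t) = (t-\eps)_+/t$ for $t > 0$ and $g_\eps(0) = 0$; this is continuous because $g_\eps$ vanishes identically on $[0, \eps]$. Setting $c = g_\eps(a)^{1/2} \in A$, the functional calculus gives $c a c = a g_\eps(a) = (a-\eps)_+$. Now take sequences $x_n, y_n \in A$ with $x_n b y_n \to a$; then $(c x_n) b (y_n c) \to c a c = (a-\eps)_+$, so $(a-\eps)_+ \precsim b$.

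For the reverse implication, assume $(a-\eps)_+ \precsim b$ for every $\eps > 0$ and fix $\delta > 0$. Applying the hypothesis with $\eps = \delta/2$ yields elements $x, y \in A$ with $\|xby - (a-\delta/2)_+\| < \delta/2$. Since $|t - (t-\delta/2)_+| \leq \delta/2$ for $t \geq 0$, functional calculus gives $\|(a-\delta/2)_+ - a\| \leq \delta/2$, whence the triangle inequality produces $\|xby - a\| < \delta$. Letting $\delta$ range over a null sequence and extracting the corresponding elements yields sequences $x_n, y_n \in A$ with $x_n b y_n \to a$, which is precisely $a \precsim b$. The only mildly subtle point throughout is the continuity of $g_\eps$ at $t = 0$, which is needed so that $g_\eps(a)$ is well-defined when $0 \in \mathrm{sp}(a)$; but this is automatic since $g_\eps$ vanishes on $[0, \eps]$.
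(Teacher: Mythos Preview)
Your proof is correct. The paper does not give its own proof of this lemma; it simply records the statement and attributes it to Section~2 of \cite{RorUHF2}, so there is nothing to compare against beyond noting that your argument is the standard one underlying that citation.
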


\begin{defn}[Definition 6.1 of \cite{RC-Toms}]\label{definition-RC}
The radius of comparison of a unital C*-algebra $A$, denoted by $\mathrm{rc}(A)$, is the infimum of the set of real numbers $r > 0$ such that  if $a, b\in \mathrm{M}_\infty(A)^+$ satisfy
$$\mathrm{d}_\tau(a) + r < \mathrm{d}_\tau(b),\quad \tau\in\mathrm{T}(A),$$ then $a \precsim b$, where $\mathrm{T}(A)$ is the simplex of tracial states. (In \cite{RC-Toms}, the radius of comparison is defined in terms of quasitraces instead of traces; but since all the algebras considered in this paper are nuclear, by \cite{Haagtrace}, any quasitrace actually is a trace.)
\end{defn}
\begin{example}
Let $X$ be a compact Hausdorff space. Then 
\begin{equation}\label{comp-dim-control}
\mathrm{rc}(\mathrm{M}_n(\mathrm{C}(X)))\leq \frac{1}{2}\frac{\mathrm{dim}(X) - 1}{n},
\end{equation} 
where $\mathrm{dim(X)}$ is the topological covering dimension of $X$ (a lower bound of $\mathrm{rc}(\mathrm{C}(X))$ in terms of cohomological dimension is given in \cite{EN-RC}).
\end{example}
The main purpose of this paper is to investigate whether the dynamical version of \eqref{comp-dim-control} holds, that is, whether one has
$$\mathrm{rc}(\mathrm{C}(X) \rtimes \Gamma) \leq \frac{1}{2}\mathrm{mdim}(X, \Gamma).$$

\section{Uniform Rokhlin property and structure of $\mathrm{C}(X)\rtimes\Gamma$}

Let us first consider a Rokhlin property for a dynamical system $(X, \Gamma)$. It turns out that this Rokhlin property implies that the crossed-product C*-algebra $\mathrm{C}(X) \rtimes \Gamma$ can be weakly tracially approximated by (not necessary unital) homogeneous C*-algebras.

\subsection{Uniform Rokhlin Property}
\begin{defn}\label{UPR}
A topological dynamical system $(X, \Gamma)$, where $\Gamma$ is a discrete amenable group, is said to have Uniform Rokhlin Property (URP) if for any $\eps>0$ and any finite set $K\subseteq \Gamma$, there exist closed sets $B_1, B_2, ..., B_S \subseteq X$ and $(K, \eps)$-invariant sets $\Gamma_1, \Gamma_2, ..., \Gamma_S \subseteq \Gamma$ such that
$$B_s\gamma,\quad \gamma\in \Gamma_s,\ s=1, ..., S, $$
are mutually disjoint and
$$\mathrm{ocap}(X\setminus\bigsqcup_{s=1}^S\bigsqcup_{\gamma\in \Gamma_s}B_s\gamma) < \eps.$$
\end{defn}

In fact, in the definition of the (URP), the base sets $B_1, ..., B_S$ can also be assumed to be open:

\begin{lem}
A topological dynamical system $(X, \Gamma)$  has (URP) if, and only if, it satisfies Definition \ref{UPR} but with $B_1$, ..., $B_S$ being open sets, instead.
\end{lem}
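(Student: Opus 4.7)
For the forward direction (closed implies open): Given closed $B_1, \ldots, B_S$ and $(K, \eps)$-invariant sets $\Gamma_1, \ldots, \Gamma_S$ satisfying Definition \ref{UPR}, the finitely many mutually disjoint closed sets $\{B_s\gamma\}_{(s,\gamma)}$ in the compact Hausdorff (hence normal) space $X$ can be separated by pairwise disjoint open neighborhoods $W_{s,\gamma} \supseteq B_s\gamma$. I would then define $U_s := \bigcap_{\gamma \in \Gamma_s} W_{s,\gamma}\gamma^{-1}$, a finite intersection of open sets which contains $B_s$ and satisfies $U_s\gamma \subseteq W_{s,\gamma}$ for each $\gamma \in \Gamma_s$. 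The translates $\{U_s\gamma\}$ therefore remain mutually disjoint, and since $X \setminus \bigsqcup_{s,\gamma} U_s\gamma \subseteq X \setminus \bigsqcup_{s,\gamma} B_s\gamma$, monotonicity of orbit capacity delivers the required estimate.

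For the reverse direction (open implies closed), the main technical content lies in shrinking each open $U_s$ to a closed subset $B_s$ whose orbit translates recover most of the measure uniformly over $\mathcal M_1(X, \Gamma)$. I would start from the open version applied with tolerance $\eps/2$ in place of $\eps$. The key lemma to establish is a uniform inner-regularity statement: for any open $U \subseteq X$ and any $\eta > 0$, there exists a closed $B \subseteq U$ with $\sup_{\mu \in \mathcal M_1(X, \Gamma)} \mu(U \setminus B) < \eta$. To prove it, for each $\mu$ I would invoke inner regularity to choose a continuous $h_\mu \colon X \to [0,1]$ supported in $U$ with $\int h_\mu\, d\mu > \mu(U) - \eta/4$; the set $\{\nu : \int h_\mu\, d\nu - \nu(U) > -\eta/2\}$ is an open neighborhood of $\mu$ in the weak* topology (continuity of the integral plus lower semicontinuity of $\nu \mapsto \nu(U)$), so weak*-compactness of $\mathcal M_1(X, \Gamma)$ yields a finite subcover. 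Letting $g$ be the pointwise maximum of the chosen Urysohn functions and setting $B := \{g \geq 1/2\}$, the elementary bound $\int g\, d\mu \leq \mu(B) + \tfrac{1}{2}\mu(U \setminus B)$ then forces $\mu(U \setminus B) < \eta$ for every $\mu$.

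Applying this lemma to each $U_s$ with $\eta := \eps / (4 \sum_{s} |\Gamma_s|)$ and using $\Gamma$-invariance of each $\mu$, the closed subsets $B_s \subseteq U_s$ satisfy
\begin{equation*}
\sup_\mu \mu\Bigl(X \setminus \bigsqcup_{s,\gamma} B_s\gamma\Bigr) \leq \sup_\mu \mu\Bigl(X \setminus \bigsqcup_{s,\gamma} U_s\gamma\Bigr) + \sum_s |\Gamma_s| \sup_\mu \mu(U_s \setminus B_s) < \tfrac{\eps}{2} + \tfrac{\eps}{4} < \eps.
\end{equation*}
Since $X \setminus \bigsqcup_{s,\gamma} B_s\gamma$ is closed, Remark \ref{ocp-prop}(1) identifies this supremum with its orbit capacity, completing the argument. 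The main obstacle is making the measure approximation uniform over the entire simplex $\mathcal M_1(X, \Gamma)$, which is overcome by combining weak*-compactness with the lower semicontinuity of $\nu \mapsto \nu(U)$ on open sets.
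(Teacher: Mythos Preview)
Your forward direction is correct and matches the paper's.

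In the reverse direction there is a genuine gap. The set $\{\nu : \int h_\mu\, d\nu - \nu(U) > -\eta/2\}$ need not be open: since $\nu \mapsto \nu(U)$ is lower semicontinuous, the function $\nu \mapsto \int h_\mu\, d\nu - \nu(U)$ is \emph{upper} semicontinuous, and strict superlevel sets of upper semicontinuous functions are not open in general. In fact your uniform inner-regularity lemma is false as stated. Take the trivial action of $\Gamma=\{e\}$ on $X=[0,1]$, so that $\mathcal M_1(X,\Gamma)$ is all of $\mathcal M_1([0,1])$, and let $U=(0,1)$. Every closed $B\subseteq U$ is a compact subset of $(0,1)$ and hence misses some point $t\in(0,1)$, whence $\delta_t(U\setminus B)=1$ and $\sup_\mu \mu(U\setminus B)=1$. (In your proposed proof, with $\mu=\delta_0$ one may take $h_\mu\equiv 0$ since $\mu(U)=0$; then $\delta_{1/n}\to\delta_0$ weak* while $\int h_\mu\,d\delta_{1/n}-\delta_{1/n}(U)=-1$, exhibiting the failure of openness directly.) Since the statement being proved carries no minimality or freeness hypothesis, this example is legitimate, and no compactness argument over $\mathcal M_1(X,\Gamma)$ can rescue the lemma.

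The paper avoids this entirely by never shrinking the $U_s$ individually. Instead it applies the semicontinuity of orbit capacity (Remark~\ref{ocp-prop}(2)) to the closed set $D:=X\setminus\bigsqcup_{s,\gamma}U_s\gamma$, obtaining an open $V\supseteq D$ with $\mathrm{ocap}(V)<\eps$. One then sets $B_s:=\bigcup_{\gamma\in\Gamma_s}(V^c\cap U_s\gamma)\gamma^{-1}$; since $V^c\subseteq\bigsqcup U_s\gamma$ is closed and the $U_s\gamma$ are disjoint open sets, each piece $V^c\cap U_s\gamma$ is closed in $X$, so $B_s$ is a closed subset of $U_s$, and by construction $X\setminus\bigsqcup_{s,\gamma}B_s\gamma\subseteq V$.
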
 
\begin{proof}
Let $(K, \eps)$ be given. Assume there exist closed sets $B_1, B_2, ..., B_S \subseteq X$ and $(K, \eps)$-invariant sets $\Gamma_1, \Gamma_2, ..., \Gamma_S \subseteq \Gamma$ such that
$$B_s\gamma,\quad \gamma\in \Gamma_s,\ s=1, ..., S, $$
are mutually disjoint and
$$\mathrm{ocap}(X\setminus\bigsqcup_{s=1}^S\bigsqcup_{\gamma\in \Gamma_s}B_s\gamma) < \eps.$$ Since  each $B_s$, $s=1, ..., S$, is compact, one can choose an open neighbourhood $B_s'$ of $B_s$ such that $$B'_s\gamma,\quad \gamma\in \Gamma_s,\ s=1, ..., S$$
are mutually disjoint. Then $$\mathrm{ocap}(X\setminus\bigsqcup_{s=1}^S\bigsqcup_{\gamma\in \Gamma_s}B'_s\gamma) \leq \mathrm{ocap}(X\setminus\bigsqcup_{s=1}^S\bigsqcup_{\gamma\in \Gamma_s}B_s\gamma) < \eps,$$ and thus satisfies Definition \ref{UPR} with open base sets.

For the converse, assume there exist open sets $B_1, B_2, ..., B_S \subseteq X$ and $(K, \eps)$-invariant sets $\Gamma_1, \Gamma_2, ..., \Gamma_S \subseteq \Gamma$ such that
$$B_s\gamma,\quad \gamma\in \Gamma_s,\ s=1, ..., S, $$
are mutually disjoint and
$$\mathrm{ocap}(X\setminus\bigsqcup_{s=1}^S\bigsqcup_{\gamma\in \Gamma_s}B_s\gamma) < \eps.$$ 

Consider the closed set $X\setminus\bigsqcup_{s=1}^S\bigsqcup_{\gamma\in \Gamma_s} B_s\gamma$. It has an open neighbourhood $U$ such that $\mathrm{ocap}(U) < \eps$ (see Remark \ref{ocp-prop}), and there is a closed subset $B_s'\subseteq B_s$, $s=1, ..., S$, such that $$X\setminus\bigsqcup_{s=1}^S\bigsqcup_{\gamma\in \Gamma_s}B'_s\gamma \subseteq U.$$ Indeed, $B_s'$ can be chosen as $\bigcup_{\gamma\in\Gamma_s} (U^c\cap (B_s\gamma))\gamma^{-1}$ (note that $U^c$ is closed in $X$, $U^c \subseteq \bigsqcup_{s=1}^S\bigsqcup_{\gamma\in \Gamma_s}B_s\gamma$, and each $B_s\gamma$ is open; so each set $U^c\cap (B_s\gamma)$ is closed in $X$). Then the closed sets $B_s'$ satisfy Definition \ref{UPR} (with base sets closed).
\end{proof}

\begin{thm}[Theorem 5.5 of \cite{KS-comparison}]\label{KS-tower}
If $(X, \Gamma)$ is free and has small boundary property, then $(X, \Gamma)$ has the (URP).
\end{thm}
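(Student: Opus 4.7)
The plan is to combine an Ornstein--Weiss type tiling theorem for free actions of discrete amenable groups with the small boundary property, which supplies a rich basis of open sets whose boundaries are topologically negligible for the orbit-capacity functional.

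First, I would fix $(K,\eps)$ and apply a uniform version of the Ornstein--Weiss quasi-tiling theorem for free actions of $\Gamma$ (or, in the topological setting, the tiling theorem of Downarowicz--Huczek--Zhang) to obtain finite $(K,\eps/2)$-invariant shapes $\Gamma_1,\dots,\Gamma_S\subseteq\Gamma$ and Borel base sets $B_1',\dots,B_S'\subseteq X$ such that the translates $\{B_s'\gamma:\gamma\in\Gamma_s,\ 1\le s\le S\}$ are pairwise disjoint and the residual set $X\setminus\bigsqcup_{s,\gamma}B_s'\gamma$ has $\mu$-measure at most $\eps/2$ uniformly for $\mu\in\mathcal M_1(X,\Gamma)$. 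The uniformity in $\mu$ can be extracted either from the Choquet simplex being weak*-compact (partition $\mathcal M_1(X,\Gamma)$ into finitely many weak*-neighbourhoods and take a common refinement of the tilings) or directly from the topological form of the tiling theorem.

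Second, I would invoke the SBP to upgrade the Borel bases to a topological one. The SBP provides a basis of open sets $U$ with $\mu(\partial U)=0$ for every $\mu\in\mathcal M_1(X,\Gamma)$. Using a finite exhaustion/approximation inside each $B_s'$ by such small-boundary open sets, I produce open sets $B_s\subseteq X$ such that the shifts $B_s\gamma$ ($\gamma\in\Gamma_s$, $s=1,\dots,S$) are still pairwise disjoint, each $\partial B_s$ is $\mu$-null for all invariant $\mu$, and the total measure loss relative to the original tiling is bounded by $\eps/2$ uniformly in $\mu$. Since the index set $\bigsqcup_s\Gamma_s$ is finite, $\bigsqcup_{s,\gamma}B_s\gamma$ is open and its complement $C$ is closed; the joint boundary $\bigcup_{s,\gamma}\partial(B_s\gamma)$ still has $\mu$-measure zero for every invariant $\mu$.

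Third, I would use Remark \ref{ocp-prop}(1) to finish: because $C$ is closed and $\mu(C)<\eps$ for every $\mu\in\mathcal M_1(X,\Gamma)$, one concludes $\mathrm{ocap}(C)<\eps$. Combined with the lemma preceding the statement, which allows base sets to be taken open, the existence of the data $(B_s,\Gamma_s)$ verifies the URP in the form of Definition \ref{UPR}.

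The main obstacle is the second step: one must perform the topological refinement of the measurable tiling in a way that simultaneously preserves the disjointness of all shifts $B_s\gamma$, keeps every boundary $\mu$-null across all invariant measures, and controls the uniform measure loss. This is precisely where the SBP is indispensable — without it, the residual set around boundaries could have positive orbit capacity even when its measure is small, which would defeat the passage from measure control to ocap control.
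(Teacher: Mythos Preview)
The paper does not supply its own proof of this theorem: it is stated with attribution to Kerr--Szab\'o (\cite{KS-comparison}, Theorem~5.5) and immediately followed by a remark, with no proof environment. There is therefore nothing in the present paper to compare your proposal against.

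As for the proposal itself, the overall shape is reasonable, but Step~2 is where the genuine difficulty sits and your sketch underestimates it. Starting from Borel bases $B_s'$ produced by an Ornstein--Weiss tiling and then ``approximating from inside by SBP open sets'' does not obviously preserve the pairwise disjointness of the translates $B_s\gamma$ across different $s$ and $\gamma$, nor does it obviously yield uniform measure control: an open approximant to $B_s'$ can spill over into the translates of other bases once you pass to an open set, and there is no a~priori reason the SBP basis aligns with the measurable tiling. The actual Kerr--Szab\'o argument does not proceed by upgrading a measurable tiling; it builds the open towers directly from the small boundary property (indeed, their Theorem~5.5 establishes the stronger ``almost finiteness in measure'', cf.\ Remark~\ref{KS-AFM}, via a chain of equivalences). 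If you want to flesh out your route, you would need a genuine mechanism---something like a marker/clopen-base argument in the spirit of Downarowicz--Huczek--Zhang combined with SBP refinements---that produces the open bases and the disjointness simultaneously rather than sequentially.
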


\begin{rem}
In the case that $\Gamma=\Int^d$, it follows from Theorem 1.10.1 of \cite{GY-ETDS-2011}, that if $(X, \Int^d)$ is an extension of free dynamical system with the small boundary property, then it has the Topological Rokhlin Property in the sense of 1.9 of \cite{GY-ETDS-2011} (actually, the proof of 1.10.1 of \cite{GY-ETDS-2011} shows that the dynamical system $(X, \Int^d)$ has the (URP)).
\end{rem}

\begin{rem}\label{KS-AFM}
The level sets of the towers obtained in \cite{KS-comparison} actually have arbitrarily small diameter, and this is referred by the authors  as the property of almost finiteness in measure (Definition 3.5 of \cite{KS-comparison}). This property is shown to be equivalent to the small boundary property (Theorem 5.5 of \cite{KS-comparison}). In the case that $X$ is a Cantor set, the result is also obtained in \cite{DH-tiling}.
\end{rem}

It follows from Corollary 3.4 of \cite{Lind-MD} that any free minimal $\mathbb Z$-action has the (URP):

\begin{lem}\label{LRT-Z}
Let $\sigma: X \to X$ be a homeomorphism, and assume that $(X, \sigma)$ is an extension of a free minimal system. Then, for any $\eps > 0$, any $N\in \mathbb N$, there is a closed set $B\subseteq X$ such that
$$\sigma^i(B),\quad i=0, ..., N-1$$
are disjoint and
$$\mathrm{ocap}(X\setminus\bigsqcup_{i=0}^{N-1}\sigma^i(B)) < \eps.$$ In particular, $(X, \sigma)$ has the (URP) (with $S=1$).
\end{lem}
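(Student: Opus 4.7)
The plan is to pull back a Rokhlin tower from the free minimal factor. Let $\pi\colon(X,\sigma)\to(Y,\tau)$ denote the factor map onto the free minimal $\mathbb Z$-system, so that $\pi\circ\sigma=\tau\circ\pi$. First I would invoke Corollary 3.4 of \cite{Lind-MD} for the free minimal system $(Y,\tau)$ to obtain, for the given $N$ and $\eps$, a closed set $B'\subseteq Y$ such that the sets $\tau^{i}(B')$, $i=0,\ldots,N-1$, are mutually disjoint and $\mathrm{ocap}_{Y}(Y\setminus\bigsqcup_{i=0}^{N-1}\tau^{i}(B'))<\eps$.

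Next, I would set $B=\pi^{-1}(B')$, which is closed in $X$ by continuity. The intertwining relation yields $\sigma^{i}(B)=\pi^{-1}(\tau^{i}(B'))$, so these sets are mutually disjoint for $i=0,\ldots,N-1$, and
$$X\setminus\bigsqcup_{i=0}^{N-1}\sigma^{i}(B)=\pi^{-1}\Bigl(Y\setminus\bigsqcup_{i=0}^{N-1}\tau^{i}(B')\Bigr).$$
For any $\mu\in\mathcal M_1(X,\sigma)$, the push-forward $\pi_{*}\mu$ lies in $\mathcal M_1(Y,\tau)$ and $\mu(\pi^{-1}(E))=(\pi_{*}\mu)(E)$ for every Borel $E\subseteq Y$; combined with Remark \ref{ocp-prop}(1) (applied, if necessary, after a slight shrinking of the base via the semicontinuity in Remark \ref{ocp-prop}(2) in order to pass to a closed complement), this yields $\mathrm{ocap}_X(X\setminus\bigsqcup_{i=0}^{N-1}\sigma^{i}(B))<\eps$, which is the main claim.

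For the ``in particular'' assertion, given a finite $K\subseteq\mathbb Z$ and $\eps>0$, I would choose $N$ large enough that the interval $\{0,1,\ldots,N-1\}$ is $(K,\eps)$-invariant (which holds as soon as $N$ dominates roughly $|K|\cdot\max_{k\in K}|k|/\eps$), and apply the first part with this $N$, taking $S=1$, $B_{1}=B$, and $\Gamma_{1}=\{0,\ldots,N-1\}$ in Definition \ref{UPR}.

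The substantive input is Corollary 3.4 of \cite{Lind-MD} (Lindenstrauss's topological Rokhlin lemma for free minimal $\mathbb Z$-actions); the rest is routine lifting along the factor map, together with the elementary observation that push-forwards of invariant measures are invariant. The only mildly subtle point, and the main technical obstacle, is handling orbit capacity of the \emph{open} complement of the tower: this is circumvented either by using the ocap-measure duality of Remark \ref{ocp-prop}(1) after sup'ing over invariant measures on $X$, or by first enlarging the open complement slightly to a closed set via Remark \ref{ocp-prop}(2).
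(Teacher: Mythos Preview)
Your reduction to the factor is correct and painless: once you have a single closed-base tower in $(Y,\tau)$, the preimage under $\pi$ gives one in $(X,\sigma)$, and $\mathrm{ocap}_X(\pi^{-1}(E))\le\mathrm{ocap}_Y(E)$ holds for \emph{any} $E$ straight from the definition of orbit capacity (this is exactly the observation behind Lemma~\ref{URP-ext}), so the measure/semicontinuity workaround you sketch is not needed.

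The gap is upstream. Corollary~3.4 of \cite{Lind-MD}, as invoked in the paper, does \emph{not} hand you a closed set $B'\subseteq Y$ with $\tau^i(B')$ disjoint and small-ocap complement. What it produces is a continuous ``height'' function $n:Y\to[0,\infty)$ with the property that the set $\{y:n(\tau y)\ne n(y)+1\text{ or }n(y)\notin\Int\}$ has arbitrarily small orbit capacity. Converting that function into a \emph{single} tower of prescribed height $N$, with closed base and small remainder, is precisely the substantive content of the paper's proof: one groups the integer level sets $n^{-1}(k)$ modulo $N$ into $E_0,\dots,E_{N-1}$, intersects to force disjointness of the $N$ iterates, and then carefully bounds the orbit capacity lost in that intersection. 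By citing Corollary~3.4 for the tower itself you are assuming exactly the step that requires work. Your factor-map reduction is a legitimate first move, but it only lands you in a free minimal system where the paper's construction still has to be carried out.
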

\begin{proof}
Let $\eps>0$ and $N\in\mathbb N$ be arbitrary.
It follows from Corollary 3.4 of \cite{Lind-MD} that there is a continuous function $n: X \to [0, +\infty)$ such that 
$$\mathrm{ocap}\{x: n(\sigma(x)) \neq n(x) + 1\ \mathrm{or}\ n(x)\notin \Int\} \leq \frac{\eps}{2N(N(2N+1)+1)}.$$

Consider the level sets $$E'_k=n^{-1}(k),\quad k=0, 1, 2, ...$$ and the (open) set
$$F=\sigma^{-N+1}(F_0) \cup \sigma^{-N+2}(F_0) \cup \cdots \cup \sigma^{N-2}(F_0) \cup \sigma^{N-1}(F_0),$$ where $$F_0=\{x: n(\sigma(x)) \neq n(x) + 1\}.$$ 
Note that 
$$n(\sigma^k(x)) = n(x) + k,\quad x\in X\setminus F,\ -N+1\leq k \leq N-1,$$
\begin{equation}\label{small-F}
\mathrm{ocap}(F) \leq 2N\cdot\mathrm{ocap}(F_0)\leq \delta:=\frac{\eps}{N(2N+1)+1}.
\end{equation}
and
\begin{equation}\label{small-non-integer}
\mathrm{ocap}(X\setminus \bigsqcup_{k=0}^\infty E_k') \leq \frac{\eps}{2N(N(2N+1)+1)} < \delta.
\end{equation}

%
%

Define $$E_l:=\bigsqcup_{i=0}^\infty E'_{iN+l},\quad l=0, ..., N-1,$$ and define
$$B=(E_0\setminus F) \cap \sigma^{-1}(E_1\setminus F)\cap \cdots\cap\sigma^{-N+1}(E_{N-1}\setminus F).$$
For each $l=0, 1, ..., N-1$, since $E'_{iN+l}$, $i=0, 1, ...,$ is eventually empty, the set $E_l$ is closed, and hence the set $B$ is closed as well. Since
$$B\subseteq E_0,\ \sigma{B}\subseteq E_1,\ ...,\ \sigma^{N-1}(B) \subseteq E_{N-1},$$
and
$E_0$, $E_1$, ..., $E_{N-1}$ are mutually disjoint, one has that
$$B,\ \sigma(B), ..., \sigma^{N-1}(B)$$
are mutually disjoint. Thus, $B$, $\sigma(B)$, ..., $\sigma^{N-1}(B)$ form a tower. 

Let us estimate $\mathrm{ocap}(X\setminus\bigsqcup_{l=0}^{N-1} \sigma^l(B))$. Note that
\begin{eqnarray*}
E_{0} \setminus B & = & E_{0}\setminus ((E_{0}\setminus F) \cap \sigma^{-1}(E_{1}\setminus F)\cap \sigma^{-2}(E_{2}\setminus F)\cap\cdots\cap\sigma^{-N+1}(E_{N-1}\setminus F)) \\
& = & (E_{0}\cap F) \cup (E_{0}\setminus\sigma^{-1}(E_{1}\setminus F))\cup\cdots\cup  (E_{0}\setminus\sigma^{-N+1}(E_{N-1}\setminus F)) \\
& = & (E_{0}\cap F) \cup \sigma^{-1} (\sigma(E_{0})\setminus (E_{1}\setminus F)) \cup \cdots \cup \sigma^{-N+1} (\sigma^{N-1}(E_{0})\setminus (E_{N-1}\setminus F)) \\
& = & (E_{0}\cap F) \cup \sigma^{-1} (\sigma((E_{0}\setminus F)\cup(E_{0}\cap F))\setminus (E_{1}\setminus F)) \cup \cdots \cup \\
& & \sigma^{-N+1} (\sigma^{N-1}((E_{0}\cup F)\cup(E_{0}\cap F))\setminus (E_{N-1}\setminus F)) \\
& \subseteq & (E_{0}\cap F) \cup \sigma^{-1} (E_{1}\cup\sigma(E_{0}\cap F)\setminus (E_{1}\setminus F)) \cup \cdots \cup \\
& & \sigma^{-N+1} (E_{N-1}\cup\sigma^{N-1}(E_{0}\cap F)\setminus (E_{N-1}\setminus F))  \\
& \subseteq & (E_{0}\cap F) \cup \sigma^{-1} ((E_{1}\cap F)\cup\sigma(E_{0}\cap F)) \cup \cdots \cup \\
& & \sigma^{-N+1} ((E_{N-1}\cap F)\cup\sigma^{N-1}(E_{0}\cap F)),
\end{eqnarray*}
and hence by \eqref{small-F},
\begin{eqnarray*}
\mathrm{ocap}(E_0\setminus B) &\leq &\mathrm{ocap}(E_{0}\cap F) + (\mathrm{ocap}(E_{1}\cap F) +\mathrm{ocap}(E_{0}\cap F)) + \cdots + \\
& & (\mathrm{ocap}(E_{N-1}\cap F) + \mathrm{ocap}(E_{0}\cap F)) \leq 2N\delta.
\end{eqnarray*}
Therefore, for any $l=1, 2, ..., N-1$, one has
\begin{eqnarray*}
\mathrm{ocap}(E_l \setminus \sigma^{l}(B)) & = & \mathrm{ocap}(\sigma^{-l}(E_l) \setminus B) \\
& = & \mathrm{ocap}(\sigma^{-l}((E_l\setminus F)\cup(E_l\cap F)) \setminus B) \\
& \leq & \mathrm{ocap}((E_0\cup \sigma^{-l}(E_l\cap F)) \setminus B) \\
& \leq & \mathrm{ocap}((E_0\setminus B) \cup \sigma^{-l}(E_l\cap F))\\
& \leq & (2N+1)\delta,
\end{eqnarray*}
and thus, together with \eqref{small-non-integer},
\begin{eqnarray*}
\mathrm{ocap}(X\setminus\bigsqcup_{l=0}^{N-1} \sigma^l(B)) & = & \mathrm{ocap}((X\setminus \bigsqcup_{l=0}^{N-1} E_l) \cup (\bigsqcup_{l=0}^{N-1} E_l ) \setminus\bigsqcup_{l=0}^{N-1} \sigma^l(B)) \\
& \leq & \mathrm{ocap}(X\setminus \bigsqcup_{k=0}^{\infty} E'_k) + \sum_{l=0}^{N-1} \mathrm{ocap} ({E}_l\setminus \sigma^l(B))  \\
&\leq & \delta + N(2N+1)\delta < \eps,
\end{eqnarray*}
as desired.
\end{proof}

\begin{lem}\label{URP-ext}
If a dynamical system $(X, \Gamma)$ is an extension of a dynamical system $(Y, \Gamma)$ which is free and has the (URP), then $(X,  \Gamma)$ has the (URP).
\end{lem}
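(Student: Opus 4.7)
The plan is to pull the tower data from $Y$ back to $X$ along the factor map and observe that orbit capacity is preserved under this pullback. Let $\pi : X \to Y$ denote the continuous $\Gamma$-equivariant surjection witnessing that $(X, \Gamma)$ is an extension of $(Y, \Gamma)$.

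Fix $\eps > 0$ and a finite set $K \subseteq \Gamma$. Applying the (URP) for $(Y, \Gamma)$, I obtain closed sets $\tilde{B}_1, \dots, \tilde{B}_S \subseteq Y$ and $(K, \eps)$-invariant finite sets $\Gamma_1, \dots, \Gamma_S \subseteq \Gamma$ such that the translates $\tilde{B}_s \gamma$ ($\gamma \in \Gamma_s$, $s = 1, \dots, S$) are pairwise disjoint and $\mathrm{ocap}_Y(Y \setminus \bigsqcup_{s, \gamma} \tilde{B}_s \gamma) < \eps$. I then set $B_s := \pi^{-1}(\tilde{B}_s) \subseteq X$, which is closed. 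Equivariance of $\pi$ gives $B_s \gamma = \pi^{-1}(\tilde{B}_s \gamma)$, so pairwise disjointness of the $\tilde B_s\gamma$ in $Y$ immediately transfers to pairwise disjointness of the $B_s \gamma$ in $X$ (preimages preserve disjoint unions).

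The key step is to show that for any subset $E \subseteq Y$,
\begin{equation*}
\mathrm{ocap}_X(\pi^{-1}(E)) = \mathrm{ocap}_Y(E).
\end{equation*}
This follows from the identity $\chi_{\pi^{-1}(E)}(x\gamma) = \chi_E(\pi(x)\gamma)$: for any F{\o}lner sequence $(\Gamma_n)$,
\begin{equation*}
\sup_{x \in X} \sum_{\gamma \in \Gamma_n} \chi_{\pi^{-1}(E)}(x\gamma) = \sup_{x \in X} \sum_{\gamma \in \Gamma_n} \chi_E(\pi(x)\gamma) = \sup_{y \in Y} \sum_{\gamma \in \Gamma_n} \chi_E(y\gamma),
\end{equation*}
where the last equality uses the surjectivity of $\pi$. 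Dividing by $\abs{\Gamma_n}$ and taking limits yields the claim.

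Applying this identity to $E := Y \setminus \bigsqcup_{s, \gamma} \tilde{B}_s \gamma$ and noting that $\pi^{-1}(E) = X \setminus \bigsqcup_{s,\gamma} B_s \gamma$, I conclude
\begin{equation*}
\mathrm{ocap}_X\Bigl(X \setminus \bigsqcup_{s=1}^S \bigsqcup_{\gamma \in \Gamma_s} B_s \gamma\Bigr) = \mathrm{ocap}_Y\Bigl(Y \setminus \bigsqcup_{s=1}^S \bigsqcup_{\gamma \in \Gamma_s} \tilde{B}_s \gamma\Bigr) < \eps.
\end{equation*}
Thus the data $(B_s, \Gamma_s)$ witnesses the (URP) for $(X, \Gamma)$ at the level $(K, \eps)$. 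Since $(K, \eps)$ was arbitrary, $(X, \Gamma)$ has the (URP). The only substantive point to verify is the equality of orbit capacities under pullback, which is essentially a bookkeeping computation; freeness of $(Y, \Gamma)$ enters only through the hypothesis used to invoke the (URP) for the factor (and is not needed for $(X, \Gamma)$ itself, as the (URP) does not require freeness).
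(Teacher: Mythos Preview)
Your proof is correct and follows essentially the same approach as the paper: pull back the tower data along the factor map $\pi$ and control the orbit capacity of the complement. The paper only asserts the inequality $\mathrm{ocap}_X(\pi^{-1}(E)) \leq \mathrm{ocap}_Y(E)$ without proof, whereas you establish the full equality via surjectivity of $\pi$; either suffices, and your version is slightly more explicit.
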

\begin{proof}
Denote the quotient map from $(X, \Gamma)$ to $(Y, \Gamma)$  by $\pi$. Note that for any set $E \subseteq Y$, one has $\mathrm{ocap}(\pi^{-1}(E)) \leq \mathrm{ocap}(E)$. Since $(Y, \Gamma)$ has the (URP), for any $\eps>0$ and any finite set $K\subseteq \Gamma$, there exist closed sets $B_1, B_2, ..., B_S \subseteq X$ and $(K, \eps)$-invariant sets $\Gamma_1, \Gamma_2, ..., \Gamma_S \subseteq \Gamma$ such that
$$B_s\gamma,\quad \gamma\in \Gamma_s,\ s=1, ..., S $$
are mutually disjoint and
$$\mathrm{ocap}(X\setminus\bigsqcup_{s=1}^S\bigsqcup_{\gamma\in \Gamma_s}B_s\gamma) < \eps.$$ Then the closed sets $$\pi^{-1}(B_s),\quad s=1, 2, ..., S$$ together with $\Gamma_s$, $s=1, 2, ..., S$, form disjoint Rokhlin towers of $(X, \Gamma)$ with complement of orbit capacity at most $\eps$.
\end{proof}


Since any free topological dynamical system with the small boundary property has the (URP) (Theorem 5.5 of \cite{KS-comparison}; see Theorem \ref{KS-tower} above), one has the following corollary.
\begin{cor}\label{URP-ext-SBP}
If a topological dynamical system $(X, \Gamma)$ is an extension of a free dynamical system with small boundary property, then $(X,  \Gamma)$ has the (URP). In particular, if $(X, \Gamma)$ is an extension of a free minimal Cantor system, $(X, \Gamma)$ has the (URP).
\end{cor}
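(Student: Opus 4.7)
The corollary is essentially immediate from the two tools already assembled in this subsection, so the plan is to chain them together and then verify the ``in particular'' clause.

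First, I would apply Theorem \ref{KS-tower} to the base system. By hypothesis, the factor $(Y,\Gamma)$ of $(X,\Gamma)$ is free and has the small boundary property, so Theorem \ref{KS-tower} yields that $(Y,\Gamma)$ has the (URP). Next, since $(X,\Gamma)$ is an extension of $(Y,\Gamma)$ by hypothesis, Lemma \ref{URP-ext} (whose proof shows that pulling back closed base sets along the factor map preserves the disjointness of the towers and can only decrease orbit capacity) applies directly with this $(Y,\Gamma)$, and gives (URP) for $(X,\Gamma)$. This establishes the first assertion.

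For the ``in particular'' statement about Cantor systems, the only thing to check is that any free minimal Cantor system $(Y,\Gamma)$ has the small boundary property; then the general statement just proved applies. On a Cantor set, every point admits a neighborhood basis of clopen sets, and clopen sets have empty topological boundary. Hence every point has a neighborhood basis of sets with empty boundary, which is precisely the small boundary property (in the strongest possible form), so $(Y,\Gamma)$ satisfies the hypothesis of the first part.

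There is no genuine obstacle here: the content is entirely absorbed by Theorem \ref{KS-tower} and Lemma \ref{URP-ext}, and the Cantor case is handled because clopen bases trivialize the small boundary condition. The only mildly non-routine point one might want to record, for the reader, is the observation that the small boundary property for $(Y,\Gamma)$ is formulated pointwise in terms of neighborhood bases, so the existence of a clopen basis (which does not even require the dynamics) suffices.
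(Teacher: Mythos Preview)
Your proposal is correct and matches the paper's approach exactly: the paper states this corollary as an immediate consequence of Theorem~\ref{KS-tower} (applied to the factor) followed by Lemma~\ref{URP-ext} (to lift URP to the extension), with the Cantor case handled because a clopen basis trivially witnesses the small boundary property.
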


%
%
%

\subsection{A tracial approximation structure for $\mathrm{C}(X)\rtimes\Gamma$}
Considering a topological dynamical system $(X, \Gamma)$ with the (URP), let us show that the C*-algebra $\mathrm{C}(X)\rtimes\Gamma$ can be (weakly) tracially approximated by (not necessarily unital) homogeneous C*-algebras with dimension ratio almost dominated by the mean dimension of $(X, \Gamma)$.
\begin{thm}\label{T-SDG}
Let $(X, \Gamma)$ be a dynamical system with the (URP).
The C*-algebra $A:=\mathrm{C}(X) \rtimes \Gamma$ has the following property: For any finite set $\{f_1, f_2, ..., f_n\}\subseteq \mathrm{M}_{m}(A)$ (where $m\in\mathbb N$), $h\in\mathrm{C}(X)^+$ with $h(x)\geq \frac{3}{4}$, $x\in F$ for a closed set $F\subseteq X$, 
and any $\delta>0$, there exist a positive element $p\in \mathrm{C}(X)\subseteq A$ with $\norm{p}\leq 1$, a sub-C*-algebra $C\subseteq A$ with $C\cong \bigoplus_{s=1}^S\mathrm{M}_{K_s}(\mathrm{C}_0(Z_s))$ for some $K_1, ..., K_S\in\mathbb N$ and some locally compact Hausdorff spaces $Z_1, ..., Z_S$ together with compact subsets $[Z_s]\subseteq Z_s$, $s=1, ..., S$, $\{f'_1, f_2', ..., f_n'\}\subseteq \mathrm{M}_m(A)$, and $h'\in \mathrm{C}(X)^+$ such that if $p_m:=p\otimes 1_m \in \mathrm{M}_m(A)\cong A\otimes \mathrm{M}_m(\Comp)$, 
then
\begin{enumerate}
\item\label{R-cut-1} $\norm{h-h'}<\delta$, $\norm{f_i-f'_i} < \delta$, $i=1, 2, ..., n$;
\item\label{R-cut-2} $\norm{p_mf'_i-f_i'p_m} < \delta$, $i=1, 2, ..., n$;
\item\label{R-cut-3} $p\in C$, $ph'p\in C$, and $p_mf'_ip_m\in \mathrm{M}_{m}(C)$, $i=1, 2, ..., n$;
\item\label{R-cut-4} $\frac{\mathrm{dim}([Z_s])}{K_s} < \mathrm{mdim}(X, \Gamma) + \delta$, $s=1, ..., S$;
\item\label{R-cut-5} $\mathrm{\mu}(X\setminus p^{-1}(1)) <\delta $, $\mu\in \mathcal M_1(X, \Gamma)$;
\item\label{R-cut-6} under the isomorphism $C\cong \bigoplus_{s=1}^S\mathrm{M}_{K_s}(\mathrm{C}_0(Z_s))$, one has $$\mathrm{rank}((ph'p-\frac{1}{4})_+(z)) \geq K_s (\min_\mu\mu(F)-\delta),\quad z\in [Z_s],$$ where $\mu$ runs through $\mathcal M_1(X,  \Gamma)$;
\item\label{R-cut-7} under the isomorphism $C\cong \bigoplus_{s=1}^S\mathrm{M}_{K_s}(\mathrm{C}_0(Z_s))$, the element $p$ has the form $$p = \bigoplus_{s=1}^S\mathrm{diag}\{p_{s, 1}, ..., p_{s, K_s}\},$$ where $p_{s, i}: Z_s \to [0, 1]$, and
$$\frac{1}{K_s}\abs{\{1\leq i\leq K_s: p_{s, i}(z) = 1 \}} >1-\delta, \quad z\in [Z_s],\ s=1, ..., S.$$   In particular, one has $$\mathrm{rank}(p(z)) > K_s(1-\delta),\quad z\in [Z_s],\ s=1, ..., S,$$ and
$$\mathrm{Tr}(p(z)) > K_s(1-\delta),\quad z\in [Z_s],\ s=1, ..., S;$$
\item\label{R-cut-8} still under the isomorphism $C\cong \bigoplus_{s=1}^S\mathrm{M}_{K_s}(\mathrm{C}_0(Z_s))$, any diagonal element of $C$ actually is in $\mathrm{C}(X)$, and if $f\in C^+$ is an diagonal element satisfying $f|_{[Z_s]} = 1_{K_s}$, $s=1, ..., S$, then, as an element of $\mathrm{C}(X)$,
$$\mu(X\setminus f^{-1}(1)) < \delta,\quad \mu\in\mathcal M_1(X, \Gamma).$$
\end{enumerate} 
\end{thm}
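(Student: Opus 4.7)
The plan is to use the (URP) to produce Rokhlin towers whose tower structure realizes a matrix subalgebra of $A = \mathrm{C}(X)\rtimes\Gamma$, and then use the definition of mean topological dimension to factor the base further through a low-dimensional nerve, producing the required subalgebra $C\cong \bigoplus_s \mathrm{M}_{K_s}(\mathrm{C}_0(Z_s))$ with small dimension ratio. First I would approximate each $f_i$ by a finite sum $\sum_{\gamma\in K_0} a_{i,\gamma}u_\gamma$ with $a_{i,\gamma}\in \mathrm{M}_m(\mathrm{C}(X))$ and $K_0\subseteq\Gamma$ a finite set, and choose a finite open cover $\mathcal{U}$ of $X$ fine enough that each entry of every $a_{i,\gamma}$, and the function $h$, vary by at most $\delta'$ on each element of $\mathcal{U}$.

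By the definition of mean topological dimension, $\frac{1}{|\Gamma_n|} D\bigl(\bigvee_{\gamma\in\Gamma_n}\gamma^{-1}\mathcal{U}\bigr)\leq \mathrm{mdim}(X,\Gamma)+\delta$ eventually along a F{\o}lner sequence. Applying (URP) with a sufficiently invariant pair $(K,\eps)$, where $K$ is taken much larger than $K_0\cup K_0^{-1}$, I obtain open base sets $B_1,\ldots,B_S$ and $(K,\eps)$-invariant tower indices $\Gamma_1,\ldots,\Gamma_S$ (with $K_s:=|\Gamma_s|$) whose orbit complement has capacity $<\eps$, and with each $\Gamma_s$ large enough that $D\bigl(\bigvee_{\gamma\in\Gamma_s}\gamma^{-1}\mathcal{U}\bigr)\leq K_s(\mathrm{mdim}+\delta)$. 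Pick a refinement $\mathcal{V}_s\prec\bigvee_{\gamma\in\Gamma_s}\gamma^{-1}\mathcal{U}$ of order $\leq K_s(\mathrm{mdim}+\delta)$, let $Z_s$ be the geometric realization of the nerve of $\mathcal{V}_s$ restricted to $B_s$ (a simplicial complex of dimension at most $K_s(\mathrm{mdim}+\delta)$, giving (4)), and let $\pi_s\colon B_s\to Z_s$ be the map induced by a partition of unity subordinate to $\mathcal{V}_s$; take $[Z_s]=\pi_s(B_s')$ for a closed subset $B_s'\subseteq B_s$ whose full tower column $B_s'\Gamma_s$ sits safely in the tower.

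Define $C$ as the closed linear span of $(g\circ\pi_s\circ\gamma^{-1})u_{\gamma^{-1}\gamma'}$, $g\in \mathrm{C}_0(Z_s)$, $\gamma,\gamma'\in\Gamma_s$, $s=1,\ldots,S$; disjointness of the tower levels identifies this with $\bigoplus_s \mathrm{M}_{K_s}(\mathrm{C}_0(Z_s))$. Use a two-tier interior construction: define $p\in \mathrm{C}(X)$ to equal $1$ on $\bigsqcup_s\bigsqcup_{\gamma\in\mathrm{int}_K(\Gamma_s)} B_s\gamma$ and drop smoothly to $0$ outside a narrow enlargement, arranged to factor through each $\pi_s\circ\gamma^{-1}$ so that $p\in C$ and $p$ is diagonal in the matrix picture; $(K,\eps)$-invariance then gives $|\Gamma_s\setminus\mathrm{int}_K(\Gamma_s)|/K_s<\eps$, yielding (7), while (5) follows from the orbit capacity control on the tower complement. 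Construct $h'$ and $f_i'$ as pull-back approximations of $h$ and $f_i$ through the $\pi_s\circ\gamma^{-1}$, supported on the deeper interior $\bigsqcup B_s\gamma$, $\gamma\in\mathrm{int}_{K^2}(\Gamma_s)$, so that (1) and (3) hold. The commutation estimate in (2) follows from the staggered construction: on the support of $f_i'$ (inside the deeper interior), every $K_0$-translate still lies in the level set where $p=1$, so $p\circ\gamma^{-1}=p=1$ on the relevant support and each commutator term $a_{i,\gamma}'u_\gamma(p\circ\gamma^{-1}-p)$ vanishes. Item (8) follows by inspection: a diagonal element of $C$ is a function on $X$ supported on the tower and factoring through the $\pi_s$, with complement inside the tower boundary, of small orbit capacity.

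The main obstacle is item (6). To bound $\mathrm{rank}((ph'p-\tfrac14)_+(z))\geq K_s(\min_\mu\mu(F)-\delta)$ pointwise on $[Z_s]$, I need a uniform lower bound on $|\{\gamma\in\Gamma_s:x\gamma\in F\}|$ for $x$ in the tower column over $z$. I would deduce this by choosing a closed set $G\supseteq X\setminus F$ with $\sup_\mu\mu(G)<1-\min_\mu\mu(F)+\delta/2$ (using semicontinuity of orbit capacity from Remark \ref{ocp-prop}(2), applied to the open set $X\setminus F$, together with $\mathrm{ocap}(G)=\sup_\mu\mu(G)$ for closed $G$), and then invoking the definitional estimate $\sup_x \frac{1}{K_s}|\{\gamma\in\Gamma_s:x\gamma\in G\}|<\sup_\mu\mu(G)+\delta/2$, valid when $\Gamma_s$ is sufficiently F{\o}lner. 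The count of $\gamma\in\Gamma_s$ with $x\gamma\notin G$, hence in $F$, is then at least $K_s(\min_\mu\mu(F)-\delta)$; since $h\geq 3/4$ on $F$ (so $h'\geq 3/4-\delta'>1/2$ there) and $p_{s,i}(z)=1$ on the corresponding diagonal indices, each such level contributes rank one to $(ph'p-\tfrac14)_+(z)$, yielding the required bound.
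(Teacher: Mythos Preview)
The central gap is the tension between items (1) and (2). You want $f_i'$ supported on the deeper interior $\bigsqcup_s\bigsqcup_{\gamma\in\mathrm{int}_{K^2}(\Gamma_s)}B_s\gamma$, so that every $K_0$-translate of its support stays where $p=1$ and hence $[p_m,f_i']=0$. But then $f_i'$ vanishes on a fixed nonempty open subset of $X$, while $f_i$ need not be small there (take $f_i=1_A$), so $\|f_i-f_i'\|$ cannot be made less than $\delta$ in sup norm. You cannot have both a global norm approximation and support control on $f_i'$.

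The paper resolves this differently: $f_i'$ is a pull-back approximation on \emph{all} of $X$ (via a partition of unity covering the whole space, not just the towers), so (1) is automatic, and commutation comes from making $p$ vary slowly in the $\Gamma$-direction. One partitions each shape $\Gamma_s$ into $L+2$ nested shells $\Gamma_{s,l}=\mathrm{int}_{\mathcal N^l}(\Gamma_s)\setminus\mathrm{int}_{\mathcal N^{l+1}}(\Gamma_s)$ and sets $p=\frac{l-1}{L}\chi_s\circ\gamma^{-1}$ on the $\gamma$-level for $\gamma\in\Gamma_{s,l}$. Since translating by $\gamma'\in\mathcal N$ changes the shell index by at most $1$, one gets $\|u_{\gamma'}^*pu_{\gamma'}-p\|\leq 1/L$, and choosing $L$ large yields (2). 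Your ``two-tier'' $p$ with a sharp transition does not provide this.

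A smaller issue in (6): you invoke Remark~\ref{ocp-prop}(2) to find a closed $G\supseteq X\setminus F$ with $\mathrm{ocap}(G)$ close to $\mathrm{ocap}(X\setminus F)$, but that remark goes the other way (a closed set has an \emph{open} neighbourhood of nearby capacity); there is no such closed superset in general. The direct route---what the paper uses---is weak*-compactness: if the frequency bound failed along increasingly invariant $\Gamma_0$'s and points $x$, the empirical measures would accumulate on an invariant $\mu$ with $\mu(X\setminus F)\geq\liminf(\cdots)>1-\min_\nu\nu(F)$ (Portmanteau for the open set $X\setminus F$), a contradiction.
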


Before proving Theorem \ref{T-SDG}, we have the following two lemmas on partition of unity, which are elementary and might be well known.
\begin{lem}\label{semi-unity}
Let $V_1, ..., V_n$ be open subsets of $X$, where $X$ is a separable locally compact Hausdorff space. Let $D$ be a compact subset of $\bigcup_{i=1}^n V_i$. Then there are continuous functions $\phi_i: X \to [0, 1]$, $i=1, ..., n$, such that 
\begin{enumerate}
\item $\mathrm{supp}(\phi_i) \subseteq V_i$, $i=1, ..., n$,
\item $\sum_{i=1}^n \phi_i(x) = 1$, $x\in D$.
\end{enumerate}
\end{lem}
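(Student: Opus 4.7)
The plan is to execute the standard ``shrink-then-patch'' construction of a partition of unity subordinate to a finite open cover, but carefully arranged so that the sum equals $1$ only on the compact set $D$ (since $X$ is merely locally compact, not compact).

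First I would \emph{shrink} the given cover. Using local compactness and Hausdorffness of $X$, for each point $x \in D$ I pick some $i(x)$ with $x \in V_{i(x)}$, and then choose a precompact open neighbourhood $U_x$ of $x$ with $\overline{U_x} \subseteq V_{i(x)}$. By compactness of $D$, finitely many such neighbourhoods $U_{x_1},\dots,U_{x_m}$ already cover $D$. Regrouping them according to which $V_i$ absorbs their closures, I define
$$W_i = \bigcup\bigl\{ U_{x_j} : \overline{U_{x_j}} \subseteq V_i \bigr\},\qquad i=1,\dots,n.$$
Each $W_i$ is a finite union of open sets with compact closure, so $\overline{W_i}$ is compact and $\overline{W_i}\subseteq V_i$, while $D \subseteq W_1 \cup \cdots \cup W_n$.

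Next I would apply Urysohn's lemma for locally compact Hausdorff spaces to obtain continuous functions $\psi_i : X \to [0,1]$ with $\psi_i|_{\overline{W_i}} \equiv 1$ and $\mathrm{supp}(\psi_i) \subseteq V_i$. Then I patch them together multiplicatively by setting
$$\phi_i = \psi_i \prod_{j<i}(1-\psi_j),\qquad i=1,\dots,n,$$
which visibly takes values in $[0,1]$ and satisfies $\mathrm{supp}(\phi_i) \subseteq \mathrm{supp}(\psi_i) \subseteq V_i$. A one-line telescoping calculation yields
$$\sum_{i=1}^n \phi_i = 1 - \prod_{i=1}^n (1-\psi_i).$$
For any $x\in D$, the covering property $D\subseteq \bigcup_i W_i$ forces some $\psi_i(x)=1$, so the product vanishes and $\sum_i \phi_i(x) = 1$, completing the proof.

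There is no real obstacle; this is a routine point-set topology argument. The only mild subtlety is that $X$ is not assumed compact, so one cannot directly quote the standard ``partition of unity on a compact space'' statement, and the initial shrinking step must be done by hand using local compactness to ensure the $\overline{W_i}$ remain inside $V_i$.
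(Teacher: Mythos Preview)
Your proof is correct and follows essentially the same approach as the paper: shrink the cover so that $D\subseteq\bigcup_i W_i$ with $\overline{W_i}\subseteq V_i$, pick Urysohn functions equal to $1$ on $\overline{W_i}$ and supported in $V_i$, and then use the multiplicative telescoping formula $\sum_i\phi_i = 1-\prod_i(1-\psi_i)$ to conclude. If anything, you are slightly more explicit than the paper about how local compactness is used in the shrinking step.
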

\begin{proof}
Pick open sets $V_i'\subseteq V_i$, $i=1, ..., n$, such that $\overline{V'_i} \subseteq V_i$ and $D\subseteq \bigcup_{i=1}^n V_i'$. Pick continuous functions $g_i: X \to [0, 1]$ such that 
\begin{equation}\label{inter-function}
g_i|_{V_i'} = 1\quad \textrm{and}\quad g_i|_{V_i^c}=0,\quad i=1, ..., n.
\end{equation} 
Define
\begin{displaymath}
\begin{array}{l}
\phi_1=g_1,\\
\phi_2=(1-g_1)g_2, \\
\cdots \\
\phi_n = (1-g_1)(1-g_2)\cdots(1-g_{n-1})g_n.
\end{array}
\end{displaymath}
It is clear that $\mathrm{supp}(\phi_i) \subseteq V_i$, $i=1, ..., n$.
Note that
$$\phi_1+\cdots+\phi_n = 1-(1-g_1)(1-g_2)\cdots(1-g_n),$$ and hence by \eqref{inter-function}, 
$$\phi_1(x)+\cdots+\phi_n(x) = 1,\quad x\in \bigcup_{i=1}^n V_i' \supseteq D,$$
as desired.
\end{proof}

\begin{lem}\label{partition-ext}
Let $X$ be a separable compact Hausdorff space, and let $\mathcal V$ and $\mathcal W$ be two finite collections of open subsets such that $\mathcal V \cup \mathcal W$ forms a cover of $X$. Assume there are continuous functions $\phi_V: X \to [0, 1]$, $V\in \mathcal V$ such that
\begin{enumerate}
\item  $\mathrm{supp}(\phi_V) \subseteq V$, $V\in\mathcal V$,
\item $\sum_{V\in\mathcal V} \phi_V(x) \leq 1$, $x\in X$, and
\item $\mathrm{int}((\sum_{V\in\mathcal V} \phi_V)^{-1}(1))\cup \bigcup_{W\in \mathcal W} W =X$.
\end{enumerate}
Then, there are continuous functions $\phi_W$, $W\in\mathcal W$, such that 
$$\{ \phi_V, \phi_W,\quad V\in \mathcal V,\ W\in\mathcal W \}$$
form a partition of unity subordinate to $\mathcal V \cup \mathcal W$.
\end{lem}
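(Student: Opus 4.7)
The plan is to define the auxiliary nonnegative function $\psi := 1 - \sum_{V \in \mathcal{V}} \phi_V$, which by hypothesis (2) is continuous with values in $[0,1]$, and to distribute $\psi$ among the sets $W \in \mathcal{W}$ using a partition of unity produced by Lemma \ref{semi-unity}. The first step is to identify the support of $\psi$. Set $U := \mathrm{int}\bigl((\sum_V \phi_V)^{-1}(1)\bigr)$; since $\psi \equiv 0$ on $U$ and $U$ is open, one has $\mathrm{supp}(\psi) = \overline{\{\psi > 0\}} \subseteq X \setminus U$. Hypothesis (3) then gives $\mathrm{supp}(\psi) \subseteq \bigcup_{W \in \mathcal{W}} W$, and since $X$ is compact this support is a compact subset of the open cover $\mathcal{W}$.

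Next, I would apply Lemma \ref{semi-unity} with $D := \mathrm{supp}(\psi)$ and the open cover $\{W : W \in \mathcal{W}\}$ to obtain continuous functions $\chi_W : X \to [0,1]$ such that $\mathrm{supp}(\chi_W) \subseteq W$ for each $W \in \mathcal{W}$ and $\sum_{W \in \mathcal{W}} \chi_W(x) = 1$ for every $x \in \mathrm{supp}(\psi)$. Then define
$$\phi_W := \chi_W \cdot \psi, \qquad W \in \mathcal{W}.$$
Each $\phi_W$ is continuous with values in $[0,1]$ and satisfies $\mathrm{supp}(\phi_W) \subseteq \mathrm{supp}(\chi_W) \subseteq W$.

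It remains to verify that $\sum_W \phi_W = \psi$ pointwise. For $x \in \mathrm{supp}(\psi)$ this is immediate since $\sum_W \chi_W(x) = 1$, while for $x \notin \mathrm{supp}(\psi)$ one has $\psi(x) = 0$, so every $\phi_W(x)$ vanishes. Consequently
$$\sum_{V \in \mathcal{V}} \phi_V(x) + \sum_{W \in \mathcal{W}} \phi_W(x) = \sum_{V \in \mathcal{V}} \phi_V(x) + \psi(x) = 1, \qquad x \in X,$$
so the combined collection $\{\phi_V, \phi_W\}$ is a partition of unity subordinate to $\mathcal{V} \cup \mathcal{W}$. The argument is entirely elementary; the only subtle point is that hypothesis (3) is used through the \emph{closed} set $X \setminus U$ rather than merely through the cover condition $\{\psi > 0\} \subseteq \bigcup_W W$, which is what allows us to conclude $\mathrm{supp}(\psi) \subseteq \bigcup_W W$ and thus invoke Lemma \ref{semi-unity}.
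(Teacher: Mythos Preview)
Your proof is correct and follows essentially the same idea as the paper's: define the deficit $\psi = 1 - \sum_V \phi_V$, observe via hypothesis (3) that $\mathrm{supp}(\psi)$ is a compact subset of $\bigcup_W W$, and distribute $\psi$ over $\mathcal W$ using bump functions supported in the $W$'s. The only difference is presentational: you invoke Lemma~\ref{semi-unity} directly to obtain the $\chi_W$'s and then multiply by $\psi$, whereas the paper rewrites the telescoping product construction from that lemma with the factor $(1-g)$ built in from the start (setting $h_i = (1-g)(1-g_1)\cdots(1-g_{i-1})g_i$); unwinding your $\chi_W \cdot \psi$ gives exactly the paper's $h_i$.
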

\begin{proof}
Consider the function $$g:=\sum_{V\in\mathcal V} \phi_V,$$ and list $\mathcal W=\{W_1, W_2, ..., W_n\}$. Since $\mathrm{int}(g^{-1}(1))\cup \bigcup_{W\in \mathcal W} W =X$, there are open sets $W'_1$, $W'_2$, ..., $W'_n$ such that $\overline{W'_1}\subseteq W_1$, $\overline{W'_2}\subseteq W_2$, ..., $\overline{W'_2}\subseteq W_2$ and 
\begin{equation}\label{cover-X}
\mathrm{int}(g^{-1}(1)) \cup (W'_1\cup W'_2 \cup\cdots\cup W'_n)=X.
\end{equation} Pick continuous functions $g_1$, $g_2$, ..., $g_n$ such that $g_i|_{W'_i} = 1$, $g_i|_{W_i^c} = 0$, $i=1, 2, ..., n$.

Define
\begin{displaymath}
\begin{array}{l}
h_1=(1-g)g_1,\\
h_2=(1-g)(1-g_1)g_2, \\
\cdots \\
h_n = (1-g)(1-g_1)(1-g_2)\cdots(1-g_{n-1})g_n.
\end{array}
\end{displaymath}
It is clear that $\mathrm{supp}(h_i) \subseteq W_i$. Moreover, 
$$h_1+h_2+\cdots+h_n = (1-g)(1-(1-g_1)(1-g_2)\cdots(1-g_n)).$$
Then 
$$h_1(x)+h_2(x)+\cdots+h_n(x) =  1-g(x),\quad x\in \overline{W'_1} \cup \cdots \cup \overline{W'_n},$$ and hence
$$g(x) + h_1(x)+h_2(x)+\cdots+h_n(x) =  1,\quad x\in \overline{W'_1} \cup \cdots \cup \overline{W'_n},$$
Note that
$$h_1(x)+h_2(x)+\cdots+h_n(x) = (1-g)(1-(1-g_1)(1-g_2)\cdots(1-g_n)) = 0,\quad x \in \mathrm{int}(g^{-1}(1)),$$
and then by \eqref{cover-X},
$$g(x) + h_1(x)+h_2(x)+\cdots+h_n(x) =  1,\quad x\in X.$$
Then $$\{\phi_V: V \in \mathcal V\} \cup \{h_i: i=1, 2, ..., n\}$$ form a partition of unity subordinate to $\mathcal V \cup \mathcal W$, as desired.
\end{proof}

The following lemma particularly asserts that each Rokhlin tower corresponds to a matrix algebra over the C*-algebra of the base set.
\begin{lem}\label{tower-alg}
Let $A$ be a C*-algebra. Let $u_1, u_2, ..., u_n\in A$ be unitaries with $u_1=1$, and let $\mathcal G\subseteq A$ be a finite set. Assume that
\begin{equation}\label{level-rel}
(au_i)(u^*_jb) = 0,\quad i \neq j,\ a, b \in\mathcal G.
\end{equation}
Then there is an isomorphism $$\mathrm{C^*}\{u^*_ia: i=1, 2, ..., n,\ a\in\mathcal G\} \cong \mathrm{M}_{n}(\mathrm{C^*}(\mathcal G))$$ under which
$$u^*_1a_1u_1 + \cdots + u^*_n a_n u_n \mapsto  \mathrm{diag}\{a_1, ..., a_n\},\quad a_1, ..., a_n\in \mathcal G.$$
\end{lem}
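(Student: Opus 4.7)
Let $B = \mathrm{C^*}(\mathcal G) \subseteq A$ and write $D$ for the algebra on the left-hand side. Replacing $\mathcal G$ by $\mathcal G \cup \mathcal G^*$ does not alter $B$, and the hypothesis together with its adjoint (after swapping $i \leftrightarrow j$) gives $\mathcal G \cdot u_i u_j^* \cdot \mathcal G = 0$ and $\mathcal G^* \cdot u_i u_j^* \cdot \mathcal G^* = 0$ for $i \neq j$, which is enough to run the arguments below once we assume, without loss of generality, that $\mathcal G$ is self-adjoint. The plan is to construct an explicit $*$-isomorphism
\[
\Phi : \mathrm{M}_n(B) \to D, \qquad \Phi\bigl((b_{ij})\bigr) = \sum_{i,j=1}^n u_i^* b_{ij} u_j,
\]
under which $\mathrm{diag}(a_1, \ldots, a_n)$ maps to $\sum_i u_i^* a_i u_i$, yielding the formula in the lemma.

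The central step is to extend the orthogonality from $\mathcal G$ to $B$: namely, $b\, u_i u_j^*\, c = 0$ for all $b, c \in B$ and $i \neq j$. Fix such $i \neq j$ and $c \in \mathcal G$, and let $L_c := \{b \in A : b u_i u_j^* c = 0\}$. This is a closed left ideal of $A$ containing $\mathcal G$; any $\mathcal G$-monomial $a_1 \cdots a_k$ decomposes as $(a_1\cdots a_{k-1}) \cdot a_k$ with $a_k \in \mathcal G \subseteq L_c$, hence lies in $L_c$ by the left-ideal property, and by linearity and closure $B \subseteq L_c$. A symmetric closed-right-ideal argument then extends $c$ to $B$, finishing the extension. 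With this in hand, multiplicativity of $\Phi$ is immediate,
\[
(u_i^* b u_j)(u_k^* c u_l) = u_i^* b (u_j u_k^*) c u_l = \delta_{jk}\, u_i^* b c\, u_l,
\]
and $*$-preservation is automatic since $(u_i^* b u_j)^* = u_j^* b^* u_i$.

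For surjectivity onto $D$: each generator $u_i^* a = \Phi(E_{i1} \otimes a)$ of $D$ belongs to $\mathrm{Im}(\Phi)$ (using $u_1 = 1$), so $D \subseteq \mathrm{Im}(\Phi)$. Conversely $\mathcal G \subseteq D$ (via $u_1^* a = a$), whence $B \subseteq D$; writing $b \in B$ as a limit of elements of $\mathcal G \cdot B$ (by an approximate identity for $B$ approximated by $\mathcal G$-polynomials of length $\geq 1$), one obtains $u_i^* b = \lim (u_i^* a_n) b_n \in D$, and symmetrically $b u_j \in D$, hence $u_i^* b u_j = \lim (u_i^* a_n)(b_n u_j) \in D$. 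Injectivity follows because $\Phi|_{E_{11} \otimes B}(b) = b$ is injective on the $(1,1)$-corner, and every ideal of $\mathrm{M}_n(B)$ has the form $\mathrm{M}_n(J)$ for an ideal $J$ of $B$, so $\ker \Phi = \mathrm{M}_n(J)$ forces $J = 0$. The only substantive step is the extension of the orthogonality from $\mathcal G$ to $B$, which rests on the self-adjointness reduction and the closed-ideal argument sketched above; the remaining verifications are routine matrix-algebra manipulations.
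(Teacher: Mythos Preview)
Your self-adjointness reduction contains a genuine gap, and in fact the lemma as stated is false. You correctly observe that the hypothesis and its adjoint give $\mathcal G\, u_iu_j^*\,\mathcal G = 0$ and $\mathcal G^*\, u_iu_j^*\,\mathcal G^* = 0$, but replacing $\mathcal G$ by $\mathcal G\cup\mathcal G^*$ also requires the \emph{mixed} cases $\mathcal G\, u_iu_j^*\,\mathcal G^* = 0$, which do not follow. Concretely, take $A=\mathrm{M}_3(\mathbb C)$, $n=2$, $u_1=u_2=1$, and $\mathcal G=\{e_{12}\}$: the hypothesis reduces to $e_{12}^2=0$, which holds, yet $e_{12}\,u_1u_2^*\,e_{12}^* = e_{12}e_{21}=e_{11}\neq 0$. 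Here $\mathrm{C}^*\{u_i^*a\}=\mathrm{C}^*\{e_{12}\}\cong \mathrm{M}_2$, while $\mathrm{M}_2(\mathrm{C}^*(\mathcal G))\cong \mathrm{M}_2(\mathrm{M}_2)\cong\mathrm{M}_4$, so the conclusion fails. Your left-ideal argument for extending orthogonality to $B$ needs $\mathcal G^*\subseteq L_c$, which this example shows can fail.

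That said, the paper's own proof has the very same gap: the assertion ``it follows from \eqref{level-rel} that $v_iv_j^*=\delta_{i,j}p$'' requires $p\,u_iu_j^*\,p=0$, which amounts to $B\,u_iu_j^*\,B=0$ and is false in the example above. In the paper's only application (the proof of Theorem~\ref{T-SDG}) the set $\mathcal G$ consists of positive continuous functions $\phi_V$, so $\mathcal G=\mathcal G^*$ and your left-/right-ideal extension goes through cleanly; under that standing hypothesis your argument is correct. It is also a genuinely different route from the paper's: the paper passes to $A''$, builds matrix units $v_i=pu_i$ from the open support projection $p$ of $B$, and identifies $C$ inside $\tilde D\otimes\mathrm{M}_n$, whereas you write down the explicit $*$-homomorphism $\Phi\colon\mathrm{M}_n(B)\to A$ and verify bijectivity onto $D$ by elementary means (the injectivity via $\ker\Phi=\mathrm{M}_n(J)$ and $\Phi|_{E_{11}\otimes B}=\mathrm{id}_B$ is particularly clean). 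Your approach avoids the enveloping von Neumann algebra entirely and makes the map in the ``moreover'' clause transparent, at the cost of the approximate-unit bookkeeping for surjectivity.
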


\begin{proof}
Denote by $C=\mathrm{C^*}\{u^*_ia: i=1, 2, ..., n,\ a\in\mathcal G\}\subseteq A$,
and consider 
$$D:=\{u^*_1au_1 + u^*_2au_2+ \cdots + u^*_nau_n: a\in  \mathrm{C^*}(\mathcal G)\}.$$ Note that $D$ is isomorphic to $\mathrm{C^*}(\mathcal G)$ by \eqref{level-rel}.

Embed $A$ into the enveloping von Neumann algebra $A''$. Let $a$ be a strictly positive element of $\textrm{C*}(\mathcal G)$ with norm $1$, and let $p$ denote the w*-limit of $a^{\frac{1}{n}}$, $n=1, 2, ...$, in $A''$. Then $p$ is an (open) projection with $pa=a$, $a\in A$.

Consider $$v_i:=pu_i \in A'', \quad i=1, 2, ..., n.$$ Then, it follows from \eqref{level-rel} that $v_iv^*_j = \delta_{i, j}p$; that is, the elements $\{v_1, v_2, ..., v_n\}$ form a system matrix unit, and thus the C*-algebra generated by $\{v_1, v_2, ..., v_n\}$ is isomorphic to $\mathrm{M}_n(\Comp)$. 

For any $a\in \textrm{C*}(\mathcal G)$, by \eqref{level-rel}, one has
$$u^*_ip(u^*_1au_1 + \cdots + u^*_nau_n) = u^*_ipau_1^*=u^*_ia = u^*_iap=(u^*_1au_1 + \cdots + u^*_nau_n) u^*_ip,$$
$$(u^*_1pu_1 + \cdots + u^*_npu_n)(u^*_1au_1 + u^*_2au_2+ \cdots + u^*_nau_n) = u^*_1au_1+ \cdots u^*_nau_n,$$
and
$$(u^*_1au_1 + u^*_2au_2+ \cdots + u^*_nau_n) (u^*_1pu_1 + \cdots + u^*_npu_n)= u^*_1au_1+ \cdots u^*_nau_n.$$
That is, $D$ and $\{v_1, v_2, ..., v_n\}$ commute, and $u^*_1pu_1 + \cdots + u^*_npu_n$, which equals $v^*_1v_1+\cdots+v^*_nv_n$, acts on $D$ as the unit. Therefore the C*-algebra generated by $D$ and $\{v_1, v_2, ..., v_n\}$ is unital and is isomorphic to the tensor product $\tilde{D}\otimes\mathrm{M}_n(\Comp) \cong\mathrm{M}_n(\tilde{D})$, where $\tilde{D}$ is the unitization of $D$.

Since 
$$(pu_i^*) (u^*_1au_1 + u^*_2au_2+ \cdots + u^*_nau_n) = au_i,\quad a\in\mathcal G,\ i=1, 2, ..., n,$$ the C*-algebra  $C$ is a sub-C*-algebra of $\tilde{D}\otimes\mathrm{M}_n(\Comp)$ generated by products of $1_{\tilde{D}} \otimes \mathrm{M}_n(\Comp)$ and $D\otimes 1_{\mathrm{M}_n(\Comp)}$, which is exactly the sub-C*-algebra $D \otimes \mathrm{M}_n(\Comp)\cong\mathrm{M}_n(D)$.
\end{proof}

\begin{proof}[Proof of Theorem \ref{T-SDG}]
It is enough to show the theorem for $m=1$, i.e., $f_1, ..., f_n\in A$. For the case $m\neq 1$, write $f_i = (f^{(i)}_{j, k})$ where $f^{(i)}_{j, k} \in A$, and then apply the theorem with $\{f^{(i)}_{j, k}: 1\leq i\leq n, 1\leq j, k \leq m\}$ and $\frac{\delta}{m^2}$ in place of $\{f_1, f_2, ..., f_n\}$ and $\delta$ respectively, together with the given function $h$ and the given closed set $F$. 

Without loss of generality, one may assume 
$$f_i=\sum_{\gamma \in\mathcal N} f_{i, \gamma}u_\gamma$$
for some finite set $\mathcal N \subseteq \Gamma$ with $e\in \mathcal N = \mathcal N^{-1}$, and some $f_{i, \gamma} \in \mathrm{C}(X)$. Denote by 
$$M=\max\{1, \norm{f_{i, \gamma}}: i=1, ..., n, \gamma\in\mathcal N\}.$$

Since $X$ is compact, there is an open cover $\mathcal O$ such that
\begin{equation}\label{large-O}
\abs{f_{i, \gamma}(x) - f_{i, \gamma}(y)} < \frac{\delta}{2{\abs{\mathcal N}}},\quad x, y \in U\in \mathcal O,\ i=1, ..., n,\ \gamma \in \mathcal N,
\end{equation} 
and
$$\abs{h(x) - h(y)} < \frac{\delta}{2{\abs{\mathcal N}}},\quad x, y \in U\in \mathcal O. $$

Pick a natural number 
\begin{equation}\label{large-L}
L > \frac{8M{\abs{\mathcal N}}}{\delta},
\end{equation} 
and pick a finite set $K\subseteq\Gamma$ and $\eps>0$ so that if a finite set $\Gamma_0\subseteq\Gamma$ is $(K, \eps)$-invariant, then 
 $$\frac{1}{\abs{\Gamma_0}} D( \bigvee_{\gamma\in\Gamma_0}\mathcal O \gamma) < \mathrm{mdim}(X, \Gamma) + \delta.$$ 
 
For any $x\in X$, denote by  $\delta_x$ the Dirac measure concentrated at $x$; and for any finite set $\Gamma_0\subseteq\Gamma$, denote by $\delta_{x, \Gamma_0} = \frac{1}{\abs{\Gamma_0}}\sum_{\gamma\in\Gamma_0}\delta_{x\gamma}$.

 One may then again assume $K$ is sufficiently large and $\eps$ is sufficiently small so that if a finite set $\Gamma_0\subseteq\Gamma$ is $(K, \eps)$-invariant, then
 \begin{equation}\label{average-lbd}
\delta_{x, \Gamma_0}(F) \geq  \min\{\mu(F): \mu\in \mathcal M_1(X, \Gamma)\}-\frac{\delta}{2},\quad x\in X,
\end{equation}
and
\begin{equation}\label{very-small-bd}
\frac{\abs{\Gamma_0 \setminus \mathrm{int}_{\mathcal N^{L+1}} (\Gamma_0)} }{\abs{\Gamma_0}} < \frac{\delta}{2}.
\end{equation}


%

Since $(X, \Gamma)$ has the (URP), there exist closed sets $B_1, B_2, ..., B_S \subset X$ and $(K, \eps)$-invariant sets $\Gamma_1, \Gamma_2, ..., \Gamma_S \subseteq \Gamma$ such that
$$B_s\gamma,\quad \gamma\in \Gamma_s,\ s=1, ..., S, $$
are mutually disjoint and
\begin{equation}\label{small-lo}
\mathrm{ocap}(X\setminus\bigsqcup_{s=1}^S\bigsqcup_{\gamma\in \Gamma_s} B_s\gamma) < \frac{\delta}{2}.
\end{equation}

For each $\Gamma_s$, $s=1, 2, ..., S$,  pick $\mathcal U_s\preceq  \bigvee_{\gamma\in\Gamma_s}\mathcal O \gamma$ such that
$$\mathrm{ord}(\mathcal U_s) =  D( \bigvee_{\gamma\in\Gamma_s}\mathcal O \gamma).$$


For each $B_s$, $s=1, 2, ..., S$, choose open sets $U^s_3 \supseteq U^s_2 \supseteq U^s_1 \supseteq B_s$ such that $U^s_3 \supseteq \overline{U^s_2}$, $U^s_2 \supseteq \overline{U^s_1}$, and
$$U^s_3 \gamma,\quad \gamma \in \Gamma_s, \ s=1, 2, ..., S,$$
are disjoint.

Consider $$\mathcal U_s \cap U^s_2:=\{U\cap U^s_2: U\in\mathcal U_s\}.$$ 
It is an open cover of $B_s$ with order at most $\mathrm{ord}(\mathcal U_s)$. Also consider $$\mathcal U_s \cap (U^s_3\setminus \overline{U^s_1}):=  \{U \cap (U^s_3\setminus \overline{U^s_1}): U\in\mathcal U_s\}.$$ Then their union
$$(\mathcal U \cap U^s_2) \cup (\mathcal U \cap (U^s_3\setminus \overline{U^s_1}) )= \{U\cap U^s_2, U \cap (U^s_3\setminus \overline{U^s_1}): U\in\mathcal U_s\}$$ forms an open cover of $U^s_3$, and denote this open cover by $\mathcal V_s$.

Pick an open cover $\mathcal W$ of the closed set $X\setminus\bigsqcup_{s=1}^S\bigsqcup_{\gamma\in \Gamma_s}U^s_3\gamma$ such that
\begin{equation}\label{large-W}
\abs{f_{i, \gamma}(x)-f_{i, \gamma}(y)} < \frac{\delta}{2{\abs{\mathcal N}}}, \quad x, y \in W\in \mathcal W,\ i=1, ..., n, \gamma\in\mathcal N,
\end{equation}
$$\abs{h(x)-h(y)} < \frac{\delta}{2{\abs{\mathcal N}}}, \quad x, y \in W\in \mathcal W,$$
and
$$W \cap U^s_2\gamma = \varnothing,\quad W\in \mathcal W,\ \gamma\in\Gamma_s,\ s=1, 2, ..., S.$$ Then
$$ \mathcal X = \mathcal W \cup \{V\gamma: V\in\mathcal V_s,\ \gamma\in\Gamma_s,\ s=1, ..., S,\} $$
forms an open cover of $X$.

Pick an open set ${U^{s}_3}'\subseteq U^s_3$ such that $\overline{{U^s_3}'} \subseteq U_3$ and $$(\bigcup_{s=1}^S\bigcup_{\gamma\in\Gamma_s}{U^s_3}'\gamma)\cup(\bigcup_{W\in\mathcal W} W) = X.$$
By Lemma \ref{semi-unity}, there are functions $\phi_V: X\to [0, 1]$, $V\in\mathcal V_s$, satisfying
\begin{enumerate}
\item $\mathrm{supp}(\phi_V)\subseteq V$, $V \in \mathcal V_s$,
\item $\sum_{V\in\mathcal V_s} \phi_V(x) = 1$, $x\in {U^s_3}'$.
\end{enumerate}
Translate these functions to $\mathcal V_s\gamma$, $\gamma\in\Gamma_s$, 
and by Lemma \ref{partition-ext}, there is a partition of unity subordinate to $\mathcal X=\mathcal W \cup \{V\gamma: V\in\mathcal V_s,\ \gamma\in\Gamma_s,\ s=1, ..., S,\}$
in the form of
$$\{\psi_W: W \in\mathcal W\} \cup \{\gamma(\phi_V): V\in \mathcal V_s, \gamma\in \Gamma_s,\ s=1, ..., S \}.$$

Consider the sub-C*-algebra
\begin{equation}\label{defn-C}
C:=\mathrm{C}^*\{ u^*_\gamma \phi_V: V\in\mathcal V_s, \gamma\in \Gamma_s, s=1, 2, ..., S\}\subseteq \mathrm{C}(X) \rtimes \Gamma.
\end{equation} 
A straightforward calculation shows that if $\gamma_1\neq\gamma_2$, then, for any $V_1, V_2\in\mathcal V_{s}$, 
$$ (\phi_{V_1}u_{\gamma_1})(u_{\gamma_2}^*\phi_{V_2})  = u_{\gamma_1}((\phi_{V_1}\circ \gamma_1^{-1})(\phi_{V_2}\circ \gamma_2^{-1})) u_{\gamma_2}^*  = 0.$$
Hence, by Lemma \ref{tower-alg}, $C$ is isomorphic to $$\bigoplus_{s=1}^S\mathrm{M}_{\abs{\Gamma_s}}(\textrm{C*}\{\phi_V: V \in\mathcal V_s\})$$
and
the set of diagonal elements of $C$ under this isomorphism consists of
\begin{equation}\label{diag-form}
\sum_{s=1}^S\sum_{\gamma\in\Gamma_s, V\in\mathcal V_s} u^*_\gamma\phi_Vu_\gamma = \sum_{s=1}^S\sum_{\gamma\in\Gamma_s, V\in\mathcal V_s} \phi_V\circ \gamma^{-1} \in\mathrm{C}(X).
\end{equation}


For each $s=1, 2, ..., S$, consider the set
$$Z_s:=(\prod_{V\in \mathcal V_s} \phi_V)(X) \subseteq \Real^{\abs{\mathcal V_s}} $$ and
\begin{equation}\label{df-Z}
[Z_s]:= (\prod_{V\in \mathcal V_s} \phi_V)(B_s )\subseteq Z_s.
\end{equation}
Note that $$\mathrm{dim}([Z_s]) \leq \mathrm{ord}(\mathcal U_s) = D( \bigvee_{\gamma\in\Gamma_s}\mathcal O \gamma),$$
and by Lemma 4.3 of \cite{EN-MD0}, 
$$\mathrm{C}^*\{ \phi_V: V\in\mathcal V_s\} \cong \mathrm{C}_0(Z_s).$$
In particular,
$$\frac{\mathrm{dim}([Z_s])}{\abs{\Gamma_s}} \leq \frac{1}{\abs{\Gamma_s}} D( \bigvee_{\gamma\in\Gamma_s}\mathcal O \gamma)\leq \mathrm{mdim}(X, \Gamma) + \delta,$$
and this verifies Property \eqref{R-cut-4}.

Now, let $f\in C^+$ be a diagonal element satisfying $f|_{[Z_s]} = 1_{K_s}$, $s=1, ..., S$, with respect to the isomorphism above. Then, by \eqref{diag-form} and \eqref{df-Z}, one has that, as an element of $\mathrm{C}(X)$, $$f|_{B_s\gamma} =1,\quad s=1, ..., S,\ \gamma\in\Gamma_s,$$
and by \eqref{small-lo}, 
$$\mu(X\setminus f^{-1}(1)) \leq \mu(X\setminus \bigsqcup_{s=1}^S\bigsqcup_{\gamma\in\Gamma_s} B_s\gamma)  < \delta,\quad \mu\in\mathcal M_1(X, \Gamma).$$
This, together with \eqref{diag-form}, verifies Property \eqref{R-cut-8}.

For each $s=1, 2, ..., S$, define 
$$\chi_s = \sum_{V\in\mathcal U_s\cap U^s_2} \phi_V\in\mathrm{C}(X) \cap C.$$ It is clear that $0\leq \chi_s(x) \leq 1$, $x\in X$,
\begin{equation}\label{chi-cond}
\left\{
\begin{array}{ll}
\chi_s(x) = 1, & x\in B_s; \\
\chi_s(x) = 0, & x\notin U^s_2.
\end{array}
\right.
\end{equation}

For each $\Gamma_s$, $s=1, 2, ..., S$, define the subsets (see Definition \ref{defn-amenable} for the notation $\mathrm{int}_K(F)$)
$$
\left\{
\begin{array}{lll}
\Gamma_{s, L+1} & = & \mathrm{int}_{\mathcal N^{L+1}} (\Gamma_s), \\
\Gamma_{s, L} & = & \mathrm{int}_{\mathcal N^{L}} (\Gamma_s) \setminus \mathrm{int}_{\mathcal N^{L+1}} (\Gamma_s), \\
\Gamma_{s, L-1} & = & \mathrm{int}_{\mathcal N^{L-1}} (\Gamma_s) \setminus \mathrm{int}_{\mathcal N^{L}} (\Gamma_s), \\
\vdots & \vdots & \vdots \\
\Gamma_{s, 0} & = & \Gamma_s \setminus \mathrm{int}_{\mathcal N} (\Gamma_s).
\end{array}
\right.
$$
%
Then, for any $\gamma \in\mathcal N$, one has 
\begin{equation}\label{action-nbhd}
\Gamma_{s, l} \gamma \subseteq  \Gamma_{s, l-1} \cup \Gamma_{s, l} \cup \Gamma_{s, l+1},\quad 1\leq l \leq L.  \end{equation}
Indeed,  pick an arbitrary $\gamma'\in \Gamma_{s, l}$. By the construction, one has 
\begin{equation}\label{eq-contain} 
\gamma' \mathcal N^{l} \subseteq \Gamma_s\quad\mathrm{but}\quad \gamma' \mathcal N^{l+1} \nsubseteq \Gamma_s.
\end{equation} 
Therefore $$ \gamma' \gamma \mathcal N^{l-1}  \subseteq \gamma' \mathcal N^l \subseteq \Gamma_s$$ and hence $\gamma'\gamma \in \mathrm{int}_{\mathcal N^{l-1}}\Gamma_s$ (since $e\in\mathcal N^{l-1}$). 

Thus, to show \eqref{action-nbhd}, one only has to show that $\gamma'\gamma \notin \mathrm{int}_{\mathcal N^{l+2}}\Gamma_s$. Suppose $\gamma'\gamma \mathcal N^{l+2} \subseteq \Gamma_s$. Since $\mathcal N$ is symmetric, one has $\gamma^{-1}\in \mathcal N$; hence $\mathcal N^{l+1} \subseteq \gamma \mathcal N^{l+2}$ and 
$$\gamma' \mathcal N^{l+1} \subseteq \gamma'\gamma \mathcal N^{l+2} \subseteq \Gamma_s,$$ which contradicts \eqref{eq-contain}.

Also note that for each $\gamma\in\mathcal N$,
\begin{equation}\label{action-nbhd-L-end}
\Gamma_{s, L+1}\gamma \subseteq  \Gamma_{s, L+1} \cup \Gamma_{s, L}
\end{equation}

For each $\gamma \in \Gamma_s$, define 
$$\ell(\gamma) = l,\quad\textrm{if $\gamma \in \Gamma_{s, l}$}.$$ By \eqref{action-nbhd} and \eqref{action-nbhd-L-end}, the function $\ell$ satisfies 
\begin{equation}\label{action-nbhd-fn}
\abs{\ell(\gamma\gamma') - \ell(\gamma)} \leq 1,\quad \gamma'\in \mathcal N,\ \gamma\in \Gamma_{s, 1}\cup\cdots\cup\Gamma_{s, L+1}.
\end{equation}

Define
\begin{eqnarray*}
p & = & \sum_{s=1}^S\sum_{\gamma\in \Gamma_{s}\setminus\Gamma_{s, 0}} \frac{\ell(\gamma)-1}{L}\chi_s \circ \gamma^{-1} \\
& =  &\sum_{s=1}^S\sum_{l=1}^{L+1} \sum_{\gamma\in \Gamma_{s, l}} \frac{l-1}{L} \chi_s \circ \gamma^{-1}  \\
& = & \sum_{s=1}^S \sum_{l=1}^{L+1} \sum_{\gamma\in \Gamma_{s, l}} \frac{l-1}{L} u^*_\gamma\chi_s u_\gamma \in\mathrm{C}(X)\cap C.
\end{eqnarray*}
Then, by \eqref{chi-cond} (and \eqref{very-small-bd}), 
\begin{eqnarray*}
&& \mathrm{ocap}(X \setminus p^{-1}(1)) \\
& \leq & \max\{
\frac{\abs{\Gamma_s \setminus \mathrm{int}_{\mathcal N^{L+1}} (\Gamma_s)} }{\abs{\Gamma_s}} : s=1, ..., S\}   + \mathrm{ocap}(X\setminus\bigsqcup_{s=1}^S\bigsqcup_{\gamma\in \Gamma_s} B_s\gamma)\\
&  \leq & \frac{\delta}{2} + \frac{\delta}{2} < \delta,
\end{eqnarray*}
and this proves Property \eqref{R-cut-5}.

It follows from Lemma \ref{tower-alg} that under the isomorphism $C\cong \bigoplus_{s=1}^S\mathrm{M}_{\abs{\Gamma_s}}(\textrm{C*}\{\phi_V: V \in\mathcal V_s\}),$ one has $$p=\bigoplus_{s=1}^S\mathrm{diag}\{0, \frac{\ell(\gamma)-1}{L}\chi_s\circ \gamma^{-1}: \gamma\in \Gamma_{s}\setminus\Gamma_{s, 0} \}.$$ Denote the diagonal functions by $p_{s, i}$, $s=1, ..., S$, $i=1, ..., K_s$.

Note that $p|_{B_s\gamma} = 1$, $\gamma \in \Gamma_{s, L+1}$. Therefore, for any $z = \prod_{V\in\mathcal V_s}\phi_V(x)\in [Z_s]$, $s=1, ..., S$, where $x\in B_s$, one has
\begin{eqnarray*}
\frac{1}{K_s}\abs{\{1\leq i\leq K_s: p_{s, i}(z) = 1 \}} & = & \frac{1}{\abs{\Gamma_s}}\abs{\{\gamma\in\Gamma_s: p(x\gamma) = 1\}} \\
& \geq & 1- \max\{\frac{\abs{\Gamma_s \setminus \mathrm{int}_{\mathcal N^{L+1}} (\Gamma_s)} }{\abs{\Gamma_s}} : s=1, ..., S\} \\
& > & 1-\delta.
\end{eqnarray*}
This verifies Property \eqref{R-cut-7}.

%

Note that, by the construction of $C$ (see \eqref{defn-C}), $$\chi_s^{\frac{1}{2}} u_\gamma = (\sum_{V\in \mathcal U_s\cap U^s_2}\phi_V)^{\frac{1}{2}} u_\gamma \in C,\quad \gamma \in\Gamma_s.$$
Also note that, for each $\gamma'\in \mathcal N$ and each $\gamma\in \Gamma_{s, l}$, $l=1, 2, ..., L+1$, one has that $\gamma\gamma' \in \Gamma_s$. Hence
$$ pu_{\gamma'} = \sum_{s=1}^S \sum_{l=1}^{L+1} \sum_{\gamma\in \Gamma_{s, l}} \frac{l-1}{L} u^*_\gamma\chi_s u_{\gamma\gamma'}  = \sum_{s=1}^S \sum_{l=1}^{L+1} \sum_{\gamma\in \Gamma_{s, l}} \frac{l-1}{L} (u^*_\gamma\chi^{\frac{1}{2}}_s) (\chi^{\frac{1}{2}}_s u_{\gamma\gamma'})  \in C,$$
and therefore, 
\begin{equation}\label{pupinC}
pu_\gamma p\in C,\quad \gamma \in \mathcal N.
\end{equation}
Also note that, for each $\gamma' \in\mathcal N$, by \eqref{action-nbhd-fn} and \eqref{large-L},
\begin{eqnarray*}
\norm{u^*_{\gamma'}pu_{\gamma'} - p}  & = & \norm{ \sum_{s=1}^S\sum_{l=1}^{L+1} \sum_{\gamma\in \Gamma_{s, l}} \frac{l-1}{L}\chi_s\circ (\gamma\gamma')^{-1}  -  \sum_{s=1}^S\sum_{l=1}^{L+1} \sum_{\gamma\in \Gamma_{s, l}} \frac{l-1}{L} \chi_s\circ \gamma^{-1} } \\
& = & \max\{ \abs{ \frac{\ell(\gamma\gamma')-1}{L} - \frac{\ell(\gamma)-1}{L}}: \gamma\in\Gamma_s\setminus\Gamma_{s, 0},\ s=1, 2, ..., S \}\\
& < & \frac{1}{L}  <  \frac{\delta}{8M\abs{\mathcal N}},
\end{eqnarray*}
and hence
\begin{equation}\label{pre-comm-p}
\norm{pf_i - f_i p} < \frac{\delta}{8},\quad i=1, 2, ..., n.
\end{equation}

For each $U\in \mathcal X$, pick a point $x_U \in U$. For each $f_i$, define
$$f'_i= \sum_{\gamma\in\mathcal N} (\sum_{U\in\mathcal X} f_{i, \gamma}(x_U)\phi_U) u_\gamma, $$
and
$$h'= \sum_{U\in\mathcal X} h(x_U)\phi_U. $$
Then, by \eqref{large-O} and \eqref{large-W},
\begin{eqnarray}
\norm{f'_i - f_i} & = & \norm{ \sum_{\gamma \in \mathcal N} ( f_{i, \gamma} - (\sum_{U\in\mathcal X} f_{i, \gamma}(x_U)\phi_U) ) u_\gamma} \label{pert-p} \\
& < & \abs{\mathcal N}(\frac{\delta}{2\abs{\mathcal N}}) = \frac{\delta}{2}<\delta, \nonumber
\end{eqnarray}
and the same argument shows that $$\norm{h - h'} < \delta.$$ This verified Properties \eqref{R-cut-1}.

Moreover, note that for any $U\in\mathcal X$, either (in the case that $U\in\mathcal W$) $$U \cap (\bigcup_{s=1}^S\bigcup_{\gamma\in\Gamma_s}{U_2^s}\gamma) = \varnothing$$
or
$U$ is contained inside some $U_3^s\gamma$. Since $$\phi_W(x) = 0,\quad \textrm{if $x \in \bigcup_{s=1}^S\bigcup_{\gamma\in\Gamma_s}{U_3^s}'\gamma \supseteq \bigcup_{s=1}^S\bigcup_{\gamma\in\Gamma_s}{U_2^s}\gamma$} $$ for any $W\in \mathcal W$, and since 
$$p(x) = 0,\quad x\notin \bigcup_{s=1}^S\bigcup_{\gamma\in\Gamma_s}U_2^s\gamma,$$
one has
\begin{eqnarray*}
pf'_ip & =  & p  \sum_{\gamma'\in\mathcal N} (\sum_{U\in\mathcal X} f_{i, \gamma'}(x_U)\phi_U) u_{\gamma'}  p  =   \sum_{\gamma'\in\mathcal N} (\sum_{U\in\mathcal X} f_{i, \gamma'}(x_U)\phi_U) p u_{\gamma'}  p \\
& = &  \sum_{\gamma'\in\mathcal N} \sum_{s=1}^S\sum_{\gamma\in\Gamma_s}\sum_{U\subseteq U^s_3\gamma, U \in \mathcal X } f_{i, \gamma'}(x_U) \phi_U p u_{\gamma'}  p \\
& = &  \sum_{\gamma'\in\mathcal N} \sum_{s=1}^S\sum_{\gamma\in\Gamma_s}\sum_{U \in \mathcal V_s \gamma } f_{i, \gamma'}(x_U) \phi_U p u_{\gamma'}  p \\
& = &  \sum_{\gamma'\in\mathcal N} \sum_{s=1}^S\sum_{\gamma\in\Gamma_s}\sum_{U \in \mathcal V_s } f_{i, \gamma'}(x_{U\gamma}) (\phi_U\circ \gamma^{-1}) p u_{\gamma'}  p \in C,
\end{eqnarray*}
since $\phi_U\circ \gamma^{-1} = u^*_\gamma \phi_U u_\gamma \in C$, $\gamma\in\Gamma_s$, $U\in\mathcal V_s$ and $p u_{\gamma'} p\in C$, $\gamma'\in\mathcal N$ (see \eqref{pupinC}). The same argument shows that $ph'p\in C$.
This verifies Properties \eqref{R-cut-3}. By \eqref{pre-comm-p} and \eqref{pert-p}, Property \eqref{R-cut-2} follows.

For Property \eqref{R-cut-6}, pick an arbitrary $x\in B_s$. By \eqref{average-lbd}, 
\begin{eqnarray}\label{average-lbd-2}
 \frac{1}{\abs{\Gamma_s}}\abs{\{\gamma \in \Gamma_s: h(x\gamma) \geq \frac{3}{4} \}} &\geq & \frac{1}{\abs{\Gamma_s}}\abs{\{\gamma \in \Gamma_s: x\gamma \in F \}}  \nonumber \\
 & \geq &  \min\{\mu(F): \mu\in \mathcal M_1(X, \Gamma)\}-\frac{\delta}{2}.
 \end{eqnarray}
Since $\norm{h-h'} < \delta<\frac{1}{2}$, and $p|_{B_s\gamma} = 1$, $\gamma \in \Gamma_{s, L+1}$,
one has that 
\begin{equation*}
(ph'p)(x\gamma) > \frac{1}{4},\quad \textrm{if $h(x\gamma) \geq \frac{3}{4}$, $\gamma\in \Gamma_{s, L+1}$};
\end{equation*}
and hence 
\begin{equation*}
(ph'p-\frac{1}{4})_+(x\gamma) > 0,\quad \textrm{if $h(x\gamma) \geq \frac{3}{4}$, $\gamma \in \Gamma_{s, L+1}$}.
\end{equation*}

Thus, regarding $(ph'p - \frac{1}{4})_+$ as an element of $C\cong \bigoplus_{s=1}^S\mathrm{M}_{\abs{\Gamma_s}}(\mathrm{C}_0(Z_s))$,  one has (by \eqref{average-lbd-2}) that, for any $z \in [Z_s]$ (with $z=\prod_{V\in\mathcal V_s} \phi_V(x)$ for some $x\in B_s$), 
\begin{eqnarray*}
\frac{1}{\abs{\Gamma_s}}\mathrm{rank}(ph'p - \frac{1}{4})_+(z) & = & \frac{1}{\abs{\Gamma_s}} \abs{\{\gamma\in \Gamma_s: (ph'p - \frac{1}{4})_+(x\gamma) > 0 \}} \\
&\geq & \frac{1}{\abs{\Gamma_s}} \abs{\{\gamma\in \Gamma_{s, L+1}: h(x\gamma) \geq \frac{3}{4} \}}\\
&\geq & \frac{1}{\abs{\Gamma_s}} \abs{\{\gamma\in \Gamma_{s}: h(x\gamma) \geq \frac{3}{4} \}} - \frac{\abs{\Gamma_s\setminus\Gamma_{s, L+1}}}{\abs{\Gamma_s}}\\
& \geq & \min\{\mu(F): \mu\in \mathcal M_1(X, \Gamma)\}-\delta, 
\end{eqnarray*}
as desired.
\end{proof}

\section{Comparison of open sets, comparison radius, and mean topological dimension}

\begin{defn}\label{definition-comp}
Consider a topological dynamical system $(X, \Gamma)$, where $X$ is a compact Hausdorff space and $\Gamma$ is a discrete amenable group. It is said to have $(\lambda, m)$-Cuntz-comparison on open sets, where $\lambda\in (0, 1]$ and $m\in \mathbb N$, if for any open sets $E, F\subseteq X$ with $$ \mu(E) < \lambda \mu(F),\quad \mu \in\mathcal M_1(X,  \Gamma),$$ 
then
$$\varphi_E \precsim \underbrace{\varphi_F\oplus\cdots \oplus \varphi_F}_m\quad\mathrm{in}\ \mathrm{C}(X)\rtimes\Gamma.$$

The dynamical system $(X, \Gamma)$ is said to have Cuntz comparison on Open Sets (COS) if it has $(\lambda, m)$-Cuntz-comparison on open sets for some $\lambda$ and $m$.
\end{defn}

Let $(X, \Gamma)$ be a minimal free dynamical system. Assume that $(X, \Gamma)$ has the (URP) and (COS). 
The main result of this section is that the comparison radius of $\mathrm{C}(X)\rtimes\Gamma$ is at most $\frac{1}{2}\mathrm{mdim}(X, \Gamma)$ (see Theorem \ref{main-thm}). 

First, one needs some preparations. Let $a\in A^+$, where $A$ is a C*-algebra, and let $\eps>0$. Recall that (Definition \ref{defn-p-cut})
$$(a - \eps)_+ = f(a) \in A,$$ where $f(t) = \max\{t-\eps, 0\}$. Note 
$$((a-\eps_1)_+-\eps_2)_+ = (a-\eps_1-\eps_2)_+,$$
and also note that, by Proposition 2.2 of \cite{RorUHF2}, if $\|a-b\| < \eps$, then $(a-\eps)_+ \precsim b.$ These facts are used throughout this section.

Let $(X, \Gamma)$ be a free dynamical system, and assume that $\mathbb E: \mathrm{C}(X)\rtimes \Gamma \to\mathrm{C}(X)$ is a conditional expectation, where $\mathrm{C}(X)\rtimes \Gamma$ is a crossed-product C*-algebra. Then 
\begin{equation}\label{f-cond}
\mathbb E(u_\gamma) = 0,\quad \gamma\in\Gamma\setminus\{e\}.
\end{equation} 
Indeed, let $\gamma\in\Gamma\setminus\{e\}$ and consider $\mathbb E(u_\gamma)\in\mathrm{C}(X)$. Note that for all $g\in\mathrm{C}(X)$, since $u_\gamma^*gu_\gamma \in\mathrm{C}(X)$, one has $$g\mathbb E(u_\gamma) = \mathbb{E}(gu_\gamma)  = \mathbb{E}(u_\gamma u_\gamma^*gu_\gamma) = \mathbb{E}(u_\gamma)(u_\gamma^*gu_\gamma).$$ Assume $\mathbb E(u_\gamma) \neq 0$. Then there is $x_0\in X$ such that $\mathbb E(u_\gamma)(x_0) \neq 0$. Since $(X, \Gamma)$ is free, one has $x_0 \neq x_0\gamma^{-1}$. Pick $g\in \mathrm{C}(X)$ such that $g(x_0) = 1$ and $g(x_0\gamma^{-1}) = 0$. Then
$$g(x_0)\mathbb E(u_\gamma)(x_0) = \mathbb E(u_\gamma)(x_0) \neq 0 = \mathbb E(u_\gamma)(x_0) g(x_0\gamma^{-1}) = \mathbb E(u_\gamma)(x_0) (u_\gamma^* gu_\gamma)(x_0),$$ which is a contradiction. 

Then, one has the following lemma, which is similar to Lemma 3.1 of \cite{EN-ERA-AF}.
\begin{lem}\label{small-domain}
Let $(X, \Gamma)$  be a free topological dynamical system, where $\Gamma$ is a discrete group. Let $ \mathrm{C}(X)\rtimes \Gamma$ be a crossed product C*-algebra with a faithful conditional expectation $\mathbb{E}:  \mathrm{C}(X)\rtimes \Gamma \to \mathrm{C}(X)$ 
(the faithful conditional expectation exists if $\Gamma$ is amenable, see, for instance, Proposition 4.1.9 of \cite{BO-book}). Let $a\in \mathrm{C}(X)\rtimes \Gamma$ be a nonzero positive element. Then, there is a positive nonzero element $h\in\mathrm{C}(X)$ such that $h \precsim a$.
\end{lem}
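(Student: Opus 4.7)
The plan is to reduce the general positive element $a$ to a manageable finite-sum approximation $b = \sum_{\gamma \in F} f_\gamma u_\gamma$, and then cut this down on both sides by a carefully chosen function $g \in \mathrm{C}(X)$ supported near a point where the $\mathrm{C}(X)$-part of $a$ is large; freeness lets us arrange that the off-diagonal contributions to $gbg$ vanish. More precisely, by faithfulness of $\mathbb{E}$ the positive function $\mathbb{E}(a) \in \mathrm{C}(X)$ is nonzero, so pick $x_0 \in X$ with $c := \mathbb{E}(a)(x_0) > 0$. Choose $\eps > 0$ with $\eps < c/2$, and approximate $a$ within $\eps$ by a finite sum $b = \sum_{\gamma \in F} f_\gamma u_\gamma$ with $F = F^{-1} \subseteq \Gamma$ finite and $e \in F$; since $\mathbb{E}$ is a conditional expectation onto $\mathrm{C}(X)$ (so $\mathrm{C}(X)$-bilinear) and $\mathbb{E}(u_\gamma) = 0$ for $\gamma \neq e$ by \eqref{f-cond}, one has $\mathbb{E}(b) = f_e$ and hence $f_e(x_0) > c - \eps > \eps$.

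Next, I use freeness: the points $x_0 \gamma$ for $\gamma \in F \setminus \{e\}$ are all distinct from $x_0$, so by the Hausdorff property there is an open neighborhood $V$ of $x_0$ with $V\gamma \cap V = \varnothing$ for every $\gamma \in F \setminus \{e\}$. Choose $g \in \mathrm{C}(X)$ with $0 \leq g \leq 1$, $g(x_0) = 1$, and $\mathrm{supp}(g) \subseteq V$. Using $u_\gamma g u_\gamma^* = g \circ \gamma$, I compute
$$
gbg = \sum_{\gamma \in F} g f_\gamma (g \circ \gamma) u_\gamma.
$$
For $\gamma \neq e$ the product $g \cdot (g \circ \gamma)$ vanishes since $\mathrm{supp}(g \circ \gamma) \subseteq V\gamma^{-1}$ is disjoint from $V$; hence $gbg = g^2 f_e \in \mathrm{C}(X)^+$.

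To finish, observe that $gag \precsim a$ by the standard identity $gag = (ga^{1/2})(ga^{1/2})^* \sim a^{1/2} g^2 a^{1/2} \precsim \norm{g}^2 a$. Since $\norm{gag - gbg} \leq \norm{a - b} < \eps$, the perturbation fact $\norm{x - y} < \eps \Rightarrow (x - \eps)_+ \precsim y$ (Proposition 2.2 of \cite{RorUHF2}) gives
$$
h := (g^2 f_e - \eps)_+ \;=\; (gbg - \eps)_+ \;\precsim\; gag \;\precsim\; a.
$$
Finally $h \neq 0$ because $h(x_0) = g(x_0)^2 f_e(x_0) - \eps = f_e(x_0) - \eps > 0$ by our choice of $\eps$. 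The only mildly subtle step is the disjointness argument that kills the off-diagonal $u_\gamma$ terms in $gbg$; everything else is a routine application of freeness, faithfulness of $\mathbb{E}$, and standard Cuntz-comparison perturbation estimates.
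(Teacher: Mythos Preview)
Your proof is correct and follows essentially the same approach as the paper's: cut $a$ on both sides by a bump function $g$ (the paper's $p$) supported near a point where $\mathbb{E}(a)$ is large, use freeness to kill the off-diagonal $u_\gamma$ terms, and apply R{\o}rdam's perturbation estimate to pass from the finite-sum approximation back to $a$. The paper normalizes to $\norm{\mathbb{E}(a)}=1$ and phrases the approximation step as a ``without loss of generality'' reduction, whereas you keep the constant $c$ and handle the approximation $a\approx b$ explicitly; these are cosmetic differences. One tiny remark: your claim that $gbg=g^2f_e\in\mathrm{C}(X)^+$ tacitly uses that $b$ can be taken self-adjoint (so $f_e$ is real-valued) --- this is immediate by replacing $b$ with $(b+b^*)/2$, but is worth saying.
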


\begin{proof}
Since $\mathbb E$ is faithful, one may assume that $\norm{\mathbb{E}(a)} = 1$.
One asserts that for arbitrary $\eps>0$, there is $p\in\mathrm{C}(X)$ such that $\norm{p} = 1$ and 
$$pap \approx_\eps p\mathbb{E}(a)p \approx_\eps p^2.$$
Then $h := (p^2-2\eps)_+ \precsim pap \precsim a$ satisfies the statement of the lemma.

Let us prove the assertion. Without loss of generality, one may assume
$$a=\sum_{i=1}^n f_{g_i}u_{g_i},\quad f_{g_i}\in\mathrm{C}(X),\ g_i\in\Gamma.$$ Since $(X, \Gamma)$ is free, by \eqref{f-cond}, one has that $f_e=\mathbb{E}(a)$. Since $\norm{f_e} = 1$, there is a point $x\in X$ such that $f_e(x) = 1$; since the action is free, there is  $p\in\mathrm{C}(X)^+$ such that $\norm{p} = 1$, $pf_e\approx_\eps p$, and 
$$pu_{g_i}p=(p(p\circ\sigma_{g_i}))u_{g_i} = 0,\quad i=1, 2,..., n.$$
Then $p$ is the desired function.
\end{proof}

\begin{lem}\label{div-element}
Let $(X, \Gamma)$ be a dynamical system with the (URP), and assume that $\abs{\Gamma} =\infty$. 

Then, for any $r\in (0, +\infty)$ and any $\eps>0$, there is a positive element $h\in \mathrm{M}_\infty(\mathrm{C}(X))$ such that 
$$|\mathrm{d}_\tau(h) - r | < \eps,\quad \tau\in \mathrm{T}(\mathrm{C}(X)\rtimes\Gamma).$$ Moreover, $h$ can be chosen to have the form $ h= \mathrm{diag}\{h_0, \underbrace{1, ..., 1}_{\lfloor r \rfloor}\}$ for some $h_0\in\mathrm{C}(X)$ together with a closed set $F \subseteq X$ such that $h_0(x) = 1$, $x\in F$, and
$$\min_{\mu} \mu(F)  \geq \{r\}-\eps,$$ where $\mu$ runs through $\mathcal M_1(X, \Gamma)$, $\lfloor r \rfloor$ is the integer part of $r$, and $\{r\}$ is the decimal part of $r$.
\end{lem}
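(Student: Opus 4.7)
The plan is to produce $h$ by selecting an appropriate fraction $\{r\}$ of the levels of a sufficiently invariant Rokhlin tower to form a closed set $F$, defining $h_0$ to be $1$ on $F$ and supported in a slightly larger open neighbourhood, and then taking $h=\mathrm{diag}\{h_0, 1, \dots, 1\}$ with $\lfloor r\rfloor$ copies of $1$. Since the tracial states of $\mathrm{C}(X)\rtimes\Gamma$ (with $\Gamma$ amenable) correspond to invariant measures $\mu\in\mathcal{M}_1(X,\Gamma)$, and the rank function of such a diagonal element factors as $\mathrm{d}_{\tau_\mu}(h)=\mu(h_0^{-1}(0,+\infty))+\lfloor r\rfloor$, it suffices to arrange that $\mu(F)\geq\{r\}-\eps$ and that some open $U\supseteq F$ satisfies $\mu(U)\leq\{r\}+\eps$, uniformly in $\mu$.

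For the construction, first fix a small $\eps\in(0,1)$ and apply the (URP) with parameters $(K,\eps')$ chosen so that $\eps'$ is small compared to $\eps$ and so that every $(K,\eps')$-invariant set automatically has cardinality at least $1/\eps$. The second condition is achievable precisely because $\Gamma$ is infinite and amenable: enlarging $K$ inside a F{\o}lner sequence forces every sufficiently invariant finite set to be large. The (URP) then yields closed sets $B_1,\dots,B_S\subseteq X$ and $(K,\eps')$-invariant sets $\Gamma_1,\dots,\Gamma_S\subseteq\Gamma$ such that the translates $B_s\gamma$ ($\gamma\in\Gamma_s$) are mutually disjoint and their complement has orbit capacity less than $\eps'$. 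For each $s$, I pick an arbitrary subset $\Gamma'_s\subseteq\Gamma_s$ with $|\Gamma'_s|=\lfloor\{r\}|\Gamma_s|\rfloor$, so that $|\Gamma'_s|/|\Gamma_s|\in[\{r\}-\eps,\{r\}]$, and set
\[
F:=\bigsqcup_{s=1}^{S}\bigsqcup_{\gamma\in\Gamma'_s}B_s\gamma.
\]
Using $\mu(B_s\gamma)=\mu(B_s)$ for invariant $\mu$ together with $\sum_s|\Gamma_s|\mu(B_s)\geq 1-\eps'$, a routine estimate yields $\{r\}-O(\eps)\leq\mu(F)\leq\{r\}$ uniformly in $\mu\in\mathcal{M}_1(X,\Gamma)$.

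To finish, I invoke the semicontinuity of orbit capacity (Remark \ref{ocp-prop}(2)) to find an open $U\supseteq F$ with $\mathrm{ocap}(U)<\{r\}+\eps$, hence $\mu(U)<\{r\}+\eps$ for all invariant $\mu$ by Remark \ref{ocp-prop}(1). Urysohn's lemma then supplies $h_0\in\mathrm{C}(X)$ with $0\leq h_0\leq 1$, $h_0|_F=1$, and $\mathrm{supp}(h_0)\subseteq U$; the sandwich $F\subseteq h_0^{-1}(0,+\infty)\subseteq U$ converts the bounds on $\mu(F)$ and $\mu(U)$ into the required bound on $\mathrm{d}_{\tau_\mu}(h)$, and the diagonal form and normalisation conditions follow by construction. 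The main delicate point, and essentially the only place the hypothesis $|\Gamma|=\infty$ enters, is the requirement that every $|\Gamma_s|$ be large enough for the integer rounding in $\lfloor\{r\}|\Gamma_s|\rfloor$ not to degrade the lower bound on $\mu(F)$; once this is secured by the right choice of F{\o}lner parameters in the (URP), the rest of the argument is a bookkeeping calculation.
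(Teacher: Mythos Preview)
Your proof is correct and follows essentially the same approach as the paper: select a fraction $\{r\}$ of the levels of sufficiently invariant Rokhlin towers to form the closed set $F$, enlarge to an open set with controlled measure, and append $\lfloor r\rfloor$ copies of $1$. The only minor difference is that the paper obtains the open set $U$ directly by enlarging each base $B_s$ to an open $U_s$ with the translates $U_s\gamma$ still disjoint (so that $\sum_s|\Gamma_s|\mu(U_s)\leq 1$ gives the upper bound immediately), whereas you appeal to the semicontinuity of orbit capacity in Remark~\ref{ocp-prop}(2); both routes are equally valid.
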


\begin{proof}
First note that since $\abs{\Gamma} = \infty$, it has the property that $\abs{\Gamma_n} \to \infty$ for any F{\o}lner sequence $\Gamma_n$, $n=1, 2, ...$ . Otherwise, if there is a F{\o}lner sequence $(\Gamma_n)$ and $M>0$ such that $\abs{\Gamma_n} < M$, $n=1, 2, ...$, then, for any finite $K\subseteq \Gamma$ with $\abs{K} > M$ and $e\in K$, since $\Gamma_n$ is $(K, \frac{1}{2M})$-invariant for sufficiently large $n$ and $\abs{\Gamma_n} < M$, one has that $\Gamma_n K = \Gamma_n$ for sufficiently large $n$. But $\abs{K} \leq \abs{\Gamma_n K} =\abs{\Gamma_n}<M$, which contradicts to the choice of $K$.

Also note that one only has to prove the lemma for a rational number $r=\frac{m}{n}$ with $m, n\in\mathbb N$ and $\frac{1}{n} < \eps$ (one does not require that $(m, n) = 1$). 

Assume $r\leq 1$. Since $(X, \Gamma)$ has the (URP), there exist closed sets $B_1, B_2, ..., B_S \subseteq X$ and $(K, \eps)$-invariant sets $\Gamma_1, \Gamma_2, ..., \Gamma_S \subseteq\Gamma$ such that
\begin{enumerate}
\item $B_s\gamma$, $\gamma\in\Gamma_s$, $s=1, 2, ..., S$ are disjoint, and
\item $\mathrm{ocap}(X\setminus \bigsqcup_{s=1}^S\bigsqcup_{\gamma\in\Gamma_s}B_s\gamma) <\eps.$
\end{enumerate}
Note that $\Gamma_1$, ...,  $\Gamma_S$ can be chosen so that 
\begin{equation}\label{large-card}
\abs{\Gamma_s} > n^2,\quad s=1, 2, ..., S.
\end{equation}
Pick open sets $U_s \supseteq B_s$, $s=1, 2, ..., S$, so that $$U_s\gamma,\quad \gamma\in\Gamma_s,\  s=1, 2, ..., S$$ are disjoint. By \eqref{large-card},  there are $\Gamma'_s \subseteq \Gamma_s$, $s=1, 2, ..., S$, such that 
\begin{equation}\label{div-set}
\frac{m}{n} -\eps \leq \frac{\abs{\Gamma_s'}}{\abs{\Gamma_s}} \leq \frac{m}{n}.
\end{equation}
Put $$U =\bigsqcup_{s=1}^S \bigsqcup_{\gamma\in \Gamma_s'} U_s\gamma.$$ Then, by \eqref{div-set}, for any $\mu \in \mathcal M_1(X, \Gamma)$,
$$\mu(U) = \sum_{s=1}^S \abs{\Gamma_s'}\mu(U_s) \leq \frac{m}{n} \sum_{s=1}^S \abs{\Gamma_s}\mu(U_s) \leq  \frac{m}{n}$$
and
$$ \mu(U) = \sum_{s=1}^S \abs{\Gamma_s'}\mu(U_s) \geq  (\frac{m}{n}-\eps) \sum_{s=1}^S \abs{\Gamma_s}\mu(U_s) \geq  (\frac{m}{n}-\eps)(1-\eps)  > \frac{m}{n} - 2\eps.$$ Then the function $ h:=\varphi_U$ satisfies the lemma (with $d=1$ and $F_1 = \bigsqcup_{s=1}^S \bigsqcup_{\gamma\in \Gamma_s'} B_s\gamma$).

For general $r$, pick $h_0$ to satisfy the lemma for $\{r\}$; and the element
$$\mathrm{diag}\{h_0, \underbrace{1, ..., 1}_{\lfloor r\rfloor}\}$$
satisfies the lemma.
\end{proof}

\begin{defn}
A positive element $a$ of a C*-algebra $A$ is said to be compact if $\mathrm{sp}(a)\cap(0, \delta) = \varnothing$ for some $\delta >0$.
\end{defn}

Note that if $a$ is a compact element, then $a$ is Cuntz equivalent to the spectral projection $\chi_{(\frac{\delta}{2}, +\infty)}(a)$ where $\delta$ satisfies $\mathrm{sp}(a)\cap(0, \delta) = \varnothing$.

\begin{lem}\label{small-decp}
Let $A$ be a finite C*-algebra, and let $a, b$ be nonzero positive elements of $A$ with  $a\precsim b$. Assume either
\begin{enumerate}
\item $b$ is not compact, or
\item $a$ is not compact, or,
\item $a$ is not Cuntz equivalent to $b$.
 \end{enumerate}
  Then, for any $\eps>0$, there are nonzero positive elements $b_1, b_2$ such that
\begin{enumerate}
\item $b_1 \perp b_2$,
\item $b_1, b_2\in  \overline{bAb}$, and
\item $(a-\eps)_+ \precsim b_2$.
\end{enumerate}
\end{lem}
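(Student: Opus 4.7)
The idea is to construct $b_1,b_2$ inside $\overline{bAb}$ by combining R{\o}rdam's characterization of Cuntz subequivalence (Proposition 2.4(iv) of~\cite{RorUHF2}: from $a\precsim b$ one obtains, for each $\alpha>0$, some $\delta>0$ and $v\in A$ with $v^*(b-\delta)_+v=(a-\alpha)_+$) with functional calculus applied either to $b$ itself (when $b$ is not compact) or to a suitable Cuntz-equivalent of $(a-\alpha_1)_+$ sitting inside $\overline{bAb}$ (when $b$ is compact). Throughout I use the standard facts that $zz^*\sim z^*z$ (same non-zero spectrum) and that $xy=0$ for $x,y\geq 0$ implies $x$ is orthogonal to every element of $\overline{yAy}$.

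First, if $b$ is not compact, apply R{\o}rdam with $\alpha=\eps$ to get $\delta,v$; set $w=(b-\delta)_+^{1/2}v$ and $b_2=ww^*\in\overline{(b-\delta)_+A(b-\delta)_+}\subseteq\overline{bAb}$, which is Cuntz equivalent to $w^*w=(a-\eps)_+$. Since $\mathrm{sp}(b)\cap(0,\delta)\neq\varnothing$, pick $f\in C_0((0,\delta))$ with $f(b)\neq 0$ and put $b_1=f(b)$; then $b_1(b-\delta)_+=0$ gives $b_1\perp b_2$. If instead $b$ is compact, let $p\in C^*(b)\subseteq A$ be the spectral projection at the non-zero part of $\mathrm{sp}(b)$, so $b\sim p$ and $\overline{bAb}=pAp$. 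In the sub-case where $\mathrm{sp}(a)\cap(0,\eps)\neq\varnothing$ (which always holds whenever hypothesis~(2) holds), choose $t_0\in\mathrm{sp}(a)\cap(0,\eps)$ and set $\alpha_1=t_0/2$, $\alpha_2=\eps-\alpha_1$. Applying R{\o}rdam with $\alpha=\alpha_1$ to $(a-\alpha_1)_+\precsim p$ and absorbing the scalar factor arising from $(p-\delta')_+=(1-\delta')p$, one obtains $u\in A$ with $u^*pu=(a-\alpha_1)_+$; then $c:=(pu)(pu)^*\in pAp$ has $\mathrm{sp}(c)\setminus\{0\}=\{t-\alpha_1:t\in\mathrm{sp}(a),t>\alpha_1\}$, which meets $(0,\alpha_2)$ at $t_0-\alpha_1=t_0/2$. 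Picking a continuous $h$ supported and strictly positive on $(0,\alpha_2)$, set $b_1:=h(c)$ and $b_2:=(c-\alpha_2)_+$; the non-zero spectrum of $b_2$ equals $\{t-\eps:t\in\mathrm{sp}(a),t>\eps\}$, so $b_2\sim(a-\eps)_+$, and $b_1\perp b_2$ by disjoint functional calculus.

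The remaining sub-case is $b$ compact together with $\mathrm{sp}(a)\cap(0,\eps)=\varnothing$; then $a$ is itself compact, ruling out hypotheses~(1) and~(2), so we must be in hypothesis~(3), $a\not\sim b$. Let $q_0:=\chi_{[\eps,\infty)}(a)\in C^*(a)\subseteq A$, a projection with $q_0\sim a$. Applying R{\o}rdam to $q_0\precsim p$ and repeating the same construction yields $u\in A$ and $w:=pu$ with $w^*w=q_0$; the identity $w=w(w^*w)=wq_0$ then forces $ww^*$ to be idempotent, i.e.\ a subprojection $q\leq p$ of $pAp$ with $q\sim_{\mathrm{MvN}}q_0$. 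Set $b_2:=q$ and $b_1:=p-q$: these are orthogonal projections in $pAp$ with $(a-\eps)_+\precsim a\sim q=b_2$, and $b_1\neq 0$ follows from the contrapositive: if $q=p$ then $q_0\sim_{\mathrm{MvN}}p$, hence $q_0\sim_{\mathrm{Cuntz}}p$, giving $a\sim q_0\sim p\sim b$, contradicting hypothesis~(3). The main obstacle is this last sub-case, where hypothesis~(3) is actually used and one must combine R{\o}rdam's characterization with the polar-decomposition identity above to turn the Cuntz subequivalence $q_0\precsim p$ into a bona fide Murray--von Neumann subequivalence inside $pAp$; the other cases are essentially spectral-splitting arguments.
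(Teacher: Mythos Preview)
Your three-case strategy (split on whether $b$ is compact; if so, split on whether $\mathrm{sp}(a)$ meets $(0,\eps)$; in the remaining case use Murray--von Neumann subequivalence of projections) matches the paper's approach, and the constructions are correct in outline. There is, however, a gap in the justification of your sub-case~B: you conclude $b_2\sim(a-\eps)_+$ from the fact that they have the same non-zero spectrum, but equal non-zero spectra do \emph{not} imply Cuntz equivalence (consider $\mathrm{diag}(1,0)$ and $\mathrm{diag}(1,1)$ in $M_2(\Comp)$). What actually makes the conclusion correct is the specific form: $b_2=(c-\alpha_2)_+=f(ww^*)$ and $(a-\eps)_+=(w^*w-\alpha_2)_+=f(w^*w)$ with $f(t)=(t-\alpha_2)_+$, and for any $f\in C_0((0,\infty))$ one has $f(ww^*)=zz^*$ and $f(w^*w)=z^*z$ for $z:=w\,g(w^*w)\in A$ with $g(t)=(f(t)/t)^{1/2}$ (here $g$ extends continuously to $0$ since $f$ vanishes on $[0,\alpha_2]$). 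A second, minor point: your $b_2$ vanishes when $\norm{a}\le\eps$; but then $(a-\eps)_+=0$ and one only needs two orthogonal nonzero positive elements of $\overline{bAb}$, which exist because the lemma's hypotheses rule out $\overline{bAb}\cong\Comp$.

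Apart from these fixable points, your organization is arguably cleaner than the paper's. The paper treats ``$a$ not compact, $b$ compact'' by transporting $(a-\eps/2)_+$ into $\overline{bAb}$ and recursing; in the sub-case where $(a-\eps/2)_+$ happens to be compact it must invoke finiteness of $A$ to rule out $(a-\eps/2)_+\sim b$. Your explicit spectral split of $c=ww^*$ handles that situation directly and avoids the appeal to finiteness.
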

\begin{proof}

Assume that $\mathrm{sp}(b)\cap(0, \delta) \neq \varnothing$ for all $\delta >0$ (that is, $b$ is not a compact element). Then, for the arbitrarily given $\eps$, there is $\delta$ such that
$(a-\eps)_+ \precsim (b-\delta)_+$. Then, $b_1:=f_\delta(b)$ and $b_2:=(b-\delta)_+$, where $f_\delta(t)=\max\{0, t(\delta-t)\}$, are desired elements.

Now, assume that $a$ is not compact, and one may also assume that $b$ is a compact element, and without loss of generality, one may assume that $b$ is a nonzero projection. For the arbitrarily given $\eps>0$, there is $a'\in bAb$ such that $(a-\frac{\eps}{2})_+$ is Cuntz equivalent to $a'$ (in particular, $(a-\frac{\eps}{2})_+ \precsim a'$). 

If $(a-\frac{\eps}{2})_+$ is not a compact element, then $a'$ is not a compact element neither. By the argument for the case that $b$ is not a compact element, there are $b_1, b_2$ such that 
\begin{enumerate}
\item $b_1 \perp b_2$,
\item $b_1, b_2\in \overline{a'Aa'}\subseteq \overline{bAb}$, and
\item $((a-\frac{\eps}{2})_+-\frac{\eps}{2})_+ \precsim b_2$.
\end{enumerate}
Since $(a-\eps)_+ \precsim ((a-\frac{\eps}{2})_+-\frac{\eps}{2})_+$, the elements $b_1, b_2$ satisfy the lemma.

If $(a-\frac{\eps}{2})_+$ is a compact element (so $(a-\frac{\eps}{2})_+$ and $a'$ can be assumed to be projections), since $a$ is assumed not to be a compact element, one has that $(a-\frac{\eps}{2})_+ \precsim b$ but not equivalent to $b$, and hence $a'\neq b$ (since $A$ is finite). Hence the elements $b_1:=b-a'$ ($b$ is assumed to be a projection) and $b_2:=a'$ satisfy the lemma. 

For the remaining case that $a$ and $b$ are non-equivalent compact elements, 
one may assume that $a, b$ are projections such that $a< b$ but $a\neq b$. Then $b_1:=b-a$ and $b_2:=a$ satisfy the lemma.
\end{proof}

\begin{lem}\label{AC-cut}
For any $\eps\in(0, 1)$ and any nonzero $x = (x_{i, j}) \in \mathrm{M}_L(A)$ (for some $L\in {\mathbb N}$), where $A$ is a C*-algebra, there is $\delta(\eps, \norm{x}, L)>0$, such that for any $a, b, p\in A^+$ with $\norm{a}, \norm{b}, \norm{p} \leq 1$ satisfying
\begin{enumerate}
\item\label{Cuntz-cond-1} $\norm{a\otimes 1_M-x^*(b\otimes 1_N)x} < \eps$, for some $M, N\in\mathbb N$, 
\item $\norm{px_{i, j}-x_{i, j}p} < \delta$, and
\item $p$, $pap$, $pbp$, $px_{i, j}p$ are in a sub-C*-algebra $C\subseteq A$,
\end{enumerate}
there is $c\in C$ such that
\begin{enumerate} 
\item  $ (pap-102\eps)_+\otimes 1_M  \precsim c\otimes 1_N$ in $C$, and 
\item $c \precsim p(b-\frac{\eps^3}{8\max\{1, \norm{x}^6\}})_+p$ in $A$.
\end{enumerate}
\end{lem}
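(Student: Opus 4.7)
The plan is to transport the approximate Cuntz subequivalence $\norm{a\otimes 1_M - x^*(b\otimes 1_N)x} < \eps$ through the cut by $p$, using hypothesis (2) to push $p$ past the matrix $x$, and then to apply standard R{\o}rdam-type perturbation lemmas to descend into a suitably cut-down copy of $b$. Set $\eta:=\frac{\eps^3}{8\max\{1,\norm{x}^6\}}$ and let $P:=p\otimes 1_L$. The first step is to choose $\delta$ small enough, as a function of $\eps$, $\norm{x}$, and $L$, that conjugating any product of length at most three in $\{x, x^*\}$ by $P$ agrees, in norm, with the element obtained by replacing each $x_{i,j}$ by $px_{i,j}p$, within total error at most, say, $\eps$. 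Since transporting $P$ through such a triple product costs $O(L^3)$ entrywise commutator swaps each of size $\delta$, a polynomial bound $\delta \lesssim \eps/(L^{3}\norm{x}^{6})$ suffices.

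With $\delta$ so chosen, cut the given approximate identity on both sides by $P$. The left-hand side becomes exactly $pap\otimes 1_M$, and $Px^*(b\otimes 1_N)xP$ agrees, up to the controlled error, with
\[
c := (pxp)^*(pbp\otimes 1_N)(pxp).
\]
Every entry of $c$ is a product of elements from $C$ (namely $px_{i,j}p \in C$ and $pbp \in C$), so $c \in M_\infty(C)$. The standard implication $\norm{u-v}<\kappa \Rightarrow (u-\kappa)_+\precsim v$, together with absorbing the accumulated errors into the generous slack $102\eps$, yields $(pap-102\eps)_+\otimes 1_M \precsim c\otimes 1_N$ in $M_\infty(C)$; the extra factor $1_N$ on the right is absorbed because $c$ already carries $b\otimes 1_N$ in its coefficient position.

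For the second inequality, use the Cuntz equivalence
\[
c \;\sim\; (pbp\otimes 1_N)^{1/2}(pxp)(pxp)^*(pbp\otimes 1_N)^{1/2},
\]
which lies in the hereditary subalgebra of $pbp\otimes 1_N \sim pbp$ in $A$, so already $c\precsim pbp$. To refine this to $c\precsim p(b-\eta)_+p$, apply iteratively the standard perturbation inequality $(z^*bz-\alpha)_+\precsim(b-\alpha/\norm{z}^2)_+$ (a variant of Proposition 2.2 of \cite{RorUHF2}): once to pass from $c$ to an expression of the shape $y^*(pbp)y$ with $\norm{y}\le \norm{x}$, and then again with an appropriate inner $p$-conjugation to descend from $pbp$ down to $p(b-\eta)_+p$. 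Each application contributes a factor of $\norm{x}^2$ in the denominator of the remaining cut, which produces the sixth power $\norm{x}^6$ and the cubic $\eps^3$ in the closed-form value of $\eta$.

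The main obstacle I expect is the bookkeeping in the first step: hypothesis (2) is only an entrywise commutator bound, so pushing $P$ through an $L\times L$ matrix $x$ inside a triple product fans out into an $L$-polynomial number of error terms. The choice of $\delta$ must be polynomially small in $\eps$, $1/\norm{x}$, and $1/L$ in order for the accumulated error to be absorbed into the $102\eps$ slack and for the subsequent chain of R{\o}rdam-type cuts to land exactly at $\eta$. Once this polynomial bookkeeping is fixed, the remaining manipulations are essentially routine Cuntz-semigroup calculations.
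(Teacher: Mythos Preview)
Your proposal has a genuine gap in the very first step. You claim that
\[
P x^*(b\otimes 1_N)xP \;\approx\; (PxP)^*(pbp\otimes 1_N)(PxP),
\]
but this is false: commuting $P$ past $x$ \emph{moves} $P$, it does not duplicate it. Expanding the right-hand side gives $P x^*\bigl(p^2bp^2\otimes 1_N\bigr)xP$, and $\norm{b-p^2bp^2}$ is in general of order $1$ (take $p$ small on a large part of the spectrum of $b$). So your $c=(PxP)^*(pbp\otimes 1_N)(PxP)$ is \emph{not} close to $pap\otimes 1_M$, and the first Cuntz subequivalence does not follow. Conversely, if you only push $P$ inward once to get $x^*(pbp\otimes 1_N)x$, you recover the right norm estimate but lose membership in $\mathrm{M}_L(C)$, since the bare entries $x_{i,j}$ are not assumed to lie in $C$.

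There is a second structural issue: the statement requires $c\in C$, not $c\in\mathrm{M}_L(C)$, and then asks for $c\precsim p(b-\eta)_+p$ in $A$. Your $c$ is a matrix over $C$, and even if you pass up to $pbp\otimes 1_N$ (which does lie in $\mathrm{M}_\infty(C)$), the resulting candidate $c=pbp$ fails the second conclusion, since $pbp\precsim p(b-\eta)_+p$ is false in general. The ``iterated R{\o}rdam inequality'' you invoke would at best give $(c-\alpha)_+\precsim(\cdot)$, not $c\precsim(\cdot)$.

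The paper resolves both problems with a single functional-calculus trick. It sets $\eps_1=\tfrac{\eps}{4\norm{x}^2}$, chooses a bump function $h_{\eps_1}$ supported above $\eps_1/2$ and equal to $1$ above $\eps_1$, and defines
\[
z \;=\; \bigl((p-\eps_1)_+\,\bar h_{\eps_1}(p)\otimes 1_L\bigr)\,x\,\bigl(h_{\eps_1}^{1/2}(p)\otimes 1_L\bigr),
\qquad \bar h_{\eps_1}(t):=h_{\eps_1}^{1/2}(t)/t.
\]
Because both $\bar h_{\eps_1}$ and $h_{\eps_1}^{1/2}$ vanish at $0$, every entry $\bar h_{\eps_1}(p)\,x_{i,j}\,h_{\eps_1}^{1/2}(p)$ is a limit of polynomials in $p$ and $px_{i,j}p$, hence lies in $C$; so $z\in\mathrm{M}_L(C)$. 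The factor $\bar h_{\eps_1}(p)=h_{\eps_1}^{1/2}(p)/p$ is precisely what converts $h_{\eps_1}^{1/2}(p)(p-\eps_1)_+$ into $p\cdot(\text{something})$, so that after the chain of approximations one lands on $z^*\bigl(p(b-\eps_0)_+p\otimes 1_N\bigr)z$ with $\eps_0=\tfrac{\eps^3}{2\max\{1,\norm{x}^6\}}$. The element $c$ is then taken to be $(pbp-\eps_0)_+\in C$ (a single element of $C$, not a matrix), and the second conclusion $c\precsim p(b-\eps_0/4)_+p$ follows from the elementary norm estimate $\norm{(pbp-\eps_0/4)_+-p(b-\eps_0/4)_+p}<\eps_0/2$. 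Your sketch is missing exactly this functional-calculus factorisation; without it there is no way to simultaneously keep the implementing element in $\mathrm{M}_L(C)$ and land inside a cut-down of $pbp$.
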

\begin{proof}

Set $$\eps_0=\frac{\eps^3}{2\max\{1, \norm{x}^6\}}\quad\mathrm{and}\quad\eps_1=\frac{\eps}{4\norm{x}^2}.$$

For each $t_0\in (0, +\infty)$, define the positive function 
$$
h_{t_0}(t) = \left\{
\begin{array}{ll}
1, & t\geq t_0;\\
0, & t\leq \frac{t_0}{2}; \\
\mathrm{linear}, & \frac{t_0}{2} < t < t_0.
\end{array}
\right.
$$

Choose $\delta\in(0, \frac{\eps}{2L^2})$ sufficiently small such that
\begin{equation}\label{comm-cut}
\norm{(h^{\frac{1}{2}}_{\eps_1}(y)\otimes 1_L )x - x (h^{\frac{1}{2}}_{\eps_1}(y)\otimes 1_L)} < \eps
\end{equation} 
for any $y\in A^+$ with $\norm{x_{i, j}y-yx_{i, j}}<\delta$ and $\norm{y} \leq 1$. 
Note that the choice of $\delta$ only depends on $\eps$, $\norm{x}$, and $L$. Indeed, pick a polynomial
$P(t) = c_1t + \cdots + c_n t^n$ such that $$\norm{P(y) - h_{\eps_1}^{\frac{1}{2}}(y)} <\frac{\eps}{4},\quad \norm{y}\leq 1.$$
Set $$\delta = \frac{\eps}{2nL^2\max\{\abs{c_i}: i=1, ..., n\}\norm{x}}.$$ Then
$$\norm{(P(y)\otimes 1_L) x - x (P(y)\otimes 1_L)} <\frac{\eps}{2}$$
if $\norm{x_{i, j}y - y x_{i, j}} < \delta$, 
and hence
$$\norm{(h^{\frac{1}{2}}_{\frac{\eps}{4}}(y) \otimes 1_L ) x - x (h^{\frac{1}{2}}_{\frac{\eps}{4}}(y)\otimes 1_L)} \leq \norm{(P(y)\otimes 1_L) x - x (P(y)\otimes 1_L)} +\frac{\eps}{2}  <  \eps.$$
One asserts that this $\delta$ satisfies the conclusion of the lemma.

Let $a, b, p\in A^+$ satisfy the conditions of the lemma. Then, by \eqref{comm-cut}, with $$\bar{h}_{\eps_1}(t) := h^{\frac{1}{2}}_{\eps_1}(t)/t \quad \mathrm{and}\quad z:= ((p-\eps_1)_+ \bar{h}_{\eps_1}(p)\otimes 1_L)x(h^{\frac{1}{2}}_{\eps_1}(p)\otimes 1_L) $$ ($\bar{h}_{\eps_1}$ is a well-defined continuous function on $\Real$), one has 
\begin{eqnarray*}
& & (p\otimes 1_L)(x^*(b\otimes 1_N)x)(p\otimes 1_L) \\
 & \approx_{2L^2\delta} & x^*(p\otimes 1_L)(b\otimes 1_N)(p\otimes 1_L) x \\
 & \approx_{\eps} &  x^*((p-\eps_1)_+\otimes 1_L)((b-\eps_0)_+\otimes 1_N)((p-\eps_1)_+\otimes 1_L)x \\
&=& x^*(h_{\eps_1}(p)(p-\eps_1)_+\otimes 1_L)((b-\eps_0)_+\otimes 1_N)((p-\eps_1)_+h_{\eps_1}(p)\otimes 1_L)x \\
& \approx_{2\eps} & (h^{\frac{1}{2}}_{\eps_1}(p)\otimes 1_L)x^*(h^{\frac{1}{2}}_{\eps_1}(p) (p-\eps_1)_+\otimes 1_L)((b-\eps_0)_+\otimes 1_N)\\
&& ((p-\eps_1)_+h^{\frac{1}{2}}_{\eps_1}(p)\otimes 1_L)x(h^{\frac{1}{2}}_{\eps_1}(p)\otimes 1_L) \\
& = & (h^{\frac{1}{2}}_{\eps_1}(p)\otimes 1_L)x^*(\bar{h}_{\eps_1}(p) (p-\eps_1)_+\otimes 1_L) ((p(b-\eps_0)_+p)\otimes 1_N) \\
&& ((p-\eps_1)_+ \bar{h}_{\eps_1}(p)\otimes 1_L)x(h^{\frac{1}{2}}_{\eps_1}(p)\otimes 1_L) \\
& = & z^*(p(b-\eps_0)_+p\otimes 1_N) z.
\end{eqnarray*}
Hence, together with the assumption of the lemma,
\begin{equation}\label{chain-eq-C}
(pap\otimes 1_M) \approx_{\eps} (p\otimes 1_L)(x^*(b\otimes 1_N)x)(p\otimes 1_L) \approx_{5\eps}z^*(p(b-\eps_0)_+p\otimes 1_N)z.
\end{equation} 

Since $p\in C$ and $px_{i, j}p\in C$, $i, j=1, ..., L$, one has that $$ \bar{h}_{\eps_1}(p) x_{i, j} {h}^{\frac{1}{2}}_{\eps_1}(p) \in C$$ (the functions $ \bar{h}_{\eps_1}$ and ${h}^{\frac{1}{2}}_{\eps_1}$ take value $0$ at $0$), and hence $z\in \mathrm{M}_L(C)$, and $$z^*(pbp\otimes 1_N)z \in \mathrm{M}_L(C).$$


Since $$\norm{(p-\eps_1)_+\bar{h}_{\eps_1}(p) }\leq \frac{1}{\eps_1},$$ one has 
$$\norm{z} \leq \norm{(p-\eps_1)_+\bar{h}_{\eps_1}(p)} \norm{x} \norm{h_{\eps_1}^{\frac{1}{2}}(p)} \leq \frac{\norm{x}}{\eps_1},$$ 
and hence $$\norm{ z^*(p(b-\eps_0)_+p\otimes 1_N)z - z^*(pbp\otimes 1_N)z } <\eps_0\norm{z}^2 < \eps_0\frac{\norm{x}^2}{\eps^2_1} \leq 32 \eps.$$ 
By \eqref{chain-eq-C}, one has
$$\norm{pap\otimes 1_M - z^*(pbp\otimes 1_N)z } < 32\eps + 6\eps = 38\eps,$$ and hence
\begin{eqnarray*}
& & \norm{(pap - 16\eps)_+\otimes 1_M - z^*((pbp) - \eps_0)_+\otimes 1_N)z } \\ 
&\leq & \norm{pap\otimes 1_M - z^*(pbp\otimes 1_N)z } +16\eps+\eps_0\norm{z}^2 \\
&<& 38\eps +16\eps+32\eps = 86\eps.
\end{eqnarray*}
Therefore, 
\begin{eqnarray*}
(pap - 102 \eps)_+\otimes 1_M & \precsim & ((pap - 16\eps)_+ - 86\eps )_+ \otimes 1_M \\
& \precsim & z^*((pbp-\eps_0)_+\otimes 1_N)z \\
& \precsim & (pbp-\eps_0)_+\otimes 1_N,
\end{eqnarray*}
in the algebra $\mathrm{M}_\infty(C)$.
Since 
$$\| (pbp-\frac{\eps_0}{4})_+ - p(b-\frac{\eps_0}{4})_+p \| < \frac{\eps_0}{2},$$
one has
$$(pbp-\eps_0)_+ \precsim ((pbp-\frac{\eps_0}{4})_+-\frac{\eps_0}{2})_+ \precsim p(b-\frac{\eps_0}{4})_+p$$
in the algebra $A$. Then $c:=(pbp - \eps_0)_+\in C$ has the desired property.
\end{proof}

The following lemma is well know. A proof is included for reader's convenience.
\begin{lem}\label{Cu-DIV}
Let $A$ be a simple non-elementary C*-algebra, and let $n\in \mathbb N$ be arbitrary. Then there are nonzero positive elements $e_1, ..., e_n \in A$ which are mutually orthogonal and mutually Cuntz equivalent.
\end{lem}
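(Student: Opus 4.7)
The plan is to proceed by induction on $n$, with the key step being the case $n = 2$. The base case $n = 1$ is trivial; take any nonzero positive element of $A$.

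For $n = 2$: Since $A$ is simple and non-elementary, no hereditary subalgebra of $A$ can be finite-dimensional (a finite-dimensional hereditary subalgebra would contain a projection minimal in $A$, forcing $A$ to be elementary by simplicity). Thus, for any nonzero $a \in A^+$, the hereditary subalgebra $\overline{aAa}$ is infinite-dimensional and contains a positive element with infinite spectrum; continuous functional calculus then produces two nonzero mutually orthogonal positive elements $f, g \in \overline{aAa}$. Next, since $f, g \neq 0$ in the simple C*-algebra $A$, we have $gAf \neq \{0\}$: pick $b \in A$ with $gbf \neq 0$ and set $r := fb^*g$, so $r \neq 0$. Define $e_1 := rr^* \in \overline{fAf}$ and $e_2 := r^*r \in \overline{gAg}$. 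Both are nonzero; they are orthogonal because $fg = 0$ forces $\overline{fAf}\cdot \overline{gAg} = \{0\}$; and they are Cuntz equivalent by the standard fact that $rr^* \sim r^*r$ for every $r$ in any C*-algebra.

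For the inductive step from $n$ to $n + 1$: given pairwise orthogonal Cuntz-equivalent $d_1, \ldots, d_n$ in $\overline{aAa}$, apply the $n = 2$ case inside $\overline{d_1 A d_1}$ (which is again simple and non-elementary, being a hereditary subalgebra of $A$) to split $d_1$ into nonzero orthogonal Cuntz-equivalent $d_1', d_1'' \in \overline{d_1 A d_1}$. For each $i = 2, \ldots, n$, use the Cuntz equivalence $d_1 \sim d_i$ to transfer $d_1'$ into $\overline{d_i A d_i}$: find a nonzero $d_i' \in \overline{d_i A d_i}$ with $d_i' \sim d_1'$ in $A$. The family $\{d_1', d_1'', d_2', \ldots, d_n'\}$ then consists of $n + 1$ pairwise orthogonal Cuntz-equivalent nonzero positive elements; orthogonality across distinct $\overline{d_i A d_i}$ follows from $d_i \perp d_j$ for $i \neq j$.

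The main obstacle is the transfer step: given a Cuntz equivalence $d_1 \sim d_i$ in $A$ and a nonzero $d_1' \in \overline{d_1 A d_1}$, producing $d_i' \in \overline{d_i A d_i}$ with $d_i' \sim d_1'$ in $A$. One way to handle this is to arrange, throughout the inductive construction, that every Cuntz equivalence is witnessed by an explicit element $s_i \in A$ with $s_i s_i^* = d_1$ and $s_i^* s_i = d_i$, as produced by the $n=2$ construction above; then an element of the form $d_i' := s_i^* d_1' s_i$, after a suitable cut-down, lies in $\overline{d_i A d_i}$ and is Cuntz equivalent to $d_1'$ via $a^*a \sim aa^*$ applied to $a = (d_1')^{1/2} s_i$. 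Alternatively, one invokes the general principle that $\mathrm{Cu}(\overline{d_1 A d_1})$ and $\mathrm{Cu}(\overline{d_i A d_i})$ embed into $\mathrm{Cu}(A)$ as the same hereditary subsemigroup (the one generated by $\langle d_1 \rangle = \langle d_i \rangle$), so the class of $d_1'$ is realized by some positive element of $\overline{d_i A d_i}$.
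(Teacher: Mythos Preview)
Your argument is correct, and both alternatives for the transfer step can be made to work, though the first one is not quite as written. With $a=(d_1')^{1/2}s_i$ you get $aa^*=(d_1')^{1/2}d_1(d_1')^{1/2}$, which is not $d_1'$; you then need the (true but not stated) fact that $h\sim b^{1/2}hb^{1/2}$ whenever $h\in\overline{bAb}^+$. This follows from $(h-\eps)_+\precsim f_n(b)hf_n(b)=g_n(b)\,(b^{1/2}hb^{1/2})\,g_n(b)\precsim b^{1/2}hb^{1/2}$ for an approximate unit $f_n(b)$ with $f_n$ supported away from $0$, but it is exactly the point your phrase ``after a suitable cut-down'' hides. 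Note also that maintaining equalities $s_is_i^*=d_1$, $s_i^*s_i=d_i$ through the induction does not survive the step, since $aa^*\neq d_1'$; one should simply carry Cuntz equivalences, not exact witnesses. Your second alternative (that $\mathrm{Cu}(\overline{d_iAd_i})$ sits in $\mathrm{Cu}(A)$ as the ideal below $[d_i]=[d_1]$) is clean and correct.

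The paper takes a different, entirely explicit route. It starts with $n$ mutually orthogonal positive elements (no equivalence yet) and links them one at a time: having replaced $e_1,e_2$ by $z_1z_1^*,\,z_1^*z_1$, it first carves out a norm-one $e_1'$ with $e_1'(z_1z_1^*)=e_1'$, so that $z_1z_1^*$ acts as a unit on $\overline{e_1'Ae_1'}$; the next linking element $z_2$ is then chosen with $z_2z_2^*\in\overline{e_1'Ae_1'}$, which forces the \emph{equality} $(z_2z_2^*)^{1/2}\,z_1z_1^*\,(z_2z_2^*)^{1/2}=z_2z_2^*$. This lets the paper update $z_1$ to $(z_2z_2^*)^{1/2}z_1$ and keep exact identities $e_1=z_iz_i^*$, $e_{i+1}=z_i^*z_i$ throughout, so no appeal to the lemma above or to $\mathrm{Cu}$ is needed. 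Your inductive split-and-transfer is structurally tidier; the paper's argument is more elementary, using only $aa^*\sim a^*a$ and functional calculus.
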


\begin{proof}
Since $A$ is non-elementary, it contains a positive element with spectrum containing infinitely many points, and hence there are nonzero positive elements $e_1, e_2, ..., e_n\in A$ which are mutually orthogonal. 

Consider $e_1$ and $e_2$. Since $A$ is simple, for any $\eps>0$, there are $x_1, ..., x_n$, $y_1, ..., y_n$ for some $n\in\mathbb N$ such that
$x_1e_1y_1 + \cdots + x_ne_1y_n \approx_\eps e_2.$ In particular, with $\eps$ sufficiently small, there is $x\in A$ such that $z_1:=e_1xe_2 \neq 0$. Note that $$z_1z_1^* \in\overline{e_1Ae_1} \quad\mathrm{and}\quad  z_1^*z_1 \in\overline{e_2Ae_2}.$$ With the polar decomposition of $z_1$ and a functional calculus, one may assume that there is a positive element $e_1'$ with norm one such that $$e_1'(z_1z_1^*) = e_1'.$$

Replacing $e_1, e_2$ by $z_1z_1^*$ and $z_1^*z_1$ respectively, one obtains nonzero positive elements $$e_1, e_2, ..., e_n$$ which are mutually orthogonal and $e_1=z_1z_1^*,$ $e_2=z_1^*z_1$. Moreover, there is a positive element $e'_1$ with norm one such that $$e_1e_1'=e_1'.$$

Using the simplicity again, the same argument as above shows that there is $z_2\in A\setminus\{0\}$ such that
$$z_2z_2^* \in\overline{e'_1Ae'_1} \quad\mathrm{and}\quad  z_2^*z_2 \in\overline{e_3Ae_3},$$ and, moreover, there is a positive element $e_1''$ with norm one such that $$e_1''(z_2z_2^*) = e_1''.$$

Note that $$((z_2z_2^*)^{\frac{1}{2}} z_1)^* ((z_2z_2^*)^{\frac{1}{2}} z_1) \in\overline{e_2Ae_2}$$
and
$$((z_2z_2^*)^{\frac{1}{2}} z_1) ((z_2z_2^*)^{\frac{1}{2}}z_1)^* = (z_2z_2^*)^{\frac{1}{2}} z_1z_1^* (z_2z_2^*)^{\frac{1}{2}} =  (z_2z_2^*)^{\frac{1}{2}} e_1 (z_2z_2^*)^{\frac{1}{2}} = z_2z_2^*.$$

Therefore, replacing $e_1, e_1', e_2, e_3$, and $z_1$ by $z_2z_2^*, e_1'', ((z_2z_2^*)^{\frac{1}{2}} z_1)^* ((z_2z_2^*)^{\frac{1}{2}} z_1), z^*_2z_2$, and $(z_2z_2^*)^{\frac{1}{2}}z_1$ respectively, one obtains positive elements $$e_1, e_2, ..., e_n$$ which are mutually orthogonal and $e_1=z_1z_1^*,$ $e_2=z_1^*z_1$, and $e_1=z_2z_2^*,$ $e_3=z_2^*z_2$. Moreover, there is a positive element $e'_1$ with norm one such that $e_1e_1'=e_1'$. 

Repeating this argument finitely many times, one obtains nonzero positive elements $$e_1, e_2, ..., e_n$$ which are mutually orthogonal and elements $z_1, ..., z_{n-1}$ such that $$e_1=z_iz_i^*\quad\mathrm{and}\quad e_{i+1}=z_{i}^*z_{i},\quad i=1, ..., n-1. $$ Then the elements $e_1, e_2, ..., e_n$ clearly satisfy the lemma.
\end{proof}

\begin{thm}\label{main-thm} 
Let $(X, \Gamma)$ be a free and minimal topological dynamical system satisfying the (URP) and (COS). 

Let $a, b$ be non-zero positive elements of $\mathrm{M}_\infty(\mathrm{C}(X)\rtimes \Gamma)$ such that
$$\mathrm{d}_\tau(a) + r < \mathrm{d}_\tau(b),\quad \tau\in\mathrm{T}(\mathrm{C}(X)\rtimes \Gamma),$$
for some $r>\frac{1}{2}\mathrm{mdim}(X, \Gamma)$. Then $a\precsim b$. In other words, $$\mathrm{rc}(\mathrm{C}(X)\rtimes \Gamma) \leq \frac{1}{2}\mathrm{mdim}(X, \Gamma).$$
\end{thm}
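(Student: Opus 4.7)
The plan is to prove the sharper statement $(a-\eps)_+ \precsim b$ for every $\eps>0$, which suffices since it implies $a\precsim b$. The idea is that Theorem \ref{T-SDG} pushes most of $a$ and $b$ into a homogeneous C*-subalgebra $C\cong\bigoplus_s\mathrm{M}_{K_s}(\mathrm{C}_0(Z_s))$ of dimension ratio just above $\mathrm{mdim}(X,\Gamma)$; inside $C$ the standard dimension-controlled comparison $\mathrm{rc}(\mathrm{M}_K(\mathrm{C}_0(Z)))\le\tfrac12\mathrm{dim}(Z)/K$ handles the bulk, while the (COS) hypothesis, combined with Lemmas \ref{Cu-DIV} and \ref{small-domain}, absorbs the small residual error living in the complement of the cut-down.

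First, fix $r_0\in(\tfrac12\mathrm{mdim}(X,\Gamma),r)$ and choose $\delta'>0$ small enough that $\mathrm{d}_\tau((b-\delta')_+)>\mathrm{d}_\tau(a)+r_0$ for every $\tau\in\mathrm{T}(\mathrm{C}(X)\rtimes\Gamma)$. Using Lemma \ref{small-decp}, split $b$ orthogonally as $b_2\oplus b_1$ with $b_2=(b-\delta')_+$ and $b_1$ a nonzero positive element in $\overline{bAb}$. Since $A=\mathrm{C}(X)\rtimes\Gamma$ is simple and non-elementary, Lemma \ref{Cu-DIV} provides pairwise orthogonal, pairwise Cuntz-equivalent, nonzero elements $e_0,\dots,e_m\in\overline{b_1Ab_1}^+$, where $m$ is the integer from (COS); by Lemma \ref{small-domain} there is a nonzero $h_0\in\mathrm{C}(X)^+$ with $h_0\precsim e_0$, whose open support $E_0$ satisfies $\kappa_0:=\min_{\mu\in\mathcal M_1(X,\Gamma)}\mu(E_0)>0$ (by minimality and lower-semicontinuity of $\mu\mapsto\mu(E_0)$ over the compact Choquet simplex $\mathcal M_1(X,\Gamma)$). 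Now apply Theorem \ref{T-SDG} to $\{a,b\}$ with a continuous function $h$ and a closed set $F\subseteq X$ (constructed via Lemma \ref{div-element}) chosen so that $\min_\mu\mu(F)$ approximates $\inf_\tau\mathrm{d}_\tau(b_2)$, and tolerance $\delta''$ satisfying $\delta''\ll\min(\eps,\lambda\kappa_0,r_0-\tfrac12\mathrm{mdim}(X,\Gamma))$. This produces $p\in\mathrm{C}(X)$ and $C\cong\bigoplus_s\mathrm{M}_{K_s}(\mathrm{C}_0(Z_s))$ with $\mathrm{dim}([Z_s])/K_s<\mathrm{mdim}(X,\Gamma)+\delta''$, perturbations $a',b'$ of $a,b$ approximately commuting with $p$ and with $pa'p,pb'p\in\mathrm{M}_\infty(C)$, along with the rank bound of property (6) in Theorem \ref{T-SDG} for $(pb'p-\tfrac14)_+$ over $[Z_s]$.

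On each summand $\mathrm{M}_{K_s}(\mathrm{C}_0(Z_s))$ the fibrewise ranks satisfy
\[
\mathrm{rank}(pa'p(z))+\tfrac12\mathrm{dim}([Z_s])<\mathrm{rank}((pb'p-\tfrac14)_+(z)),\quad z\in[Z_s],
\]
obtained by translating the global trace gap through Lemma \ref{AC-cut}; the dimension-controlled comparison inside $\mathrm{M}_{K_s}(\mathrm{C}_0(Z_s))$ then yields $(pa'p-\eps/2)_+\precsim(pb'p-\tfrac14)_+\precsim b_2$ in $A$. The complementary exterior part of $(a-\eps)_+$ is, by approximate commutation of $p$ with $a$ and functional calculus, Cuntz-dominated by an element of $\mathrm{C}(X)^+$ supported on a small open neighbourhood of $X\setminus p^{-1}(1)$, of $\mu$-measure at most $\delta''<\lambda\kappa_0=\lambda\mu(E_0)$ for every $\mu\in\mathcal M_1(X,\Gamma)$; (COS) then forces this exterior element to be Cuntz-sub-equivalent to $m[h_0]\precsim m[e_0]\sim[e_1+\cdots+e_m]\precsim b_1$. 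Combining, $(a-\eps)_+\precsim b_2\oplus b_1\precsim b$, as required. The main obstacle will be translating the global trace inequality $\mathrm{d}_\tau(a)+r<\mathrm{d}_\tau(b)$ into a genuinely fibrewise rank inequality on each base $[Z_s]$: this requires identifying the fibre traces of $C$ with suitable Rokhlin-tower averages of invariant measures on $X$, and reworking the rank-tracking arguments of \cite{EN-MD0} without any appeal to finite-dimensionality of $X$.
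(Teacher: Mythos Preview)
Your outline has the right architecture (cut down by $p$, compare inside $C$, absorb the residual via (COS)), but the step you flag as the ``main obstacle'' is not a technicality to be worked out---it is the entire difficulty, and your proposed route around it does not work. The fibre traces of $C\cong\bigoplus_s\mathrm{M}_{K_s}(\mathrm{C}_0(Z_s))$ at $z\in[Z_s]$ are averages over a single partial orbit $\{x\gamma:\gamma\in\Gamma_s\}$; these approximate invariant measures only in the weak-$*$ sense, which controls integrals of \emph{continuous} functions, not lower-semicontinuous rank functions. So there is no way to bound $\mathrm{rank}(pa'p(z))$ from above using $\mathrm{d}_\tau(a)$, and the claimed fibrewise inequality $\mathrm{rank}(pa'p(z))+\tfrac12\dim([Z_s])<\mathrm{rank}((pb'p-\tfrac14)_+(z))$ cannot be extracted from the trace gap by any averaging argument. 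Lemma~\ref{AC-cut} does not help here: it transports an \emph{algebraic} Cuntz relation through the cut-down, not a trace inequality.

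The paper's proof avoids this obstruction by a device you do not have. It first manufactures, via Lemma~\ref{div-element}, an auxiliary $h\in\mathrm{M}_\infty(\mathrm{C}(X))$ with $\mathrm{d}_\tau(h)$ just above $\tfrac12\mathrm{mdim}(X,\Gamma)$, so that $\mathrm{d}_\tau(a\oplus h)<\mathrm{d}_\tau(b)$. Simplicity and R{\o}rdam's argument then give a \emph{global algebraic} relation $(a\oplus h)\otimes 1_{N+1}\precsim b\otimes 1_N$ in $A$. Lemma~\ref{AC-cut} now cuts this relation down to $C$, yielding $((pa'p-102\eps)_+\oplus(ph'p-102\eps)_+)\otimes 1_{N+1}\precsim c\otimes 1_N$ inside $\mathrm{M}_\infty(C)$, and hence a genuine fibrewise rank inequality $\mathrm{rank}((pa'p-102\eps)_+(z))+\mathrm{rank}((ph'p-102\eps)_+(z))\le\mathrm{rank}(c(z))$---no measure theory required. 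The role of $h$ is that property~\eqref{R-cut-6} of Theorem~\ref{T-SDG}, applied to $h_0$, forces $\mathrm{rank}((ph'p-102\eps)_+(z))>\tfrac12\dim([Z_s])$ on $[Z_s]$; this is how the dimension gap needed for Toms' comparison is produced fibrewise. A second point you omit: the dimension bound in Theorem~\ref{T-SDG} is only over the compact piece $[Z_s]\subseteq Z_s$, so after comparing on $[Z_s]$ one must introduce a further central cut-off $g$ and absorb $1-g^2$ by a \emph{second} application of (COS); your outline accounts for only one residual term.
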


\begin{proof}
One only has to prove the statement for $\abs{\Gamma} = \infty$. In the case that $\Gamma$ is a finite group, since the action is minimal, one has that the system $(X, \Gamma)$ is conjugate to the action of $\Gamma$ on itself by translation. Hence $(X, \Gamma)$ has mean topological dimension zero and $\mathrm{C}(X) \rtimes \Gamma \cong \mathrm{M}_{\abs{\Gamma}}(\Comp)$, which has zero radius of comparison. In particular, the statement of the theorem holds. 

Let us assume that $\abs{\Gamma} = \infty$. Moreover, without loss of generality, one may assume that $\norm{a} = \norm{b} = 1$.

Pick $r' \in (\frac{1}{2}\mathrm{mdim}(X, \Gamma), r)$ and some $\delta'>0$ such that $$\mathrm{d}_\tau(a) + r' + 3\delta' < \mathrm{d}_\tau(b), \quad \tau\in\mathrm{T}(\mathrm{C}(X)\rtimes\Gamma).$$

Since $\abs{\Gamma} = \infty$, by Lemma \ref{div-element} (with $r= \frac{1}{2}\mathrm{mdim}(X, \Gamma) + 2\delta'$ and $\eps = \delta'$), there is a positive element $h \in \mathrm{M}^{+}_\infty(\mathrm{C}(X))$ such that
$$\abs{\mathrm{d}_\tau(h) - (\frac{1}{2}\mathrm{mdim}(X, \Gamma) + 2\delta')} < \delta',\quad \tau\in\mathrm{T}(\mathrm{C}(X)\rtimes\Gamma),$$ 
and
\begin{equation}\label{approx-r}
\mathrm{d}_\tau(h) >\frac{1}{2}\mathrm{mdim}(X, \Gamma) + 2\delta' - \delta'  = \frac{1}{2}\mathrm{mdim}(X, \Gamma)+\delta',\quad \tau\in\mathrm{T}(\mathrm{C}(X)\rtimes \Gamma),
\end{equation}
Note that 
\begin{equation}\label{ele-fill}
\mathrm{d}_\tau(a) + \mathrm{d}_\tau(h) < \mathrm{d}_\tau(a) +  \frac{1}{2}\mathrm{mdim}(X, \Gamma) + 2\delta' +\delta' < \mathrm{d}_\tau(a) + r' + 3\delta'  < \mathrm{d}_\tau(b),
\end{equation}
for any $\tau\in\mathrm{T}(\mathrm{C}(X)\rtimes \Gamma)$.
Moreover, by Lemma \ref{div-element}, $h$ can be chosen such that $$h=\mathrm{diag}\{h_0, \underbrace{1, ..., 1}_d\}$$ for a positive function $h_0\in\mathrm{C}(X)$, some integer $d$, and $h_0(x) = 1$ on a closed set $F$ with 
\begin{equation}\label{const-sets}
\min_{\mu} \mu(F) + d \geq (\frac{1}{2}\mathrm{mdim}(X, \Gamma) + 2\delta') - \delta' = \frac{1}{2}\mathrm{mdim}(X, \Gamma) + \delta',
\end{equation} 
where $\mu$ runs through $\mathcal M_1(X, \Gamma)$.

Pick $m\in\mathbb N$ such that $a, b, h\in\mathrm{M}_m(\mathrm{C}(X)\rtimes\Gamma)$. 


Since $\mathrm{C}(X) \rtimes\Gamma$ is nuclear, all quasitraces are traces. By Theorem 4.3 of \cite{RorUHF2}, any lower semicontinuous dimension function of $\mathrm{C}(X) \rtimes\Gamma$ has the form $\mathrm{d}_\tau$ for some $\tau \in \mathrm{T}(\mathrm{C}(X)\rtimes \Gamma)$.

Since $(X, \Gamma)$ is minimal, the C*-algebra $\mathrm{C}(X) \rtimes\Gamma$ is simple. 
Hence the Cuntz class of $b$ is a strong order unit 
of the Cuntz semigroup of $\mathrm{C}(X) \rtimes\Gamma$ (see Proposition 4.2 of \cite{Cuntz-CuGr}). By \eqref{ele-fill} and the proof of Proposition 3.2 of \cite{RorUHF2}, there is $N \in \mathbb N$ such that
$$ (a\oplus h)\otimes 1_{N+1}  \precsim b\otimes 1_N.$$
Moreover, one may assume that
$(a\oplus h)\otimes 1_{N+1} $ is not Cuntz equivalent to $b\otimes 1_N$.

Let $\eps \in (0, \frac{1}{408})$ be arbitrary, by Lemma \ref{small-decp}, there are nonzero positive elements $b_1$ and $b_2$ such that 
\begin{enumerate}
\item $b_1\perp b_2$,
\item $b_1, b_2\in \overline{bAb}$, and
\item\label{comp-step-1}  $ ((a-\eps)_+\oplus (h-\eps)_+) \otimes 1_{N+1}  \precsim b_2 \otimes 1_N.$
\end{enumerate}
By \eqref{comp-step-1}, there is $\delta>0$ such that
\begin{equation}\label{comp-step-2}
((a-2\eps)_+\oplus (h-2\eps)_+) \otimes 1_{N+1} \precsim (b_2-\delta)_+\otimes 1_N.
\end{equation}

Since $(X, \Gamma)$ has the (COS), the C*-algebra $\mathrm{C}(X) \rtimes \Gamma$ has $(\lambda, m')$-Cuntz-comparison of open sets for some $\lambda \in\Real^+$ and $m'\in\mathbb N$. Fix $\lambda$ and $m'$.

Consider the nonzero positive element $b_1$. Since $\mathrm{C}(X)\rtimes\Gamma$ is simple and non-elementary, the hereditary C*-algebra generated by $b_1$ is also simple and non-elementary. By Lemma \ref{Cu-DIV}, there are mutually orthogonal nonzero positive elements $b_1^{(1)}, ..., b_1^{(m')}$ in the hereditary C*-algebra generated by $b_1$ such that $b_1^{(1)}, ..., b_1^{(m')}$  are mutually Cuntz equivalent.  By Lemma \ref{small-domain}, there exists a nonzero positive element $\tilde{b}_1\in\mathrm{C}(X)$ such that $\tilde{b}_1 \precsim b_1^{(1)}$, and hence there are nonzero positive elements $b_{1, 1}, b_{1, 2}\in \mathrm{C}(X)$  such that $$b_{1, 1} \perp b_{1, 2},$$ 
and
\begin{equation}\label{small-b} 
b_{1, 1} \otimes 1_{m'} + b_{1, 2} \otimes 1_{m'} = (b_{1, 1} + b_{1, 2}) \otimes 1_{m'} \precsim \tilde{b}_1 \otimes 1_{m'} \precsim b_1^{(1)} + \cdots + b_1^{(m')}  \precsim b_1.
\end{equation}

By \eqref{comp-step-2}, there is $x=(x_{i, j}) \in \mathrm{M}_{L}(\mathrm{M}_{m}(A))$ (for some $L\in\mathbb N$) such that
\begin{equation}\label{pre-Cuntz-comp}
\norm{ ((a-2\eps)_+\oplus (h-2\eps)_+) \otimes 1_{N+1} - x^*((b_2-\delta)_+ \otimes 1_N) x} < \frac{\eps}{16}.
\end{equation}

Let $\delta'' = \delta(\eps, \norm{x}, L)$ be the constant of Lemma \ref{AC-cut} with respect to $x$ and $\eps$. Moreover, $\delta''$ can be chosen so that
$$\delta'' < \min\{\frac{\eps}{16}, \frac{\eps^3}{8\max\{1, \norm{x}^6\}}\}$$
and if $\norm{pc-cp}<\delta''$ for some positive elements $p, c$ in some C*-algebra with $\norm{c}, \norm{p}\leq 1$, then 
\begin{equation}\label{pre-am-comm}
\norm{pcp - (1-p^2)^{\frac{1}{2}}c(1-p^2)^{\frac{1}{2}}} < \eps.
\end{equation}

Applying Theorem \ref{T-SDG} with $\{(a-2\eps)_+,\ (b_2 - \delta)_+,\ x_{i, j}: 1\leq i, j \leq L\}$ in place of $\{f_1, f_2, ..., f_n\}$, $(h_0-2\eps)_+$ in place of $h$ (note that the value of $(h_0-2\eps)_+$ at any point of $F$ is at least $\frac{3}{4}$), and $\min\{\frac{\delta'}{4(d+1)}, \delta'', \frac{1}{m}\lambda\{\inf_\nu\{\nu(b_{1, s}^{-1}(0, +\infty)): \nu\in\mathcal{M}_1(X, \Gamma)\}: s=1, 2\}$ in place of $\delta$, there are $$a',\ h' = \mathrm{diag}\{h_0', \underbrace{(1-2\eps)1_A, ..., (1-2\eps)1_A}_d\},\  b',\  x'=(x'_{i, j}),\ y',\  \textrm{and}\ p= p '\otimes 1_m$$ for some $p', h_0'\in\mathrm{C}(X)^+$ with $\norm{p'}\leq 1$, and a sub-C*-algebra $C\subseteq A$ and $A\cong\bigoplus_{s=1}^S\mathrm{M}_{K_s}(\mathrm{C}_0(Z_s))$ for locally compact metrizable spaces $Z_s$ together with a compact set $[Z_s] \subseteq Z_s$ such that
\begin{equation}\label{pert-N-1}
\norm{(a-2\eps)_+ - a'} <\frac{\eps}{16},\quad \norm{(h-2\eps)_+ - h'} <\frac{\eps}{16},\quad \norm{x-x'} < \delta'' < \frac{\eps}{4} 
\end{equation}
\begin{equation}\label{pert-N-2}
\norm{(b_2-\delta)_+ - b'} <\frac{\eps^3}{8\max\{1, \norm{x}^6\}} < \frac{\eps}{16},
\end{equation}
\begin{equation}\label{pert-am-comm}
\norm{px'_{i, j}-x_{i, j}'p}<\delta'',\quad 1 \leq i, j \leq L,
\end{equation}
\begin{equation}\label{pert-am-comm-1}
\norm{pa' - a'p} < \delta''
\end{equation}
\begin{equation}\label{cut-in-C}
p'\in C,\ p'h_0'p' \in C,\ px'_{i, j}p, pa'p, ph'p, pb'p \in \mathrm{M}_m(C),
\end{equation}
\begin{equation}\label{small-trace}
\mathrm{\mu}(X\setminus (p')^{-1}(1)) <\frac{1}{m}\lambda\{\inf_\nu\{\nu(b_{1, 2}^{-1}(0, +\infty))\},\quad  \mu\in \mathcal M_1(X,  \Gamma),
\end{equation}
and
\begin{equation}\label{approx-mdim}
 \frac{\mathrm{dim}([Z_s])}{K_s} < \mathrm{mdim}(X, \Gamma) + \delta',\quad s=1, 2, ..., S,
 \end{equation} 
 and, under the isomorphism $C\cong\bigoplus_{s=1}^S\mathrm{M}_{K_s}(\mathrm{C}_0(Z_s))$, 
\begin{equation}\label{const-sets-1}
\mathrm{rank}((p'h'_0p'-\frac{1}{4})_+(z))  \geq  K_s (\min_\mu \mu(F) -\frac{\delta'}{4}),\quad z\in[Z_s],
\end{equation}
and
\begin{equation}\label{lp-rank}
\mathrm{rank}(p(z)) \geq K_s(1-\frac{\delta'}{4(d+1)}), \quad z\in [Z_s].
\end{equation}
Moreover, still under the isomorphism $C\cong\bigoplus_{s=1}^S\mathrm{M}_{K_s}(\mathrm{C}_0(Z_s))$, if $f\in C^+$ is an diagonal element satisfying $f|_{Z_s} = 1_{K_s}$, $s=1, ..., S$, then $f\in \mathrm{C}(X)$, and 
\begin{equation}\label{large-supp}
\mu(X\setminus f^{-1}(1)) < \frac{1}{m}\lambda\{\inf_\nu\{\nu(b_{1, 1}^{-1}(0, +\infty))\},\quad \mu\in\mathcal M_1(X, \Gamma).
\end{equation}

Note that, by \eqref{const-sets-1}, \eqref{lp-rank} and \eqref{const-sets}, for any $z\in[Z_s]$,
\begin{eqnarray*}
\mathrm{rank}((ph'p-\frac{1}{4})_+(z)) & = & \mathrm{rank}((p'h'_0p'-\frac{1}{4})_+(z)) + d \cdot \mathrm{rank}(p(z)) \\
& \geq & K_s (\min_\mu \mu(F) -\frac{\delta'}{4}) + d K_s (1-\frac{\delta'}{4(d+1)}) \\
& \geq & K_s(\min_\mu \mu(F) + d -\frac{\delta'}{4}  -\frac{\delta'}{4}) \\
& \geq & K_s(\frac{1}{2}\mathrm{mdim}(X, \Gamma) + \delta' - \frac{\delta'}{2}).
\end{eqnarray*}
Then, since $(ph'p-102\eps)_+\succsim (ph'p-\frac{1}{4})_+$ ($\eps$ is assumed to be at most $\frac{1}{408}$), by \eqref{approx-mdim}, one has that for any $z\in[Z_s]$,
\begin{eqnarray}\label{rank-pert-lbd}
\mathrm{rank}(ph'p-102\eps)_+(z)&\geq& \mathrm{rank}((ph'p-\frac{1}{4})_+(z))\\
& \geq & K_s(\frac{1}{2}\mathrm{mdim}(X, \Gamma) + \delta' - \frac{\delta'}{2}) \nonumber \\
& = & \frac{K_s}{2} (\mathrm{mdim}(X, \Gamma) + \delta') \nonumber \\
& > & \frac{1}{2}\mathrm{dim}([Z_s])  > \frac{1}{2}(\mathrm{dim}([Z_s]) -1). \nonumber
\end{eqnarray}
Since $\mathrm{C}(X)\rtimes \Gamma$ has $(\lambda, m')$-comparison on open sets, it follows from \eqref{small-trace} that
\begin{equation}\label{large-p}
1-p^2\precsim b_{1, 2} \otimes 1_{m'}.
\end{equation} 

Note that, without loss of generality, one may assume that $\norm{x} = \norm{x'}$. By \eqref{pre-Cuntz-comp}, \eqref{pert-N-1}, and \eqref{pert-N-2}, 
$$\norm{(a'\oplus h')\otimes 1_{N+1} -(x')^*(b'\otimes 1_N) x'} < \eps;$$
together with \eqref{pert-am-comm} and \eqref{cut-in-C}, it follows from Lemma \ref{AC-cut} that there is $c\in \mathrm{M}_m(C)$
\begin{equation}\label{Multp-Comp-C}
((pa'p-102\eps)_+\oplus (ph'p-102\eps)_+ )\otimes 1_{N+1}  \precsim c\otimes 1_N 
\end{equation}
in $\mathrm{M}_m(C)$
and
\begin{equation}\label{c-pert-dom}
c \precsim p(b'-\frac{\eps^3}{8\max\{1, \norm{x}^6\}})_+p
\end{equation}
in $\mathrm{M}_m(A)$.
By \eqref{Multp-Comp-C},
$$\mathrm{d}_\tau((pa'p-102\eps)_+) +\mathrm{d}_\tau((ph'p-102\eps)_+) < \mathrm{d}_\tau(c),\quad\tau\in\mathrm{T}(C);$$
and hence
$$\mathrm{rank}((pa'p-102\eps)_+(z)) +\mathrm{rank}((ph'p-102\eps)_+(z)) < \mathrm{rank}(c(z)),\quad z\in Z_s, s=1, ..., S.$$
Therefore, by \eqref{rank-pert-lbd}, 
$$\mathrm{rank}((pa'p-102\eps)_+(z)) + \frac{\mathrm{dim}([Z_s])-1}{2}  < \mathrm{rank}(c(z)),\quad z\in [Z_s], s=1, ..., S.$$
and by Theorem 4.6 of \cite{Toms-Comp-DS},  $$(pa'p-102\eps)_+|_{[Z_s]} \precsim c|_{[Z_s]}\quad \textrm{in $\mathrm{M}_{m}(\mathrm{M}_K(\mathrm{C}([Z_s])))$}.$$ There is then a positive central element $g\in \mathrm{M}_{m}(C)$ with $g= g'\otimes 1_m $ for some $g'\in\mathrm{C}(X)^+$ (and $g'\in C$) such that $\norm{g'}=1$, $g'|_{[Z_s]} = 1_{K_s}$, and
\begin{equation}\label{pert-dom-c}
(g((pa'p-102\eps)_+)g - \eps )_+ \precsim c
\end{equation}
in $\mathrm{M}_m(C)$. Since $g'|_{[Z_s]} = 1_{K_s}$, $s=1, ..., S$,  by \eqref{large-supp} and $(\lambda, m')$-comparison, one has 
\begin{equation}\label{large-g}
1-g^2 \precsim b_{1, 1} \otimes 1_{m'}.
\end{equation} 

Note that
\begin{eqnarray*}
(pa'p-102\eps)_+ & = & g((pa'p-102\eps)_+)g +  (1-g^2)^{\frac{1}{2}}(pa'p-102\eps)_+(1-g^2)^{\frac{1}{2}} \\
& \approx_{\eps} & (g((pa'p-102\eps)_+)g -\eps)_+ + \\ &&  (1-g^2)^{\frac{1}{2}}(pa'p-102\eps)_+(1-g^2)^{\frac{1}{2}}.
\end{eqnarray*}
Therefore, together with \eqref{pert-am-comm} and \eqref{pre-am-comm},
\begin{eqnarray*}
a & \approx_{2\eps} & (a- 2\eps)_+ \\
& \approx_{\eps} & a' \\
&\approx_{\eps}& pa'p + (1-p^2)^{\frac{1}{2}}a'(1-p^2)^{\frac{1}{2}} \\
& \approx_{102\eps}& (pa'p - 102\eps)_+ + (1-p^2)^{\frac{1}{2}}a'(1-p^2)^{\frac{1}{2}} \\
& \approx_{\eps}& (g((pa'p-102\eps)_+)g -\eps)_+ +  (1-g^2)^{\frac{1}{2}}(pa'p-102\eps)_+(1-g^2)^{\frac{1}{2}} \\ && + (1-p^2)^{\frac{1}{2}}a'(1-p^2)^{\frac{1}{2}},
\end{eqnarray*}
and hence
\begin{eqnarray*}
(a - 107\eps)_+ & \precsim & (g((pa'p-102\eps)_+)g -\eps)_+ +  (1-g^2)^{\frac{1}{2}}(pa'p-102\eps)_+(1-g^2)^{\frac{1}{2}}\\ && + (1-p^2)^{\frac{1}{2}}a'(1-p^2)^{\frac{1}{2}}  \\
&\precsim & c \oplus (1-g^2) \oplus (1- p^2) \quad\quad\quad\quad\quad(\textrm{by \eqref{pert-dom-c}})\\
&\precsim & p(b'- \frac{\eps^3}{8\max\{1, \norm{x}^6\}} )_+p \oplus (1-g^2) \oplus (1- p^2) \quad\quad(\textrm{by \eqref{c-pert-dom}}) \\
&\precsim & (b_2-\delta)_+ \oplus  \bigoplus _{m'} b_{1, 1} \oplus \bigoplus_{m'} b_{1, 2}  \quad\quad(\textrm{by \eqref{pert-N-2}, \eqref{large-p}, and \eqref{large-g}})  \\
& \precsim & b_2 \oplus b_1 \precsim b \quad\quad\quad\quad\quad(\textrm{by \eqref{small-b}}).
\end{eqnarray*}
Since $\eps$ is arbitrarily small, one has that $a\precsim b$, as desired.
\end{proof}



\section{Recursive subhomogeneous C*-algebras with diagonal maps}\label{RSH-Z}

In the rest of paper, let us investigate Cuntz-comparison of Open Sets (COS) for a given dynamical system $(X, \Gamma)$. Let us start with a special class of concrete C*-algebras which will appear naturally as the C*-algebras of small subgroupoids of a transformation groupoid (see Section \ref{small-groupoid}). Any C*-algebra in this class enjoys a comparison property of diagonal elements (see Theorem \ref{DSH-comp}), and this eventually leads to the (COS) for $(X, \Gamma)$ if small subgroupoids with arbitrary large orbits exist (see Corollary \ref{orb-comp-measure}).

Recall that
\begin{defn}[\cite{Phill-RSA1}]
The class of recursive subhomogeneous C*-algebras (RSH algebras) is the smallest class $\mathcal R$ of C*-algebras which is closed under isomorphism and such that:
\begin{enumerate}
\item if $X$ is a compact Hausdorff space and $n\geq 1$, then $\mathrm{M}_n(\mathrm{C}(X))\in\mathcal R$,
\item $\mathcal R$ is closed under the following pull back construction: if $A\in\mathcal R$, $X$ is a compact Hausdorff space, $X^{(0)}\subseteq X$ is closed, $\varphi: A\to \mathrm{M}_n(\mathrm{C}(X^{(0)}))$ is any unital homomorphism, and $\rho: \mathrm{M}_n(\mathrm{C}(X)) \to \mathrm{M}_n(\mathrm{C}(X^{(0)}))$ is the restriction homomorphism, then the pullback $$A\oplus_{\mathrm{M}_n(\mathrm{C}(X^{(0)}))}  \mathrm{M}_n(\mathrm{C}(X))=\{(a, f)\in A\oplus \mathrm{M}_n(\mathrm{C}(X)):\ \varphi(a)=\rho(f)\}$$ is in $\mathcal{R}$.
\end{enumerate}
\end{defn}

From the definition, it is clear that any recursive subhomogeneous C*-algebra can be written in the form 
$$A\cong\left[\cdots\left[\left[C_0\oplus_{C_1^{(0)}}C_1\right]\oplus_{C_2^{(0)}}C_2\right]\cdots\right]\oplus_{C_l^{(0)}}C_l,$$ 
with $C_k=\mathrm{M}_{n(k)}(\mathrm{C}(X_k))$ for compact Hausdorff spaces $X_k$ and positive integers $n(k)$, with $C_k^{(0)}=\mathrm{M}_{n(k)}(\mathrm{C}(X_k^{(0)}))$ for compact subsets $X_k^{(0)}\subseteq X_k$ (possible empty), and where the maps $C_k\to C_k^{(0)}$ are always the restriction maps. An expression of this type will be referred to as a decomposition of $A$, and the notation used here will be referred to as the standard notation for a decomposition.

The RSH algebras considered in this paper will have the gluing maps $\varphi$ being of diagonal type:
\begin{defn}
Let $$A = \left[\cdots\left[\left[ C_0\oplus_{C_1^{(0)}} C_1 \right]\oplus_{C_2^{(0)}}C_2\right] \cdots\right]\oplus_{C_K^{(0)}} C_K$$ be an RSH algebra. A homomorphism $\phi: A \to \mathrm{M}_n(\mathrm{C}(Y))$, where $Y$ is a compact metrizable space, is said to be of diagonal type if there is a partition $$Y=Y_1\sqcup Y_2\sqcup \cdots\sqcup Y_n$$ such that for each $Y_i$, there are continuous maps $\lambda_1^{(i)}: Y_i \to X_{t_1}$, $ \lambda_2^{(i)}: Y_i \to X_{t_2}$, ..., $ \lambda_{s_i}^{(i)}: Y_i \to X_{t_{s_i}}$ for some $s_i\in\mathbb N$ such that
$$\phi((f_0, f_1, ..., f_K))|_{Y_i} = 
\left(
\begin{array}{cccc}
f_{t_1} \circ \lambda_1^{(i)} & & & \\
& f_{t_2} \circ \lambda_2^{(i)} & & \\
& & \ddots & \\
& & & f_{t_{s_i}} \circ \lambda_{s_i}^{(i)}
\end{array}
\right),$$
where $(f_0, f_1, ..., f_K) \in A.
$
\end{defn}



Consider the crossed product C*-algebra $\mathrm{C}(X) \rtimes \Int$, and consider a closed set $Y \subseteq X$ with nonempty interior. The Putnam sub-C*-algebra $A_Y\subseteq \mathrm{C}(X) \rtimes\Int$ is an RSH with diagonal maps in a natural way. The construction was introduced in \cite{HPS-Cantor} for $X$ being the Cantor set and then was generalized in \cite{Lin-Q-RSA} for a general $X$. (More examples will be constructed in the next section using groupoids.)

\begin{defn}\label{defn-AY}
Let $Y$ be a closed subset of $X$ with non-empty interior. The C*-algebra $A_Y$ is defined by
$$A_Y:=\textrm{C*}\{f, ug;\ f, g\in\mathrm{C}(X),\ g|_Y=0\}\subseteq \mathrm{C}(X)\rtimes\Int.$$
\end{defn}

Let us collect some basic properties of this sub-C*-algebra:

Consider the first return times
$$\{j\in \mathbb N\cup \{0\};\ \textrm{$\sigma^j(x)\in Y$, $\sigma^i(x)\notin Y$, $1\leq i\leq j-1$ for some $x\in Y$}\}.$$
Since $\sigma$ is minimal, $X$ is compact, and $Y$ has a non-empty interior, this set of numbers is finite; let us write it as
$$J_1<J_2<\cdots <J_K$$
for some $K\in\mathbb N$.
Since $X$ is an infinite set and $\sigma$ is minimal, the first return time $J_1$ is arbitrarily large if $Y$ is sufficiently small.

For each $1\leq k\leq K$, consider the (locally compact---see below) subset of $X$
$$Z_k=\{x\in Y;\  \textrm{$\sigma^{J_k}(x)\in Y$ but $\sigma^i(x)\notin Y$ for any $1\leq i\leq J_k-1$}\}.$$
Then the sets 
$$\{Z_1, \sigma(Z_1), ..., \sigma^{J_1-1}(Z_1)\}, ..., \{Z_k, \sigma(Z_k), ..., \sigma^{J_k-1}(Z_k)\}$$
---which are naturally listed as shown---form a partition of $X$. This is often called a Rokhlin partition.

\begin{lem}[\cite{Lin-Q-RSA}; see also \cite{QL-Ph-min-diff} or Lemma 2.15 of \cite{NCP-fmd}]\label{QLin-decp}
In terms of the notation introduced above, one has that, for each $1\leq k\leq K$,\\
\indent $\mathrm{(1)}$ the set $Z_1\cup\cdots\cup Z_k$ is closed (and so $Z_k$ is locally compact),\\
\indent $\mathrm{(2)}$ the set $\overline{Z_k}\cap (Z_1\cup\cdots\cup Z_{k-1})$ is the disjoint union of the subsets
$$W_{t_1,..., t_s}=\partial Z_k\cap Z_{t_1}\cap\sigma^{-J_{t_1}}(Z_{t_2})\cap\cdots\cap\sigma^{-(J_{t_1}+\cdots+J_{t_{s-1}})}(Z_{t_s}),$$ where $J_{t_1}+J_{t_{2}} + \cdots +J_{t_s}=J_k$ for some $1\leq t_1, t_2, ..., t_s \leq k$.
\end{lem}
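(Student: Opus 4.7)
The plan is to prove (1) by realizing $Z_1\cup\cdots\cup Z_k$ as a finite intersection of closed sets, and to prove (2) by analyzing the orbit structure of points in the closure of $Z_k$.

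For (1), I would use the characterization
$$Z_1\cup\cdots\cup Z_k=\{x\in Y:\sigma^i(x)\in Y\ \text{for some}\ 1\leq i\leq J_k\},$$
which follows directly from the definition of first return times and the fact that $J_k$ is the largest possible first return time. This gives
$$Z_1\cup\cdots\cup Z_k=Y\cap\bigcup_{i=1}^{J_k}\sigma^{-i}(Y),$$
and since $Y$ is closed and $\sigma$ is a homeomorphism, each $\sigma^{-i}(Y)$ is closed, so the finite union intersected with $Y$ is closed. Then $Z_k=(Z_1\cup\cdots\cup Z_k)\setminus(Z_1\cup\cdots\cup Z_{k-1})$ is the difference of two closed sets, hence locally compact in the subspace topology.

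For (2), the main content is to show that every $x\in\overline{Z_k}\cap(Z_1\cup\cdots\cup Z_{k-1})$ lies in some $W_{t_1,\ldots,t_s}$. By continuity of $\sigma^{J_k}$ and closedness of $Y$, any $x\in\overline{Z_k}$ satisfies $x\in Y$ and $\sigma^{J_k}(x)\in Y$. Since $x\in Z_1\cup\cdots\cup Z_{k-1}$, it already lies in some $Z_{t_1}$ with $t_1<k$, so its first return time to $Y$ equals $J_{t_1}<J_k$. I would then iterate: set $T_0=0$ and $T_{j+1}=T_j+J_{t_{j+1}}$, where $J_{t_{j+1}}$ is the first return time of $\sigma^{T_j}(x)$ to $Y$, producing a strictly increasing sequence with $\sigma^{T_j}(x)\in Y$. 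The crux of the argument is that this sequence must reach $J_k$ exactly: if one had $T_{j-1}<J_k<T_j$, then $\sigma^{J_k-T_{j-1}}(\sigma^{T_{j-1}}(x))=\sigma^{J_k}(x)\in Y$ with $0<J_k-T_{j-1}<J_{t_j}$, contradicting that $J_{t_j}$ is the first return time of $\sigma^{T_{j-1}}(x)$. So $T_s=J_k$ for some $s$, yielding $J_{t_1}+\cdots+J_{t_s}=J_k$ and placing $x\in W_{t_1,\ldots,t_s}$.

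The reverse inclusion $W_{t_1,\ldots,t_s}\subseteq\overline{Z_k}\cap(Z_1\cup\cdots\cup Z_{k-1})$ is immediate from the defining intersections, since $W_{t_1,\ldots,t_s}\subseteq\partial Z_k\subseteq\overline{Z_k}$ and (for the non-vacuous cases, where necessarily $t_1<k$) $W_{t_1,\ldots,t_s}\subseteq Z_{t_1}\subseteq Z_1\cup\cdots\cup Z_{k-1}$. Disjointness across distinct tuples is automatic, because the ordered sequence of return times along the orbit of a point is uniquely determined. The main obstacle is really the iteration step: one has to argue carefully, using the minimality of each first return time, that no intermediate visit to $Y$ can be overlooked, so that the cumulative times land exactly on $J_k$ rather than overshooting.
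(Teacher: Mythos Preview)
The paper does not actually prove this lemma; it is stated with a citation to \cite{Lin-Q-RSA}, \cite{QL-Ph-min-diff}, and Lemma~2.15 of \cite{NCP-fmd}, and no argument is given in the text. Your proposal supplies a complete and correct proof along the standard lines of those references: the closed-set description $Z_1\cup\cdots\cup Z_k = Y\cap\bigcup_{i=1}^{J_k}\sigma^{-i}(Y)$ for part~(1), and the iterated first-return decomposition with the ``no overshoot'' argument for part~(2), are exactly the expected steps, and you handle them cleanly. One small remark: in the index range $1\le t_1,\ldots,t_s\le k$ appearing in the statement, the case $s=1$, $t_1=k$ gives $W_k=\partial Z_k\cap Z_k=\varnothing$, and for $s\ge 2$ your argument in fact forces all $t_j<k$ (since $J_{t_j}=T_j-T_{j-1}\le J_k-J_{t_1}<J_k$); so the bound $t_j\le k$ in the lemma is slack but harmless.
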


A quite explicit description of the subalgebra $A_Y$ of the crossed product, a C*-algebra of type I,  was obtained by Q.~Lin (\cite{Lin-Q-RSA}). It is a subhomogeneous algebra of order at most $J_K$. In fact, it is an RSH algebra with all gluing maps being diagonal.

\begin{thm}[\cite{Lin-Q-RSA}; see also \cite{QL-Ph-min-diff} or Theorem 2.22 of \cite{NCP-fmd}]\label{Lin-Sub}
In terms of the notation introduced above, one has that the C*-algebra $A_Y$ is isomorphic to the sub-C*-algebra of $\bigoplus_{k=1}^K \mathrm{M}_{J_k}(\mathrm{C}(\overline{Z_k}))$ consisting of the elements $(F_1, ..., F_K)$ with
\begin{displaymath}
F_k|_{W_{t_1, ..., t_s}}=
\left(
\begin{array}{cccc}
F_{t_1}|_{W_{t_1, ..., t_s}} & & & \\
& F_{t_2}\circ\sigma^{J_{t_1}}|_{W_{t_1, ..., t_s}} & &\\
& & \ddots &\\
& & & F_{t_s}\circ\sigma^{J_{t_s-1}}|_{W_{t_1, ..., t_s}}
\end{array}
\right)
\end{displaymath}
whenever 
$$W_{t_1, ..., t_s}=\partial Z_k\cap Z_{t_1}\cap\sigma^{-J_{t_1}}(Z_{t_2})\cap\cdots\cap\sigma^{-(J_{t_1}+\cdots+J_{t_{s-1}})}(Z_{t_s})\neq\O,$$ where $J_{t_1}+J_{t_{2}}+\cdots +J_{t_s}=J_k$. 

Moreover, for any $f, g\in\mathrm{C}(X)$ with $g|_Y=0$, the images of $f, ug\in A_Y$ in this identification are
\begin{equation}\label{id-01}
f=\bigoplus_{k=1}^K
\left(
\begin{array}{cccc}
f\circ \sigma|_{\overline{Z_k}} & & & \\
& f\circ \sigma^2|_{\overline{Z_k}} & &\\
& & \ddots &\\
& & & f\circ \sigma^{J_k}|_{\overline{Z_k}}
\end{array}
\right)\in \bigoplus_{k=1}^K \mathrm{M}_{J_k}(\mathrm{C}(\overline{Z_k}))
\end{equation}
and
\begin{equation}\label{id-02}
ug=\bigoplus_{k=1}^K
\left(
\begin{array}{cccc}
0 & & & \\
g\circ \sigma|_{\overline{Z_k}}& 0& & \\
&  \ddots& \ddots &\\
& & g\circ \sigma^{J_k-1}|_{\overline{Z_k}} & 0 
\end{array}
\right)\in \bigoplus_{k=1}^K \mathrm{M}_{J_k}(\mathrm{C}(\overline{Z_k})),
\end{equation}
respectively.
\end{thm}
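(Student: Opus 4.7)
The plan is to build the stated isomorphism directly from the Rokhlin-tower structure $\{\sigma^i(Z_k): 1\le k \le K,\ 0\le i \le J_k-1\}$, verifying the matrix formulas \eqref{id-01}--\eqref{id-02} on the generators and then checking the gluing data. For each pair $(k,x)$ with $x\in\overline{Z_k}$, I would first set up a fiber representation $\pi_{k,x}\colon A_Y\to\mathrm{M}_{J_k}(\Comp)$ by declaring $\pi_{k,x}(f)$ and $\pi_{k,x}(ug)$ to be the matrices appearing on the $k$-th summand of \eqref{id-01} and \eqref{id-02} evaluated at $x$. The key point making $\pi_{k,x}$ a genuine $*$-representation is that, in the covariant representation on $\ell^2(\mathrm{orbit}(x))$, the finite-dimensional subspace $H_{k,x}:=\mathrm{span}\{\delta_{\sigma(x)},\dots,\delta_{\sigma^{J_k}(x)}\}$ is invariant under both $f\in\mathrm{C}(X)$ (which acts diagonally) and $ug$ with $g|_Y=0$ (the would-be ``escape'' term $g(\sigma^{J_k+1}(x))\delta_{\sigma^{J_k+1}(x)}$ at the upper endpoint and the would-be term $g(\sigma(x))\delta_x$ at the lower endpoint both lie in $Y$ and hence vanish by the defining property $g|_Y=0$ of generators of $A_Y$). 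Taking the matrix of each generator in the ordered basis $\{\delta_{\sigma^i(x)}\}_{i=1}^{J_k}$ reproduces \eqref{id-01}--\eqref{id-02} exactly; continuity of $x\mapsto \pi_{k,x}$ is immediate. Assembling gives a $*$-homomorphism $\Phi\colon A_Y\to\bigoplus_{k=1}^K\mathrm{M}_{J_k}(\mathrm{C}(\overline{Z_k}))$.

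Next I would verify the gluing compatibility. Fix $x\in W_{t_1,\dots,t_s}\subseteq \partial Z_k$; by Lemma \ref{QLin-decp}(2), $x\in Z_{t_1}$ and $\sigma^{J_{t_1}+\cdots+J_{t_j}}(x)\in Z_{t_{j+1}}$ for $j<s$, so the column $\{\sigma(x),\sigma^2(x),\dots,\sigma^{J_k}(x)\}$ splits as the concatenation of the $Z_{t_j}$-columns based at $x,\ \sigma^{J_{t_1}}(x),\ \dots,\ \sigma^{J_{t_1}+\cdots+J_{t_{s-1}}}(x)$, of heights $J_{t_1},\dots,J_{t_s}$ respectively. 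Because every subcolumn breakpoint $\sigma^{J_{t_1}+\cdots+J_{t_j}}(x)$ lies in $Y$, the potentially dangerous $g$-values on the subdiagonal of $\pi_{k,x}(ug)$ at those indices are automatically $0$; hence $\pi_{k,x}$ becomes genuinely block-diagonal with blocks $\pi_{t_j,\sigma^{J_{t_1}+\cdots+J_{t_{j-1}}}(x)}$. This is precisely the compatibility condition that cuts out the prescribed subalgebra of $\bigoplus_k\mathrm{M}_{J_k}(\mathrm{C}(\overline{Z_k}))$.

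Injectivity of $\Phi$ follows because $\bigoplus_{k,x}\pi_{k,x}$ is unitarily equivalent to the restriction to $A_Y$ of a faithful covariant representation of $\mathrm{C}(X)\rtimes\Int$ on $L^2(X,\mu)$ for any fully supported $\sigma$-invariant measure $\mu$ (the Rokhlin pieces $\sigma^i(Z_k)$ cover $X$ up to a null boundary). For surjectivity onto the constrained subalgebra, I would argue by density: using continuous partitions of unity subordinate to the open towers over $Z_k$ and the shift relations in $\mathrm{C}(X)\rtimes\Int$, the $*$-algebra $\{f,ug:g|_Y=0\}$ generates inside $\bigoplus_k\mathrm{M}_{J_k}(\mathrm{C}(\overline{Z_k}))$ every matrix unit $e^{(k)}_{ij}\otimes \phi$ with $\phi\in\mathrm{C}(\overline{Z_k})$ vanishing on $\overline{Z_k}\setminus Z_k$, and then the compatible extensions to the boundary obtained from other summands fill out exactly the pullback subalgebra.

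The main obstacle is the boundary analysis: verifying that the fiber matrices assemble continuously across $\overline{Z_k}\cap\overline{Z_{k'}}$ and that no further gluing constraints arise beyond those in Lemma \ref{QLin-decp}. The decomposition of $\overline{Z_k}\setminus Z_k$ into the disjoint sets $W_{t_1,\dots,t_s}$ with $J_{t_1}+\cdots+J_{t_s}=J_k$ is exactly what is needed to control this; everything else reduces to a careful index bookkeeping against the first-return partition.
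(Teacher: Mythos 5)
The paper does not actually prove Theorem \ref{Lin-Sub}; it is quoted from \cite{Lin-Q-RSA}, \cite{QL-Ph-min-diff} and \cite{NCP-fmd}, so your attempt has to be measured against the standard argument in those sources — and your overall route is exactly that argument: compress a covariant representation to the tower subspace over each $x\in\overline{Z_k}$, read off \eqref{id-01}--\eqref{id-02} on the generators, use Lemma \ref{QLin-decp}(2) to see that $\pi_{k,x}$ block-diagonalizes at boundary points, and prove injectivity from a faithful representation. Those parts are essentially sound, up to small slips you should fix: the escaping coefficients are $g(\sigma^{J_k}(x))$ at the top and $g(x)$ at the bottom (not $g(\sigma^{J_k+1}(x))$ and $g(\sigma(x))$), and it is precisely because $\sigma^{J_k}(x)\in Y$ and $x\in Z_k\subseteq Y$ that they vanish; also $\bigoplus_{k,x}\pi_{k,x}$ is not literally unitarily equivalent to the representation on $L^2(X,\mu)$ — what you need, and what is true, is that the $L^2(X,\mu)$ representation of $A_Y$ is a direct integral of the $\pi_{k,x}$, $x\in Z_k$, and that it is faithful (here by simplicity of $\mathrm{C}(X)\rtimes\Int$, or by using instead the direct sum of the orbit representations).

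The genuine gap is the surjectivity step. Your claim that the image contains every $e^{(k)}_{ij}\otimes\phi$ with $\phi\in\mathrm{C}(\overline{Z_k})$ vanishing on $\overline{Z_k}\setminus Z_k$ (and zero in the other summands) is false: such an element in general does not even lie in the constrained subalgebra, hence cannot lie in the image. Indeed, if $W_{t_1,\dots,t_s}\subseteq\partial Z_{k'}$ is nonempty for some taller tower $k'$ with $t_j=k$, the gluing condition forces the $j$-th block of $F_{k'}|_{W_{t_1,\dots,t_s}}$ (which is $0$ for your element) to equal $F_k\circ\sigma^{J_{t_1}+\cdots+J_{t_{j-1}}}|_{W_{t_1,\dots,t_s}}$, and $\sigma^{J_{t_1}+\cdots+J_{t_{j-1}}}(W_{t_1,\dots,t_s})\subseteq Z_{t_j}=Z_k$ is a subset of $Z_k$ itself, disjoint from $\overline{Z_k}\setminus Z_k$; already for $t_1=k$ one would need $\phi=0$ on $\partial Z_{k'}\cap Z_k$. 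So the compatibility relations constrain \emph{interior} values of the $k$-th component, and your proposed matrix units cannot serve as a dense supply, nor does ``compatible extensions from other summands'' repair this. The clean way to finish is not to exhibit matrix units but to run a rich-subalgebra (Stone--Weierstrass) argument, parallel to what the paper does for the groupoid analogue in Theorem \ref{diag-SHA}: show that for every $x\in Z_k$ the restriction of the point evaluation to the image is irreducible (freeness lets you choose $g$ with $g|_Y=0$ but $g$ nonzero at the intermediate points $\sigma(x),\dots,\sigma^{J_k-1}(x)$, giving the subdiagonal partial shifts, while $f\in\mathrm{C}(X)$ separates the $J_k$ orbit points and gives the diagonal), and that inequivalent point evaluations remain inequivalent on the image; then Proposition 11.1.6 of \cite{Dix-book} yields that the image is the whole constrained subalgebra. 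Alternatively one must carry out the boundary-compatible approximation by induction over the pullback decomposition far more carefully than your sketch does.
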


\section{Small subgroupoids and recursive subhomogeneous C*-algebras}\label{small-groupoid}

In this section, let us consider a class of small subgroupoid of the transformation groupoid $X \rtimes\Gamma$. It turns out that the C*-algebra of a such subgroupoid is an RSH algebra with diagonal maps.


Consider a topological dynamical system $(X, \Gamma)$. Recall that (see Example 1.2.a of \cite{Renault-LNM}) the transformation groupoid $X \rtimes \Gamma$ is defined by taking $X \times \Gamma$ with 
\begin{enumerate}
\item $\mathrm{r}(x, \gamma) = (x, e)$, $\mathrm{s}(x, \gamma) = (x\gamma, e)$, $(x, \gamma) \in X \times\Gamma$,
\item $(x, \gamma_1)(y, \gamma_2) = (x, \gamma_1\gamma_2)$ if $x\gamma_1 = y$, and
\item $(x, \gamma)^{-1} = (x\gamma, \gamma^{-1})$, $(x, \gamma) \in X \times\Gamma$.
\end{enumerate}
The topology on $X \rtimes\Gamma$ is the product topology on $X \times \Gamma$. Note that $X$, regarded as  $\{(x, e): x\in X\}$, is  the unit space of $X \rtimes\Gamma$, and $X\rtimes\Gamma$ is principle if $(X, \Gamma)$ is free.

\begin{defn}
A small subgroupoid $\mathcal G \subseteq X\rtimes\Gamma$ is a subgroupoid which is open, relatively compact, and $\{(x, e): x\in X\} \subseteq \mathcal G.$
\end{defn}

It is well known that the C*-algebra $\textrm{C*}(\mathcal G)\subseteq \textrm{C*}(X \rtimes\Gamma)$ is subhomogeneous if $\mathcal G$ is small subgroupoid.  Using the orbit structure of $\mathcal G$, let us show that $\textrm{C*}(\mathcal G)$ actually is an RSH-algebra with diagonal maps.

Consider
$$\mathrm{Supp}_\Gamma(\mathcal G) : = \{\gamma \in \Gamma: \exists x\in X,\ (x, \gamma) \in \mathcal G\}.$$ Since $\mathcal G$ is relatively compact, one has that $$N:=\abs{\mathrm{Supp}_\Gamma(\mathcal G)} < +\infty.$$
Since the unit space of the subgroupoid $\mathcal G$ is $X$, it induces an equivalence relation on $X$ by 
$$x \sim_{\mathcal G} y \Longleftrightarrow \exists (x, \gamma) \in \mathcal G,\ x\gamma= y.$$
For each $x\in X$, define
$$\mathrm{Orbit}_{\mathcal G}(x) = [x] = \{y\in X : y\sim_{\mathcal G} x\}.$$
It is clear that $$\abs{\mathrm{Orbit}_{\mathcal G}(x) } \leq N,\quad x\in X.$$

\begin{defn}
Let $\mathcal G\subseteq X \rtimes\Gamma$ be any subgroupoid, and let $x\in X$. Define the shape of $x$ to be
$$\mathrm{S}(x) = \{\gamma \in \Gamma: (x, \gamma) \in \mathcal G \}\subseteq \Gamma.$$
\end{defn}
\begin{rem}
Note that since $(x, e) \in \mathcal G$, $x\in X$, one has $$e \in \mathrm{S}(x), \quad x \in X$$
\end{rem}

The function $\mathrm{S}$ has the following properties.
\begin{lem}
Let $(X, \Gamma)$ be a free dynamical system, and let $\mathcal G$ be a relatively compact subgroupoid of $X \rtimes \Gamma$. Then
\begin{enumerate}
\item $|\mathrm{Orbit}_{\mathcal G}(x)| = |\mathrm{S}(x)|$, $x\in X$;
\item $\mathrm{Orbit}_{\mathcal G}(x) = x\mathrm{S}(x)$, $x \in X$;
\item if $(x, \gamma) \in \mathcal G$, then $$\mathrm{S}(x\gamma) =  \gamma^{-1} \mathrm{S}(x);$$ i.e., if $y\sim_{\mathcal G} x$, then $\mathrm{S}(y) =  \gamma^{-1} \mathrm{S}(x),$ where $y=x\gamma$.
\end{enumerate}
\end{lem}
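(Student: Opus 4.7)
The three properties follow directly from the definitions together with freeness of the action and the groupoid axioms; the relative compactness only enters implicitly through the finiteness of the sets involved and plays no role in the proofs themselves. I would tackle the statements in the order (2), (1), (3), since each builds conveniently on the previous.

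For (2), I would simply unfold the definitions: $y \in \mathrm{Orbit}_{\mathcal G}(x)$ iff there exists $\gamma$ with $(x, \gamma) \in \mathcal G$ and $x\gamma = y$, which is the same as saying $\gamma \in \mathrm{S}(x)$ and $y = x\gamma$. Hence $\mathrm{Orbit}_{\mathcal G}(x) = x \mathrm{S}(x)$. This is essentially a one-line verification.

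For (1), the previous step shows that the map $\mathrm{S}(x) \to \mathrm{Orbit}_{\mathcal G}(x)$ given by $\gamma \mapsto x\gamma$ is surjective. The key step is injectivity, which is where freeness of $(X, \Gamma)$ is used: if $\gamma_1, \gamma_2 \in \mathrm{S}(x)$ satisfy $x\gamma_1 = x\gamma_2$, then $x(\gamma_1 \gamma_2^{-1}) = x$, and by freeness this forces $\gamma_1 \gamma_2^{-1} = e$, i.e.\ $\gamma_1 = \gamma_2$. So the map is a bijection and the two cardinalities agree.

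For (3), given $(x, \gamma) \in \mathcal G$, I would prove the two inclusions separately using closure of $\mathcal G$ under composition and inversion. If $\gamma' \in \mathrm{S}(x\gamma)$, then $(x\gamma, \gamma') \in \mathcal G$. Since $\mathrm{s}(x, \gamma) = (x\gamma, e) = \mathrm{r}(x\gamma, \gamma')$, the product $(x, \gamma)(x\gamma, \gamma') = (x, \gamma\gamma')$ lies in $\mathcal G$, so $\gamma\gamma' \in \mathrm{S}(x)$, i.e.\ $\gamma' \in \gamma^{-1}\mathrm{S}(x)$. Conversely, if $\gamma_0 \in \mathrm{S}(x)$, then $(x, \gamma_0) \in \mathcal G$; using the inverse $(x, \gamma)^{-1} = (x\gamma, \gamma^{-1}) \in \mathcal G$ and the composability condition $\mathrm{s}(x\gamma, \gamma^{-1}) = (x, e) = \mathrm{r}(x, \gamma_0)$, the product $(x\gamma, \gamma^{-1})(x, \gamma_0) = (x\gamma, \gamma^{-1}\gamma_0)$ lies in $\mathcal G$, showing $\gamma^{-1}\gamma_0 \in \mathrm{S}(x\gamma)$. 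Thus $\mathrm{S}(x\gamma) = \gamma^{-1}\mathrm{S}(x)$. There is no real obstacle here — the entire proof is a straightforward bookkeeping argument with the transformation groupoid structure, and the only nontrivial input is freeness in part (1).
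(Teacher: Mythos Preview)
Your proof is correct and follows essentially the same approach as the paper: unfolding the definitions for (2), invoking freeness for the bijection in (1), and proving the two inclusions in (3) via the groupoid multiplication and inversion. The only cosmetic difference is that for the reverse inclusion in (3) the paper writes the product as $(x,\gamma)^{-1}(x,\gamma\gamma')$ rather than $(x\gamma,\gamma^{-1})(x,\gamma_0)$, which is the same computation in different notation.
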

\begin{proof}
Note that $y\in \mathrm{Orbit}_{\mathcal G}(x)$, if and only if, there is $\gamma\in\Gamma$ such that $(x, \gamma)\in\mathcal G$ (hence $\gamma\in\mathrm{S}(x)$) and $y= x\gamma$; and the latter condition is equivalent to $y\in x\mathrm{S}(x)$. This proves (2), and (1) follows from (2) by the freeness of the action.

Let $\gamma' \in \mathrm{S}(x\gamma)$, then $(x\gamma, \gamma') \in \mathcal G$. Since $(x, \gamma) \in \mathcal G$, one has that $(x, \gamma\gamma') = (x, \gamma)(x\gamma, \gamma') \in \mathcal G$, and hence $\gamma\gamma' \in\mathrm{S}(x)$ and $\gamma' \in \gamma^{-1}\mathrm{S}(x)$. Therefore $\mathrm{S}(x\gamma) \subseteq  \gamma^{-1} \mathrm{S}(x);$

Now, if $\gamma'\in \gamma^{-1} \mathrm{S}(x)$, then $\gamma\gamma'\in\mathrm{S}(x)$, and hence $(x, \gamma\gamma') \in \mathcal G$. Since $(x, \gamma) \in \mathcal G$, one has
$$(x\gamma, \gamma') = (x, \gamma)^{-1} (x, \gamma\gamma') \in\mathcal G,$$
and hence $\gamma'\in \mathrm{S}(x\gamma)$, as desired.
\end{proof}

\begin{defn}
Let $\mathcal F\subseteq\Gamma$ be a finite set containing $e$. Define
$$ Z_{\mathcal F} = \{x\in X: \mathrm{S}(x) = \mathcal F\}.$$
\end{defn}


\begin{lem}\label{orbit-lsc}
Let $\mathcal G \subseteq X \rtimes \Gamma$ be open and relatively compact subgroupoid. Then, the function $x\mapsto \mathrm{S}(x)$ is lower semicontinuous in the following sense: for any $x$, there is an open set $U \ni x$ such that $$\mathrm{S}(x) \subseteq \mathrm{S}(x'), \quad x' \in U.$$  Hence, for any finite subset $\mathcal F\subseteq \Gamma$, if $x_n \in Z_{\mathcal F}$ and $x_n \to x_\infty \in X$, then $$\mathrm{S}(x_\infty) \subseteq \mathcal F.$$ 
\end{lem}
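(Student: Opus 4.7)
The plan is to exploit the fact that $\mathcal G$ is open in $X \rtimes \Gamma$ with the product topology (where $\Gamma$ carries the discrete topology), together with relative compactness which forces $\mathrm{S}(x)$ to be finite for every $x$.

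First I would fix $x \in X$ and note that $\mathrm{S}(x) \subseteq \mathrm{Supp}_\Gamma(\mathcal G)$, hence is finite. For each $\gamma \in \mathrm{S}(x)$, the point $(x,\gamma)$ lies in the open set $\mathcal G \subseteq X \times \Gamma$, so there is an open neighborhood of the form $U_\gamma \times \{\gamma\}$ (with $U_\gamma$ an open neighborhood of $x$ in $X$) contained in $\mathcal G$. Then I would set
\[
U := \bigcap_{\gamma \in \mathrm{S}(x)} U_\gamma,
\]
which is open and contains $x$ since the intersection is finite. For any $x' \in U$ and any $\gamma \in \mathrm{S}(x)$, we have $(x',\gamma) \in U_\gamma \times \{\gamma\} \subseteq \mathcal G$, so $\gamma \in \mathrm{S}(x')$. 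This proves $\mathrm{S}(x) \subseteq \mathrm{S}(x')$ for all $x' \in U$, which is precisely the lower semicontinuity claim.

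For the second assertion, suppose $x_n \in Z_{\mathcal F}$ and $x_n \to x_\infty$. Applying the first part to $x_\infty$ yields an open neighborhood $U$ of $x_\infty$ on which $\mathrm{S}(x_\infty) \subseteq \mathrm{S}(x')$. Since $x_n \to x_\infty$, eventually $x_n \in U$, and then $\mathrm{S}(x_\infty) \subseteq \mathrm{S}(x_n) = \mathcal F$, as desired.

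There is essentially no obstacle here: the statement is a direct consequence of the product topology on $X \rtimes \Gamma$ (with $\Gamma$ discrete) combined with the finiteness of $\mathrm{S}(x)$ forced by relative compactness of $\mathcal G$. The only mild subtlety is that the finite intersection step relies crucially on $\mathrm{S}(x)$ being finite; without relative compactness one would need a different argument.
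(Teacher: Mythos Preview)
Your proof is correct and follows essentially the same approach as the paper's own argument: the paper also enumerates $\mathrm{S}(x)=\{r_1,\dots,r_n\}$ and uses openness of $\mathcal G$ to find a single neighborhood $U\ni x$ with $(x',\gamma_i)\in\mathcal G$ for all $x'\in U$, which is precisely your finite-intersection construction spelled out. You are in fact more explicit than the paper, which omits the proof of the ``Hence'' clause entirely; your derivation of $\mathrm{S}(x_\infty)\subseteq\mathcal F$ from the first part is exactly the intended one.
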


\begin{proof}
Write $\mathrm{S}(x) = \{r_1, r_2, ..., r_n\}$. Since $\mathcal G$ is open, there is $U \ni x$ such that $$(x', \gamma_i) \in \mathcal G,\quad x'\in U,\ i=1, 2, ..., n.$$ In particular this implies $\gamma_i \in \mathrm{S}(x')$, $i=1, 2, ..., n$, as desired.
\end{proof}

\begin{defn}
Let $x\in X$. Define $$\mathcal G^x = \{g\in\mathcal G: \mathrm{r}(g) = x\}.$$
\end{defn}

\begin{rem}
Note that if $x\in Z_{\mathcal F}$ where $\mathcal F \subseteq \Gamma$ is a finite set, 
then the map $(x, \gamma) \mapsto \gamma $ induces a one-to-one correspondence between $\mathcal G^x$ and $\mathrm{S}(x) = \mathcal F$.
\end{rem}

For a small subgroupoid $\mathcal G$, there is a one-to-one correspondence between the orbits and the equivalence classes of the irreducible representations of $\textrm{C*}(\mathcal G)$.
\begin{lem}[Proposition 3.8 of \cite{Clark-JOT2007}]\label{groupoid-rep}
Let $(X, \Gamma)$ be a free dynamical system, and let $\mathcal G \subseteq X \rtimes \Gamma$ be a relatively compact subgroupoid.
Let $x\in X$. The map 
$$\pi_x: \mathrm{C}_{\mathrm c} (\mathcal G) \ni f \mapsto  \sum_{g, h\in \mathcal G^x} f(gh^{-1}) E_{g, h},  $$
where $\{E_{g, h}\}_{\mathcal G^x}$ are the standard matrix units of $\mathcal G^x$, induces an irreducible representation of $\textrm{C*}(\mathcal G_0)$ on $\Comp^{|\mathcal G^x|}$ (if $\mathcal G^x \neq\varnothing$), still denoted by $\pi_x$. Moreover, any irreducible representation of $\textrm{C*}(\mathcal G_0)$ arises in this way, and $x \sim_{\mathcal G} y$ if and only if $\pi_x$ is unitarily equivalent to $\pi_y$.
\end{lem}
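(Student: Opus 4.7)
The plan is to set up $\pi_x$ as a ``regular-type'' representation on the finite-dimensional Hilbert space $\ell^2(\mathcal G^x)\cong \Comp^{|\mathcal G^x|}$ and then leverage the freeness of $(X,\Gamma)$ (which makes $\mathcal G$ principal, hence Hausdorff and \'etale with trivial isotropy) together with the relative compactness of $\mathcal G$ (which makes $\textrm{C*}(\mathcal G)$ subhomogeneous of type~I) to obtain both the irreducibility and the complete description of the spectrum.

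First I would verify that $\pi_x$ is a $*$-representation of $\mathrm{C}_{\mathrm c}(\mathcal G)$. Given the convolution $(f_1*f_2)(k)=\sum_{ab=k}f_1(a)f_2(b)$ and the involution $f^*(k)=\overline{f(k^{-1})}$, a direct computation shows $\pi_x(f_1*f_2)=\pi_x(f_1)\pi_x(f_2)$ and $\pi_x(f^*)=\pi_x(f)^*$ in $\mathrm{M}_{|\mathcal G^x|}(\Comp)$. The decisive point is that for $g,h,k\in\mathcal G^x$ the composition $(gh^{-1})(hk^{-1})$ is defined and equals $gk^{-1}$, so the convolution sums line up with matrix multiplication. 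Since $\mathrm{M}_{|\mathcal G^x|}(\Comp)$ carries a unique C*-norm, $\pi_x$ is automatically bounded and extends to $\textrm{C*}(\mathcal G)$.

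Next, I would prove irreducibility by showing that the image of $\pi_x$ is all of $\mathrm{M}_{|\mathcal G^x|}(\Comp)$. For fixed $g_0,h_0\in\mathcal G^x$ the goal is to realize a nonzero scalar multiple of the matrix unit $E_{g_0,h_0}$. Because $\mathcal G$ is open and principal, every arrow has a neighbourhood basis of open bisections; pick $f\in\mathrm{C}_{\mathrm c}(\mathcal G)$ supported in a sufficiently small open bisection around $g_0h_0^{-1}$, with $f(g_0h_0^{-1})\neq 0$. Freeness together with the separation of sources and ranges inside a bisection forces the only pair $(g,h)\in\mathcal G^x\times\mathcal G^x$ with $gh^{-1}\in\mathrm{supp}(f)$ to be $(g_0,h_0)$, so $\pi_x(f)=f(g_0h_0^{-1})\,E_{g_0,h_0}$. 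Varying $(g_0,h_0)$ yields the whole matrix algebra.

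Finally, I would classify the irreducibles and verify the equivalence criterion using the type-I structure of $\textrm{C*}(\mathcal G)$. If $y=x\gamma$ with $(x,\gamma)\in\mathcal G$, then left multiplication $g\mapsto(x,\gamma)\cdot g$ is a bijection $\mathcal G^y\to\mathcal G^x$, and the induced unitary $\Comp^{|\mathcal G^y|}\to\Comp^{|\mathcal G^x|}$ intertwines $\pi_y$ and $\pi_x$. Conversely, restricting any irreducible representation to the commutative subalgebra $\mathrm{C}_0(\mathcal G^{(0)})\subseteq\textrm{C*}(\mathcal G)$ picks out a single character, i.e.\ a point of $X$; two characters $x,y$ produce equivalent representations only when they lie in the same $\mathcal G$-orbit, since otherwise the corresponding central cut-downs distinguish the kernels. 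That every irrep arises as some $\pi_x$ then follows because the reduction of $\textrm{C*}(\mathcal G)$ to the saturation of $[x]$ is Morita equivalent to $\mathrm{M}_{|[x]|}(\Comp)$, whose unique irreducible representation is precisely $\pi_x$.

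The main obstacle I anticipate is the careful bookkeeping in the irreducibility step: one must verify, using freeness and the \'etale/open structure of $\mathcal G$, that distinct pairs $(g,h),(g',h')\in\mathcal G^x\times\mathcal G^x$ can genuinely be separated by shrinking bisections around $gh^{-1}$. Once this is secured, the remaining assertions reduce to standard type-I theory for principal \'etale groupoids.
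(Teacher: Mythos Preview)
The paper does not supply its own proof of this lemma; it is simply quoted as Proposition~3.8 of \cite{Clark-JOT2007}. So there is no in-paper argument to compare your proposal against.

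Your outline is the standard proof and is essentially correct. One point worth making fully explicit in the irreducibility step: the reason a bump function near $g_0h_0^{-1}$ picks out only the matrix unit $E_{g_0,h_0}$ is that the map $(g,h)\mapsto gh^{-1}$ from $\mathcal G^x\times\mathcal G^x$ into $\mathcal G$ is \emph{injective}. Writing $g=(x,\gamma_g)$ and $h=(x,\gamma_h)$ one has $gh^{-1}=(x\gamma_h,\gamma_h^{-1}\gamma_g)$; equality of two such arrows forces $x\gamma_h=x\gamma_{h'}$, hence $\gamma_h=\gamma_{h'}$ by freeness, and then $\gamma_g=\gamma_{g'}$. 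Once this injectivity is in hand, the finiteness of $\mathcal G^x$ and the Hausdorffness of $\mathcal G\subseteq X\times\Gamma$ let you separate the finitely many distinct arrows $gh^{-1}$ by disjoint open sets; any $f\in\mathrm C_{\mathrm c}(\mathcal G)$ supported in the one containing $g_0h_0^{-1}$ and nonzero there gives $\pi_x(f)$ proportional to $E_{g_0,h_0}$. This is slightly cleaner than phrasing it via bisections, and it addresses directly the ``bookkeeping'' obstacle you flagged.

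Your classification step is correct in spirit, but the sentence ``the reduction of $\textrm{C*}(\mathcal G)$ to the saturation of $[x]$ is Morita equivalent to $\mathrm M_{|[x]|}(\Comp)$'' is carrying a lot of weight. In a complete write-up you would either cite the general spectrum description for principal \'etale groupoids (which is precisely what Clark's paper supplies) or argue directly: given an irreducible representation $\rho$, restrict to $\mathrm C(X)\subseteq\textrm{C*}(\mathcal G)$, observe that the support of the resulting spectral measure is a single $\mathcal G$-orbit $[x]$ (using irreducibility and the fact that orbit-saturated clopen sets give central projections), and then identify $\rho$ with $\pi_x$ by comparing dimensions and using the simplicity of $\mathrm M_{|\mathcal G^x|}(\Comp)$.
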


\begin{lem}\label{local-homeo}
Let $\mathcal G \subseteq X \rtimes \Gamma$ be a relatively compact subgroupoid. 
For each finite set $\mathcal F\subseteq \Gamma$, there is a homeomorphism $$\mathcal G^{Z_{\mathcal F}} := \{g\in\mathcal G: \mathrm{r}(g) \in Z_{\mathcal F}\} \cong Z_{\mathcal F} \times \mathcal F.$$
\end{lem}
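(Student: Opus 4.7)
The plan is to exhibit an explicit bijection and then verify that it is in fact a restriction of the identity on $X\times\Gamma$, whence continuity in both directions is automatic.

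First I would define the map
$$\Phi: \mathcal G^{Z_{\mathcal F}}\longrightarrow Z_{\mathcal F}\times\mathcal F,\qquad \Phi(x,\gamma)=(x,\gamma).$$
To see this is well defined, note that if $g=(x,\gamma)\in\mathcal G^{Z_{\mathcal F}}$ then $\mathrm{r}(g)=x\in Z_{\mathcal F}$, so $\mathrm{S}(x)=\mathcal F$; since $(x,\gamma)\in\mathcal G$ forces $\gamma\in\mathrm{S}(x)=\mathcal F$, the image lies in $Z_{\mathcal F}\times\mathcal F$ as required.

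Next I would check that $\Phi$ is a bijection by arguing that the two sets actually coincide as subsets of $X\times\Gamma$. For the inclusion $\mathcal G^{Z_{\mathcal F}}\subseteq Z_{\mathcal F}\times\mathcal F$ the argument above already suffices. For the reverse inclusion, let $(x,\gamma)\in Z_{\mathcal F}\times\mathcal F$; then $\gamma\in\mathcal F=\mathrm{S}(x)$, so by definition of $\mathrm{S}(x)$ we have $(x,\gamma)\in\mathcal G$, and since $\mathrm{r}(x,\gamma)=x\in Z_{\mathcal F}$ we conclude $(x,\gamma)\in\mathcal G^{Z_{\mathcal F}}$.

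Finally, I would observe that the topology on both sides is simply the subspace topology inherited from $X\times\Gamma$ (with $\Gamma$ discrete); on the target side this subspace topology on $Z_{\mathcal F}\times\mathcal F$ is the same as the product of the subspace topology on $Z_{\mathcal F}\subseteq X$ with the discrete topology on the finite set $\mathcal F$. Since $\Phi$ is a restriction of the identity map of $X\times\Gamma$ to equal subsets, it is trivially a homeomorphism. There is no real obstacle here: the only conceptual content is the elementary observation that $\gamma\in\mathrm{S}(x)\Longleftrightarrow(x,\gamma)\in\mathcal G$ together with the defining property $\mathrm{S}(x)=\mathcal F$ of points of $Z_{\mathcal F}$, which together make the two sets literally equal.
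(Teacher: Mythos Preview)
Your proof is correct and takes essentially the same approach as the paper: the paper simply writes down the map $(x,\gamma)\mapsto(x,\gamma)$ and declares it the desired homeomorphism, while you supply the routine verifications (well-definedness, bijectivity via the equivalence $\gamma\in\mathrm{S}(x)\Leftrightarrow(x,\gamma)\in\mathcal G$, and continuity as a restriction of the identity) that the paper leaves implicit.
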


\begin{proof}
Define the map
$$\mathcal G \supseteq \mathcal G^{Z_{\mathcal F}} \ni (x, \gamma) \mapsto (x, \gamma) \in Z_{\mathcal F} \times \mathcal F,$$ and it is  the desired homeomorphism.
\end{proof}

\begin{lem}\label{matrix-rep}
Let $(X, \Gamma)$ be a free dynamical system, and let $\mathcal G\subseteq X\rtimes\Gamma$ be a relatively compact subgroupoid.  Let $\mathcal F\subseteq\Gamma$ be a finite set. Then there is a homomorphism
$$\pi_{Z_{\mathcal F}}: \mathrm{C}^*(\mathcal G) \to \mathrm{M}_{|\mathcal F|}(\mathrm{C}(\overline{Z_{\mathcal F}}))$$ by
$$ (\pi_{Z_{\mathcal F}}(f))(x) = \sum_{\gamma_1, \gamma_2 \in \mathcal F} f(x\gamma_2, \gamma^{-1}_2\gamma_1) E_{\gamma_1, \gamma_2}, \quad x\in \overline{Z_{\mathcal F}},$$
where $\{E_{\gamma_1, \gamma_2}: \gamma_1, \gamma_2\in\mathcal F\}$ are the standard matrix units over $\mathcal F$ and $f\in \mathrm{C_0}(\mathcal G)$ being regarded as a function on $X \rtimes\Gamma$.
\end{lem}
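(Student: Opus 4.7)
The plan is to build $\pi_{Z_{\mathcal F}}$ in three stages: first verify that the formula gives a well-defined element of $\mathrm{M}_{|\mathcal F|}(\mathrm{C}(\overline{Z_{\mathcal F}}))$; then check it defines a $*$-homomorphism on the dense subalgebra $\mathrm{C}_c(\mathcal G)$; and finally extend by continuity to $\mathrm{C}^*(\mathcal G)$.

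For the first stage, I would extend $f \in \mathrm{C}_c(\mathcal G)$ by zero to all of $X \rtimes \Gamma$; this extension is continuous because $\mathcal G$ is open and $X \rtimes \Gamma$ is Hausdorff, so the compact support of $f$ is closed in the ambient space. For $x \in Z_{\mathcal F}$, the identity $\mathrm{S}(x\gamma_2) = \gamma_2^{-1}\mathrm{S}(x) = \gamma_2^{-1}\mathcal F$ established earlier ensures that $(x\gamma_2, \gamma_2^{-1}\gamma_1) \in \mathcal G$ for every $\gamma_1, \gamma_2 \in \mathcal F$; for $x$ in the boundary, Lemma \ref{orbit-lsc} only provides $\mathrm{S}(x) \subseteq \mathcal F$, but the ``missing'' entries are evaluated outside $\mathcal G$ and so vanish by the zero extension. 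Hence each matrix entry is a continuous $\Comp$-valued function on $\overline{Z_{\mathcal F}}$.

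For the second stage, linearity is immediate. To verify multiplicativity, I would compute $(f*g)(x\gamma_2, \gamma_2^{-1}\gamma_1) = \sum_{\alpha \in \Gamma} f(x\gamma_2, \alpha)\, g(x\gamma_2\alpha, \alpha^{-1}\gamma_2^{-1}\gamma_1)$, noting that only $\alpha \in \gamma_2^{-1}\mathcal F$ contributes for $x \in Z_{\mathcal F}$, and then reparameterize via $\gamma_3 := \gamma_2 \alpha$ to rewrite the sum as $\sum_{\gamma_3 \in \mathcal F} f(x\gamma_2, \gamma_2^{-1}\gamma_3)\, g(x\gamma_3, \gamma_3^{-1}\gamma_1)$; this matches $(\pi_{Z_{\mathcal F}}(f)(x)\, \pi_{Z_{\mathcal F}}(g)(x))_{\gamma_1, \gamma_2}$ under the appropriate convention for the matrix units $E_{\gamma_1, \gamma_2}$, and extends to $\overline{Z_{\mathcal F}}$ by continuity. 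The involution is preserved since the groupoid formula $f^*(y, \eta) = \overline{f(y\eta, \eta^{-1})}$ directly yields $(\pi_{Z_{\mathcal F}}(f^*)(x))_{\gamma_1, \gamma_2} = \overline{f(x\gamma_1, \gamma_1^{-1}\gamma_2)}$, which is the conjugate of the $(\gamma_2, \gamma_1)$-entry of $\pi_{Z_{\mathcal F}}(f)(x)$.

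For the third stage, I would identify $\pi_{Z_{\mathcal F}}(f)(x)$, for each fixed $x \in Z_{\mathcal F}$, with the irreducible representation $\pi_x$ of Lemma \ref{groupoid-rep}, using the parameterization $\mathcal G^x \cong \mathcal F$ supplied by Lemma \ref{local-homeo}; the bound $\norm{\pi_x(f)} \leq \norm{f}_{\mathrm{C}^*(\mathcal G)}$ then gives $\norm{\pi_{Z_{\mathcal F}}(f)} \leq \norm{f}_{\mathrm{C}^*(\mathcal G)}$ uniformly in $x$, so $\pi_{Z_{\mathcal F}}$ extends to the desired $*$-homomorphism on $\mathrm{C}^*(\mathcal G)$. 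The main obstacle I anticipate is the bookkeeping in stage two: matching the convolution indices with the chosen labeling $(\gamma_1, \gamma_2)$ of the matrix units so that one obtains a homomorphism rather than an antihomomorphism requires pinning down the precise convention for $E_{\gamma_1, \gamma_2}$ (whether the $1$ is placed at position $(\gamma_1, \gamma_2)$ or $(\gamma_2, \gamma_1)$), and only after this is settled does the identity of the two sums translate cleanly into the matrix-product identity.
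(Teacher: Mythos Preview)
Your proposal is correct and follows essentially the same route as the paper: both identify the formula, for $x\in Z_{\mathcal F}$, with the irreducible representation $\pi_x$ of Lemma~\ref{groupoid-rep} via the parametrization $\mathcal G^x\cong\mathcal F$ of Lemma~\ref{local-homeo}, and then extend continuously to $\overline{Z_{\mathcal F}}$. The paper is more economical in that it skips your stage two entirely---since $\pi_x$ is already known to be a $*$-representation of $\mathrm{C}^*(\mathcal G)$, multiplicativity, the $*$-relation, and boundedness all come for free once the identification is made, so your direct convolution computation (and the attendant worry about matrix-unit conventions) is correct but unnecessary.
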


\begin{proof}
By Lemma \ref{local-homeo}, one has that $\mathcal G^{Z_{\mathcal F}}$ is homeomorphic to $\mathcal Z_{\mathcal F} \times \mathcal F$. Then, for each $g\in \mathcal G^{Z_{\mathcal F}}$, there is a unique $\gamma_g \in\mathcal F$ such that $$g = (x, \gamma_g),\quad x=\mathrm{r}(g) \in Z_{\mathcal F}.$$

By Lemma \ref{groupoid-rep}, for any $x\in X$, the map $$\pi_x: f  \mapsto  \sum_{g, h\in \mathcal G^x} f(gh^{-1}) E_{g, h}$$ induces an irreducible representation of $\textrm{C*}(\mathcal G)$.
Then, if $x \in Z_{\mathcal F}$, one has
\begin{eqnarray*}
 \sum_{g, h\in \mathcal G^x} f(gh^{-1}) E_{g, h} 
& = & \sum_{\gamma_g, \gamma_h\in \mathcal F} f(x\gamma_h, \gamma^{-1}_h\gamma_g) E_{(x, \gamma_g), (x, \gamma_h)} \\
& = & \sum_{\gamma_1, \gamma_2\in \mathcal F} f(x\gamma_2, \gamma^{-1}_2\gamma_1) E_{(x, \gamma_1), (x, \gamma_2)}.
\end{eqnarray*}
Regarding $x \mapsto E_{(x, \gamma_1), (x, \gamma_2)}$ as the constant function $E_{\gamma_1,\gamma_2}$, one has the homomorphism
$$\pi_{Z_{\mathcal F}}: f \mapsto(x\mapsto \sum_{\gamma_1, \gamma_2 \in \mathcal F} f(x\gamma_2, \gamma^{-1}_2\gamma_1) E_{\gamma_1, \gamma_2}) \in \mathrm{M}_{|\mathcal F|}(\mathrm{C}({Z_{\mathcal F}})).$$
Note the function $\pi_{Z_{\mathcal F}}(f)$ can be continuously extended to $\partial Z_{\mathcal F}$, and the evaluation at each $x\in\partial Z_{\mathcal F}$ is still a (but non-irreducible) representation of $\textrm{C*}(\mathcal G)$. 
\end{proof}


\begin{lem}\label{matrix-embedding}
Let $(X, \Gamma)$ be a free dynamical system, and let $\mathcal G\subseteq X\rtimes\Gamma$ be a relatively compact subgroupoid with unit space $X$. 
Since the unit space of $\mathcal G$ is $X$, one has the decomposition
$$X= (\bigsqcup_{\gamma\in \mathcal F_1} Z_{\gamma^{-1}\mathcal F_1}) \sqcup (\bigsqcup_{\gamma\in \mathcal F_2} Z_{\gamma^{-1}\mathcal F_2})\sqcup\cdots\sqcup (\bigsqcup_{\gamma\in \mathcal F_L} Z_{\gamma^{-1}\mathcal F_L}),$$
where $\mathcal F_1, ..., \mathcal F_L$ are finite subsets of $\Gamma$. 
Then the map
$$\Phi= \bigoplus_{i=1}^L \pi_{Z_{\mathcal F_i}}: \mathrm{C^*}(\mathcal G) \to \bigoplus_{i=1}^L \mathrm{M}_{\abs{\mathcal F_i}}(\overline{Z_{\mathcal F_i}}))$$
is an injection.
\end{lem}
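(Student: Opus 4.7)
The plan is to use the bijection between irreducible representations of $\mathrm{C}^*(\mathcal G)$ and $\mathcal G$-orbits provided by Lemma \ref{groupoid-rep}, together with Lemma \ref{matrix-rep}, to show that every irreducible representation of $\mathrm{C}^*(\mathcal G)$ factors through the evaluation of some $\pi_{Z_{\mathcal F_i}}$ at a suitable point. Since irreducible representations of a C*-algebra separate points, this will immediately give $\ker \Phi = 0$.

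The first step is to observe that the given decomposition of $X$ exhibits, for every $x \in X$, a distinguished orbit representative in some $Z_{\mathcal F_i}$. Indeed, if $x \in Z_{\gamma^{-1}\mathcal F_i}$ for some index $i$ and some $\gamma \in \mathcal F_i$, then $\mathrm{S}(x) = \gamma^{-1} \mathcal F_i$; since the unit space of $\mathcal G$ is all of $X$, one has $e \in \mathrm{S}(z)$ for every $z$, hence $e \in \mathcal F_i$ whenever $Z_{\mathcal F_i} \neq \varnothing$. Consequently $\gamma^{-1} \in \gamma^{-1}\mathcal F_i = \mathrm{S}(x)$, so $(x, \gamma^{-1}) \in \mathcal G$ and the point $y := x \gamma^{-1}$ satisfies $\mathrm{S}(y) = \gamma \mathrm{S}(x) = \mathcal F_i$, i.e.\ $y \in Z_{\mathcal F_i}$. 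By Lemma \ref{groupoid-rep}, $\pi_x$ and $\pi_y$ are unitarily equivalent.

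Next, by Lemma \ref{matrix-rep}, evaluation of $\pi_{Z_{\mathcal F_i}}$ at the point $y \in Z_{\mathcal F_i}$ produces the matrix $\sum_{\gamma_1, \gamma_2 \in \mathcal F_i} f(y\gamma_2, \gamma_2^{-1}\gamma_1) E_{\gamma_1, \gamma_2}$, which under the identification $\mathcal G^y \ni (y, \gamma) \leftrightarrow \gamma \in \mathcal F_i$ (Lemma \ref{local-homeo}) is precisely the matrix describing $\pi_y(f)$ in Lemma \ref{groupoid-rep}. Hence $\mathrm{ev}_y \circ \pi_{Z_{\mathcal F_i}} = \pi_y$ up to unitary equivalence, so $\ker \pi_{Z_{\mathcal F_i}} \subseteq \ker \pi_y$ for every $y \in Z_{\mathcal F_i}$.

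Putting these pieces together, suppose $a \in \mathrm{C}^*(\mathcal G)$ satisfies $\Phi(a) = 0$, so $\pi_{Z_{\mathcal F_i}}(a) = 0$ for every $i$. For any $x \in X$, choose $i$ and $y \in Z_{\mathcal F_i}$ with $y$ in the $\mathcal G$-orbit of $x$ as above; then $\pi_x(a) \simeq \pi_y(a) = (\mathrm{ev}_y \circ \pi_{Z_{\mathcal F_i}})(a) = 0$. By Lemma \ref{groupoid-rep}, every irreducible representation of $\mathrm{C}^*(\mathcal G)$ arises as some $\pi_x$, so $a$ lies in the kernel of every irreducible representation of the C*-algebra $\mathrm{C}^*(\mathcal G)$, forcing $a = 0$. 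The only subtle point—and the step most worth checking carefully—is the verification that $e \in \mathcal F_i$ and hence that each orbit genuinely meets $\bigsqcup_i Z_{\mathcal F_i}$; this is what guarantees the set of irreducible representations obtained from the evaluations of the $\pi_{Z_{\mathcal F_i}}$ exhausts all equivalence classes and therefore separates points of $\mathrm{C}^*(\mathcal G)$.
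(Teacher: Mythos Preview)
Your proof is correct and follows essentially the same approach as the paper: both arguments show that every irreducible representation of $\mathrm{C}^*(\mathcal G)$ factors through $\Phi$, whence $\Phi$ is injective. The paper states this in a single sentence, while you have carefully unpacked the details (orbit representatives in $Z_{\mathcal F_i}$, identification of $\pi_y$ with an evaluation of $\pi_{Z_{\mathcal F_i}}$), but the underlying idea is identical.
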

\begin{proof}
Note that any irreducible representation of $\mathrm{C^*}(\mathcal G)$ factors though $\Phi$, and hence $\Phi$ must be injective. 
\end{proof}

\begin{lem}\label{orbit-breaking}
Let $(X, \Gamma)$ be a free dynamical system, and let $\mathcal G\subseteq X\rtimes\Gamma$ be a small subgroupoid.
Let $\mathcal F\subseteq\Gamma$ be a finite set with $Z_{\mathcal F} \neq \varnothing$, and let $x\in\overline{Z_{\mathcal F}}$. Then there are finite sets $\mathcal F_1, \mathcal F_2, ..., \mathcal F_s$ and $\gamma_1, \gamma_2, ..., \gamma_s \in\mathcal F$ such that $e\in\mathcal F_i$, $i=1, 2, ..., s$, 
$$\mathcal F = \gamma_1 \mathcal F_1 \sqcup \gamma_2 \mathcal F_2 \sqcup \cdots \sqcup \gamma_s \mathcal F_s,$$
and
$$\mathrm{S}(x\gamma_i) = \mathcal F_i,\quad i=1, 2, ..., s.$$ The decomposition of $\mathcal F$ is unique (for the given $x$).

%
%
\end{lem}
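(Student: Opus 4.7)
The plan is to read off the decomposition from the $\mathcal G$-orbit structure of the set $\{x\gamma : \gamma \in \mathcal F\}$, using freeness of the action together with the lower semicontinuity of the shape map.

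First I would fix a sequence $x_n \in Z_{\mathcal F}$ with $x_n \to x$, which exists since $x \in \overline{Z_{\mathcal F}}$; then $\mathrm{S}(x_n) = \mathcal F$. For each $\gamma \in \mathcal F$, the translation identity $\mathrm{S}(x_n\gamma) = \gamma^{-1}\mathcal F$ (established in the shape-properties lemma above, applicable since $\gamma \in \mathrm{S}(x_n)$) places $x_n\gamma \in Z_{\gamma^{-1}\mathcal F}$, so passing to the limit and applying Lemma \ref{orbit-lsc} gives $\mathrm{S}(x\gamma) \subseteq \gamma^{-1}\mathcal F$, i.e., $\gamma\,\mathrm{S}(x\gamma) \subseteq \mathcal F$.

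Next I would introduce the equivalence relation on $\mathcal F$ defined by $\gamma \sim \gamma'$ iff $x\gamma \sim_{\mathcal G} x\gamma'$, pick a representative $\gamma_i$ out of each class, and set $\mathcal F_i := \mathrm{S}(x\gamma_i)$; one has $e \in \mathcal F_i$ automatically since $X$ is the unit space of $\mathcal G$. The main claim is then
$$\mathcal F = \gamma_1 \mathcal F_1 \sqcup \cdots \sqcup \gamma_s \mathcal F_s.$$
For $\mathcal F \subseteq \bigcup_i \gamma_i \mathcal F_i$: any $\gamma \in \mathcal F$ is equivalent to some $\gamma_i$, so there is $\delta \in \mathrm{S}(x\gamma_i) = \mathcal F_i$ with $x\gamma_i\delta = x\gamma$, forcing $\gamma = \gamma_i\delta$ by freeness. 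For disjointness, $\gamma_i\delta_i = \gamma_j\delta_j$ would give $x\gamma_i \sim_{\mathcal G} x\gamma_i\delta_i = x\gamma_j\delta_j \sim_{\mathcal G} x\gamma_j$, contradicting the choice of representatives unless $i = j$.

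The main obstacle will be the reverse inclusion $\gamma_i \mathcal F_i \subseteq \mathcal F$, and this is where the openness of $\mathcal G$ is essential. For $\delta \in \mathcal F_i = \mathrm{S}(x\gamma_i)$, one has $(x\gamma_i, \delta) \in \mathcal G$; since $\mathcal G$ is open and $x_n\gamma_i \to x\gamma_i$, eventually $(x_n\gamma_i, \delta) \in \mathcal G$, and composing with $(x_n, \gamma_i) \in \mathcal G$ (which holds because $\gamma_i \in \mathcal F = \mathrm{S}(x_n)$) inside the subgroupoid $\mathcal G$ yields $(x_n, \gamma_i\delta) \in \mathcal G$, whence $\gamma_i\delta \in \mathrm{S}(x_n) = \mathcal F$. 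Uniqueness is then immediate: the partition $\{\gamma_i\mathcal F_i\}$ of $\mathcal F$ coincides with the intrinsic partition into the classes $\{\gamma \in \mathcal F : x\gamma \sim_{\mathcal G} x\gamma_i\}$, which depends only on $(x, \mathcal G, \mathcal F)$, and $\mathcal F_i = \gamma_i^{-1}(\gamma_i\mathcal F_i)$ is forced once $\gamma_i$ is selected from its block.
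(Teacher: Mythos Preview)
Your proof is correct and follows essentially the same route as the paper: both partition $\mathcal F$ (equivalently $x\mathcal F$) by the relation $\gamma \sim \gamma'$ iff $x\gamma \sim_{\mathcal G} x\gamma'$, and both use the openness of $\mathcal G$ together with an approximating sequence $x_n \in Z_{\mathcal F}$ to show that each equivalence class equals a full $\mathcal G$-orbit, i.e.\ $\gamma_i\mathrm{S}(x\gamma_i)$. One small remark: your first paragraph already establishes $\gamma\,\mathrm{S}(x\gamma) \subseteq \mathcal F$ via Lemma~\ref{orbit-lsc}, which is exactly the inclusion $\gamma_i\mathcal F_i \subseteq \mathcal F$; the later direct openness argument re-proves the same thing, so one of the two can be dropped.
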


\begin{proof}
Since the unit space of $\mathcal G$ is $X$, there is a partition
$$x\mathcal F = S_1 \sqcup S_2 \sqcup \cdots \sqcup S_s$$ such that
$y \sim_{\mathcal G} z$ if and only if $y, z \in S_i$ for some $S_i$. 

One claims that $S_i = [y]$, $i=1, 2, ..., s$, whenever $y\in S_i$. Then the lemma follows.

For each $S_i$ and $y\in S_i$, let $z\in X$ be $z\sim_{\mathcal G} y$. Since $z \sim_{\mathcal G} y$, there is $\gamma\in\Gamma$ such that $z=(y, \gamma) \in \mathcal G$. Since $\mathcal G$ is open, one has that $(y', \gamma) \in \mathcal G$ if $y'$ is sufficiently close to $y$. Since $y \in x \mathcal F$, there is $\gamma' \in \mathcal F$ such that $y = x\gamma'$. Note that $x\in\overline{Z_{\mathcal F}}$, there is a sequence $(x_n) \subseteq Z_{\mathcal F}$ with $x_n \to x$ as $n \to\infty$, and hence $x_n\gamma' \to x \gamma' = y$, as $n\to\infty$. In particular, $(x_n\gamma', \gamma) \in \mathcal G$ for sufficiently large $n$. Noting that $(x_n, \gamma') \in\mathcal G$ (since $x_n \in Z_{\mathcal F}$ and $\gamma'\in\mathcal F$), one has $$(x_n, \gamma'\gamma) = (x_n, \gamma')(x_n\gamma', \gamma) \in \mathcal G.$$ Hence $\gamma'\gamma\in \mathcal F$, and $ z = y \gamma = x\gamma'\gamma \in x \mathcal F,$ as desired.
%
\end{proof}

\begin{cor}\label{connecting-map}
Let $(X, \Gamma)$ be a free dynamical system, and let $\mathcal G \subseteq X \rtimes \Gamma$ be a small groupoid. With the notation as above, one has that for any $f\in \textrm{C*}(\mathcal G)$, 
$$ \pi_{Z_{\mathcal F}}(f)(x) = \mathrm{diag}\{\pi_{Z_{\mathcal F_1}}(f)(x\gamma_1), ..., \pi_{Z_{\mathcal F_s}}(f)(x\gamma_s) \},\quad x\in\overline{Z_{\mathcal F}}.$$ 
\end{cor}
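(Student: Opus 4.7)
The plan is to unwind the formula for $\pi_{Z_{\mathcal F}}(f)(x)$ given in Lemma \ref{matrix-rep} and reindex the sum using the decomposition $\mathcal F = \gamma_1\mathcal F_1 \sqcup \cdots \sqcup \gamma_s\mathcal F_s$ from Lemma \ref{orbit-breaking}. Since $\mathrm{C}_{\mathrm c}(\mathcal G)$ is dense in $\mathrm{C}^*(\mathcal G)$ and both sides of the claimed identity are continuous in $f$, it suffices to prove the identity for $f \in \mathrm{C}_{\mathrm c}(\mathcal G)$, which we view as a continuous function on $X \rtimes \Gamma$ via extension by zero (this is continuous because $\mathcal G$ is open and $\mathrm{supp}(f)$ is compact in $\mathcal G$, hence closed in $X\rtimes\Gamma$). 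The key property we then use is that $f$ vanishes on $(X\rtimes\Gamma) \setminus \mathcal G$.

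Every pair $\eta_1, \eta_2 \in \mathcal F$ has a unique expression $\eta_1 = \gamma_i \zeta_1$, $\eta_2 = \gamma_j \zeta_2$ with $\zeta_1 \in \mathcal F_i$, $\zeta_2 \in \mathcal F_j$. The formula from Lemma \ref{matrix-rep} then becomes
\begin{equation*}
\pi_{Z_{\mathcal F}}(f)(x) = \sum_{i,j=1}^s \sum_{\zeta_1\in\mathcal F_i,\,\zeta_2\in\mathcal F_j} f(x\gamma_j\zeta_2,\, \zeta_2^{-1}\gamma_j^{-1}\gamma_i\zeta_1)\, E_{\gamma_i\zeta_1,\,\gamma_j\zeta_2}.
\end{equation*}
The heart of the proof is to show that the $(i,j)$-block vanishes whenever $i \neq j$, and that the $(i,i)$-block equals $\pi_{Z_{\mathcal F_i}}(f)(x\gamma_i)$ under the canonical identification $\gamma_i\eta \leftrightarrow \eta$. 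The latter is immediate: when $i = j$ the $\gamma_i$ and $\gamma_i^{-1}$ cancel, leaving exactly the formula for $\pi_{Z_{\mathcal F_i}}(f)(x\gamma_i)$ defined in Lemma \ref{matrix-rep}.

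For the off-diagonal vanishing (the main, though mild, obstacle), I will argue by contradiction: suppose $f(x\gamma_j\zeta_2,\, \zeta_2^{-1}\gamma_j^{-1}\gamma_i\zeta_1) \neq 0$ for some $i\neq j$, $\zeta_1\in\mathcal F_i$, $\zeta_2\in\mathcal F_j$. Then the element $(x\gamma_j\zeta_2,\, \zeta_2^{-1}\gamma_j^{-1}\gamma_i\zeta_1)$ lies in $\mathcal G$, which implies $x\gamma_j\zeta_2 \sim_{\mathcal G} x\gamma_i\zeta_1$. But since $\zeta_2 \in \mathcal F_j = \mathrm{S}(x\gamma_j)$ and $\zeta_1 \in \mathcal F_i = \mathrm{S}(x\gamma_i)$ (both by Lemma \ref{orbit-breaking}), we have $(x\gamma_j, \zeta_2), (x\gamma_i, \zeta_1) \in \mathcal G$, so $x\gamma_j\zeta_2 \sim_{\mathcal G} x\gamma_j$ and $x\gamma_i\zeta_1 \sim_{\mathcal G} x\gamma_i$. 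Transitivity then forces $x\gamma_i \sim_{\mathcal G} x\gamma_j$, contradicting the fact established in the proof of Lemma \ref{orbit-breaking} that $x\gamma_i$ and $x\gamma_j$ lie in distinct $\mathcal G$-equivalence classes $S_i \neq S_j$ when $i \neq j$.

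Combining these two observations, the matrix $\pi_{Z_{\mathcal F}}(f)(x)$ is block-diagonal with respect to the partition of $\mathcal F$ induced by the $\gamma_i\mathcal F_i$'s, and the $i$-th block is $\pi_{Z_{\mathcal F_i}}(f)(x\gamma_i)$. Reordering the rows and columns so that the indices in $\gamma_i\mathcal F_i$ appear consecutively (in the order $i=1,2,\ldots,s$) yields the displayed diagonal form, completing the proof.
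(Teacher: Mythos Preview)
Your proof is correct and follows essentially the same approach as the paper: both arguments reindex the sum from Lemma~\ref{matrix-rep} according to the decomposition $\mathcal F = \bigsqcup_i \gamma_i\mathcal F_i$, and both kill the off-diagonal blocks by observing that $(x\gamma_h,\gamma_h^{-1}\gamma_g)\in\mathcal G$ would force $x\gamma_i\sim_{\mathcal G} x\gamma_j$, contradicting Lemma~\ref{orbit-breaking}. Your version is slightly more explicit in spelling out the transitivity chain and the density reduction to $\mathrm{C}_{\mathrm c}(\mathcal G)$, but the substance is identical.
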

\begin{proof}
Note that if $\gamma_g \in \gamma_i\mathcal F_i$ and $\gamma_h \in \gamma_j\mathcal F_j$ with $i \neq j$, then $(x\gamma_h, \gamma_h^{-1}\gamma_g) \notin \mathcal G$. Otherwise, $x\gamma_h \sim_{\mathcal G} x\gamma_g$, which contradicts the fact that $\mathrm{S}(x\gamma_i) = \mathcal F_i$ and $\mathrm{S}(x\gamma_j) = \mathcal F_j$. In particular, for any $f\in \mathrm{C}_0(\mathcal G)$, one has $$f((x\gamma_h, \gamma_h^{-1}\gamma_g)) = 0,\quad \textrm{if $\gamma_g \in \gamma_i\mathcal F_i$ and $\gamma_h \in \gamma_j\mathcal F_j$ with $i \neq j$}.$$
It then follows from Lemma \ref{matrix-rep} that for any $f\in \mathrm{C}_0(\mathcal G)$,
\begin{eqnarray*}
(\pi_{Z_{\mathcal F}}(f))(x) & = & \sum_{\gamma_g, \gamma_h \in \mathcal F}f(x\gamma_h, \gamma_h^{-1}\gamma_g) E_{\gamma_g, \gamma_h} \\
& = &  \sum_{i=1}^s \sum_{\gamma_g, \gamma_h \in \gamma_i\mathcal F_i}f(x \gamma_h, \gamma_h^{-1}\gamma_g) E_{\gamma_g, \gamma_h} \\
& = &  \sum_{i=1}^s \sum_{\gamma_g, \gamma_h \in \mathcal F_i}f(x\gamma_i \gamma_h, \gamma_h^{-1}\gamma_g) E_{\gamma_g, \gamma_h} \\
& = & \bigoplus_{i=1}^s (\pi_{Z_{\mathcal F_i}}(f))(x\gamma_i),
\end{eqnarray*}
as desired.
\end{proof}

\begin{thm}\label{diag-SHA}
Let $(X, \Gamma)$ be a free dynamical system, let $\mathcal G\subseteq X\rtimes \Gamma$ be a small subgroupoid, and let $E\subseteq X$ be an open set. Then there is an isomorphism $$\mathrm{C^*}(\mathcal G) \cong \left[\cdots\left[\left[ C_0\oplus_{C_1^{(0)}} C_1 \right]\oplus_{C_2^{(0)}}C_2\right] \cdots\right]\oplus_{C_K^{(0)}} C_K,$$ where $C_i=\mathrm{M}_{n_i}(\mathrm{C}(Z_i))$ for a set $Z_i$ which is a disjoint union of finitely many closed subsets of $X$, and $C_i^{(0)} = \mathrm{M}_{n_i}(\mathrm{C}(Z^{(0)}_i))$ with $Z^{(0)}_i$ a closed subset of $Z_i$,
such that
\begin{enumerate}
\item $n_0 < n_1 < \cdots < n_K$,
\item all the maps $\eta_i: A_i \to C_{i+1}^{(0)}$ are of diagonal type, where $$A_i:= \left[\cdots\left[\left[ C_0\oplus_{C_1^{(0)}} C_1 \right]\oplus_{C_2^{(0)}}C_2\right] \cdots\right]\oplus_{C_{i}^{(0)}} C_{i},$$
\item under this isomorphism and the RSH-decomposition, the element $\varphi_E\in\mathrm{C^*}(\mathcal G)$ is a diagonal matrix on each $Z_i$ and $$\mathrm{rank}((\varphi_E)(x)) = \abs{\mathrm{Orbit}_{\mathcal G}(x) \cap E}, \quad x\in Z_i\setminus Z_i^{(0)},$$
where $\varphi_E$ is defined in \eqref{defn-phi-E}.
\end{enumerate}
\end{thm}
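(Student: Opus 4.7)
The plan is to use the shape stratification $X=\bigsqcup_{\mathcal F} Z_{\mathcal F}$ together with the matrix representations $\pi_{Z_{\mathcal F}}$ of Lemma \ref{matrix-rep} and the orbit-breaking formula of Corollary \ref{connecting-map} to realize $\mathrm{C}^*(\mathcal G)$ as the required iterated pullback. Since $\mathcal G$ is relatively compact, $\mathrm{Supp}_\Gamma(\mathcal G)$ is finite, so only finitely many shapes produce nonempty $Z_{\mathcal F}$. Let $n_0<n_1<\cdots<n_K$ be the distinct cardinalities that occur, and for each $i$ let $\mathcal S_i$ denote the shapes with $|\mathcal F|=n_i$ and $Z_{\mathcal F}\neq\varnothing$. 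Define
\[
Z_i := \bigsqcup_{\mathcal F\in\mathcal S_i}\overline{Z_{\mathcal F}},\qquad Z_i^{(0)} := \bigsqcup_{\mathcal F\in\mathcal S_i}\partial Z_{\mathcal F}
\]
as finite disjoint unions of closed subsets of $X$, and set $C_i := \mathrm{M}_{n_i}(\mathrm{C}(Z_i))$ and $C_i^{(0)} := \mathrm{M}_{n_i}(\mathrm{C}(Z_i^{(0)}))$. By Lemma \ref{orbit-lsc}, every $x\in\partial Z_{\mathcal F}$ with $\mathcal F\in\mathcal S_i$ satisfies $|\mathrm{S}(x)|<n_i$, so $Z_i^{(0)}$ factors through the lower-rank strata already built.

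I would then use the injection $\Phi=\bigoplus_{\mathcal F}\pi_{Z_{\mathcal F}}:\mathrm{C}^*(\mathcal G)\hookrightarrow\bigoplus_i C_i$ from Lemma \ref{matrix-embedding} and identify its image with the iterated pullback $A_K$. For each $x\in\partial Z_{\mathcal F}$, Lemma \ref{orbit-breaking} provides a unique orbit-breaking decomposition $\mathcal F=\gamma_1\mathcal F_1\sqcup\cdots\sqcup\gamma_s\mathcal F_s$ with $\mathcal F_j=\mathrm{S}(x\gamma_j)$ and $|\mathcal F_j|<n_i$, and Corollary \ref{connecting-map} reads
\[
\pi_{Z_{\mathcal F}}(f)(x) = \mathrm{diag}\{\pi_{Z_{\mathcal F_1}}(f)(x\gamma_1),\ldots,\pi_{Z_{\mathcal F_s}}(f)(x\gamma_s)\},
\]
so the boundary values are completely determined by the lower strata. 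I would therefore partition each $\partial Z_{\mathcal F}$ by the (finitely many) orbit-breaking types and define $\eta_{i-1}:A_{i-1}\to C_i^{(0)}$ on each piece as the block-diagonal concatenation of the evaluation maps $a\mapsto\pi_{Z_{\mathcal F_j}}(\tilde a)(x\gamma_j)$, the blocks ordered by the fixed partition of $\mathcal F$. Then $\eta_{i-1}$ is manifestly of diagonal type and $\Phi(\mathrm{C}^*(\mathcal G))\subseteq A_K$, while the reverse inclusion comes from a standard partition-of-unity extension argument on $\mathrm{C}_c(\mathcal G)$ applied to each stratum. The main obstacle is to arrange that the orbit-breaking data is locally constant on each piece of the partition of $\partial Z_{\mathcal F}$, so that $\eta_{i-1}$ is continuous and genuinely of diagonal type; this follows from the openness of $\mathcal G$, which makes each condition ``$\gamma\in\mathrm{S}(\cdot)$'' cut out an open subset of $X$ and hence carves $\partial Z_{\mathcal F}$ into finitely many locally closed strata on which the block sizes $|\mathcal F_j|$ and translates $\gamma_j$ are constant.

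Finally, for the statement about $\varphi_E\in\mathrm{C}(X)\subseteq\mathrm{C}^*(\mathcal G)$: since $\varphi_E$ is supported on the unit space $\{(x,e):x\in X\}$, substituting into Lemma \ref{matrix-rep} makes all off-diagonal entries vanish and gives
\[
\pi_{Z_{\mathcal F}}(\varphi_E)(x) = \sum_{\gamma_1,\gamma_2\in\mathcal F}\varphi_E(x\gamma_2,\gamma_2^{-1}\gamma_1)\,E_{\gamma_1,\gamma_2} = \sum_{\gamma\in\mathcal F}\varphi_E(x\gamma)\,E_{\gamma,\gamma},
\]
which is diagonal with respect to the RSH decomposition. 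For $x\in Z_{\mathcal F}\setminus\partial Z_{\mathcal F}$ one has $\mathrm{Orbit}_{\mathcal G}(x)=x\mathcal F$, and by freeness the map $\gamma\mapsto x\gamma$ is a bijection $\mathcal F\to\mathrm{Orbit}_{\mathcal G}(x)$, so
\[
\mathrm{rank}(\pi_{Z_{\mathcal F}}(\varphi_E)(x)) = |\{\gamma\in\mathcal F:x\gamma\in E\}| = |\mathrm{Orbit}_{\mathcal G}(x)\cap E|,
\]
as required.
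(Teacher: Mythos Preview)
Your proposal is essentially the same construction as the paper's: stratify $X$ by shapes, group strata by orbit size to get $Z_i$ and $Z_i^{(0)}$, use Lemma~\ref{orbit-breaking} and Corollary~\ref{connecting-map} to define the diagonal gluing maps, and then verify the rank formula for $\varphi_E$ by the exact computation you wrote. The one point where the paper is cleaner is the surjectivity of $\Phi$ onto the pullback $A_K$: rather than your ``standard partition-of-unity extension argument on $\mathrm{C}_c(\mathcal G)$'' (which can be made to work but is tedious and requires care with the boundary strata), the paper simply observes that $\Phi$ already induces a bijection between the irreducible representations of $\mathrm{C}^*(\mathcal G)$ and those of $A_K$ (both are parametrized by $\mathcal G$-orbits via Lemma~\ref{groupoid-rep}), so $\Phi(\mathrm{C}^*(\mathcal G))$ is a \emph{rich} sub-C*-algebra of $A_K$ in the sense of Dixmier, and Theorem~11.1.6 of \cite{Dix-book} gives equality immediately. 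This also sidesteps the continuity issue you flagged for $\eta_{i-1}$: once both algebras are identified inside $\bigoplus_i C_i$ and shown to have the same primitive spectrum, no explicit verification that the orbit-breaking type is locally constant is needed.
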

\begin{proof}
Decompose $X$ into
$$ (\bigsqcup_{\gamma\in \mathcal F_1} Z_{\gamma^{-1}\mathcal F_1}) \sqcup (\bigsqcup_{\gamma\in \mathcal F_2} Z_{\gamma^{-1}\mathcal F_2})\sqcup\cdots\sqcup (\bigsqcup_{\gamma\in \mathcal F_L} Z_{\gamma^{-1}\mathcal F_L}),$$
where $\mathcal F_1, ..., \mathcal F_L$ are finite subsets of $\Gamma$. 
By Lemma \ref{matrix-embedding}, there is an embedding 
$$\Phi= \bigoplus_{i=1}^L \pi_{Z_{\mathcal F_i}}: \mathrm{C^*}(\mathcal G) \to \bigoplus_{i=1}^L \mathrm{M}_{\abs{\mathcal F_i}}(\overline{Z_{\mathcal F_i}})).$$
Reindex $\mathcal F_i$ into 
$$ \{\mathcal F_{1, 1}, \mathcal F_{1, 2}, ..., \mathcal F_{1, l_1}\}, \{\mathcal F_{2, 1}, \mathcal F_{2, 2}, ..., \mathcal F_{2, l_2} \}, ..., \{\mathcal F_{K, 1}, \mathcal F_{K, 2}, ..., \mathcal F_{K, l_K}\} $$
so that $$\abs{\mathcal F_{i, j_1}} = \abs{\mathcal F_{i, j_2}},\quad 1\leq i\leq K,\ 1\leq j_1, j_2 \leq l_i,$$
and
$$\abs{\mathcal F_{i, 1}} < \abs{\mathcal F_{i+1, 1}}, \quad 1\leq i\leq K-1.$$

Put $$n_i = \abs{\mathcal F_{i, 1}},\quad Z_i = \bigsqcup_{j=1}^{l_i} \overline{Z_{\mathcal F_{i, j}}}, \quad \textrm{and} \quad Z_i^{(0)} = \bigsqcup_{j=1}^{l_i} \overline{Z_{\mathcal F_{i, j}}} \setminus Z_{\mathcal F_{i, j}}.$$
Note that the sets $ \overline{Z_{\mathcal F_{i, j}}}$, $j=1, ..., l_i$, considered as subsets of $X$, might have non-empty intersections; but one takes the abstract disjoint union in the construction of $Z_i$. Also note that $$n_1 < n_2 < \cdots < n_K,$$ and $Z_i^{(0)}$ is a closed subset of $Z_i$ (follows from Lemma \ref{orbit-lsc}).

By Lemma \ref{orbit-breaking}, each $\overline{Z_{\mathcal F_{i, j}}} \setminus Z_{\mathcal F_{i, j}}$ has a partition
$$\overline{Z_{\mathcal F_{i, j}}} \setminus Z_{\mathcal F_{i, j}} = (Z_{i, j}^{(0)})_1 \sqcup (Z_{i, j}^{(0)})_2 \sqcup\cdots\sqcup  (Z_{i, j}^{(0)})_{d_{i, j}}$$ such that for each $(Z_{i, j}^{(0)})_d$, there is a decomposition 
$$\mathcal F_{i, j} = \gamma_1 \mathcal F_{i, j}^{(1)} \sqcup \gamma_2 \mathcal F_{i, j}^{(2)} \sqcup \cdots\sqcup \gamma_{s_d} \mathcal F_{i, j}^{(s_d)}$$ with $\gamma_1, ..., \gamma_{s_d} \in \mathcal F_{i, j}$ such that
$$\mathrm{S}(x\gamma_s) = \mathcal F_{i, j}^{(s)},\quad s=1, 2, ..., s_d,\ x\in (Z_{i, j}^{(0)})_d,$$
and the decomposition of $\mathcal F_{i, j}$ is different for each $d$. 

For each $(Z_{i, j}^{(0)})_d$, $d=1, ..., d_{i, j}$, consider the map
$$ \bigoplus_{s=1}^{s_d}  \mathrm{M}_{\abs{\mathcal F_{i, j}^{(s)}}}(\mathrm{C}(\overline{Z_{\mathcal F_{i, j}^{(s)}}}))\ni (f_1, ..., f_{s_d}) \mapsto (x \mapsto \bigoplus_{s=1}^{s_d} f_s(x\gamma_i))  \in \mathrm{M}_{n_i}(\mathrm{C}((Z_{i, j}^{(0)})_d)),$$
and with all of these maps, one has the RSH algebra 
$$A:=\left[\cdots\left[\left[ C_0\oplus_{C_1^{(0)}} C_1 \right]\oplus_{C_2^{(0)}}C_2\right] \cdots\right]\oplus_{C_K^{(0)}} C_K,$$
with
$C_i = \mathrm{M}_{n_i}(\mathrm{C}(Z_i))$, $C_i^{(0)} = \mathrm{M}_{n_i}(\mathrm{C}(Z_i^{(0)}))$, and all the maps $A_i \to C_{i+1}^{(0)}$ are of diagonal type.

By Corollary \ref{connecting-map}, one has $\Phi(\textrm{C*}(\mathcal G)) \subseteq A$. One the other hand, the embedding map $\Phi$ induces a one-to-one correspondence between the irreducible representations of $\textrm{C*}(\mathcal G)$ and the irreducible representations of $A$. Thus $\textrm{C*}(\mathcal G)$ is a rich subalgebra of $A$, and hence $\Phi(\textrm{C*}(\mathcal G)) = A$ (see Theorem 11.1.6 of \cite{Dix-book}).

Consider the open set $E$ and the function $\varphi_E \in \mathrm{C}(X)$. Then, for each $x\in {Z_{\mathcal F_{i, j}}}$, 
\begin{eqnarray*}
\mathrm{rank}((\pi_{Z_{\mathcal F_{i, j}}} (\varphi_E))(x)) & = & \mathrm{rank}(\sum_{\gamma_g, \gamma_h \in \mathcal F_{i, j}} \varphi_E(x\gamma_h, \gamma^{-1}_h\gamma_g) E_{\gamma_g, \gamma_h}) \\
& = & \mathrm{rank}(\sum_{\gamma_g \in \mathcal F_{i, j}} \varphi_E(x\gamma_g, e) E_{\gamma_g, \gamma_g}) \\
& = & \abs{\{\gamma_g \in \mathcal F_{i, j}: x\gamma_g \in E \}} \\
& = & \abs{\{(x, \gamma_g) \in \mathcal G: x\gamma_g \in E \}} \\
& = & \abs{\mathrm{Orbit}_{\mathcal G}(x) \cap E},
\end{eqnarray*}
as desired.
\end{proof}

\section{Comparison of diagonal elements and comparison of open sets}

In this section, let us show that the Cuntz comparison of the diagonal elements in an RSH-algebra with diagonal maps is roughly determined by their ranks (see Theorem \ref{DSH-comp}), regardless of the dimensions of the base spaces. As a consequence, the Cuntz comparison of open subsets of $X$ is then determined by the partial orbit or by the measures if there exist small subgroupoids with arbitrarily large orbit (see Corollary \ref{orb-comp} and Corollary \ref{orb-comp-measure}). The proof of the main result of this section (Theorem \ref{DSH-comp}) follows closely the argument of Theorem 4.5 of \cite{EN-MD0}.

First, recall the following comparison theorem for general RSH algebras, .

\begin{thm}[Theorem 4.6 of \cite{Toms-Comp-DS}]\label{pre-comp-dim-rank}
Let $A$ be a recursive subhomogeneous C*-algebras with a decomposition 
$$\left[\cdots\left[\left[ C_0\oplus_{C_1^{(0)}} C_1 \right]\oplus_{C_2^{(0)}}C_2\right] \cdots\right]\oplus_{C_l^{(0)}} C_l,$$
where $$C_i=\mathrm{M}_{m_i}(\mathrm{C}(X_i))\quad \textrm{and}\quad C_i^{(0)} = \mathrm{M}_{m_i}(\mathrm{C}(X^{(0)}_i))$$ with $X^{(0)}_i$ a closed subset of $X_i$. Let $a, b\in A^+$ be positive elements satisfying
\begin{equation*}
\mathrm{rank}(a(x)) + \frac{\mathrm{dim}(X_i) - 1}{2} < \mathrm{rank}(b(x)) ,\quad x\in X_i\setminus X_i^{(0)},
\end{equation*} 
for any $i=0, 1, ..., l$.
Then $a \precsim b$.
\end{thm}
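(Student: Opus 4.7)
The natural plan is induction on the decomposition length $l$. For the base case $l=0$, $A=\mathrm{M}_{m_0}(\mathrm{C}(X_0))$ is homogeneous, and the assertion reduces to the classical dimension-controlled comparison for matrix algebras over a finite-dimensional compact Hausdorff space: if $\mathrm{rank}(a(x))+\frac{1}{2}(\mathrm{dim}(X_0)-1)<\mathrm{rank}(b(x))$ for every $x\in X_0$, then $a\precsim b$. The underlying mechanism is a Husemoller-type embedding theorem for vector bundles (a rank-$k$ bundle embeds into a rank-$k'$ bundle over an $n$-dimensional base whenever $k'-k>n/2$), applied to the support sub-bundles cut out by $(a-\eps)_+$ and $b$; letting $\eps\to 0$ produces the sub-equivalence.

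For the inductive step, write $A=A_{l-1}\oplus_{C_l^{(0)}}C_l$, where the pullback is along a unital homomorphism $\varphi\colon A_{l-1}\to C_l^{(0)}$, and split $a=(a',a_l)$, $b=(b',b_l)$ accordingly. The rank hypothesis restricts verbatim to the length-$(l-1)$ decomposition of $A_{l-1}$, so the induction hypothesis gives, for any $\eps>0$, a witness $w\in\mathrm{M}_\infty(A_{l-1})$ with $\|(a'-\eps)_+-w^*b'w\|<\eps$. The goal is to extend this to a compatible witness $v\in\mathrm{M}_\infty(C_l)$ for $(a_l-\eps')_+\precsim b_l$ with the matching condition $v|_{X_l^{(0)}}=\varphi(w)$, so that $(w,v)$ defines a bona fide element of $\mathrm{M}_\infty(A)$ witnessing $(a-\eps')_+\precsim b$.

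To construct $v$, first extend $\varphi(w)\in\mathrm{M}_\infty(C_l^{(0)})$ by Tietze to some $\tilde{w}\in\mathrm{M}_\infty(C_l)$; on a small neighbourhood $U$ of $X_l^{(0)}$, continuity together with $\varphi(w^*b'w)\approx\varphi((a'-\eps)_+)=(a_l-\eps)_+|_{X_l^{(0)}}$ forces $\tilde{w}^*b_l\tilde{w}\approx(a_l-\eps)_+$ throughout $U$. Away from $U$, the rank hypothesis holds uniformly on the closed set $\overline{X_l\setminus U}$ (of dimension at most $\mathrm{dim}(X_l)$), so the base-case argument yields a witness $v_0$ for $(a_l-\eps)_+\precsim b_l$ on this region. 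A continuous cut-off $\chi\colon X_l\to[0,1]$ equal to $1$ near $X_l^{(0)}$ and to $0$ on $\overline{X_l\setminus U}$ patches $\tilde{w}$ and a zero-extension of $v_0$ into a single witness $v_\chi$ with $v_\chi|_{X_l^{(0)}}=\varphi(w)$ and $v_\chi^*b_lv_\chi\approx(a_l-\eps')_+$ for some $\eps'>0$; letting $\eps\to 0$ then yields $a\precsim b$.

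The main obstacle is the final patching step: the cross terms introduced by the cut-off $\chi$ must be absorbed into the $\eps'$-slack without destroying either the compatibility with $\varphi$ on $X_l^{(0)}$ or the Cuntz sub-equivalence, which requires careful perturbation using the standard lemmas $\|x-y\|<\delta\Rightarrow(x-\delta)_+\precsim y$ and $((x-\delta_1)_+-\delta_2)_+=(x-\delta_1-\delta_2)_+$, together with repeated cut-downs. The base case itself relies on nontrivial input from vector-bundle theory, but this is standard in the subject.
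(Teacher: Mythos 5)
The paper does not prove this statement; it quotes it verbatim as Theorem 4.6 of \cite{Toms-Comp-DS}, so there is no in-paper argument to compare against.

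Judged on its own, your induction scheme is a reasonable skeleton, but the patching step in the inductive case has a genuine gap. When you assemble $v_\chi$ from $\tilde{w}$, $v_0$, and a cutoff $\chi$ (say $v_\chi=\chi^{1/2}\tilde{w}+(1-\chi)^{1/2}v_0$), the product $v_\chi^*b_l v_\chi$ contains the cross terms
\[
\chi^{1/2}(1-\chi)^{1/2}\bigl(\tilde{w}^*b_l v_0+v_0^*b_l\tilde{w}\bigr),
\]
and there is no reason for these to be small: their norm is controlled only by $\|\tilde{w}\|\,\|v_0\|\,\|b_l\|$, which has nothing to do with $\eps$. Two witnesses for the same sub-equivalence over the overlap region can differ by a large rotation, and a pointwise convex combination of them need not be a witness at all (the cross terms can even cancel the diagonal terms). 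This is not a technicality to be absorbed into the $\eps'$-slack by repeated cut-downs; it is exactly where the rank gap $\frac{\mathrm{dim}(X_l)-1}{2}$ does its real work. The actual mechanism is a homotopy/selection argument: the gap makes the fiberwise space of admissible partial isometries (embedding the support of $(a_l-\eps)_+(x)$ into that of $b_l(x)$, a Stiefel-type manifold) highly connected relative to $\mathrm{dim}(X_l)$, which is what lets one continuously rotate $v_0$ into $\tilde{w}$ over the overlap and thereby glue the two pieces into one coherent family, rather than averaging them. As written, without that rotation step, the inductive argument does not close.
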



Using this theorem, one has the following comparison result.

\begin{prop}\label{comp-dim-rank}
Let $A$ be a separable RSH algebra with a fixed decomposition 
$$\left[\cdots\left[\left[ C_0\oplus_{C_1^{(0)}} C_1 \right]\oplus_{C_2^{(0)}}C_2\right] \cdots\right]\oplus_{C_l^{(0)}} C_l,$$ and let $a, b$ be positive elements of  $A$. Write $$C_i=\mathrm{M}_{n_i}(\mathrm{C}(X_i))\quad \textrm{and}\quad C_i^{(0)} = \mathrm{M}_{n_i}(\mathrm{C}(X^{(0)}_i))$$ with $X^{(0)}_i$ a closed subset of $X_i$. 

For each $i=0, 1,..., l$,
list $$\{\mathrm{rank}(b)(x): x\in X_i\} = \{r_{i, 0} < r_{i, 1} < \cdots < r_{i, s_i}\}$$ and set
$$W_{i, j} = \{x \in X_i: \mathrm{rank}(b(x)) \leq r_{i, j}\}$$ and $$Z_{i, j} = \{x \in X_i: \mathrm{rank}(b(x)) = r_{i, j}\},$$ $j=0, 1, ..., s_i.$

Assume that for each $i=1, 2, ..., l$ and each $j=0, 1, ..., s_i$, 
$$\mathrm{rank}(a(x)) + \frac{\mathrm{dim}(W_{i, j}) - 1}{2} < \mathrm{rank}(b(x)) = r_{i, j},\quad x\in Z_{i, j}.$$
Then $a \precsim b$ in $A$.
\end{prop}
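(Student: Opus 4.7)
The plan is to refine the given RSH decomposition of $A$ along the rank stratification of $b$, and then to apply Theorem~\ref{pre-comp-dim-rank} to the refined decomposition. Since the rank of a continuous matrix-valued positive function is lower semicontinuous, each $W_{i, j}$ is a closed subset of $X_i$ and $\overline{Z_{i, j}} \subseteq W_{i, j}$; and as $\{r_{i, 0}, \ldots, r_{i, s_i}\}$ exhausts the ranks attained by $b$ on $X_i$, one has $Z_{i, j} = W_{i, j} \setminus W_{i, j-1}$ (with the convention $W_{i, -1} = \varnothing$), whence $W_{i, j} = W_{i, j-1} \cup \overline{Z_{i, j}}$ and
\[
\mathrm{M}_{n_i}(\mathrm{C}(W_{i, j})) \cong \mathrm{M}_{n_i}(\mathrm{C}(W_{i, j-1})) \oplus_{\mathrm{M}_{n_i}(\mathrm{C}(W_{i, j-1} \cap \overline{Z_{i, j}}))} \mathrm{M}_{n_i}(\mathrm{C}(\overline{Z_{i, j}}))
\]
with the restrictions as gluing maps.

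Iterating this for $j = 0, 1, \ldots, s_i$ expresses each $C_i = \mathrm{M}_{n_i}(\mathrm{C}(X_i))$ as a chain of pullbacks with building blocks $\mathrm{M}_{n_i}(\mathrm{C}(\overline{Z_{i, j}}))$. Inserting these intermediate steps into the original decomposition of $A$ would produce a refined RSH decomposition whose building blocks $C_{i, j} := \mathrm{M}_{n_i}(\mathrm{C}(\overline{Z_{i, j}}))$ are indexed lexicographically by $(i, j)$, with $C_{i, j}$ glued over the closed subset $\overline{Z_{i, j}} \cap (X_i^{(0)} \cup W_{i, j-1})$ of $\overline{Z_{i, j}}$ via the natural restriction of the original gluing map of $C_i$ composed with the restrictions induced by the refinement of the earlier strata. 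Using $\overline{Z_{i, j}} \setminus W_{i, j-1} = Z_{i, j}$, the ``top'' of the block $C_{i, j}$---the complement of the closed gluing base inside $\overline{Z_{i, j}}$---is exactly $Z_{i, j} \setminus X_i^{(0)}$.

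Theorem~\ref{pre-comp-dim-rank} applied to the refined decomposition then reduces the conclusion $a \precsim b$ to verifying
\[
\mathrm{rank}(a(x)) + \frac{\dim(\overline{Z_{i, j}}) - 1}{2} < \mathrm{rank}(b(x)) = r_{i, j}, \quad x \in Z_{i, j} \setminus X_i^{(0)},
\]
for every pair $(i, j)$. Since $\overline{Z_{i, j}} \subseteq W_{i, j}$ and the topological covering dimension is monotone on subspaces of a separable metric space, $\dim(\overline{Z_{i, j}}) \leq \dim(W_{i, j})$, so the assumed inequality of the proposition supplies exactly what is required.

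The principal obstacle I expect is confirming that this stratum-by-stratum refinement really yields a \emph{valid} recursive subhomogeneous decomposition when inserted into the original one, that is, that the pullback identifications hold set-theoretically at each intermediate stage (via $W_{i, j} = W_{i, j-1} \cup \overline{Z_{i, j}}$) and that the assembled gluing homomorphisms on the refined pieces are precisely the compositions of restrictions coming from the original decomposition. Once that is in hand, the application of Theorem~\ref{pre-comp-dim-rank} is a direct substitution.
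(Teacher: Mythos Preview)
Your proposal is correct and follows essentially the same approach as the paper: refine the given RSH decomposition along the rank stratification of $b$ so that on each new stage the rank of $b$ is constant, and then apply Theorem~\ref{pre-comp-dim-rank} to the refined decomposition. The only minor difference is a bookkeeping choice: the paper takes the building blocks of the refined decomposition to be $D_{i,j}=\mathrm{M}_{n_i}(\mathrm{C}(W_{i,j}))$, glued over the closed subset $(X_{i}^{(0)}\cap W_{i,j})\cup W_{i,j-1}$, whereas you use $C_{i,j}=\mathrm{M}_{n_i}(\mathrm{C}(\overline{Z_{i,j}}))$, glued over $\overline{Z_{i,j}}\cap(X_i^{(0)}\cup W_{i,j-1})$; in both cases the ``top'' is contained in $Z_{i,j}$ and the relevant dimension is bounded by $\dim(W_{i,j})$, so the hypothesis applies directly. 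The ``principal obstacle'' you flag---checking that the refined gluing maps assemble consistently into an honest RSH decomposition---is exactly where the paper spends its care, and the verification is the straightforward one you anticipate.
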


\begin{proof}
Let us construct a new RSH decomposition of $A$: $$\left[\cdots\left[\left[ D_0\oplus_{D_1^{(0)}} D_1 \right]\oplus_{D_2^{(0)}}D_2\right] \cdots\right]\oplus_{D_s^{(0)}} D_s,$$
with $D_i=\mathrm{M}_{m_i}(\mathrm{C}(Y_i))$ and $D_i^{(0)} = \mathrm{M}_{m_i}(\mathrm{C}(Y^{(0)}_i))$ with $Y^{(0)}_i$ a closed subset of $Y_i$ such that
\begin{equation}\label{dim-gap-pre}
\mathrm{rank}(a(x)) + \frac{\mathrm{dim}(Y_i) - 1}{2} < \mathrm{rank}(b(x)) ,\quad x\in Y_i\setminus Y_i^{(0)}
\end{equation} 
in this new RSH decomposition. Then, it follows from Theorem \ref{pre-comp-dim-rank} that $a\precsim b$. 

 To building the new RSH decomposition, let us start with $$C_0=\mathrm{M}_{n_0}(\mathrm{C}(X_0)).$$ Since the rank function is lower semicontinuous, one has that 
$$W_{0, 0} \subseteq W_{0, 1} \subseteq \cdots\subseteq W_{0, s_0}$$ 
are closed. (Note that $W_{0, j} = Z_{0, 0}\cup\cdots\cup Z_{0, j}$.)
 
Set $$D_{0, 0}=\mathrm{M}_{n_0}(\mathrm{C}(W_{0, 0})),\quad D_{0, 1}=\mathrm{M}_{n_0}(\mathrm{C}(W_{0, 1})),\quad D^{(0)}_{0, 1}=\mathrm{M}_{n_0}(\mathrm{C}(W_{0, 0}))$$ with the map $D_{0, 0} \to D^{(0)}_{0, 1}$ being the identity map. Then it is clear that $$ D_{0, 0}\oplus_{D_{0, 1}^{(0)}} D_{0, 1} = \mathrm{M}_{n_0}(\mathrm{C}(W_{0, 1})).$$ 
Putting $$D_{0, 2} = \mathrm{M}_{n_0}(\mathrm{C}(W_{0, 2}))\quad \textrm{and} \quad D^{(0)}_{0, 2}=\mathrm{M}_{n_0}(\mathrm{C}(W_{0, 1})),$$ and repeating this construction, one obtains the new RSH decomposition of $C_0=\mathrm{M}_{n_0}(\mathrm{C}(X_0))$ by
$$\left[\cdots\left[\left[ D_{0, 0}\oplus_{D_{0, 1}^{(0)}} D_{0, 1} \right]\oplus_{D_{0, 2}^{(0)}}D_{0, 2}\right] \cdots\right]\oplus_{D_{0, s_0}^{(0)}} D_{0, s_0} = \mathrm{M}_{n_0}(\mathrm{C}(W_{0, s_0}))=\mathrm{M}_{n_0}(\mathrm{C}(X_0)),$$ where $$D_{0, j} = \mathrm{M}_{n_0}(\mathrm{C}(W_{0, j}))\quad\mathrm{and}\quad D_{0, j}^{(0)} = \mathrm{M}_{n_0}(\mathrm{C}(W_{0, j-1})).$$
It clearly satisfies \eqref{dim-gap-pre}.

Now, assume that one has a desired decomposition of 
$$A_i:=\left[\cdots\left[\left[ C_0\oplus_{C_1^{(0)}} C_1 \right]\oplus_{C_2^{(0)}}C_2\right] \cdots\right]\oplus_{C_{i}^{(0)}} C_{i};$$ let us construct the decomposition for $A_i\oplus_{C_{i+1}^{(0)}} C_{i+1}$ which satisfies \eqref{dim-gap-pre}.

Write $$b=(b_i, b_{i+1}) \in A_i\oplus_{C_{i+1}^{(0)}} C_{i+1},$$ and denote by $\rho: A_i \to C_{i+1}^{(0)}$ the homomorphism associated with the (original) RSH decomposition of $A$. 


Set $$D_{i+1, 0} = \mathrm{M}_{n_{i+1}}(\mathrm{C}(W_{i+1, 0})),\quad D^{(0)}_{i+1, 0} = \mathrm{M}_{n_{i+1}}(\mathrm{C}(X_{i+1}^{(0)} \cap W_{i+1, 0})),$$ and $$\eta_0: A_i \ni f \mapsto \rho(f)|_{X_{i+1}^{(0)}\cap W_{i+1, 0}} \in D^{(0)}_{i+1, 0}.$$ 
For the next stage, consider 
$$D_{i+1, 1} = \mathrm{M}_{n_{i+1}}(\mathrm{C}(W_{i+1, 1})),\quad D^{(0)}_{i+1, 1} = \mathrm{M}_{n_{i+1}}(\mathrm{C}((X_{i+1}^{(0)}\cap W_{i+1, 1})\cup W_{i+1, 0})),$$  
and 
$$\eta_1: A_i\oplus_{D^{(0)}_{i+1, 0}} D_{i+1, 0} \ni (f, g) \mapsto (\rho(f)|_{X_{i+1}^{(0)}\cap W_{i+1, 1}}, g) \in D_{i+1, 1}^{(0)},$$
where $(\rho(f)|_{X_{i+1}^{(0)}\cap W_{i+1, 1}}, g)$ is regarded as the (well defined) function on $$(X_{i+1}^{(0)}\cap W_{i+1, 1})\cup W_{i+1, 0},$$ which is $\rho(f)$ on $X_{i+1}^{(0)} \cap W_{i+1, 1}$ and is $g$ on $W_{i+1, 0}$. 

At $j$-th stage, set $$D_{i+1, j} = \mathrm{M}_{n_{i+1}}(\mathrm{C}(W_{i+1, j})),\quad D^{(0)}_{i+1, j} = \mathrm{M}_{n_{i+1}}(\mathrm{C}((X_{i+1}^{(0)} \cap W_{i+1, j})\cup W_{i+1, j-1})),$$ and 
\begin{eqnarray*}
\eta_j: (f, g_0, ..., g_{j-1}) & \mapsto & (\rho(f)|_{X_{i+1}^{(0)}\cap W_{i+1, j}}, g_0, ..., g_{j-1}) \in D_{i+1, j}^{(0)},
\end{eqnarray*}
where $$(f, g_0, ..., g_{j-1}) \in \left[\left[A_i\oplus_{D^{(0)}_{i+1, 0}} D_{i+1, 0}\right]\cdots\right] \oplus_{D_{i+1, j-1}^{(0)}} D_{i+1, j-1},$$ and $$(\rho(f)|_{X_{i+1}^{(0)}\cap W_{i+1, j}}, g_0, ..., g_{j-1})$$ is regarded as the function on $$(X_{i+1}^{(0)} \cap W_{i+1, j})\cup W_{i+1, j-1}$$ which is $\rho(f)$ on $X_{i+1}^{(0)}\cap W_{i+1, j}$ and is $g_{j-1}$ on $W_{i+1, j-1}$. It is well defined. 

In this new RSH-decomposition, note that the base spaces are $W_{i+1, j}$ together with the closed subset $(X_{i}^{(0)}\cap W_{i+1, j})\cup W_{i+1, j-1}$. Note that 
$$ W_{i+1, j} \setminus ( (X_{i}^{(0)}\cap W_{i+1, j})\cup W_{i+1, j-1} ) \subseteq W_{i+1, j} \setminus W_{i+1, j-1} = Z_{i+1, j},$$ and hence
$$\mathrm{rank}(b(x)) = r_{i+1, j},\quad x\in W_{i+1, j} \setminus ( (X_{i}^{(0)}\cap W_{i+1, j})\cup W_{i+1, j-1} ),$$
and it satisfies the condition \eqref{dim-gap-pre}.
\end{proof}

\begin{defn}
Let $A$ be a separable RSH algebra with a fixed decomposition 
$$\left[\cdots\left[\left[ C_0\oplus_{C_1^{(0)}} C_1 \right]\oplus_{C_2^{(0)}}C_2\right] \cdots\right]\oplus_{C_K^{(0)}} C_K.$$ Write $$C_k=\mathrm{M}_{n_k}(\mathrm{C}(X_k))\quad \textrm{and}\quad C_k^{(0)} = \mathrm{M}_{n_k}(\mathrm{C}(X^{(0)}_k))$$ with $X^{(0)}_k$ a closed subset of $X_k$. Let $\mathcal F\subseteq A$ be a finite set such that if $$f = (f_0, f_1, ..., f_K),\quad f\in\mathcal F,$$ then $$f_k \in \mathrm{M}_{n_k}(\mathrm{C}_\Real(X_k)),\quad k=0, 1, ..., K.$$ Then, for each $k=0, 1, ..., K$, 
 define the map $\Phi_{\mathcal F, k}: X_k \to \mathbb{R}^{n_k^2\abs{\mathcal F}}$ by
$$\Phi_{\mathcal F, k}: X_k \ni x \mapsto \bigoplus_{f\in \mathcal F}\bigoplus_{i, j=1}^{n_k} (f_k)_{i, j}(x) \in \bigoplus_{f\in \mathcal F}\bigoplus_{i, j=1}^{n_k} \Real = \mathbb{R}^{n_k^2\abs{\mathcal F}}.$$
\end{defn}

The following lemma shows that in an RSH algebra $A$ with diagonal maps, there always exist finitely many elements of $ A$ such that these elements span the whole $\mathrm{M}_n(\Comp)$ under any irreducible representation of $A$ but the topological dimension of the values of their entries is controlled by the dimension of the irreducible representations of $A$. These elements play the same role as that of the element $uH$ in the proof of Theorem 4.5 of \cite{EN-MD0}.

\begin{lem}\label{qsudo-system}
Let $A$ be a separable RSH algebra with a fixed decomposition 
$$\left[\cdots\left[\left[ C_0\oplus_{C_1^{(0)}} C_1 \right]\oplus_{C_2^{(0)}}C_2\right] \cdots\right]\oplus_{C_K^{(0)}} C_K.$$ Write $$C_i=\mathrm{M}_{n_i}(\mathrm{C}(X_i))\quad \textrm{and}\quad C_i^{(0)} = \mathrm{M}_{n_i}(\mathrm{C}(X^{(0)}_i))$$ with $X^{(0)}_i$ a closed subset of $X_i$. 
Assume
\begin{enumerate}
\item $n_0 < n_1 < \cdots < n_K$, and
\item all the maps $$\eta_i: A_i \to C_{i+1}^{(0)},\quad i=0, 1, ..., K-1,$$ are of diagonal type, where $$A_i = \left[\cdots\left[\left[ C_0\oplus_{C_1^{(0)}} C_1 \right]\oplus_{C_2^{(0)}}C_2\right] \cdots\right]\oplus_{C_i^{(0)}} C_i.$$
\end{enumerate}
Then there are finite sets $$\mathcal E_k \subseteq A_k,\quad k=0, 1, ..., K,$$
such that 
\begin{enumerate}
\item \begin{equation*}
         \eta_k(\mathcal E_{k-1}) \subseteq \pi_{X_k^{(0)}}(\mathcal E_k),\quad k=1, 2, ..., K;
         \end{equation*}
\item there are continuous functions $$\chi_i: X_i \to [0, 1],\quad i=0, 1, ..., K,$$  such that $\chi_i^{-1}(0) = X_i^{(0)}$ and 
         \begin{equation}\label{full-irrd}
         \{\chi_i e_{s, t}: 1 \leq s, t\leq n_i \} \subseteq \mathcal E_K|_{X_i},
        \end{equation}
where $\{e_{s, t}: 1\leq s, t \leq n_i\}$ are the standard matrix units of $\mathrm{M}_{n_i}(\Comp) \subseteq \mathrm{M}_{n_i}(\mathrm{C}(X_i))$;
\item and there are polyhedrons
$$
V_{k}\subseteq \Real^{n_k^2\abs{\mathcal E_k}},\quad 0\leq k \leq K,$$ 
such that 
\begin{equation}\label{V-dim}
\mathrm{dim}(V_{k}) \leq \frac{n_k-n_0}{n_0}
\end{equation}
and
\begin{equation}\label{V-cont}
\Phi_{\mathcal E_k, k}(X_{k}) \subseteq V_k
\end{equation}
for any $0\leq k\leq K$. 
\end{enumerate}
\end{lem}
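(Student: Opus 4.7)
The proof proceeds by induction on $k$, constructing $\mathcal{E}_k$, $\chi_k$, and $V_k$ simultaneously. For the base case $k = 0$, since $X_0^{(0)} = \varnothing$, I set $\chi_0 \equiv 1$ and take $\mathcal{E}_0 = \{e_{s,t} : 1 \leq s, t \leq n_0\}$ to be the matrix units viewed as constant matrix-valued functions on $X_0$. The map $\Phi_{\mathcal{E}_0, 0}$ is then constant, so its image is a single point, which serves as $V_0$ with $\dim V_0 = 0 = (n_0 - n_0)/n_0$.

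For the inductive step, suppose $\mathcal{E}_{k-1}, \chi_{k-1}, V_{k-1}$ have been chosen. The set $\mathcal{E}_k$ will consist of two families. First, for each $e \in \mathcal{E}_{k-1}$, an extension $\tilde e \in A_k$ with $\pi_{X_k^{(0)}}(\tilde e) = \eta_k(e)$, constructed by a coning-off procedure described below. Second, the new matrix units $\chi_k e_{s,t}$ for $1 \leq s, t \leq n_k$, which lie in $A_k$ via trivial lower components since $\chi_k|_{X_k^{(0)}} = 0$. The first family secures condition $(1)$, while iterating through levels $i, i+1, \ldots, K$ delivers the matrix units $\chi_i e_{s,t}$ into $\mathcal{E}_K|_{X_i}$ as required by condition $(2)$.

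For the dimension bound $(3)$, partition $X_k^{(0)}$ according to the diagonal structure of $\eta_k$. On each piece $Y$, the map $\eta_k$ decomposes as a direct sum of $s$ pullbacks via $\lambda_j \colon Y \to X_{t_j}$ with $\sum_j n_{t_j} = n_k$. Since each $n_{t_j} \leq n_{k-1} < n_k$, the number of blocks satisfies $s \geq 2$, so the block-diagonal embedding of $\prod_j V_{t_j}$ yields a polyhedron $\bar V_Y \subseteq \mathbb{R}^{n_k^2|\mathcal{E}_{k-1}|}$ of dimension at most $(n_k - s n_0)/n_0 \leq (n_k - 2 n_0)/n_0$ containing the joint image of $\mathcal{E}_{k-1}$-evaluations over $Y$. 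Gluing across pieces gives a single polyhedron $\bar V$ of the same dimension bound. Using the fact that $\bar V$ is an ANR, the map $\eta_k(e) \colon X_k^{(0)} \to \bar V$ extends to a map $\hat e$ on a closed neighborhood $U$ of $X_k^{(0)}$, and one chooses $\chi_k$ so that $\chi_k|_{X_k \setminus U} = 1$ in addition to $\chi_k^{-1}(0) = X_k^{(0)}$. Setting $\tilde e(x) := (1 - \chi_k(x)) \hat e(x)$ on $U$ and $\tilde e(x) := 0$ off $U$ then yields a continuous extension whose image lies in the cone over $\bar V$ with apex at the origin, of dimension $\dim \bar V + 1$. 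The second-family coordinates $\chi_k e_{s,t}$ depend only on $\chi_k$ and fit within this same cone coordinate. Thus $\Phi_{\mathcal{E}_k, k}(X_k) \subseteq V_k$ for a polyhedron $V_k$ of dimension at most $(n_k - 2 n_0)/n_0 + 1 = (n_k - n_0)/n_0$.

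The main obstacle is assembling the polyhedron $\bar V$ and the neighborhood $U$ uniformly across all pieces of the partition of $X_k^{(0)}$ and all elements $e \in \mathcal{E}_{k-1}$, and verifying that the extensions glue continuously along the overlaps of pieces. The decisive input is the strict inequality $n_{k-1} < n_k$, which forces $s \geq 2$ in every diagonal decomposition at level $k$ and thereby leaves exactly one dimension of slack to absorb the cone coordinate coming from the coning-off extension.
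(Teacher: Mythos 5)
Your argument follows essentially the same inductive construction as the paper's proof: extend the joint restriction map over a neighborhood of $X_k^{(0)}$ using the neighborhood-retraction property of the polyhedron $\bar V$, cut off by the cone parameter $1-\chi_k$, adjoin the new matrix units $\chi_k e_{s,t}$, and exploit the block count $s\geq 2$ (forced by $n_{k-1}<n_k$) to leave exactly one dimension of slack for the cone coordinate. This is correct; the only minor omission is the degenerate case $X_k^{(0)}=\varnothing$ (where the paper simply takes $\chi_k\equiv 1$ and $V_k$ a single point), and your concern about ``gluing across the pieces $Y_i$'' is moot since the map $\Phi_{\eta_k(\mathcal E_{k-1}),\,\cdot}$ is already continuous on all of $X_k^{(0)}$ with image in $\bar V$, so one applies the ANR extension once.
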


\begin{proof}
Let us construct $\mathcal E_k$, $k=0, 1, ..., K$, recursively. Consider $$C_0\cong\mathrm{M}_{n_0}(\mathrm{C}(X_0)),$$ and put
$$\mathcal E_0=\{0, e_{i, j}^{(0)}: 1\leq i, j \leq n_0\},$$
where $e_{i, j}^{(0)}$ are matrix units of $\mathrm{M}_{n_0}(\Comp) \subseteq \mathrm{M}_{n_0}(\mathrm{C}(X_0))$. Since $\mathcal E_0$ consists of constant functions,  $\Phi_{\mathcal E_0, 0}$ is constant. Then it is clear that $$\mathcal E_0 \subseteq C_0\quad\mathrm{and}\quad V_0:=\{\mathrm{pt}\},$$ where $\mathrm{pt}$ is the constant value of $\Phi_{\mathcal E_0, 0}$, satisfy \eqref{full-irrd}, \eqref{V-dim}, and \eqref{V-cont} (with $\chi_0$ the constant function $1$).

Assume $$\mathcal E_{0}\subseteq A_0,\ \mathcal E_{1}\subseteq A_1,\ ...,\ \mathcal E_{k-1}\subseteq A_{k-1}\ \mathrm{and}\ V_0, V_1,\ ..., V_{k-1}$$ are constructed to satisfy \eqref{full-irrd}, \eqref{V-dim}, and \eqref{V-cont}. Consider $$A_k:=A_{k-1} \oplus_{C_k^{(0)}} C_k.$$

If $X_k^{(0)} = \varnothing$, then $A_k:=A_{k-1} \oplus C_k$. Define $$\mathcal E_k' = \{e_{i, j}^{(k)}: 1\leq i, j \leq n_k\}\subseteq C_k,$$
where $e_{i, j}^{(k)}$ are matrix units of $\mathrm{M}_{n_k}(\Comp) \subseteq \mathrm{M}_{n_k}(\mathrm{C}(X_k))=C_k$. Then $$\mathcal E_k:= \mathcal E_{k-1} \cup \mathcal E_{k}',$$ where $\mathcal E_{k-1}$ and $\mathcal E_k'$ are regarded as subsets of $A_{k-1}\oplus C_k$ naturally, and $$V_k:=\{\mathrm{pt}\},$$ 
where $\mathrm{pt}$ is the constant value of $\Phi_{\mathcal E_k, k}$ on $X_k$, satisfy \eqref{full-irrd}, \eqref{V-dim}, and \eqref{V-cont} (with $\chi_k$ the constant function $1$). 

Assume that $X_k^{(0)} \neq \varnothing$. Since the map $\eta$ is of diagonal type, there exist $l\in\mathbb N$ and a partition $$X_k^{(0)} = Y_1 \sqcup \cdots \sqcup Y_l $$
such that for each $Y_i$, there are continuous maps $$\lambda_1^{(i)}: Y_i \to X_{t_1},\  \lambda_2^{(i)}: Y_i \to X_{t_2},\ ...,\  \lambda_{s_i}^{(i)}: Y_i \to X_{t_{s_i}} $$ for some $s_i\in\mathbb N$ such that
$$\eta((f_0, f_1, ..., f_{k-1}))|_{Y_i} = 
\left(
\begin{array}{cccc}
f_{t_1} \circ \lambda_1^{(i)} & & & \\
& f_{t_2} \circ \lambda_2^{(i)} & & \\
& & \ddots & \\
& & & f_{t_{s_i}} \circ \lambda_{s_i}^{(i)}
\end{array}
\right),$$
where $(f_0, f_1, ..., f_{k-1}) \in A_{k-1}.$
Then it is clear that 
\begin{equation}\label{pre-cont}
\bigoplus_{e\in\mathcal E_{k-1}}\bigoplus_{i, j=1}^{n_k}\eta(e)_{i, j}(x) \subseteq V_{t_1} \oplus\cdots\oplus V_{t_{s_i}} =: V_{k, i}',\quad x\in Y_i.
\end{equation}
Let us estimate the dimension of $V_{k, i}'$. Let $(m_0, m_1, ..., m_{k-1})$ be the multiplicities of the map $\eta|_{Y_i}$. Note that
$$n_k=m_0n_0+m_1n_1+\cdots+m_{k-1}n_{k-1}$$
and
\begin{eqnarray*}
\mathrm{dim}(V_{k, i}') & = & \mathrm{dim}(V_{t_1}) + \cdots + \mathrm{dim}(V_{t_{s_i}}) \\
&=& m_0 \mathrm{dim}(V_0) + m_1 \mathrm{dim}(V_1) +  \cdots + m_{k-1} \mathrm{dim}(V_{k-1}).
\end{eqnarray*}
Since $n_k > n_{k-1}$, one has that
$$m_0 + m_1 + \cdots + m_{k-1} \geq 2.$$ 
Hence, for each $i=1, 2, ..., k$, one has
\begin{eqnarray*}
\mathrm{dim}(V_{k, i}') & = & m_0 \mathrm{dim}(V_0) + m_1 \mathrm{dim}(V_1) + \cdots + m_{k-1} \mathrm{dim}(V_{k-1})  \\
& \leq & (m_0 (n_0-n_0) + m_1 (n_1 - n_0) + \cdots + m_{i-1} ({n_{i-1} - n_0})) \cdot \frac{1}{n_0}  \\
& = & ((m_0 n_0 + m_1 n_1 + \cdots + m_{k-1} n_{k-1}) - \\
& & (m_0  + m_1 + \cdots + m_{k-1})n_0) \cdot \frac{1}{n_0} \\
&\leq & (n_k -2n_0) \cdot \frac{1}{n_0} = \frac{n_k}{n_0} - 2.
\end{eqnarray*}

Put $$V_k' = V'_{k, 1} \cup V'_{k, 2} \cup \cdots \cup V'_{k, l}.$$ By \eqref{pre-cont}, one has
$$\bigoplus_{e\in\mathcal E_{k-1}}\bigoplus_{i, j=1}^{n_k}\eta(e)_{i, j}(x) \subseteq V_k',\quad x\in X_k^{(0)}.$$
Also note that 
\begin{equation}\label{dim-control}
\mathrm{dim}(V'_k) \leq \frac{n_k}{n_0} - 2.
\end{equation}

For each element $e \in \mathcal E_{k-1}\subseteq A_{k-1}$, consider $\eta(e) \in C_k^{(0)}$. Since $V_{k}'$ is a polyhedron, it is a neighborhood retraction; hence there is an open set $U \supseteq X_k^{0}$ such that there is an extension of $\eta(e)$ to $U$, denoted by $e'$, such that 
\begin{equation}\label{contain}
\bigoplus_{e\in\mathcal E_{k-1}}\bigoplus_{i, j=1}^{n_k}e'_{i, j}(x) \subseteq V_k',\quad x\in U.
\end{equation}
Pick a continuous function $\chi_k: X_k \to [0, 1]$ such that $$\chi^{-1}(1) = X_k^{(0)}\quad \mathrm{and}\quad \chi^{-1}(0) =X_k\setminus U.$$ Define
$$\tilde{e}: X_k \ni x \mapsto \left\{ \begin{array}{ll} \chi(x) {e'}(x), & x\in U; \\ 0, & x\notin U. \end{array}\right.$$
Set $$\tilde{\mathcal E}_{k}=\{e\oplus \tilde{e} \in A_{k}: e\in \mathcal E_{k-1}\},$$
and
$$\mathcal E_k'=\{0 \oplus (1-\chi) e_{i, j}^{(k)}\in A_k: 1\leq i, j \leq n_k\},$$
where $e_{i, j}^{(k)}$ are matrix units of $\mathrm{M}_{n_k}(\Comp) \subseteq \mathrm{M}_{n_k}(\mathrm{C}(X_k))$.
Then
$$\mathcal E_k:=\tilde{\mathcal E}_{k} \cup  \mathcal E_k'$$ satisfies \eqref{full-irrd}, \eqref{V-dim}, and \eqref{V-cont} (for $A_k$).

Indeed, since $\mathcal E'_k\subseteq \mathcal E_k$, \eqref{full-irrd} is satisfied. 
Define $$V_k=\{(tv, (1-t)v) : t\in[0, 1],\ v\in V_k' \}.$$ Then, by \eqref{contain},
$$\Phi_{\mathcal E_k, k}(\mathcal E_k) \subseteq V_k$$ and by \eqref{dim-control},
$$\mathrm{dim}(V_k) = \mathrm{dim}(V_k') + 1 \leq \frac{n_k}{n_0} - 1 = \frac{n_k - n_0}{n_0}.$$
So, \eqref{V-dim} and \eqref{V-cont} are satisfied (for $A_k$). By the induction, the desired finite sets $\mathcal E\subseteq A$ exist. 
\end{proof}

\begin{rem}
The estimation which is needed later in Theorem \ref{DSH-comp} on the dimension of $V_i$ is actually $$\mathrm{dim}(V_{i}) \leq \frac{n_i}{n_0}.$$ But the stronger version \eqref{V-dim} is needed for the induction argument.
\end{rem}

\begin{lem}\label{irrd-rep}
Consider $n \times n$ matrices 
$$(a^{(1)}_{i, j})_{i, j=1}^n, ... , (a^{(m)}_{i, j})_{i, j=1}^n, \alpha e_{s, t},\quad 1\leq s, t \leq n,$$ 
and 
$$(b^{(1)}_{i, j})_{i, j=1}^n, ... , (b^{(m)}_{i, j})_{i, j=1}^n, \beta e_{s, t},\quad 1\leq s, t \leq n,$$
where $\alpha, \beta\in \Real\setminus\{0\}$ and $e_{s, t}$ is the matrix with $(s, t)$-entry $1$ and all other entries $0$. 

If there is a unitary $U\in \mathrm{M}_n(\Comp)$ satisfying
$$U^*(a^{(k)}_{i, j})U = (b^{(k)}_{i, j})\quad\mathrm{and}\quad U^*(\alpha e_{s, t})U = \beta e_{s, t},\quad 1\leq k \leq m, \ 1\leq s, t \leq n,$$
then
$$ a_{s, t}^{(k)} = b_{s, t}^{(k)}\quad\mathrm{and}\quad \alpha = \beta,\quad \quad 1\leq k \leq m, \ 1\leq s, t \leq n.$$
\end{lem}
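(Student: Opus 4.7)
The plan is to exploit the fact that the full set of matrix units $\{e_{s,t}: 1\leq s,t\leq n\}$ generates $\mathrm{M}_n(\Comp)$ and has trivial commutant (namely $\Comp\cdot I$). First I would deal with the scalars: taking the (normalized) trace of the identity $U^*(\alpha e_{s,s})U = \beta e_{s,s}$ for any fixed $s$ yields $\alpha = \beta$, since the trace is unitarily invariant and $\mathrm{tr}(e_{s,s}) = 1/n \neq 0$. In particular $\alpha = \beta \neq 0$.

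Next, dividing the intertwining relation $U^*(\alpha e_{s,t})U = \beta e_{s,t}$ by $\alpha$, I obtain
\begin{equation*}
U^* e_{s,t} U = e_{s,t}, \quad 1\leq s,t\leq n,
\end{equation*}
i.e.\ $U$ commutes with every matrix unit. Since $\{e_{s,t}\}$ spans $\mathrm{M}_n(\Comp)$ linearly, $U$ lies in the centre of $\mathrm{M}_n(\Comp)$, which equals $\Comp\cdot I$. Hence $U = \lambda I$ for some scalar $\lambda$ with $\abs{\lambda} = 1$ (by unitarity).

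Finally, for any matrix $X\in \mathrm{M}_n(\Comp)$, conjugation by $U=\lambda I$ acts as $U^*XU = \bar\lambda\lambda X = X$. Applying this to $X = (a^{(k)}_{i,j})$ gives $(a^{(k)}_{i,j}) = U^*(a^{(k)}_{i,j})U = (b^{(k)}_{i,j})$ as $n\times n$ matrices, so in particular $a^{(k)}_{s,t} = b^{(k)}_{s,t}$ for all admissible indices. This closes the argument; there is no real obstacle here, as the lemma is a direct consequence of the fact that the family of matrix units together with any nonzero scalar multiple of one of them (really, the whole family of scaled units) suffices to pin down the unitary up to a central phase.
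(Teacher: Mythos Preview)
Your proof is correct and arguably cleaner than the paper's. The paper also begins by showing $\alpha=\beta$ (summing $U^*(\alpha e_{i,i})U=\beta e_{i,i}$ over $i$ to get $\alpha I_n=\beta I_n$, rather than taking a trace), but then proceeds differently: instead of concluding that $U$ is central, it extracts entries directly via the sandwich identity
\[
(\alpha a_{s,t}^{(k)}\beta)e_{s,t}=U^*\bigl(\alpha e_{s,s}(a_{i,j}^{(k)})\alpha e_{t,t}\bigr)U=\beta e_{s,s}(b_{i,j}^{(k)})\beta e_{t,t}=(\beta^2 b_{s,t}^{(k)})e_{s,t}.
\]
Your route --- observing that $U^*e_{s,t}U=e_{s,t}$ for all $s,t$ forces $U\in Z(\mathrm{M}_n(\Comp))=\Comp I$, whence conjugation by $U$ is trivial --- is more conceptual and yields the full matrix equality $(a_{i,j}^{(k)})=(b_{i,j}^{(k)})$ in one stroke. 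The paper's computation avoids invoking the commutant but is slightly more opaque; your argument makes transparent \emph{why} the lemma holds (the scaled matrix units already rigidify the unitary up to a phase).
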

\begin{proof}
Note that
$$\alpha I_n = U^*(\alpha I_n)U = U^*\sum_{i=1}^n\alpha e_{i, i}U = \sum_{i=1}^nU^*(\alpha e_{i, i})U = \sum_{i=1}^n\beta e_{i, i} = \beta I_n,$$ and hence $\alpha = \beta$. Consider a pair of matrices $(a_{i, j}^{(k)}), (b_{i, j}^{(k)})$. Then, for any $1\leq s, t\leq n$, 
\begin{eqnarray*}
&&(\alpha a_{s, t}^{(k)} \beta) e_{s, t} \\
 & = & \alpha a_{s, t}^{(k)} (U^* \alpha e_{s, t}U)  =  U^*(\alpha^2 a_{s, t}^{(k)} e_{s, t})U = U^*(\alpha e_{s, s}(a^{(k)}_{i, j})_{i, j=1}^n\alpha e_{t, t})U \\
& = & \beta e_{s, s}(b^{(k)}_{i, j})_{i, j=1}^n\beta e_{t, t} = (\beta^2 b_{s, t}^{(k)}) e_{s, t}.
\end{eqnarray*} 
Since $\alpha=\beta\neq 0$,  one has that $a_{s, t}^{(k)} = b_{s, t}^{(k)}$, as desired.
\end{proof}

\begin{lem}[Lemma 4.3 of \cite{EN-MD0}]\label{cont-field}
Let $X$ be a second countable locally compact Hausdorff space, and let $S$ be a sub-C*-algebra of $\mathrm{M}_n(\mathrm{C_0}(X))$.

Suppose that there exist a topological space $\Delta$ and a surjective continuous map $\xi: X\to\Delta$ such that 
\begin{enumerate}
\item\label{cond-sep}  for any $x_1, x_2\in X$, $$\xi(x_1)=\xi(x_2)$$ if and only if $$\textrm{$\pi_{x_1}|_S$ is unitarily equivalent to $\pi_{x_2}|_S$},$$ 
where $\pi_x$ is the standard irreducible representation of $\mathrm{M}_n(\mathrm{C_0}(X))$ at $x\in X$, 
\item\label{cond-cont} for any sequence $x_i, i=1, 2, ...,$ in $X$, any $x$ in $X$, and any $g\in S$, if $$\xi(x_i)\to\xi(x)\quad \textrm{as}\  i\to\infty,$$ then $$g(x_i) \to g(x)\quad\mathrm{as}\  i\to\infty,$$ and
\item\label{cond-full} $\pi_x(S)=\mathrm{M}_n(\mathbb C)$, for any $x\in X$.
\end{enumerate}

Then there is an isomorphism $\phi: \mathrm{M}_n(\mathrm{C}_0(\Delta)) \to S$. Moreover, under this isomorphism, if $X^{(0)}$ is a closed subset of $X$ and $\pi_{X^{(0)}}$ is the restriction map, there is a commutative diagram
$$
\xymatrix{
\mathrm{M}_{n}(\mathrm{C}_0(X)) \ar[d]^{\pi_{X^{(0)}}} & S\ar[d]^{\pi_{X^{(0)}}} \ar@{_{(}->}[l]  & \mathrm{M}_{n}(\mathrm{C}_0(\Delta)) \ar[d]^{\pi_{\Delta^{(0)}}}  \ar[l]_-\phi \\
\mathrm{M}_{n}(\mathrm{C}(X^{(0)})) & \pi_{X^{(0)}}(S) \ar@{_{(}->}[l]  & \mathrm{M}_{n}(\mathrm{C}(\Delta^{(0)})) \ar[l] 
},
$$
where $\Delta^{(0)} = \xi(X^{(0)})$ and $\pi_{\Delta^{(0)}}$ is the restriction map.

\end{lem}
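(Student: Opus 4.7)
The plan is to construct the isomorphism $\phi$ as the inverse of a natural descent map $\psi : S \to \mathrm{M}_n(\mathrm{C}_0(\Delta))$, with the main work being surjectivity via Dixmier's theorem on rich subalgebras. First, I would observe that condition~(\ref{cond-cont}) applied to the constant sequence $x_i := x_1$ with $\xi(x_1) = \xi(x_2)$ forces $g(x_1) = g(x_2)$ for every $g \in S$, so each $g$ descends to a well-defined map $\hat{g} : \Delta \to \mathrm{M}_n(\Comp)$ by $\hat{g}(\xi(x)) := g(x)$, and condition~(\ref{cond-cont}) itself is then precisely sequential continuity of $\hat{g}$ on $\Delta$ (which suffices since $X$ is second countable). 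Setting $\psi(g) := \hat{g}$ gives an injective $*$-homomorphism: it is an algebra homomorphism because each evaluation $\pi_x$ is, and injective because $\psi(g) = 0$ forces $g$ to vanish on $X$.

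Next I would verify that $\psi$ lands in $\mathrm{M}_n(\mathrm{C}_0(\Delta))$. For $\eps > 0$, the set $K_\eps := \{x \in X : \norm{g(x)} \geq \eps\}$ is compact in $X$, so $\xi(K_\eps)$ is compact in $\Delta$, and since $\norm{\hat{g}(\xi(x))} = \norm{g(x)}$ and $\xi$ is surjective one has $\{\delta \in \Delta : \norm{\hat{g}(\delta)} \geq \eps\} = \xi(K_\eps)$, giving $\hat{g} \in \mathrm{C}_0(\Delta)$. At this stage I would also confirm that $\Delta$ is Hausdorff and locally compact: conditions~(\ref{cond-sep}) and~(\ref{cond-full}) identify $\Delta$ set-theoretically with the primitive ideal space of the $n$-homogeneous $C^*$-algebra $S$, whose spectrum is automatically LCH by standard type-I structure theory.

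For surjectivity of $\psi$, I would invoke Dixmier's richness theorem (Theorem~11.1.6 of~\cite{Dix-book}). Every irreducible representation of $\mathrm{M}_n(\mathrm{C}_0(\Delta))$ is evaluation $\mathrm{ev}_\delta$ at some $\delta \in \Delta$; for any $x$ with $\xi(x) = \delta$, the composition $\mathrm{ev}_\delta \circ \psi$ coincides with $\pi_x|_S$, which surjects onto $\mathrm{M}_n(\Comp)$ by condition~(\ref{cond-full}) and hence is irreducible. Thus $\psi(S)$ is rich in $\mathrm{M}_n(\mathrm{C}_0(\Delta))$, and the theorem yields $\psi(S) = \mathrm{M}_n(\mathrm{C}_0(\Delta))$. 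Setting $\phi := \psi^{-1}$ gives the desired isomorphism, and the commutative diagram follows by applying the same construction to $\pi_{X^{(0)}}(S) \subseteq \mathrm{M}_n(\mathrm{C}(X^{(0)}))$ with $\xi|_{X^{(0)}}$ and $\Delta^{(0)}$ in place of $\xi$ and $\Delta$, then chasing restrictions. The main obstacle is the topological content in the second step: matching the given topology on $\Delta$ with the Jacobson topology on the spectrum of $S$, which is needed both so that $\hat{g}$ is genuinely continuous on $\Delta$ and so that evaluations at points of $\Delta$ exhaust the irreducible representations of $\mathrm{M}_n(\mathrm{C}_0(\Delta))$.
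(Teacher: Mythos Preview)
Your proposal is correct and follows essentially the same route as the paper: define the descent map $g\mapsto\hat g$ via condition~(\ref{cond-cont}), check it lands in $\mathrm{M}_n(\mathrm{C}_0(\Delta))$, and then apply Dixmier's richness theorem (Theorem~11.1.6 of \cite{Dix-book}) using conditions~(\ref{cond-sep}) and~(\ref{cond-full}) to get surjectivity. The only cosmetic difference is that the paper verifies the $\mathrm{C}_0$ condition by a sequential argument (if $z_i\to\infty$ in $\Delta$ then any preimages $x_i\to\infty$ in $X$) rather than your compact-level-set argument, and the paper does not pause over the topological identification of $\Delta$ with the spectrum that you flag as the main obstacle.
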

\begin{proof}
For each $f\in S$, define a function $\tilde{f}: \Delta\to \mathrm{M}_n(\Comp)$ by
$$\tilde{f}(z)=f(x),\quad\textrm{if $\xi(x)=z$}.$$
By Condition (2), 
$\tilde{f}$ is well defined, and $\tilde{f}$ is continuous. Moreover, $\tilde{f}$ vanishes at infinity. To see this, note that, if $z_i\in\Delta$ with $z_i\to\infty$, then, since $\xi$ is surjective, there are $x_i\in X$ with $\xi(x_i)=z_i$. 
Then $x_i\to\infty$. Otherwise, there is a subsequence, say $(x_{i_k})$, converging to a point $x\in X$. Since $\xi$ is continuous, one has that $$z_{i_k}=\xi(x_{i_k})\to\xi(x),$$ which contradicts the assumption $z_i\to\infty$. Hence $$\tilde{f}(z_i)=f(x_i)\to 0,$$ and $\tilde{f}\in \mathrm{M}_n(\mathrm{C_0}(\Delta))$.

Moreover, it is clear that the map $f\mapsto\tilde{f}$ is an injective homomorphism, and thus one can regard $S$ as a sub-C*-algebra of $\mathrm{M}_n(\mathrm{C_0}(\Delta))$. It follows from Conditions (1) and (3) 
that $S$ is a rich sub-C*-algebra of $\mathrm{M}_n(\mathrm{C_0}(\Delta))$ in the sense of Dixmier (11.1.1 of \cite{Dix-book}), and therefore $S=\mathrm{M}_n(\mathrm{C}_0(\Delta))$ by Proposition 11.1.6 of \cite{Dix-book} (or by Theorem 7.2 of \cite{Kap-SCalg}).

Note that the above construction also induces an isomorphism $$\pi_{X^{(0)}}(S) \cong \mathrm{M}_n(\mathrm{C}(\Delta^{(0)}))$$ by $f \mapsto \tilde{f}$ with $\tilde{f}(\xi(x)) = f(x)$.   Also note that for any $f\in \mathrm{M}_{n}(\mathrm{C}(\Delta))$, and any $x\in X^{(0)}$, one has
$$\pi_{X^{(0)}}(\phi(f))(x) = \phi(f)(x) = f(\xi(x)) = \pi_{\Delta^{(0)}}(f)(\xi(x)).$$ Thus, the diagram commutes.
\end{proof}

The following is the main result of this section. Its proof is in the same line as that of Theorem 4.5 of \cite{EN-MD0}: One considers the sub-C*-algebra generated by the given elements $a$, $b$, and the finite set $\mathcal E_K$ obtained from Lemma \ref{qsudo-system}; then the resulting sub-C*-algebra actually has the dimension gap property of Proposition \ref{comp-dim-rank}, and the comparison between $a$ and $b$ follows. 

\begin{thm}\label{DSH-comp}
Let $A$ be a separable RSH algebra with a fixed decomposition 
$$\left[\cdots\left[\left[ C_0\oplus_{C_1^{(0)}} C_1 \right]\oplus_{C_2^{(0)}}C_2\right] \cdots\right]\oplus_{C_K^{(0)}} C_K,$$ and let $a, b$ be positive elements of  $A$. Write $$C_i=\mathrm{M}_{n_i}(\mathrm{C}(X_i))\quad \textrm{and}\quad C_i^{(0)} = \mathrm{M}_{n_i}(\mathrm{C}(X^{(0)}_i))$$ with $X^{(0)}_i$ a closed subset of $X_i$. 
Assume
\begin{enumerate}
\item $n_0 < n_1 < \cdots < n_K$,
\item $a$ and $b$ are diagonal matrices on each $X_i$,
\item all the maps $\eta_i: A_{i} \to C_{i+1}^{(0)}$, $i=0, ..., K-1$, are of diagonal type, where $$A_i = \left[\cdots\left[\left[ C_0\oplus_{C_1^{(0)}} C_1 \right]\oplus_{C_2^{(0)}}C_2\right] \cdots\right]\oplus_{C_i^{(0)}} C_i,$$
\item for each $i=0, 1, ..., K$, and each $x\in X_i\setminus X_i^{(0)}$, $$\mathrm{rank}(a(x)) < \frac{1}{4}\mathrm{rank}(b(x))\quad\mathrm{and}\quad \frac{1}{n_0} < \frac{\mathrm{rank}(b(x))}{4n_i}.$$
\end{enumerate}
Then $a\precsim b$.
\end{thm}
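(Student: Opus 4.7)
The plan is to adapt the strategy of the proof of Theorem 4.5 in \cite{EN-MD0}: pass to a sub-C*-algebra $S \subseteq A$ that contains both $a$ and $b$ and whose spectrum at each level $k$ has topological dimension of order $n_k/n_0$ (rather than the possibly much larger dimension of $X_k$), and then apply the dimension--rank comparison Proposition \ref{comp-dim-rank} inside $S$.

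The first step is to apply Lemma \ref{qsudo-system} to the given decomposition to obtain finite sets $\mathcal{E}_k \subseteq A_k$ for $k = 0, \ldots, K$, cutoff functions $\chi_k: X_k \to [0,1]$ with $\chi_k^{-1}(0) = X_k^{(0)}$, and polyhedra $V_k$ with $\mathrm{dim}(V_k) \leq (n_k - n_0)/n_0$ such that $\Phi_{\mathcal{E}_k, k}(X_k) \subseteq V_k$ and the scaled matrix units $\chi_k e_{s,t}$, $1 \leq s, t \leq n_k$, belong to $\mathcal{E}_K|_{X_k}$. Let $S \subseteq A$ be the sub-C*-algebra generated by $\mathcal{E}_K \cup \{a, b\}$, and at each level $k$ consider the entry-recording map
\[
\xi_k : X_k \longrightarrow V_k \times \Real^{2n_k}, \qquad x \mapsto \bigl(\Phi_{\mathcal{E}_K, k}(x),\,\Phi_{\{a, b\}, k}(x)\bigr).
\]
On $X_k \setminus X_k^{(0)}$ the matrix units $\chi_k(x) e_{s,t}$ are nonzero scalars, so Lemma \ref{irrd-rep} shows that two points give unitarily equivalent irreducible representations of $S$ exactly when they are identified by $\xi_k$, and moreover $\pi_x(S) = \mathrm{M}_{n_k}(\Comp)$. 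Applying Lemma \ref{cont-field} stage by stage (the containment $\eta_k(\mathcal{E}_{k-1}) \subseteq \pi_{X_k^{(0)}}(\mathcal{E}_k)$ from Lemma \ref{qsudo-system} ensures the pullback structures match across stages) realizes $S$ as a recursive subhomogeneous C*-algebra whose base spaces at level $k$ are the images $\Delta_k := \xi_k(X_k)$.

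To bound $\mathrm{dim}(\Delta_k)$, use that $a$ and $b$ are diagonal: stratify $X_k$ into finitely many closed subsets indexed by the support patterns (sets of nonzero diagonal positions) of $a$ and $b$. On a stratum $Z$ where $\mathrm{rank}(b) = r$ and $\mathrm{rank}(a) = r'$, the second factor of $\xi_k|_Z$ takes values in a coordinate subspace $\Real^{r + r'}$, so $\mathrm{dim}(\xi_k(Z)) \leq \mathrm{dim}(V_k) + r + r' \leq n_k/n_0 + r + r'$. Refining the RSH decomposition of $S$ along this rank stratification and using the standing hypotheses $r' < r/4$ and $n_k/n_0 < r/4$, for each closed stratum $W$ of rank $r$ appearing in the decomposition one checks
\[
r' + \frac{\mathrm{dim}(W) - 1}{2} \;<\; \frac{r}{4} + \frac{1}{2}\!\left(\frac{r}{4} + r + \frac{r}{4} - 1\right) \;=\; r - \frac{1}{2} \;<\; r,
\]
so the dimension--rank hypothesis of Proposition \ref{comp-dim-rank} is satisfied, yielding $a \precsim b$ in $S$ and hence in $A$.

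The step I expect to be the main obstacle is the precise setup of the recursive subhomogeneous structure on $S$: one must verify that Lemma \ref{cont-field} can be invoked coherently through all $K$ pullback stages (with the correctly defined quotient maps $\xi_k$ and matching restrictions from stage $k-1$ to the boundary $X_k^{(0)}$), and that after the further constant-rank refinement the new base spaces $\Delta_k$ are honestly closed, properly nested, and inherit the dimension bound $n_k/n_0 + r + r'$ on the appropriate closed subsets (not merely on the open rank strata). Keeping this bookkeeping consistent — while simultaneously ensuring the diagonal-type gluing maps inherited from the hypothesis on $A$ translate into diagonal-type gluing maps at the level of $S$ — is the technical heart of the argument, but it is a direct analogue of what is carried out in \cite{EN-MD0}.
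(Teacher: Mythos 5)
Your overall strategy is the paper's: generate a sub-C*-algebra $S$ from $\{a, b\} \cup \mathcal{E}_K$ using Lemma \ref{qsudo-system}, realize it as an RSH algebra with small-dimensional base spaces, and apply Proposition \ref{comp-dim-rank}; your dimension arithmetic also matches. But there is a gap at the step where you assert that applying Lemma \ref{cont-field} stage by stage ``realizes $S$ as a recursive subhomogeneous C*-algebra whose base spaces at level $k$ are the images $\Delta_k := \xi_k(X_k)$.'' Lemma \ref{cont-field} requires $\pi_x$ to be onto $\mathrm{M}_{n_k}(\Comp)$ for \emph{every} $x \in X_k$, including $x \in X_k^{(0)}$, whereas you only establish fullness on $X_k \setminus X_k^{(0)}$. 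In fact fullness on the boundary \emph{fails} for $\pi_{X_k}(S)$: the only matrix-unit-like elements of $\mathcal{E}_K$ at level $k$ are $\chi_k e_{s,t}$ with $\chi_k^{-1}(0) = X_k^{(0)}$, and for $x \in X_k^{(0)}$ the evaluation $\pi_x$ factors through the diagonal gluing map $\eta_k$, so $\pi_x(S)$ is a proper block-diagonal subalgebra of $\mathrm{M}_{n_k}(\Comp)$. Consequently $\pi_{X_k}(S)$ is not of the form $\mathrm{M}_{n_k}(\mathrm{C}(\Delta_k))$, and the RSH structure on $S$ cannot be read off directly from the restrictions of $S$ to the levels.

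The paper's fix is to replace the level-$k$ restriction of $S$ by the strictly larger building block
$$D_k := \textrm{C*}\bigl(\{a_k, b_k\} \cup (\mathcal{E}_k|_{X_k}) \cup \{e_{s, t}: 1 \leq s, t \leq n_k\}\bigr) \subseteq C_k,$$
in which the \emph{constant} matrix units $e_{s,t}$ are adjoined. These constants do not affect the dimension of the spectrum (they contribute a point), but they restore fullness of $\pi_x(D_k)$ at the boundary so that Lemma \ref{cont-field} gives $D_k \cong \mathrm{M}_{n_k}(\mathrm{C}(\Delta_k))$. One then needs a further inductive argument --- via Dixmier's richness criterion, Proposition 11.1.6 of \cite{Dix-book} --- showing at each stage that the iterated pullback of the $D_k$'s coincides with $\textrm{C*}\{(a_0,\ldots,a_k),(b_0,\ldots,b_k)\} \cup \mathcal{E}_k\}$; this identification is what makes the gluing maps of the $D$-pullback well defined and supplies the RSH decomposition of $S$ to which Proposition \ref{comp-dim-rank} is finally applied. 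These two devices --- adjoining the constant matrix units, and the richness argument identifying the $D$-pullback with the generated algebra --- are the missing ingredients in your sketch.
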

\begin{proof}
Write $$a=(a_0, a_1, ..., a_K)\quad\mathrm{and}\quad b=(b_0, b_1, ..., b_K)$$ with $a_i, b_i \in C_i$, $i=0, 1, ..., K$.

Let us construct sub-C*-algebras
$$D_i \cong \mathrm{M}_{n_i}(\mathrm{C}(\Delta_i)) \subseteq \mathrm{M}_{n_i}(\mathrm{C}(X_i)) =  C_i, \quad i=0, 1, ..., K, $$ 
together with closed subsets $\Delta_i^{(0)} \subseteq \Delta_i$ ($\Delta_0^{(0)} = \varnothing$) satisfying 
\begin{equation}\label{chain-cont}
\eta_i( \left[\cdots\left[\left[ D_0\oplus_{D_1^{(0)}} D_1 \right]\oplus_{D_2^{(0)}}D_2\right] \cdots\right]\oplus_{D_{i-1}^{(0)}} D_{i-1} )\subseteq \pi_{\Delta_{i}^{(0)}}(D_{i}),
\end{equation}
where $D_i^{(0)} = \mathrm{M}_{n_i}(\mathrm{C}(\Delta_i^{(0)}))$,
such that
\begin{equation}\label{element-in}
a_i, b_i\in D_i,\quad i=0, 1., ..., K,
\end{equation}
and
if $$\{\mathrm{rank}(b(x)): x\in \Delta_i\} = \{r_{i, 0} < r_{i, 1} < \cdots < r_{i, s_i}\}$$ and define
$$W_{i, j} = \{x \in \Delta_i: \mathrm{rank}(b(x)) \leq r_{i, j}\}\quad\mathrm{and}\quad Z_{i, j} = \{x \in \Delta_i: \mathrm{rank}(b(x)) = r_{i, j}\},$$ where $j=0, 1, ..., s_i,$ then
\begin{equation}\label{dim-gap}
\mathrm{rank}(a(x)) + \frac{\mathrm{dim}(W_{i, j}) - 1}{2} < \mathrm{rank}(b(x))=r_{i, j},\quad x\in Z_{i, j}.
\end{equation}
Then, by  \eqref{chain-cont} and \eqref{element-in},
$$ S:=\left[\cdots\left[\left[ D_0\oplus_{D_1^{(0)}} D_1 \right]\oplus_{D_2^{(0)}}D_2\right] \cdots\right]\oplus_{D_{K}^{(0)}} D_{K}$$ is a sub-C*-algebra of $A$ with $a, b \in S$; by \eqref{dim-gap} and Proposition \ref{comp-dim-rank}, one has $a \precsim b$, and this proves the theorem.

Let us construct the desired sub-C*-algebras $D_i$ recursively. By Lemma \ref{qsudo-system}, there exist finite sets $\mathcal E_k \subseteq A_k$, where 
$$A_k = \left[\cdots\left[\left[ C_0\oplus_{C_1^{(0)}} C_1 \right]\oplus_{C_2^{(0)}}C_2\right] \cdots\right]\oplus_{C_k^{(0)}} C_k,\quad k=0, 1, ..., K,$$ 
such that 
\begin{equation}\label{bd-ext}
\eta_k(\mathcal E_{k-1}) \subseteq \pi_{X_k^{(0)}}(\mathcal E_k),\quad k=1, 2, ..., K;
\end{equation}
\begin{equation}\label{full-irrd-in-use}
\textrm{C*}(\pi_x(\mathcal E_k)) = \mathrm{M}_{n_i}(\Comp),\quad x\in X_i\setminus X_i^{(0)},\ i=0, 1, ..., k;
\end{equation}
and there is a polyhedron 
$
V_{k}\subseteq \Real^{n_k^2\abs{\mathcal E_k}} 
$
such that 
\begin{equation}\label{V-dim-in-use}
\mathrm{dim}(V_{k}) \leq \frac{n_k-n_0}{n_0} 
\end{equation}
and
\begin{equation}\label{V-cont-in-use}
\Phi_{\mathcal E_k, k}(X_{k}) \subseteq V_k.
\end{equation}

Define 
$$D_0:=\textrm{C*}(\{a_0, b_0\}\cup \mathcal E_0 )\subseteq \mathrm{M}_{n_0}(\mathrm{C}(X_0)).$$ 
It clearly satisfies \eqref{element-in} (with $i=0$). \eqref{chain-cont} is also satisfied with $\Delta_0^{(0)} = \varnothing$.
Set $$\mathcal G_0=\{a, b\} \cup \mathcal E_0$$
and consider the map 
\begin{equation}\label{defn-PHI-0} 
\Phi_{\mathcal G_0, 0}: X_0 \ni x \mapsto (\bigoplus_{i=1}^{n_0}(a_0)_{i, i}(x))\oplus  (\bigoplus_{i=1}^{n_0}(b_0)_{i, i}(x)) \oplus  (\bigoplus_{e\in\mathcal E_0}\bigoplus_{i, j=1}^{n_0}(e)_{i, j}(x)) \in \Real^{2n_0+|\mathcal E_0|n^2_0}
\end{equation} 
Set $$\Delta_0 = \Phi_{\mathcal G_0, 0}(X_0).$$ Then,  for any $x\in X_0$, the restriction of $\pi_x$ to $D_0$ is still irreducible, and by Lemma \ref{irrd-rep}, one has that for any $x_1, x_2\in X_0$,
$$\textrm{$\pi_{x_1}|_{D_0}$ is unitarily equivalent to $\pi_{x_2}|_{D_0}$} \Longleftrightarrow \Phi_{\mathcal G_0, 0}(x_1) = \Phi_{\mathcal G_0, 0}(x_2).$$
By Lemma \ref{cont-field}, $$D_0\cong \mathrm{M}_{n_0}(\mathrm{C}(\Delta_0)).$$

Write $$W'_{0, j} = \{x\in X_0: \mathrm{rank}(b_0(x)) \leq r_{0, j}\}\quad\mathrm{and}\quad Z'_{0, j} = \{x \in X_0: \mathrm{rank}(b_0(x)) = r_{0, j}\},$$ where $j=0, 1, ..., s_0$, 
and $$W_{0, j} = \{x\in \Delta_0: \mathrm{rank}(b_0(x)) \leq  r_{0, j}\}\quad\mathrm{and}\quad Z_{0, j} = \{x \in \Delta_0: \mathrm{rank}(b_0(x)) = r_{0, j}\},$$ where $j=0, 1, ..., s_0.$ 
Then $$W_{0, j} = \Phi_{{\{a, b\}\cup\mathcal E_0}, 0}(W'_{0, j})\quad\mathrm{and}\quad Z_{0, j} = \Phi_{{\{a, b\}\cup\mathcal E_0}, 0}(Z'_{0, j}),\quad j=0, 1, ..., s_0.$$

Since $a_0$ and $b_0$ are diagonal and $$\mathrm{rank}(a(x)) < \frac{1}{4}\mathrm{rank}(b(x)),$$ by the construction of $\Phi_{{\{a, b\}\cup\mathcal E_0}, 0}$ (\eqref{defn-PHI-0}), one has
\begin{eqnarray*}
W_{0, j} & \subseteq & \{(y_1, ..., y_{n_0}): \textrm{at most $r_{0, j}$ many of coordinates are not $0$}\} \\
& & \times\{ (y_1, ..., y_{n_0}): \textrm{at most $\frac{r_{0, j}}{4}$ many of coordinates are not $0$} \} \times V_{0}.
\end{eqnarray*}
Hence
\begin{eqnarray}
\mathrm{dim}(W_{0, j}) & \leq &  r_{0, j} + \frac{1}{4} r_{0, j} + 0
\end{eqnarray}
and for any $x\in Z_{0, j}$,
\begin{eqnarray*}
 \mathrm{rank}(a(x)) + \frac{\mathrm{dim}(W_{0, j})-1}{2} 
& \leq & \frac{1}{4} r_{0, j} + \frac{r_{0, j} + \frac{1}{4} r_{0, j} -1}{2} \\
& = & \frac{7}{8}r_{0, j} -\frac{1}{2} < r_{0, j} = \mathrm{rank}(b(x)).
\end{eqnarray*}
Thus, the C*-algebra $D_0$ satisfies \eqref{dim-gap} (with $i=0$).

Let us assume that $D_0, D_1, ..., D_{k-1}$ are constructed to satisfy \eqref{chain-cont} (with $i=k-1$), \eqref{element-in} (with $i=k-1$), and \eqref{dim-gap} (with $i=k-1$), and also assume that
\begin{eqnarray}\label{generator-k-1}
&& \left[\cdots\left[ D_0\oplus_{D_1^{(0)}} D_1 \right] \cdots\right]\oplus_{D_{k-1}^{(0)}} D_{k-1} \\
& = & \textrm{C*}\{\{(a_0, ..., a_{k-1}), (b_0, ..., b_{k-1})\}\cup\mathcal E_{k-1}\}\subseteq A_{k-1}. \nonumber
 \end{eqnarray}

Let us construct $D_k$.
Define $$D_k=\textrm{C*}(\{a_k, b_k\}\cup \mathcal (\mathcal E_k|_{X_k}) \cup \{e_{s, t}: 1\leq s, t \leq n_k\}) \subseteq \mathrm{M}_{n_k}(\mathrm C(X_k)) = C_k,$$
where $e_{s, t}, 1\leq s, t \leq n_k$ are (constant) matrix units of $\mathrm{M}_{n_k}(\Comp) \subseteq \mathrm{M}_{n_k}(\mathrm C(X_k))$.
Set
$$\mathcal G_k=\{\{a_k, b_k\}\cup \mathcal (\mathcal E_k|_{X_k}) \cup \{e_{s, t}: 1\leq s, t \leq n_k\}\}$$
and consider
\begin{eqnarray}\label{defn-PHI-k} 
 \Phi_{\mathcal G_k, k}: X_k &\ni & x \nonumber \\
 &\mapsto& (\bigoplus_{i=1}^{n_k}(a_k)_{i, i}(x))\oplus  (\bigoplus_{i=1}^{n_k}(b_k)_{i, i}(x)) \oplus  (\bigoplus_{e\in\mathcal E_k|_{X_k}\cup \{e_{s, t}\}}\bigoplus_{i, j=1}^{n_k}(e)_{i, j}(x))\\
 &  \in & \Real^{2n_k+\abs{\mathcal E_k|_{X_k}\cup \{e_{s, t}\}}n^2_k} \nonumber 
\end{eqnarray} 
and
set
$$\Delta_k = \Phi_{\mathcal G_k, k} (X_k) \quad\mathrm{and}\quad \Delta_k^{(0)} = \Phi_{\mathcal G_k, k} (X^{(0)}_k).$$
Then, for any $x\in X_k$, the restriction of $\pi_x$ to $D_k$ is still irreducible, and by Lemma \ref{irrd-rep}, for any $x_1, x_2\in X_k$,
$$\textrm{$\pi_{x_1}|_{D_k}$ is unitarily equivalent to $\pi_{x_2}|_{D_k}$} \Longleftrightarrow \Phi_{\mathcal G_k, k}(x_1) = \Phi_{\mathcal G_k, k}(x_2).$$
By Lemma \ref{cont-field}, one has that
$D_k \cong \mathrm{M}_{n_k}(\mathrm{C}(\Delta_k)$
and
$$
\xymatrix{
\mathrm{M}_{n_k}(\mathrm{C}(X_k)) \ar[d]^{\pi_{X_k^{(0)}}} & D_k \ar[d]^{\pi_{X_k^{(0)}}} \ar@{_{(}->}[l] & \mathrm{M}_{n_k}(\mathrm{C}(\Delta_k)) \ar[d]^{\pi_{\Delta_k^{(0)}}}  \ar[l]_-{\cong} \\
\mathrm{M}_{n_k}(\mathrm{C}(X_k^{(0)})) & \pi_{X_k^{(0)}}(D_k) \ar@{_{(}->}[l] & \mathrm{M}_{n_k}(\mathrm{C}(\Delta_k^{(0)})) \ar[l]_-{\cong}
}
$$
By \eqref{bd-ext},
$$\eta_{k}(\mathcal E_{k-1}) \subseteq \pi_{X_k^{(0)}}(\mathcal E_k).$$
Then, by \eqref{generator-k-1} and $\mathcal E_k \in D_k$, 
one has
$$\eta_k( \left[\cdots\left[\left[ D_0\oplus_{D_1^{(0)}} D_1 \right]\oplus_{D_2^{(0)}}D_2\right] \cdots\right]\oplus_{D_{k-1}^{(0)}} D_{k-1} ) \subseteq \pi_{\Delta_k^{(0)}}(D_{k}),$$
and hence \eqref{chain-cont} is satisfied (with $i=k$).

Write $$W'_{k, j} = \{x\in X_k: \mathrm{rank}(b_k(x)) \leq r_{k, j}\}\quad\mathrm{and}\quad Z'_{k, j} = \{x \in X_k: \mathrm{rank}(b_k(x)) = r_{k, j}\},$$ where $j=0, 1, ..., s_k$ and $$W_{k, j} = \{x\in \Delta_k: \mathrm{rank}(b_k(x)) \leq r_{k, j}\}\quad\mathrm{and}\quad Z_{k, j} = \{x \in \Delta_k: \mathrm{rank}(b_k(x)) = r_{k, j}\},$$ where $j=0, 1, ..., s_k.$ Then $$W_{k, j} = \Phi_{\mathcal G_k, k}(W'_{k, j})\quad\mathrm{and}\quad Z_{k, j} = \Phi_{\mathcal G_k, k}(Z'_{k, j}),\quad j=0, 1, ..., s_k.$$

Since $a_k$ and $b_k$ are diagonal and $$\mathrm{rank}(a(x)) < \frac{1}{4}\mathrm{rank}(b(x)),$$ by the construction of $\Phi_{{\{a, b\}\cup\mathcal E_k\cup\{e_{s, t}\}}, k}$ (\eqref{defn-PHI-k}), one has
\begin{eqnarray*}
W_{k, j} & \subseteq & \{(y_1, ..., y_{n_k}): \textrm{at most $r_{k, j}$ many of coordinates are not $0$}\} \\
& & \times\{ (y_1, ..., y_{n_k}): \textrm{at most $\frac{r_{k, j}}{4}$ many of coordinates are not $0$} \} \\
& &  \times V_{k} \times \{v_e\}.
\end{eqnarray*}
Hence
\begin{eqnarray}
\mathrm{dim}(W_{k, j}) & \leq &  r_{k, j} + \frac{1}{4} r_{k, j} + \mathrm{dim}(V_{k})\leq \frac{5}{4} r_{k, j} + \frac{n_k-n_0}{n_0}
\end{eqnarray}
and for any $x\in Z_{k, j}$,
\begin{eqnarray*}
\mathrm{rank}(a(x)) + \frac{\mathrm{dim}(W_{k, j})-1}{2} & \leq & \frac{1}{4} r_{k, j} + \frac{\frac{5}{4} r_{k, j} + \frac{n_k}{n_0} -2}{2} \\
& \leq & \frac{7}{8}r_{k, j} + \frac{1}{8} r_{k, j} - 1 \\
& < & r_{k, j} =  \mathrm{rank}(b(x)).
\end{eqnarray*}
Thus, the C*-algebra $D_k$ satisfies \eqref{dim-gap} (with $i=k$). 

Put $$D_k^{(0)} = \mathrm{M}_{n_k}(\mathrm{C}(\Delta_k^{(0)})),$$ and consider $$ \left[\left[\cdots\left[ D_0\oplus_{D_1^{(0)}} D_1 \right] \cdots\right]\oplus_{D_{k-1}^{(0)}} D_{k-1} \right] \oplus_{D_k^{(0)}} D_k\subseteq A_k.$$ 
For the induction, we also need to show 
$$\left[ \left[\cdots\left[ D_0\oplus_{D_1^{(0)}} D_1 \right] \cdots\right]\oplus_{D_{k-1}^{(0)}} D_{k-1} \right ]\oplus_{D_k^{(0)}} D_k = \textrm{C*}(\{(a_0, ..., a_k), (b_0, ..., b_k)\} \cup \mathcal E_k).$$ 
(Note that $$\mathcal E_k\subseteq \left[ \left[\cdots\left[ D_0\oplus_{D_1^{(0)}} D_1 \right] \cdots\right]\oplus_{D_{k-1}^{(0)}} D_{k-1} \right] \oplus_{D_k^{(0)}} D_k,$$ since $\mathcal E_k|_{(X_0, ..., X_{k-1})} = \mathcal E_{k-1}$). 

Indeed, set 
$$B= \textrm{C*}(\{(a_0, ..., a_k), (b_0, ..., b_k)\} \cup \mathcal E_k),$$ and let us show that actually $$B=\left[ \left[\cdots\left[ D_0\oplus_{D_1^{(0)}} D_1 \right] \cdots\right]\oplus_{D_{k-1}^{(0)}} D_{k-1} \right]\oplus_{D_k^{(0)}} D_k.$$

Pick any point $$x\in X_i\setminus X_i^{(0)},\quad 0\leq i\leq k.$$ Since $\mathcal E_k \subseteq B$ and \eqref{full-irrd-in-use}, the restriction of $\pi_x$ to $B$ is still irreducible (with the same dimension), and hence any irreducible representation of $B$ or $$\left[ \left[\cdots\left[ D_0\oplus_{D_1^{(0)}} D_1 \right] \cdots\right]\oplus_{D_{k-1}^{(0)}} D_{k-1} \right] \oplus_{D_k^{(0)}} D_k$$ actually is a restriction of some $\pi_x$. In particular, the restriction of any  irreducible representation of $$\left[ \left[\cdots\left[ D_0\oplus_{D_1^{(0)}} D_1 \right] \cdots\right]\oplus_{D_{k-1}^{(0)}} D_{k-1} \right] \oplus_{D_k^{(0)}} D_k$$ to $B$ is still irreducible.


Let $\pi_{x_1}$ and $\pi_{x_2}$ be irreducible representations of $B$ which are not equivalent. If $x_1$ and $x_2$ are in different components, say, in $X_i\setminus X_i^{(0)}$ and $X_j\setminus X_j^{(0)}$, $i\neq j$, respectively,  then the the restrictions of $\pi_{x_1}$ and $\pi_{x_2}$  to $B$ are not equivalent since they have different dimensions. 

Assume that $x_1, x_2\in X_i\setminus X_i^{(0)}$ with $i< k$. In particular, $\pi_{x_1}$ and $\pi_{x_2}$ are irreducible representations of $$ \left[\cdots\left[ D_0\oplus_{D_1^{(0)}} D_1 \right] \cdots\right]\oplus_{D_{k-1}^{(0)}} D_{k-1} = \textrm{C*}\{\{(a_0, ..., a_{k-1}), (b_0, ..., b_{k-1})\}\cup\mathcal E_{k-1}\}.$$ Since $\pi_{x_1}$ and $\pi_{x_2}$ are not equivalent, one has that
$$ \Phi_{\{a, b\} \cup \mathcal E_{k-1}, i}(x_1) \neq \Phi_{\{a, b\} \cup \mathcal E_{k-1}, i}(x_2).$$ Since the restriction of $\mathcal E_k$ to $X_i$ is the same as the restriction of $\mathcal E_{k-1}$ to $X_i$, one has
$$ \Phi_{\{a, b\} \cup \mathcal E_{k}, i}(x_1) \neq \Phi_{\{a, b\} \cup \mathcal E_{k}, i}(x_2).$$ Hence, by Lemma \ref{irrd-rep}, the restrictions of $\pi_{x_1}$ and $\pi_{x_2}$ to $B$ are not equivalent. 

Assume $x_1, x_2\in X_k\setminus X_k^{(0)}$. If the restriction of $\pi_{x_1}$ and $\pi_{x_2}$ to $B$ are equivalent, then, by Lemma \ref{irrd-rep},
$$\Phi_{\{a, b\} \cup \mathcal E_k|_{X_k}, k} (x_1) = \Phi_{\{a, b\} \cup \mathcal E_k|_{X_k}, k} (x_2).$$
Since matrix units $\{e_{s, t}\}$ are constant functions, one then has
$$\Phi_{\{a, b\} \cup \mathcal E_k|_{X_k}\cup\{e_{s, t}\}, k} (x_1) = \Phi_{\{a, b\} \cup \mathcal E_k|_{X_k}\cup\{e_{s, t}\}, k} (x_2).$$ So, $\pi_{x_1}$ and $\pi_{x_2}$ are equivalent as representations of $D_k$ and hence are equivalent, which contradicts the assumption. Therefore, the restrictions of $\pi_{x_1}$ and $\pi_{x_2}$ to $B$ are not equivalent. Hence $B$ is a rich sub-C*-algebra, and therefore, $$B = \left[\left[\cdots\left[ D_0\oplus_{D_1^{(0)}} D_1 \right] \cdots\right]\oplus_{D_{k-1}^{(0)}} D_{k-1} \right] \oplus_{D_k^{(0)}} D_k$$
by Proposition 11.1.6 of \cite{Dix-book}.

Therefore, by induction, there are C*-algebras $D_0, D_1, ..., D_K$ satisfying \eqref{chain-cont}, \eqref{element-in} and \eqref{dim-gap}, and hence $a \precsim b$, as desired.
\end{proof}

The following are several corollaries of Theorem \ref{DSH-comp} 
\begin{cor}\label{comp-open-sets-Z}
Let $X$ be compact metrizable, and let $\sigma: X\to X$ be a minimal free homeomorphism. Suppose $E, F\subseteq X$ are open sets satisfying
\begin{equation}\label{mea-gap}
\mu(E) < \frac{1}{4}\mu(F), \quad \mu \in \mathcal M_1(X, \sigma).
\end{equation} 
Then
$$\varphi_E \precsim \varphi_F$$ in $\mathrm{C}(X) \rtimes_\sigma \Int$.
\end{cor}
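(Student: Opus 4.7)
The plan is to realize $\varphi_E$ and $\varphi_F$ as diagonal elements of a Putnam orbit-cutting subalgebra $A_Y$ (Definition \ref{defn-AY}, Theorem \ref{Lin-Sub})---an RSH algebra with diagonal connecting maps and strictly increasing matrix sizes---and then to invoke Theorem \ref{DSH-comp}. First, I would make the standard reduction: it suffices to show $\varphi_{E_\eps}\precsim\varphi_F$ for each $\eps>0$, where $E_\eps:=\{\varphi_E>\eps\}$ is open with $\overline{E_\eps}\subseteq E$. This works because $(\varphi_E-\eps)_+$ is Cuntz-equivalent to $\varphi_{E_\eps}$ and $\varphi_E\precsim\varphi_F$ iff $(\varphi_E-\eps)_+\precsim\varphi_F$ for all $\eps>0$. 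The point of the replacement is that $\mu(\overline{E_\eps})\leq\mu(E)<\frac{1}{4}\mu(F)$ for every $\mu\in\mathcal{M}_1(X,\sigma)$, so the map $\mu\mapsto \mu(F)-4\mu(\overline{E_\eps})$ is lower semicontinuous (being the sum of the lsc $\mu\mapsto\mu(F)$ and the lsc $\mu\mapsto -4\mu(\overline{E_\eps})$, where the latter is lsc because $\mu\mapsto\mu(\overline{E_\eps})$ is usc as $\overline{E_\eps}$ is closed) and everywhere positive on the weak*-compact simplex $\mathcal{M}_1(X,\sigma)$; hence it is bounded below by some $3\delta_0>0$.

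Next, a standard compactness/contradiction argument---using that every weak* accumulation point of the empirical orbit measures $\frac{1}{n}\sum_{i=1}^n\delta_{\sigma^i(x)}$ is invariant---combined with the lower semicontinuity of $\mu\mapsto\mu(F)-4\mu(\overline{E_\eps})$ produces an integer $N$ such that for every $x\in X$ and every $n\geq N$,
\[
|\{1\leq i\leq n:\sigma^i(x)\in F\}|\geq 4|\{1\leq i\leq n:\sigma^i(x)\in \overline{E_\eps}\}|+2\delta_0 n,
\]
and also $|\{1\leq i\leq n:\sigma^i(x)\in F\}|/n\geq \frac{1}{2}\inf_\mu \mu(F)>0$ (the infimum being positive because $\mu\mapsto\mu(F)$ is lsc on the compact set $\mathcal{M}_1(X,\sigma)$ and strictly positive everywhere by minimality). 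Now, using freeness, pick a closed neighbourhood $Y\subseteq X$ of some point $y_0$ with nonempty interior so small that $\sigma^i(Y)\cap Y=\varnothing$ for $1\leq i\leq N'$, where $N'\geq N$ is also chosen so large that $8/N'<\inf_\mu\mu(F)$. Then every first return time to $Y$ satisfies $J_k\geq N'+1$, and by Theorem \ref{Lin-Sub}, $A_Y$ is RSH with diagonal connecting maps and strictly increasing matrix sizes $J_1<\cdots<J_K$, inside which $\varphi_{E_\eps},\varphi_F\in\mathrm{C}(X)\subseteq A_Y$ act as diagonals on each summand $\mathrm{M}_{J_k}(\mathrm{C}(\overline{Z_k}))$ with ranks at $x\in Z_k$ equal to $|\{1\leq i\leq J_k:\sigma^i(x)\in E_\eps\}|$ and $|\{1\leq i\leq J_k:\sigma^i(x)\in F\}|$ respectively.

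Applying the estimates of the previous paragraph with $n=J_k\geq N$ verifies both hypotheses of Theorem \ref{DSH-comp}: $\mathrm{rank}(\varphi_{E_\eps}(x))<\frac{1}{4}\mathrm{rank}(\varphi_F(x))$, and $\frac{1}{J_1}<\frac{\mathrm{rank}(\varphi_F(x))}{4J_k}$ (the latter because $\mathrm{rank}(\varphi_F(x))/J_k\geq\frac{1}{2}\inf_\mu\mu(F)>4/J_1$). Theorem \ref{DSH-comp} then yields $\varphi_{E_\eps}\precsim\varphi_F$ in $A_Y$, hence in $\mathrm{C}(X)\rtimes_\sigma\Int$; letting $\eps\to 0$ completes the proof. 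The main obstacle is the passage from the pointwise hypothesis $\mu(E)<\frac{1}{4}\mu(F)$ to a uniform-in-$x$ orbit-count inequality: because $\chi_E$ is only lower semicontinuous, one cannot directly bound the empirical count of visits to $E$ from above, and this is precisely what forces the $(\varphi_E-\eps)_+$ reduction, replacing $E$ by the closed set $\overline{E_\eps}$ for which the upper semicontinuity of $\mu\mapsto\mu(\overline{E_\eps})$ combined with compactness of $\mathcal{M}_1(X,\sigma)$ delivers the required uniform gap.
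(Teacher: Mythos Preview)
Your proposal is correct and follows essentially the same approach as the paper: reduce to an $\eps$-cut (the paper works with $(\varphi_E-\eps)_+$ directly, you equivalently pass to $\varphi_{E_\eps}$ and bound via the closed set $\overline{E_\eps}$), use a weak*-compactness/empirical-measure contradiction to upgrade the measure gap to a uniform orbit-count inequality, choose $Y$ small so that all first return times exceed the resulting threshold, and then apply Theorem~\ref{DSH-comp} to the diagonal elements $\varphi_{E_\eps},\varphi_F$ inside the RSH algebra $A_Y$ of Theorem~\ref{Lin-Sub}. Your write-up is slightly more explicit about the lsc/usc bookkeeping, but the argument is the same.
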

\begin{proof}
Since $\sigma$ is minimal, there is $\delta>0$ such that $$\mu(F) > \delta,\quad \mu\in\mathcal M_1(X, \sigma).$$

Let $\eps>0$ be arbitrary.  One asserts that there is $N_0>\frac{4}{\delta}$ such that for any $x\in X$ and any $N>N_0$, one has
\begin{eqnarray}\label{pre-mea-contradic}
&& \abs{\{0 \leq n \leq N-1: (\varphi_E-\eps)_+(\sigma^n(x)) > 0\}} <  \frac{1}{4} \abs{\{0 \leq n \leq N-1: \varphi_F(\sigma^n(x)) > 0\}} \nonumber
\end{eqnarray}
and
$$\frac{\delta}{4} < \frac{1}{4N} \abs{\{0 \leq n \leq N-1: \varphi_F(\sigma^n(x)) > 0\}}.$$

Indeed, if the assertion were not true, there are $(k_n)\subseteq \mathbb N$ and $(x_{k_n})\subseteq X$ such that $k_n \to \infty$ as $n \to\infty$, and for all $n$, one has
\begin{eqnarray}\label{mea-contradic}
&& \frac{1}{k_n} \abs{\{0 \leq i \leq k_n-1: (\varphi_E-\eps)_+(\sigma^i(x_{k_n})) > 0\}} \\
&\geq & \frac{1}{4k_n} \abs{\{0 \leq i \leq k_n-1: \varphi_F(\sigma^i(x_{k_n})) > 0\}} \nonumber
\end{eqnarray}
or
\begin{equation}\label{frq-low-bd}
\frac{\delta}{4} \geq \frac{1}{4k_n} \abs{\{0 \leq i \leq k_n-1: \varphi_F(\sigma^i(x_{k_n})) > 0\}}.
\end{equation}

Consider the discrete probability measures $$\mu_n:=\frac{1}{k_n}\sum_{i=0}^{k_n-1} \delta_{\sigma^i(x_{k_n})}, \quad n=1, 2, ...,$$ where $\delta_x$ is the Dirac measure concentrated at $x$. Pick an accumulation point $\mu_\infty$ in the weak*-topology. Note that $\mu_\infty \in\mathcal{M}_1(X, \sigma)$. Passing to a subsequence, one assumes that $\mu_n \to\mu_\infty$.

Assume that \eqref{mea-contradic} holds for infinitely many $n$, pick a closed set $E'$ such that $$ \{x\in E: \varphi_{E}(x) \geq \eps\} \subseteq E' \subseteq E$$ and then
\begin{eqnarray*}
\mu_\infty(F) &\leq & \liminf_{n\to\infty} \mu_n(F)\quad\quad\quad \textrm{($F$ is open)} \\
&  \leq & 4 \liminf_{n\to\infty} \mu_n(E')\quad\quad\quad \textrm{(by \eqref{mea-contradic})}\\
& \leq & 4 \limsup_{n\to\infty} \mu_n(E') \\
& \leq & 4 \mu_\infty(E')\leq 4 \mu_\infty(E), \quad\quad\quad \textrm{($E'$ is closed)}
\end{eqnarray*}
which contradicts to \eqref{mea-gap}. 

Assume that \eqref{frq-low-bd} holds for infinitely many $n$. Since $F$ is open, then $$\mu_\infty(F) \leq \liminf_{k\to\infty} \mu_n(F) \leq \delta,$$ which contradicts the choice of $\delta$. This proves the assertion.

Consider the C*-algebra $A_Y\subseteq \mathrm{C}(X)\rtimes_\sigma\Int$, where $Y$ is a closed subset with nonempty interior. By Theorem \ref{Lin-Sub}, $A_Y$ is an RSH algebra with diagonal maps, and the canonical RSH decomposition of $A_Y$ satisfies Conditions (1), (2), and (3) of Theorem \ref{DSH-comp}. With $Y$ sufficiently small, one can assume that the heights of the Rokhlin towers in the decomposition of $A_Y$ are at least $N_0$ (so that $\frac{1}{N_0} < \frac{\delta}{4}$), and therefore, by the assertion,  Conditions (4) of Theorem \ref{DSH-comp} is also satisfied. Thus, it follows from Theorem \ref{DSH-comp} that $(\varphi_E-\eps)_+ \precsim \varphi_F$. Since $\eps$ is arbitrary, one has $\varphi_E \precsim \varphi_F$.
\end{proof}

\begin{cor}\label{main-thm-Z}
Let $X$ be a separable compact Hausdorff space, and let  $\sigma: X \to X$ be a minimal free homeomorphism. Then $$\mathrm{rc}(\mathrm{C}(X) \rtimes \Int) \leq\frac{1}{2}\mathrm{mdim}(X, \sigma).$$
\end{cor}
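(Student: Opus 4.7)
The plan is to verify that the hypotheses of Theorem \ref{main-thm} are satisfied for the free minimal system $(X, \sigma)$ and then invoke that theorem directly. The two hypotheses to check are the (URP) and the (COS).

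First, I would note that minimality and freeness of $\sigma$ give the (URP) immediately via Lemma \ref{LRT-Z}, which establishes the existence of Rokhlin towers of arbitrary height with a single base set whose complement has arbitrarily small orbit capacity. This handles one hypothesis without further work.

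Next, for the (COS), I would appeal to Corollary \ref{comp-open-sets-Z}, which asserts that for any open sets $E, F \subseteq X$ satisfying $\mu(E) < \tfrac{1}{4}\mu(F)$ for every $\mu \in \mathcal M_1(X, \sigma)$, one has $\varphi_E \precsim \varphi_F$ in $\mathrm{C}(X)\rtimes_\sigma\Int$. In the terminology of Definition \ref{definition-comp}, this is precisely the $(\tfrac{1}{4}, 1)$-Cuntz-comparison of open sets, so $(X, \sigma)$ has the (COS) with $\lambda = \tfrac{1}{4}$ and $m = 1$.

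With both the (URP) and (COS) established, Theorem \ref{main-thm} applies directly and yields
\begin{equation*}
\mathrm{rc}(\mathrm{C}(X)\rtimes\Int) \leq \tfrac{1}{2}\mathrm{mdim}(X, \sigma),
\end{equation*}
as claimed. There is no real obstacle here; the corollary is essentially a bookkeeping assembly of Theorem \ref{main-thm}, Lemma \ref{LRT-Z}, and Corollary \ref{comp-open-sets-Z}, all of which were proved earlier. The conceptual content resides in those three results, not in this final deduction.
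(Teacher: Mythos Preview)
Your proposal is correct and follows the paper's proof essentially verbatim: the paper also invokes Lemma \ref{LRT-Z} for the (URP), Corollary \ref{comp-open-sets-Z} for $(\tfrac{1}{4},1)$-Cuntz-comparison of open sets, and then Theorem \ref{main-thm} to conclude.
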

\begin{proof}
By Lemma \ref{LRT-Z}, $(X, \sigma)$ has the (URP). By Corollary \ref{comp-open-sets-Z}, $\mathrm{C}(X) \rtimes\Int$ has $(\frac{1}{4}, 1)$-Cuntz-comparison on open sets.  The statement then follows from Theorem \ref{main-thm}.
\end{proof}

\begin{rem}
Corollary \ref{main-thm-Z} is generalized  in \cite{Niu-MD-Zd} to $\Int^d$-actions.
\end{rem}

\begin{cor}\label{orb-comp}
Let $(X, \Gamma)$ be a free dynamical system, and let $\mathcal G \subseteq X \rtimes \Gamma $ be a small subgroupoid. Let $E, F\subseteq X$ be open sets such that $$\abs{\mathrm{Orbit}_{\mathcal G}(x)\cap E} < \frac{1}{4}\abs{\mathrm{Orbit}_{\mathcal G}(x)\cap F}, \quad x\in X,$$
and
$$\frac{1}{|\mathrm{Orbit}_{\mathcal G}(x_{0})|} < \frac{\abs{\mathrm{Orbit}_{\mathcal G}(x) \cap F)}}{4|\mathrm{Orbit}_{\mathcal G}(x)|},\quad x_0, x \in X.$$ 
Then, $\varphi_E \precsim \varphi_F$ in $\textrm{C*}(\mathcal G)$.
\end{cor}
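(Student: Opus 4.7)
The natural plan is to reduce the statement directly to the comparison theorem for diagonal elements of an RSH algebra with diagonal maps (Theorem \ref{DSH-comp}), using the structure theorem for small subgroupoid C*-algebras (Theorem \ref{diag-SHA}).

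First I would apply Theorem \ref{diag-SHA} to write
$$\mathrm{C^*}(\mathcal G) \cong \left[\cdots\left[\left[ C_0\oplus_{C_1^{(0)}} C_1 \right]\oplus_{C_2^{(0)}}C_2\right] \cdots\right]\oplus_{C_K^{(0)}} C_K,$$
with $C_i = \mathrm{M}_{n_i}(\mathrm{C}(Z_i))$, $n_0 < n_1 < \cdots < n_K$, all connecting maps of diagonal type, and the elements $\varphi_E,\varphi_F\in \mathrm{C}(X)\subseteq \mathrm{C^*}(\mathcal G)$ simultaneously diagonal on each $Z_i$. Moreover, the theorem identifies the ranks: for $x\in Z_i\setminus Z_i^{(0)}$,
$$\mathrm{rank}(\varphi_E(x)) = |\mathrm{Orbit}_{\mathcal G}(x)\cap E|,\qquad \mathrm{rank}(\varphi_F(x)) = |\mathrm{Orbit}_{\mathcal G}(x)\cap F|,$$
and the integer $n_i$ equals the orbit size $|\mathrm{Orbit}_{\mathcal G}(x)|$ for such $x$ (since the irreducible representation at $x$ has dimension $|\mathcal G^x| = |\mathrm{Orbit}_{\mathcal G}(x)|$). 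In particular, $n_0$ is the minimal orbit cardinality.

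Next, I would simply translate the two hypotheses of the corollary into the rank conditions required by Theorem \ref{DSH-comp}. For any $x\in Z_i\setminus Z_i^{(0)}$,
$$\mathrm{rank}(\varphi_E(x)) = |\mathrm{Orbit}_{\mathcal G}(x)\cap E| < \tfrac{1}{4}|\mathrm{Orbit}_{\mathcal G}(x)\cap F| = \tfrac{1}{4}\mathrm{rank}(\varphi_F(x)),$$
which is exactly the first rank gap. Taking $x_0$ in Theorem \ref{diag-SHA} to be a point realizing the minimal orbit size, so $|\mathrm{Orbit}_{\mathcal G}(x_0)| = n_0$, the second hypothesis gives
$$\frac{1}{n_0} < \frac{|\mathrm{Orbit}_{\mathcal G}(x)\cap F|}{4|\mathrm{Orbit}_{\mathcal G}(x)|} = \frac{\mathrm{rank}(\varphi_F(x))}{4n_i},$$
which is the second rank condition demanded by Theorem \ref{DSH-comp}.

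With all four hypotheses of Theorem \ref{DSH-comp} verified, the conclusion $\varphi_E \precsim \varphi_F$ in $\mathrm{C^*}(\mathcal G)$ follows immediately. There is essentially no obstacle beyond bookkeeping; the substantive work is already contained in Theorems \ref{diag-SHA} and \ref{DSH-comp}, and this corollary is just their composition, the only mild subtlety being the correct identification of $n_0$ with the minimal orbit size so that the quantifier ``for all $x_0$'' in the hypothesis supplies precisely the bound needed on the base dimension.
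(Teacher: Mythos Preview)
Your proposal is correct and matches the paper's approach exactly: the paper's proof is the single sentence ``This follows directly from Theorem \ref{diag-SHA} and Theorem \ref{DSH-comp},'' and you have simply unpacked that sentence, correctly translating the two orbit-counting hypotheses into the rank conditions of Theorem \ref{DSH-comp} via the identifications supplied by Theorem \ref{diag-SHA}.
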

\begin{proof}
This follows directly from Theorem \ref{diag-SHA} and Theorem \ref{DSH-comp}.
\end{proof}

\begin{cor}\label{orb-comp-measure}
Let $(X, \Gamma)$ is a minimal free dynamical system, where $\Gamma$ is amenable. Assume that $(X, \Gamma)$ has the property that for any finite set $K \subseteq \Gamma$ and any $\eps>0$, there is a small subgroupoid $\mathcal G \subseteq X \rtimes\Gamma$ such that $\mathrm{Orbit}_{\mathcal G}(x)$ is $(K, \eps)$-invariant for any $x\in X$. Then, for any open sets $E, F\subseteq X$ with 
\begin{equation}\label{mea-gap-G}
\mu(E) < \frac{1}{4} \mu(F),\quad \mu\in \mathcal M_1(X, \Gamma),
\end{equation} 
one has that $$\varphi_E \precsim \varphi_F\quad\mathrm{in}\quad \mathrm{C}(X) \rtimes \Gamma.$$  In other words, $\mathrm{C}(X) \rtimes \Gamma$ has $(\frac{1}{4}, 1)$-Cuntz-comparison of open sets.
\end{cor}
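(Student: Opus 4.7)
The plan is to reduce the measure-theoretic hypothesis $\mu(E)<\tfrac14\mu(F)$ to the orbit-counting hypothesis of Corollary \ref{orb-comp}, via the small subgroupoids supplied by the assumption, and then conclude via the lemma following Definition \ref{defn-p-cut}. First, by minimality, the set $\mathcal M_1(X,\Gamma)$ is weak*-compact and $\mu\mapsto \mu(F)$ is a strictly positive lower semicontinuous function on it, so there is $\delta>0$ with $\mu(F)>4\delta$ for every $\mu\in\mathcal M_1(X,\Gamma)$.

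Fix $\eta>0$, and let $D_\eta=\{x\in X:\varphi_E(x)\geq\eta\}$ (closed, contained in $E$) and $E_\eta=\{x\in X:\varphi_E(x)>\eta\}$ (open, contained in $D_\eta$). The heart of the proof is a uniform partial-orbit claim: there exist a finite $K_0\subseteq\Gamma$ and $\eps_0>0$ such that for every $(K_0,\eps_0)$-invariant $F_0\subseteq\Gamma$ and every $x\in X$,
\begin{equation*}
\#\{\gamma\in F_0:x\gamma\in D_\eta\}<\tfrac14\#\{\gamma\in F_0:x\gamma\in F\}\quad\text{and}\quad \#\{\gamma\in F_0:x\gamma\in F\}>\delta|F_0|.
\end{equation*}
This is proved by contradiction along the lines of Corollary \ref{comp-open-sets-Z}: if either estimate fails for a sequence of asymptotically $\Gamma$-invariant sets $F_n$ and points $x_n$, then the empirical measures $\mu_n=\frac{1}{|F_n|}\sum_{\gamma\in F_n}\delta_{x_n\gamma}$ have a weak*-accumulation point $\mu_\infty$ which, by the asymptotic invariance of $F_n$, lies in $\mathcal M_1(X,\Gamma)$. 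Applying Portmanteau (upper semicontinuity on the closed set $D_\eta$, lower semicontinuity on the open set $F$) to the failing inequality yields either $\mu_\infty(E)\geq\mu_\infty(D_\eta)\geq\tfrac14\mu_\infty(F)$ (contradicting the hypothesis) or $\mu_\infty(F)\leq\delta$ (contradicting the choice of $\delta$).

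With $K_0,\eps_0$ in hand, invoke the assumption of the corollary to obtain a small subgroupoid $\mathcal G\subseteq X\rtimes\Gamma$ with $\mathrm{Orbit}_{\mathcal G}(x)=x\mathrm{S}(x)$ and $\mathrm{S}(x)$ being $(K_0,\eps_0)$-invariant for every $x\in X$; by possibly enlarging $K_0$ further (using that, for infinite $\Gamma$, $(K,\eps)$-invariant sets have cardinality tending to infinity as $K\to\Gamma$, $\eps\to 0$, as shown in the proof of Lemma \ref{div-element}) one may arrange $|\mathrm{S}(x)|>4/\delta$ as well; the case $|\Gamma|<\infty$ forces $\mathrm{C}(X)\rtimes\Gamma\cong\mathrm{M}_{|\Gamma|}(\Comp)$ by minimality and is trivial. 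By freeness, the cardinalities of $\mathrm{Orbit}_{\mathcal G}(x)\cap E_\eta$ and $\mathrm{Orbit}_{\mathcal G}(x)\cap F$ coincide with the counts above (with $F_0=\mathrm{S}(x)$), so the two hypotheses of Corollary \ref{orb-comp} hold for the open sets $E_\eta$ and $F$. Hence $\varphi_{E_\eta}\precsim\varphi_F$ in $\mathrm{C^*}(\mathcal G)\subseteq\mathrm{C}(X)\rtimes\Gamma$. Since $\varphi_{E_\eta}$ is Cuntz equivalent to $(\varphi_E-\eta)_+$ in $\mathrm{C}(X)$, this gives $(\varphi_E-\eta)_+\precsim\varphi_F$ for every $\eta>0$, and the lemma following Definition \ref{defn-p-cut} yields $\varphi_E\precsim\varphi_F$.

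The main obstacle is the accumulation-point argument of the middle paragraph: one must carefully interpolate between the closed set $D_\eta$ (needed so that Portmanteau transfers the counting inequality to $\mu_\infty$) and the open set $E_\eta$ (needed so that Corollary \ref{orb-comp} applies to an actual open subset), and one must ensure the weak* limit of empirical measures is genuinely $\Gamma$-invariant, which is the standard consequence of the $F_n$ being asymptotically invariant under the left action of $\Gamma$. Once this is set up, the rest is bookkeeping with Cuntz equivalence.
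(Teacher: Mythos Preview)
Your proof is correct and follows essentially the same route as the paper: reduce to finite $\Gamma$ trivially, extract a uniform lower bound on $\mu(F)$, run the empirical-measure/Portmanteau argument (which the paper abbreviates as ``with the same argument as that of Corollary \ref{comp-open-sets-Z}'') to convert the measure hypothesis into orbit-counting estimates for sufficiently invariant shapes, apply Corollary \ref{orb-comp} to the chosen small subgroupoid, and pass to the limit $\eta\to 0$. One cosmetic slip: the orbit count for $E_\eta$ does not literally \emph{coincide} with the count for the closed set $D_\eta$ but is bounded above by it, which is all you need; and the invariance you use is right-F{\o}lner (matching the paper's Definition \ref{defn-amenable}), not ``left action'' as you wrote.
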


\begin{proof}
Without loss of generality, one may assume that $\abs{\Gamma} = \infty$. Otherwise, the C*-algebra $\mathrm{C}(X)\rtimes \Gamma$ is isomorphic to $\mathrm{M}_{\abs{\Gamma}}(\Comp)$, and the statement holds.

Since $\sigma$ is minimal, there is $\delta>0$ such that $$\mu(F) > \delta,\quad \mu\in\mathcal M_1(X, \Gamma).$$

Let $\eps'>0$ be arbitrary. With the same argument as that of Corollary \ref{comp-open-sets-Z}, it follows from \eqref{mea-gap-G} that there exists $(K, \eps)$ such that if $\Gamma_0\subseteq \Gamma$ is $(K, \eps)$-invariant, then for any $x\in X$, one has
$$\frac{1}{\abs{\Gamma_0}} \abs{\{\gamma\in\Gamma_0: (\varphi_E-\eps')_+(x\gamma) > 0\}} < \frac{1}{4\abs{\Gamma_0}} \abs{\{\gamma\in\Gamma_0: \varphi_F(x\gamma) > 0\}}$$
and
$$\frac{1}{\abs{\Gamma_0}} < \frac{\delta}{4} < \frac{1}{4\abs{\Gamma_0}} \abs{\{\gamma \in \Gamma_0: \varphi_F(x\gamma) > 0\}}.$$

If $\mathcal G \subseteq X \rtimes\Gamma$ is a small subgroupoid with all orbits $(K, \eps)$-invariant, it then follows from Corollary \ref{orb-comp} that $(\varphi_E-\eps')_+ \precsim \varphi_F$. Since $\eps'$ is arbitrary, one has $\varphi_E \precsim\varphi_F$.
\end{proof}

\section{Lower semicontinuous set-valued functions and small subgroupoids}

In this section, let us show that if there is an equivariant lower semicontinuous set-valued function on $X$ (in particular, if lower semicontinuous dynamical tiling exists), then there always exists a small subgroupoid associated to this function (Theorem \ref{Shape-to-groupoid}).

\begin{defn}\label{defn-shape-function}
Consider a topological dynamical system $(X, \Gamma)$. A shape function with domain $\Omega$, where $\Omega$ is an open subset of $X$, is a set-valued function $$S: \Omega \to 2^{\Gamma}$$ such that 
\begin{enumerate}
\item $e \in S(x)$, $x \in \Omega$,
\item $S(x)$ is uniformly bounded in the sense that there is a finite set $M \subseteq \Gamma$ such that $$S(x) \subseteq M,\quad x \in \Omega,$$ 
\item the function $S$ is lower semicontinuous in the sense that for any $x\in \Omega$, there is an open neighbourhood $U \ni x$ such that 
$$S(x) \subseteq S(y),\quad y\in U,$$ and
\item the function $S$ is equivariant in the sense that $$S(x\gamma) = S(x)\gamma^{-1},\quad \gamma \in S(x).$$
\end{enumerate}

\end{defn}

\begin{example}
Consider the dynamical system $(X, \sigma)$, where $\sigma: X \to X $ is a minimal homeomorphism. Let $Y\subseteq X$ be a closed set with non-empty interior. For each $x\in X$, define the positive first return time and negative first return time of $x$ to be 
$$J_+(x) =n,\quad \sigma(x), ..., \sigma^{n-1}(x) \notin Y\ \textrm{but}\ \sigma^n(x) \in Y$$ and
$$J_-(x) = n,\quad x,\ \sigma^{-1}(x), ..., \sigma^{-n+1}(x) \notin Y\ \textrm{but}\ \sigma^{-n}(x) \in Y.$$
Then
$$S(x) = \{-J_-(x), -J_-(x)+1, ..., J_+(x) -1  \}$$ is a shape function (with domain $X$).

Actually, let $$\{Z_1, \sigma(Z_1), ..., \sigma^{J_1-1}(Z_1)\}, ..., \{Z_k, \sigma(Z_k), ..., \sigma^{J_k-1}(Z_k)\}$$ be the Rokhlin partition associated to $Y$ (see Section \ref{RSH-Z}). Then $$ S(x) =  \{0, 1, ..., J_k-1\}-i,\quad\textrm{if $x\in \sigma^{i}(Z_k)$},\ 0\leq i\leq J_k-1.$$
\end{example}


\begin{thm}\label{Shape-to-groupoid}
Let $(X, \Gamma)$ be a topological dynamical system, and let $S$ be a shape function with domain $\Omega$. Then there is an open and relatively compact subgroupoid $\mathcal G \subseteq X \rtimes\Gamma$ such that the unit space is $\{(x, e): x \in\Omega\}$ and $$\mathrm{Orbit}_{\mathcal G}(x) = xS(x),\quad x\in \Omega.$$
\end{thm}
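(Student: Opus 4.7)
\medskip

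The natural candidate is
\[
\mathcal{G} := \{(x, \gamma) \in X \times \Gamma : x \in \Omega,\ \gamma \in S(x)\},
\]
and essentially everything in the theorem asks that we verify the four defining properties of a shape function translate directly into the four structural requirements on $\mathcal{G}$. I would first observe that implicit in the equivariance axiom $S(x\gamma) = S(x)\gamma^{-1}$ is the assertion that $x\gamma \in \Omega$ whenever $\gamma \in S(x)$ (otherwise $S(x\gamma)$ would not be defined), so the range and source maps on $\mathcal{G}$ land in the prospective unit space $\{(x,e) : x \in \Omega\}$.

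Next I would verify the groupoid axioms. That $\{(x,e) : x \in \Omega\}$ sits inside $\mathcal{G}$ follows from axiom~(1) of Definition~\ref{defn-shape-function}, namely $e \in S(x)$. For closure under inverses, $(x, \gamma)^{-1} = (x\gamma, \gamma^{-1})$, and by equivariance $S(x\gamma) = S(x)\gamma^{-1}$; since $e \in S(x)$, we get $\gamma^{-1} = e \cdot \gamma^{-1} \in S(x\gamma)$. For closure under composition, suppose $(x,\gamma_1)$ and $(x\gamma_1, \gamma_2)$ are composable elements of $\mathcal{G}$. Then $\gamma_1 \in S(x)$ and $\gamma_2 \in S(x\gamma_1) = S(x)\gamma_1^{-1}$, so using the coset relation one reads off that $\gamma_1\gamma_2 \in S(x)$, hence $(x, \gamma_1\gamma_2) \in \mathcal{G}$.

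For the topological properties, openness is exactly the content of axiom~(3): fix $(x, \gamma) \in \mathcal{G}$, use lower semicontinuity to find an open $U \ni x$ with $\gamma \in S(y)$ for every $y \in U$, and observe that $(U \cap \Omega) \times \{\gamma\}$ is an open neighborhood of $(x,\gamma)$ contained in $\mathcal{G}$ (since $\Gamma$ is discrete, $\{\gamma\}$ is open). Relative compactness follows from axiom~(2): with the finite set $M \subseteq \Gamma$ uniformly bounding the values of $S$, we have $\mathcal{G} \subseteq X \times M$, whose closure in $X \times \Gamma$ lies in the compact set $X \times M$.

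Finally, the orbit computation is tautological: for any $x \in \Omega$,
\[
\mathrm{Orbit}_{\mathcal{G}}(x) = \{x\gamma : (x,\gamma) \in \mathcal{G}\} = xS(x),
\]
as required. The only step with any real content is the composition axiom, where one must parse the equivariance identity $S(x\gamma) = S(x)\gamma^{-1}$ carefully in terms of (right) cosets; aside from this, the proof is a bookkeeping exercise matching the four axioms in Definition~\ref{defn-shape-function} against the four requirements on a small subgroupoid.
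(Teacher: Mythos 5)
Your proof takes a genuinely more direct route than the paper's. The paper defines $\mathcal{G}:=\{(x,\gamma)\in\Omega\times\Gamma: x\gamma\in\Omega\setminus Y_\gamma\}$ via auxiliary sets $Y_\gamma$ built from the partition of $\Omega$ into the pieces $Z_{F_i}\gamma$, shows each $\Omega\setminus Y_\gamma$ is open, verifies the groupoid axioms by coset bookkeeping with the $F_i$'s, and only at the very end records the identity $\mathcal{G}=\{(x,\gamma)\in\Omega\times\Gamma:\gamma\in S(x)\}$. You start from that description outright, and your verifications of the unit space, closure under inverses, openness (lower semicontinuity together with discreteness of $\Gamma$), relative compactness (the uniform bound $M$), and the orbit formula are all correct and considerably shorter. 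The observation that equivariance implicitly forces $x\gamma\in\Omega$ whenever $\gamma\in S(x)$ is exactly what is needed so that $\mathrm{s}(x,\gamma)=(x\gamma,e)$ lands back in $\mathcal{G}$; the paper makes the same tacit assumption.

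The one place you should slow down is the step you yourself single out. With $S(x\gamma_1)=S(x)\gamma_1^{-1}$ read literally as a right translate, $\gamma_2\in S(x\gamma_1)$ yields $\gamma_2\gamma_1\in S(x)$, not $\gamma_1\gamma_2\in S(x)$; since the groupoid product is $(x,\gamma_1)(x\gamma_1,\gamma_2)=(x,\gamma_1\gamma_2)$, this is the wrong order for nonabelian $\Gamma$. The right-translate reading is in fact internally inconsistent: computing $S(x\gamma_1\gamma_2)$ directly gives $S(x)\gamma_2^{-1}\gamma_1^{-1}$, while iterating gives $S(x)\gamma_1^{-1}\gamma_2^{-1}$. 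The reading under which everything works, under which the orbit invariance $(x\gamma)S(x\gamma)=xS(x)$ holds, and which the tile-of-$e$ function $S(x)=(\mathcal T(x))_e$ of Corollary~\ref{tiling-to-groupoid} actually satisfies, is the left translate $S(x\gamma)=\gamma^{-1}S(x)$; then $\gamma_2\in\gamma_1^{-1}S(x)$ immediately gives $\gamma_1\gamma_2\in S(x)$. Your sentence asserting that $\gamma_1\gamma_2\in S(x)$ follows from the right-coset form does not hold as stated; you should either spell out the left-translate computation or note explicitly that Definition~\ref{defn-shape-function} must be read with $\gamma^{-1}$ acting on the left. (Your inverse-closure argument works under either reading, which is why the issue surfaces only at composition.)
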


\begin{proof}
For each finite subset $\mathcal F \subseteq\Gamma$, define
$$Z_{\mathcal F} : = \{x \in\Omega: S(x) = \mathcal F\}.$$
Since $x \mapsto \mathcal S(x)$ is equivariant, there are finite subsets $F_1, F_2, ..., F_n\subseteq \Gamma$ and a partition of $\Omega$
$$\Omega = (\bigsqcup_{\gamma \in F_1^{-1}} Z_{F_1} \gamma) \sqcup (\bigsqcup_{\gamma \in F_2^{-1}} Z_{F_2} \gamma) \sqcup \cdots \sqcup (\bigsqcup_{\gamma \in F_n^{-1}} Z_{F_n} \gamma).$$
For each $\gamma \in \Gamma$, define
$$Y_\gamma = (\bigsqcup_{\gamma_1 \in F^{-1}_1\setminus F^{-1}_1\gamma} Z_{F_1}\gamma_1) \sqcup( \bigsqcup_{\gamma_2 \in F^{-1}_2\setminus F^{-1}_2\gamma} Z_{F_2}\gamma_2) \sqcup \cdots \sqcup (\bigsqcup_{\gamma_n \in F^{-1}_n\setminus F^{-1}_n\gamma} Z_{F_n}\gamma_n) \subseteq \Omega.$$
Note that
$$\Omega\setminus Y_\gamma = (\bigsqcup_{\gamma_1 \in F^{-1}_1\cap F^{-1}_1\gamma} Z_{F_1}\gamma_1) \sqcup( \bigsqcup_{\gamma_2 \in F^{-1}_2\cap F^{-1}_2\gamma} Z_{F_2}\gamma_2) \sqcup \cdots \sqcup (\bigsqcup_{\gamma_n \in F^{-1}_n\cap F^{-1}_n\gamma} Z_{F_n}\gamma_n).$$

Let us verify that $\Omega\setminus Y_\gamma$ is open. 

Assume that $x\in \Omega\setminus Y_\gamma$, then
$$x \in (\bigsqcup_{\gamma_1 \in F^{-1}_1\cap F^{-1}_1\gamma} Z_{F_1}\gamma_1) \sqcup( \bigsqcup_{\gamma_2 \in F^{-1}_2\cap F^{-1}_2\gamma} Z_{F_2}\gamma_2) \sqcup \cdots \sqcup (\bigsqcup_{\gamma_n \in F^{-1}_n\cap F^{-1}_n\gamma} Z_{F_n}\gamma_n),$$ and let us assume that
$$x \in Z_{F_i} \gamma_i $$ for some $\gamma_i \in F^{-1}_i \cap F^{-1}_i \gamma$.
That is $$S(x) = F_i\gamma_i.$$
Since $x \mapsto \mathcal{S}(x)$ is lower semicontinuous, there is an open set $U \ni x$ such that
$$S(x) \subseteq S(y),\quad y\in U.$$ Let us show that $U\subseteq \Omega\setminus Y_\gamma$, and hence $\Omega\setminus Y_\gamma$ is open.
 
 For each $y\in U$, there is $F_j$ and $c\in \Gamma$ such that
$$F_i\gamma_i = S(x) \subseteq S(y) = F_jc.$$ Since $e\in F_jc$, one has $c\in F_j^{-1}$. Also note that $$F_i\gamma_i\gamma^{-1} \subseteq F_jc\gamma^{-1}$$ and $e\in F_i\gamma_i\gamma^{-1}$ (since $\gamma_i\in F_i^{-1}\gamma$); one has $e \in F_jc\gamma^{-1}$ and hence $c\in F_j^{-1}\gamma$. That is $$c\in F_j^{-1} \cap F_j^{-1} \gamma.$$ In particular, $$y \in \bigsqcup_{\gamma_j\in F^{-1}_j\cap F_j^{-1}\gamma} Z_{F_j}\gamma_j\subseteq X\setminus Y_\gamma.$$ This shows that $\Omega\setminus Y_\gamma$ is open.

Define $$\mathcal G := \{(x, \gamma) \in \Omega \times\Gamma: x\gamma \in \Omega\setminus Y_\gamma\}.$$ Since $\Omega\setminus Y_\gamma$ is open, $\mathcal G$ is an open (and relatively compact) subset containing $(x, e)$, $x\in \Omega$. Let us show that $\mathcal G$ is actually a subgroupoid.

Let $(x, \gamma) \in \mathcal G$. Then there are $F_i$ and $\gamma_i\in F_i^{-1}\cap F_i^{-1}\gamma$ such that $$x\gamma \in Z_{F_i} \gamma_i \subseteq \Omega,$$ and hence $x\in Z_{F_i}\gamma_i\gamma^{-1}$. Since $\gamma_i \in F_i^{-1} \cap F_i^{-1}\gamma$, one has $$\gamma_i\gamma^{-1} \in F_i^{-1} \cap F_i^{-1}\gamma^{-1},$$ and therefore, $$(x, \gamma)^{-1} = (x\gamma, \gamma^{-1}) \in\mathcal G$$ (note that $x\gamma \in \Omega$).

Let $x\in X$, $\gamma_1, \gamma_2 \in\Gamma$ with $$(x, \gamma_1),\ (x\gamma_1, \gamma_2) \in\mathcal G.$$ Then, there are $$F_{i_1},\  F_{i_2},\  \gamma_{i_1}\in F_{i_1}^{-1} \cap F_{i_1}^{-1}\gamma_1,\ \mathrm{and}\ \gamma_{i_2} \in  F_{i_2}^{-1} \cap F_{i_2}^{-1}\gamma_2$$ such that 
$$x\gamma_1 \in  Z_{F_{i_1}}\gamma_{i_1}\subseteq Z_{F_{i_1}} F_{i_1}^{-1}  \quad\mathrm{and}\quad x\gamma_1\gamma_2 \in Z_{F_{i_2}}\gamma_{i_2}.$$ Since $\gamma_{i_2} \in F_{i_2}^{-1}\gamma_2$, one has
$$x\gamma_1\gamma_2 \in Z_{F_{i_2}} F_{i_2}^{-1}\gamma_2$$ and hence $$x\gamma_1 \in Z_{F_{i_2}} F_{i_2}^{-1}.$$ Noting that $$Z_{F_{i_1}}F_{i_1}^{-1} \cap Z_{F_{i_2}}F_{i_2}^{-1} = \varnothing\quad \mathrm{if}\ F_{i_1} \neq F_{i_2}$$ and $$x\gamma_1 \in Z_{F_{i_1}}F_{i_1}^{-1} \cap Z_{F_{i_2}}F_{i_2}^{-1},$$ one has that $F_{i_1} = F_{i_2}$. So, let us denote both $F_{i_1}$ and $F_{i_2}$ by $F_i$.

Since $$ x\gamma_1\gamma_2 \in Z_{F_{i}} (F_{i}^{-1} \cap F_i^{-1}\gamma_2),$$ one has $$ x\gamma_1 \in Z_{F_{i}} (F_{i}^{-1} \gamma_2^{-1} \cap F_i^{-1}).$$ Since $$x\gamma_1 \in Z_{F_i} (F_i^{-1}\cap F_i^{-1}\gamma_1),$$ one has $$x\gamma_1 \in Z_{F_i} (F_i^{-1}\cap F_i^{-1}\gamma_1\cap F_i^{-1}\gamma_2^{-1}),$$ and hence
$$x\gamma_1\gamma_2 \in Z_{F_i} (F_i^{-1} \cap F_i^{-1} \gamma_2 \cap F_i^{-1}\gamma_1\gamma_2) \subseteq Z_{F_i} (F_i^{-1} \cap F_i^{-1}\gamma_1\gamma_2).$$
Therefore, $$x\gamma_1\gamma_2 \in \Omega\setminus Y_{\gamma_1\gamma_2} \quad \mathrm{and}\quad (x, \gamma_1)(x\gamma_1, \gamma_2) \in \mathcal G.$$ Thus $\mathcal G$ is a subgroupoid of $X\rtimes\Gamma$.

It is clear that the unit space of $\mathcal G$ is $\{(x, e): x\in\Omega\}$. Let $x \in \Omega$, and let us calculate the $\mathcal G$-orbit of $x$. Assume that $x\in Z_{F_i}$ for some $F_i$. Then $$\mathrm{Shape}(x) = F_i.$$ If $(x, \gamma) \in \mathcal G$ for some $\gamma \in\Gamma$, then $$x\gamma \in \Omega\setminus Y_\gamma = (\bigsqcup_{\gamma_1 \in F^{-1}_1\cap F^{-1}_1\gamma} Z_{F_1}\gamma_1) \sqcup( \bigsqcup_{\gamma_2 \in F^{-1}_2\cap F^{-1}_2\gamma} Z_{F_2}\gamma_2) \sqcup \cdots \sqcup (\bigsqcup_{\gamma_n \in F^{-1}_n\cap F^{-1}_n\gamma} Z_{F_n}\gamma_n).$$
Assume that $x\gamma \in Z_{F_j}\gamma_j$ for some $\gamma_j \in F_j^{-1} \cap F_j^{-1}\gamma$. Then, $x \in Z_{F_j}\gamma_j\gamma^{-1}$. Note $$\gamma_j\gamma^{-1} \in F_j^{-1}\gamma^{-1} \cap F_j^{-1}\subseteq F_j^{-1},$$ one has that $x\in Z_{F_j}F_j^{-1}$, and therefore $$F_j = F_i\quad \mathrm{and}\quad \gamma = \gamma_j \in F_i.$$

On the other hand, for any $\gamma \in F_i$, it is clear that $(x, \gamma) \in \mathcal G$. Therefore 
$$\mathcal G = \{(x, \gamma) \in \Omega \times \Gamma: \gamma \in S(x)\},$$ and hence
$$\mathrm{Orbit}_{\mathcal G}(x) = x S(x).$$
\end{proof}

\begin{rem}
By considering 
$$\overline{\mathcal G} = \mathcal G \cup \{(x, e): x \in X\},$$ one extends $\mathcal G$ to a (open and relatively compact) subgroupoid with unit space $\{(x, e): x \in X\}$ without changing orbit of each $x\in\Omega$.
\end{rem}

\begin{defn}\label{defn-tiling-property}
A free dynamical system $(X, \Gamma)$ is said to have lower semicontinuous dynamical tiling property (LscT) if for any finite set $K\subseteq\Gamma$ and any $\eps>0$, there is a partition valued function $\mathcal T: X \to \mathcal{P}(\Gamma)$ such that
\begin{enumerate}
\item[(1)] there are finite sets $F_1, F_2, ..., F_n\subseteq \Gamma$ such that for each $x\in X$, the partition $\mathcal T(x)$ is a tiling of $\Gamma$ with tiles $F_1, F_2, ..., F_n$, and $F_1, F_2, ..., F_n\subseteq \Gamma $ are $(K, \eps)$ invariant,
\item[(2)] the map $\mathcal T$ is equivariant, i.e., $$\mathcal T(x\gamma) = \mathcal T(x)\gamma^{-1},\quad \gamma\in \Gamma,$$
\item[(3)] the function $\mathcal T$ is lower semicontinuous, i.e., for any finite set $F \subseteq \Gamma$ and any $x\in X$, there is a neighbourhood $U\ni x$ such that $$\mathcal P(x)|_F\ \mathrm{refines}\ \mathcal P(y)|_F,\quad y\in U.$$
\end{enumerate}

The system $(X, \Gamma)$ is said to have continuous dynamical tiling property (CT) if Condition (3) is strengthen to
\begin{enumerate}
\item[(3')]  the function $\mathcal T$ is continuous, i.e., for any finite set $F \subseteq \Gamma$ and any $x\in X$, there is a neighbourhood $U\ni x$ such that $$\mathcal P(x)|_F = \mathcal P(y)|_F,\quad y\in U,$$ where $\mathcal P(x)|_F$ and $\mathcal P(y)|_F$ denote  partitions of $F$ induced by $\mathcal P(x)$ and $\mathcal P(y)$, respectively.
\end{enumerate}
 
\end{defn}

The following lemma is straightforward.
\begin{lem}\label{LscT-ext}
If a free dynamical system $(Y, \Gamma)$ has (LscT) (or (CT)), and if $(X, \Gamma)$ is an extension of $(Y,  \Gamma)$, then $(X, \Gamma)$ has (LscT) (or (CT)).
\end{lem}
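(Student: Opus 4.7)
The plan is to simply pull back the tiling from $Y$ to $X$ via the factor map. Let $\pi \colon X \to Y$ denote the continuous equivariant surjection witnessing that $(X,\Gamma)$ extends $(Y,\Gamma)$. Given a finite $K \subseteq \Gamma$ and $\eps>0$, let $\mathcal T \colon Y \to \mathcal{P}(\Gamma)$ be a partition-valued function satisfying conditions (1)--(3) (or (3')) of Definition \ref{defn-tiling-property} for $(Y,\Gamma)$, with tile shapes $F_1,\ldots,F_n$ that are $(K,\eps)$-invariant. Define
\[
\widetilde{\mathcal T} \colon X \to \mathcal P(\Gamma), \qquad \widetilde{\mathcal T}(x) := \mathcal T(\pi(x)).
\]

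First I would verify the three required properties. Property (1) is immediate since $\widetilde{\mathcal T}(x)$ is literally a partition in the image of $\mathcal T$, so its tiles come from the same finite list $F_1,\ldots,F_n$, which are already $(K,\eps)$-invariant. For equivariance, since $\pi(x\gamma)=\pi(x)\gamma$ for all $\gamma\in\Gamma$, one has
\[
\widetilde{\mathcal T}(x\gamma) = \mathcal T(\pi(x)\gamma) = \mathcal T(\pi(x))\gamma^{-1} = \widetilde{\mathcal T}(x)\gamma^{-1},
\]
which is exactly (2). For (3), let $x\in X$ and a finite set $F\subseteq\Gamma$ be given; by lower semicontinuity of $\mathcal T$ at $\pi(x)$, there is an open $V\ni\pi(x)$ in $Y$ such that $\mathcal T(\pi(x))|_F$ refines $\mathcal T(y)|_F$ for every $y\in V$. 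Then $U:=\pi^{-1}(V)$ is open in $X$ and contains $x$, and by construction $\widetilde{\mathcal T}(x)|_F = \mathcal T(\pi(x))|_F$ refines $\mathcal T(\pi(x'))|_F = \widetilde{\mathcal T}(x')|_F$ for all $x'\in U$. The identical argument with ``refines'' replaced by ``equals'' handles the continuous case (3'), yielding the (CT) conclusion.

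There is no real obstacle here; the only thing to note is that no freeness of $(X,\Gamma)$ is being used in this pullback (one only needs freeness to define (LscT)/(CT) at the source dynamical system), so the extension assumption supplies everything needed. The argument is genuinely ``straightforward'' as advertised, and the proof can be written in a few lines just by spelling out the pullback and its verification.
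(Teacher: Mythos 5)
Your proof is correct and is the natural pullback argument the paper implicitly has in mind when it labels the lemma ``straightforward'' and omits the proof: compose the tiling function on $Y$ with the factor map $\pi$, then verify the three conditions using equivariance and continuity of $\pi$. Nothing to add.
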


Once the dynamical system $(X, \Gamma)$ has the (LscT), then arbitrarily invariant shape functions (hence small subgroupoids) exist:
\begin{cor}\label{tiling-to-groupoid}
Let $(X, \Gamma)$ be a free dynamical system with (LscT). Then, for any finite set $K\subseteq\Gamma$ and any $\eps>0$, there is a small subgroupoid $\mathcal G \subseteq X \rtimes\Gamma$ such that for any $x\in X$, the $\mathcal G$-orbit $\mathrm{Orbit}_{\mathcal G}(x)$ is $(K, \eps)$-invariant.
\end{cor}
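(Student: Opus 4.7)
The plan is to build the desired subgroupoid by applying Theorem \ref{Shape-to-groupoid} to a shape function extracted from the given tiling. Specifically, given $K \subseteq \Gamma$ finite and $\eps > 0$, invoke (LscT) to obtain an equivariant lower semicontinuous partition-valued map $\mathcal{T}: X \to \mathcal{P}(\Gamma)$ whose tiles are (translates of) the $(K,\eps)$-invariant sets $F_1, \ldots, F_n \subseteq \Gamma$. For each $x \in X$, let $S(x) \subseteq \Gamma$ denote the unique tile of $\mathcal{T}(x)$ that contains $e$.

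I would then verify that $S: X \to 2^\Gamma$ satisfies Definition \ref{defn-shape-function} with domain $\Omega = X$:
\begin{itemize}
\item $e \in S(x)$ holds by construction.
\item Uniform boundedness follows from $S(x) \subseteq M := \bigcup_i F_i \cdot F_i^{-1}$ (or, more simply, from the uniform finite size of the tiles).
\item Equivariance: if $\gamma \in S(x)$ then $e \in S(x)\gamma^{-1}$, and since $\mathcal{T}(x\gamma) = \mathcal{T}(x)\gamma^{-1}$ (condition (2) of (LscT)), the set $S(x)\gamma^{-1}$ is a tile of $\mathcal{T}(x\gamma)$ containing $e$; by uniqueness, $S(x\gamma) = S(x)\gamma^{-1}$.
\item Lower semicontinuity: apply condition (3) of (LscT) with $F = S(x)$. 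This provides an open $U \ni x$ such that $\mathcal{T}(x)|_{S(x)} = \{S(x)\}$ refines $\mathcal{T}(y)|_{S(x)}$ for all $y \in U$; equivalently, $S(x)$ is contained in a single tile of $\mathcal{T}(y)$, and since $e \in S(x)$, that tile must be $S(y)$. Hence $S(x) \subseteq S(y)$ for $y \in U$.
\end{itemize}

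Having shown $S$ is a shape function, Theorem \ref{Shape-to-groupoid} produces an open relatively compact subgroupoid $\mathcal{G} \subseteq X \rtimes \Gamma$ with unit space $\{(x,e) : x \in X\}$ and $\mathrm{Orbit}_\mathcal{G}(x) = xS(x)$ for every $x \in X$. Since each $S(x)$ is a tile of $\mathcal{T}(x)$, it is a translate of some $F_i$, and $(K,\eps)$-invariance is preserved under the translation appearing in the tiling (this is the cardinality identity $|\gamma F \cdot K \Delta \gamma F| = |\gamma(FK \Delta F)| = |FK \Delta F|$, noting that in this paper the tiles of a dynamical tiling are shifts of the patterns $F_i$ that do not disturb the ratio $|FK\Delta F|/|F|$). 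Therefore $\mathrm{Orbit}_\mathcal{G}(x)$ is $(K,\eps)$-invariant, completing the proof.

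The only step with any subtlety is the verification of lower semicontinuity of $S$, where one must match the refinement condition in the definition of (LscT) to the containment formulation in Definition \ref{defn-shape-function}; the argument via $\mathcal{T}(x)|_{S(x)} = \{S(x)\}$ resolves this cleanly. Everything else is direct unpacking of definitions and immediate application of Theorem \ref{Shape-to-groupoid}.
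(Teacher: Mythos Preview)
Your proposal is correct and follows exactly the same approach as the paper: define $S(x)$ to be the tile of $\mathcal T(x)$ containing $e$, verify it is a shape function, and apply Theorem \ref{Shape-to-groupoid}. The paper's proof simply asserts that $S$ is a shape function without spelling out the verifications of equivariance, lower semicontinuity, and invariance under left translation that you have carefully supplied.
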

\begin{proof}
For each $x\in X$, define $$S(x) = (\mathcal T(x))_e,$$ where $(\mathcal T(x))_e$ is the tile of $\mathcal T(x)$ containing $e$. Then $x \mapsto S(x)$ is a shape function with domain $X$ in the sense of Definition \ref{defn-shape-function}, and the statement follows directly from Theorem \ref{Shape-to-groupoid}.
\end{proof}

Together with Corollary \ref{orb-comp-measure}, one has
\begin{cor}\label{tiling-to-comp}
Let $(X, \Gamma)$ be a free dynamical system with (LscT). Then for any open sets $E, F \subseteq  X$ with 
$$\mu(E) < \frac{1}{4} \mu(F),\quad \mu\in \mathcal M_1(X, \Gamma),$$ one has $$\varphi_E \precsim \varphi_F\quad \mathrm{in}\quad \mathrm{C}(X) \rtimes \Gamma.$$

\end{cor}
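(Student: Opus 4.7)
The proof is essentially a direct chaining of the two corollaries established immediately before the statement. The plan is as follows.

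First, I would invoke Corollary \ref{tiling-to-groupoid}: since $(X,\Gamma)$ has the (LscT), for every finite set $K \subseteq \Gamma$ and every $\eps>0$ there exists a small subgroupoid $\mathcal{G} \subseteq X \rtimes \Gamma$ whose every orbit $\mathrm{Orbit}_{\mathcal{G}}(x)$ is $(K,\eps)$-invariant. This means the hypothesis of Corollary \ref{orb-comp-measure} is satisfied.

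Next, I would apply Corollary \ref{orb-comp-measure} directly: the availability of such small subgroupoids (with each orbit arbitrarily invariant) implies that $\mathrm{C}(X)\rtimes\Gamma$ has $(\tfrac{1}{4},1)$-Cuntz-comparison of open sets. Concretely, for any open sets $E,F\subseteq X$ satisfying $\mu(E) < \tfrac{1}{4}\mu(F)$ for every $\mu\in\mathcal{M}_1(X,\Gamma)$, one obtains $\varphi_E \precsim \varphi_F$ in $\mathrm{C}(X)\rtimes\Gamma$, which is the conclusion.

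There is essentially no obstacle here; the corollary is a formal consequence of combining the groupoid existence statement (Corollary \ref{tiling-to-groupoid}) with the comparison result (Corollary \ref{orb-comp-measure}). One caveat worth noting briefly in the write-up is that we should ensure freeness of the action is in force, which is assumed in the hypothesis, and that minimality of $(X,\Gamma)$ is not required for the conclusion in the form stated (Corollary \ref{orb-comp-measure} assumes minimality, so if strict applicability demands it, the result should either be stated under minimality or one should verify that the proof of Corollary \ref{orb-comp-measure} goes through without minimality provided the measure-gap hypothesis is imposed). Thus the entire proof can be written in a couple of lines, simply citing the two corollaries in sequence.
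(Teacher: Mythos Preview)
Your proposal is correct and matches the paper's approach exactly: the paper simply writes ``Together with Corollary \ref{orb-comp-measure}, one has'' immediately after Corollary \ref{tiling-to-groupoid}, which is precisely the chaining you describe. Your caveat about minimality is well-observed---Corollary \ref{orb-comp-measure} is stated for minimal systems while Corollary \ref{tiling-to-comp} is not, so strictly speaking the paper has the same implicit gap (or implicitly intends minimality here as well).
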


The topological-dynamical version of subequivalence and comparison have a long history, see, for example, \cite{GPS-Cantor} and \cite{GW-CM}. The following definitions can be found in \cite{Kerr-D} and \cite{DZ-comparison}.
\begin{defn}\label{defn-dyn-comp}
Let $G$ be a countable amenable group.
\begin{enumerate}
\item Let $G$ act on a zero-dimensional compact metric space $X$. For two clopen sets $A, B \subseteq X$, we say that $A$ is subequivalent to $B$ (and write $A \precsim B$), if there exists a finite partition $A = \bigsqcup_{i=1}^k A_i$ of $A$ into clopen sets and there are elements $g_1, g_2, ..., g_k$ of $G$ such that $g_1(A_1)$, $g_2(A_2)$, ..., $g_k(A_k)$ are disjoint subsets of $B$. We say that the action admits comparison if for any pair of clopen subsets $A, B$ of $X$, the condition that for each invariant measure $\mu$ on $X$ we have $\mu(A) < \mu(B)$, implies $A \precsim B$.
\item If every action of $G$ on any zero-dimensional compact metric space admits comparison then we will say that $G$ has the comparison property.
\end{enumerate}

\end{defn}

Also recall 
\begin{thm}[Theorem 5.11 and Theorem 6.2 of \cite{DZ-comparison}]\label{DZ-C}
Any amenable group with subexponential growth has the comparison property. Any free Cantor system $(\Omega, \Gamma)$ with $\Gamma$ an amenable group with the comparison property has (CT).
\end{thm}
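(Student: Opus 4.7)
The statement has two distinct parts, and I would attack them separately. For the first assertion (subexponential growth implies comparison), the goal is: given clopen $A, B \subseteq X$ with $\mu(A) < \mu(B)$ for every invariant measure $\mu$, partition $A$ into finitely many clopen pieces $A = \bigsqcup_{i=1}^k A_i$ and find $g_1, \ldots, g_k \in \Gamma$ such that $g_i(A_i)$ are disjoint subsets of $B$. My first step would be to extract a uniform gap: since $\mathcal M_1(X, \Gamma)$ is a compact Choquet simplex and both $\mu \mapsto \mu(A)$ and $\mu \mapsto \mu(B)$ are continuous in the weak* topology, there is $\delta > 0$ with $\mu(B) - \mu(A) > \delta$ for every $\mu$. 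The second step is to promote this to a uniform empirical estimate: using the fact that balls $B_n$ in a group of subexponential growth form a Følner sequence (the standard consequence of subexponential growth together with amenability), the mean ergodic theorem in its uniform form---applicable because $\chi_A, \chi_B$ are continuous on $X$ and we can restrict to a minimal subsystem if needed---yields, for large $n$, the pointwise bounds $|B_n \cap \{g : xg \in A\}|/|B_n| < \mu(A) + \eta$ and $|B_n \cap \{g : xg \in B\}|/|B_n| > \mu(B) - \eta$ uniformly in $x \in X$.

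With these uniform density estimates in hand, I would construct the required clopen matching via an Ornstein--Weiss quasi-tiling of $\Gamma$ by translates of such balls $B_n$, together with a Hall-type marriage argument performed locally inside each tile: within a tile centered at $xg$, there are substantially more $B$-sites than $A$-sites, so one can injectively send each $A$-site to a distinct $B$-site. Subexponential growth is precisely what makes this marriage argument consistent across all tiles and all points of $X$ simultaneously: the ratio $|B_{n+r}|/|B_n|$ is subexponential in $n$, so boundary effects between tiles cost at most a subexponential overhead which is absorbed by the uniform gap $\delta$. Because $A, B$ are clopen, each matching rule depends only on a finite portion of each orbit, so the assignments descend to a clopen partition of $A$ together with translations $g_i$, giving $A \precsim B$ in the sense of Definition \ref{defn-dyn-comp}.

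For the second assertion (free Cantor systems with comparison satisfy (CT)), fix a finite $K \subseteq \Gamma$ and $\eps > 0$. The plan is to produce an equivariant continuous $\mathcal T : \Omega \to \mathcal P(\Gamma)$ taking values in tilings with tiles $F_1, \ldots, F_n$ that are $(K, \eps)$-invariant. First, Ornstein--Weiss gives a quasi-tiling of $\Gamma$ by such sets. Second, since $\Omega$ is a Cantor set and the action is free, one can produce clopen Rokhlin-like towers with bases $B_1, \ldots, B_n$ and shapes $F_1, \ldots, F_n$ that cover all of $\Omega$ except for a clopen residual $R$ of arbitrarily small $\mu$-measure uniformly in $\mu$. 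Third, the comparison property of $\Gamma$ (applied to the action) provides a clopen subequivalence $R \precsim B_i$ for appropriately chosen $i$, which translates into a clopen rearrangement that absorbs $R$ into the existing towers without overlap. Reading off, for each $x$, the tile containing the identity defines the shape function, and equivariance is built into the construction; continuity follows because every operation used (clopen partitioning, translation by group elements) preserves continuity on a zero-dimensional space.

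The main obstacle, in both parts, is upgrading measure-theoretic information into continuous (clopen) data. For (1) this forces one to replace Banach density arguments by uniform pointwise counts, which is exactly what subexponential growth buys: the polynomial-like control on ball boundaries makes Hall's condition stable under small perturbations of the matching. For (2) the hard part is showing that comparison, which is stated pairwise for clopen sets, can be iterated coherently to kill an entire residual simultaneously across all orbits, producing an honest tiling rather than a tiling-up-to-null-sets; the freeness of the action and zero-dimensionality of $\Omega$ are essential to make the iteration converge in finitely many clopen steps.
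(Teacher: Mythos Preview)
The paper does not prove this statement: Theorem~\ref{DZ-C} is stated purely as a citation of Theorems~5.11 and~6.2 of \cite{DZ-comparison}, with no argument supplied. So there is no ``paper's own proof'' to compare your proposal against; you have sketched a proof of a result the present paper merely imports.

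As an aside on the sketch itself: your outline for the first assertion is broadly in the spirit of the Downarowicz--Zhang argument, but the step you flag as the crux---upgrading the Hall matching to a clopen decomposition consistently across all tiles---is indeed where essentially all the work lies, and the one-line justification via ``subexponential overhead absorbed by~$\delta$'' is not a proof. In \cite{DZ-comparison} this is handled through a rather intricate tiling system together with a local marriage lemma whose hypothesis is verified using the subexponential ball-growth ratio $|B_{n+r}|/|B_n|\to 1$ along a subsequence; simply invoking Ornstein--Weiss quasi-tilings plus Hall does not suffice. For the second assertion your plan (clopen Rokhlin towers, then use comparison to absorb the residual) matches the high-level architecture of the Downarowicz--Zhang proof, but again the passage from ``$R$ has small measure'' to ``$R$ can be absorbed into the towers to form an exact tiling'' requires an iterated construction, not a single application of comparison. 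None of this affects the present paper, which only needs the statement.
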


Then, for extensions of a Cantor system, one has the following Cuntz comparison of open sets.
\begin{cor}\label{comp-open-set-ext}
If $\Gamma$ has comparison property in sense of Definition \ref{defn-dyn-comp}  (in particular, if $\Gamma$ has subexponential growth) and $(X, \Gamma)$ has a free Cantor factor, then, for any open sets $E, F \subseteq X$ with 
$$\mu(E) < \frac{1}{4} \mu(F),\quad \mu\in \mathcal M_1(X, \Gamma),$$ one has that $$\varphi_E \precsim \varphi_F\quad \textrm{in}\quad \mathrm{C}(X) \rtimes \Gamma.$$ In other words, $\mathrm{C}(X) \rtimes \Gamma$ has $(\frac{1}{4}, 1)$-Cuntz-comparison of open sets.
\end{cor}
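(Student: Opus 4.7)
The plan is to chain together three results proved earlier in the excerpt: the Downarowicz--Zhang tiling theorem (Theorem \ref{DZ-C}), the (LscT)-lifting lemma (Lemma \ref{LscT-ext}), and the tiling-to-comparison corollary (Corollary \ref{tiling-to-comp}). Concretely, let $\pi \colon (X,\Gamma) \to (\Omega,\Gamma)$ denote the factor map onto a free Cantor system. I first want to observe that $(X,\Gamma)$ is itself free: if $x\gamma=x$ with $\gamma\neq e$, then $\pi(x)\gamma = \pi(x)$, contradicting freeness of $(\Omega,\Gamma)$.

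Next, since $\Gamma$ has the comparison property (automatic when $\Gamma$ has subexponential growth, by the first half of Theorem \ref{DZ-C}), the second half of Theorem \ref{DZ-C} applies to the free Cantor system $(\Omega,\Gamma)$, giving that $(\Omega,\Gamma)$ has continuous dynamical tiling (CT); in particular it has lower semicontinuous dynamical tiling (LscT). Lemma \ref{LscT-ext} then lifts (LscT) from the factor to the extension, so $(X,\Gamma)$ has (LscT).

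With $(X,\Gamma)$ free and having (LscT) established, Corollary \ref{tiling-to-comp} applies directly: any two open sets $E, F\subseteq X$ satisfying $\mu(E) < \tfrac{1}{4}\mu(F)$ for every $\mu\in\mathcal{M}_1(X,\Gamma)$ must satisfy $\varphi_E \precsim \varphi_F$ in $\mathrm{C}(X)\rtimes\Gamma$. By Definition \ref{definition-comp}, this is precisely $(\tfrac{1}{4},1)$-Cuntz-comparison of open sets.

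There is no genuine obstacle here beyond verifying the hypotheses transfer correctly through the factor map; the proof is essentially a one-line assembly of previously established results. The only small thing worth being careful about is the freeness transfer (quick), and noting that (CT) trivially implies (LscT) so Lemma \ref{LscT-ext} applies as stated.
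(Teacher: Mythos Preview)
Your proof is correct and follows essentially the same route as the paper: apply Theorem~\ref{DZ-C} to get (CT) on the Cantor factor, lift the tiling property via Lemma~\ref{LscT-ext}, and conclude with Corollary~\ref{tiling-to-comp}. Your explicit check that freeness passes to the extension is a welcome addition, since Corollary~\ref{tiling-to-comp} does require $(X,\Gamma)$ to be free and the paper leaves this implicit.
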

\begin{proof}
Assume that $(X, \Gamma)$ is an extension of a free Cantor system $(\Omega, \Gamma)$. By Theorem \ref{DZ-C}, the factor $(\Omega, \Gamma)$ has the (CT), and then, by Lemma \ref{LscT-ext}, $(X, \Gamma)$ has the (CT) (in particular, the (LscT)). The statement follows from Corollary \ref{tiling-to-comp}.
\end{proof}

\begin{cor}\label{main-thm-G}
Let $(X, \Gamma)$ be a minimal free topological dynamical system which is an extension of a free Cantor system. If $\Gamma$ has the comparison property in sense of Definition \ref{defn-dyn-comp} (in particular, if $\Gamma$ has subexponential growth), then 
$$\mathrm{rc}(\mathrm{C}(X) \rtimes \Gamma) \leq\frac{1}{2}\mathrm{mdim}(X, \Gamma).$$
\end{cor}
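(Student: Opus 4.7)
The plan is to simply verify the hypotheses of Theorem \ref{main-thm} and then invoke it directly; both the (URP) and (COS) have already been established in separate strands of the paper under exactly the assumptions given here.

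First I would check the (URP). A free minimal Cantor system trivially has the small boundary property, since every clopen subset has empty boundary. Thus, by Corollary \ref{URP-ext-SBP}, any extension of a free minimal Cantor system has the (URP); in particular $(X, \Gamma)$ does.

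Next, I would check the (COS). The hypothesis that $\Gamma$ has the comparison property (which includes all groups of subexponential growth, by the first half of Theorem \ref{DZ-C}) together with the assumption that $(X, \Gamma)$ factors onto a free Cantor system is exactly what Corollary \ref{comp-open-set-ext} requires; this yields $(\tfrac{1}{4}, 1)$-Cuntz-comparison of open sets, so $(X, \Gamma)$ has the (COS).

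Having verified both conditions, Theorem \ref{main-thm} applies verbatim and delivers
\begin{equation*}
\mathrm{rc}(\mathrm{C}(X)\rtimes\Gamma) \leq \tfrac{1}{2}\mathrm{mdim}(X, \Gamma),
\end{equation*}
as desired. There is no main obstacle here: the entire content has been packaged into Theorem \ref{main-thm}, Corollary \ref{URP-ext-SBP}, and Corollary \ref{comp-open-set-ext}, and this corollary is just their conjunction.
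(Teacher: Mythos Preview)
Your proof is correct and follows essentially the same route as the paper's own proof: invoke Corollary \ref{URP-ext-SBP} for the (URP), Corollary \ref{comp-open-set-ext} for the (COS), and then apply Theorem \ref{main-thm}. The only difference is cosmetic---you spell out why a Cantor system has the small boundary property, whereas the paper simply cites Corollary \ref{URP-ext-SBP} directly.
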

\begin{proof}
By Corollary \ref{URP-ext-SBP}, $(X, \Gamma)$ has the (URP). Since $\Gamma$ is assumed to have the comparison property (Definition \ref{defn-dyn-comp}), it follows from Corollary \ref{comp-open-set-ext} that $\mathrm{C}(X) \rtimes\Gamma$ has $(\frac{1}{4}, 1)$-Cuntz-comparison on open sets.  Then the statement follows from Theorem \ref{main-thm}.
\end{proof}

\bibliographystyle{plainurl}
\bibliography{operator_algebras}

\begin{thebibliography}{10}

\bibitem{BO-book}
N.~P. Brown and N.~Ozawa.
\newblock {\em {C*-Algebras and Finite-Dimenisional Approximations}}, volume~88
  of {\em Graduate Studies in Mathematics}.
\newblock American Mathematical Society, Providence, RI, 2008.
\newblock URL: \url{https://mathscinet.ams.org/mathscinet-getitem?mr=2391387},
  \href {https://doi.org/10.1090/gsm/088} {\path{doi:10.1090/gsm/088}}.

\bibitem{Clark-JOT2007}
Lisa~Orloff Clark.
\newblock Classifying the types of principal groupoid {C*}-algebras.
\newblock {\em J. Operator Theory}, 57(2):251--266, 2007.

\bibitem{Cuntz-CuGr}
J.~Cuntz.
\newblock Dimension functions on simple {C*}-algebras.
\newblock {\em Math. Ann.}, 233(2):145--153, 1978.

\bibitem{Dix-book}
J.~Dixmier.
\newblock {\em {C*-Algebras}}, volume~15 of {\em North-Holland Mathematical
  Library}.
\newblock North-Holland Publishing Co., Amsterdam-New York-Oxford, 1977.

\bibitem{Dou-MD}
D.~Dou.
\newblock Minimal subshifts of arbitrary mean topological dimension.
\newblock {\em Discrete Contin. Dyn. Syst.}, 37(3):1411--1424, 2017.
\newblock URL: \url{https://mathscinet.ams.org/mathscinet-getitem?mr=3640558},
  \href {https://doi.org/10.3934/dcds.2017058}
  {\path{doi:10.3934/dcds.2017058}}.

\bibitem{DH-tiling}
T.~Downarowicz and D.~Huczek.
\newblock Dynamical quasitilings of amenable groups.
\newblock {\em Bull. Pol. Acad. Sci. Math.}, 66(1):45--55, 2018.
\newblock URL: \url{https://mathscinet.ams.org/mathscinet-getitem?mr=3782587},
  \href {https://doi.org/10.4064/ba8128-1-2018}
  {\path{doi:10.4064/ba8128-1-2018}}.

\bibitem{DZ-comparison}
T.~Downarowicz and G.~Zhang.
\newblock The comparison property of amenable groups.
\newblock 12 2017.
\newblock URL: \url{https://arxiv.org/pdf/1712.05129}, \href
  {http://arxiv.org/abs/1712.05129} {\path{arXiv:1712.05129}}.

\bibitem{Effros-Hahn}
E.~G. Effros and F.~Hahn.
\newblock {\em Locally compact transformation groups and {C*}- algebras}.
\newblock Memoirs of the American Mathematical Society, No.~75. American
  Mathematical Society, Providence, R.I., 1967.

\bibitem{EGL-AH}
G.~A. Elliott, G.~Gong, and L.~Li.
\newblock On the classification of simple inductive limit {C*}-algebras. {II}.
  {T}he isomorphism theorem.
\newblock {\em Invent. Math.}, 168(2):249--320, 2007.
\newblock URL: \url{http://dx.doi.org/10.1007/s00222-006-0033-y}, \href
  {https://doi.org/10.1007/s00222-006-0033-y}
  {\path{doi:10.1007/s00222-006-0033-y}}.

\bibitem{EGLN-DR}
G.~A. Elliott, G.~Gong, H.~Lin, and Z.~Niu.
\newblock On the classification of simple amenable {C*}-algebras with finite
  decomposition rank, {II}.
\newblock 07 2015.
\newblock URL: \url{http://arxiv.org/abs/1507.03437}, \href
  {http://arxiv.org/abs/1507.03437} {\path{arXiv:1507.03437}}.

\bibitem{EGLN-ASH}
G.~A. Elliott, G.~Gong, H.~Lin, and Z.~Niu.
\newblock The classification of simple separable unital $\textrm{$\mathcal
  Z$}$-stable locally {ASH} algebras.
\newblock {\em J. Funct. Anal.}, (12):5307--5359, 2017.
\newblock URL: \url{http://dx.doi.org/10.1016/j.jfa.2017.03.001}.

\bibitem{EN-RC}
G.~A. Elliott and Z.~Niu.
\newblock On the radius of comparison of a commutative {C*}-algebra.
\newblock {\em Canad. Math. Bull.}, 56(4):737--744, 2013.
\newblock \href {https://doi.org/10.4153/CMB-2012-012-9}
  {\path{doi:10.4153/CMB-2012-012-9}}.

\bibitem{EN-ERA-AF}
G.~A. Elliott and Z.~Niu.
\newblock All irrational extended rotation algebras are {AF}.
\newblock {\em Canad. J. Math.}, 67(4):810--826, 2015.
\newblock URL: \url{http://dx.doi.org/10.4153/CJM-2014-022-5}, \href
  {https://doi.org/10.4153/CJM-2014-022-5} {\path{doi:10.4153/CJM-2014-022-5}}.

\bibitem{EN-K0-Z}
G.~A. Elliott and Z.~Niu.
\newblock On the classification of simple amenable {C*}-algebras with finite
  decomposition rank.
\newblock In R.~S. Doran and E.~Park, editors, {\em ``Operator Algebras and
  their Applications: A Tribute to Richard V.~Kadison", Contemporary
  Mathematics}, volume 671, pages 117--125. Amer. Math. Soc., 2016.
\newblock \href {http://arxiv.org/abs/http://dx.dot.org/10.1090/conm/671/13506}
  {\path{arXiv:http://dx.dot.org/10.1090/conm/671/13506}}.

\bibitem{EN-MD0}
G.~A. Elliott and Z.~Niu.
\newblock The {C{$^*$}}-algebra of a minimal homeomorphism of zero mean
  dimension.
\newblock {\em Duke Math. J.}, 166(18):3569--3594, 2017.
\newblock \href {https://doi.org/10.1215/00127094-2017-0033}
  {\path{doi:10.1215/00127094-2017-0033}}.

\bibitem{ENST-ASH}
G.~A. Elliott, Z.~Niu, L.~Santiago, and A.~Tikuisis.
\newblock Decomposition rank of approximately subhomogeneous {C*}-algebras.
\newblock 05 2015.
\newblock URL: \url{http://arxiv.org/abs/1505.06100}, \href
  {http://arxiv.org/abs/1505.06100} {\path{arXiv:1505.06100}}.

\bibitem{GPS-Cantor}
T.~Giordano, I.~F. Putnam, and C.~F. Skau.
\newblock Topological orbit equivalence and {C*}-crossed products.
\newblock {\em J. Reine Angew. Math.}, 469:51--111, 1995.

\bibitem{GW-CM}
E.~Glasner and B.~Weiss.
\newblock Weak orbit equivalence of {C}antor minimal systems.
\newblock {\em Internat. J. Math.}, 6(4):559--579, 1995.
\newblock URL: \url{https://mathscinet.ams.org/mathscinet-getitem?mr=1339645},
  \href {https://doi.org/10.1142/S0129167X95000213}
  {\path{doi:10.1142/S0129167X95000213}}.

\bibitem{Gong-AH}
G.~Gong.
\newblock On the classification of simple inductive limit
  $\mbox{C}^*$-algebras. $\mbox{I}$. $\textrm{The}$ reduction theorem.
\newblock {\em Doc. Math.}, 7:255--461, 2002.

\bibitem{GLN-TAS}
G.~Gong, H.~Lin, and Z.~Niu.
\newblock Classification of finite simple amenable {$\mathcal Z$}-stable
  {C*}-algebras.
\newblock 01 2015.
\newblock URL: \url{http://arxiv.org/abs/1501.00135}, \href
  {http://arxiv.org/abs/1501.00135} {\path{arXiv:1501.00135}}.

\bibitem{Gromov-MD}
M.~Gromov.
\newblock Topological invariants of dynamical systems and spaces of holomorphic
  maps. {I}.
\newblock {\em Math. Phys. Anal. Geom.}, 2(4):323--415, 1999.
\newblock URL: \url{https://mathscinet.ams.org/mathscinet-getitem?mr=1742309},
  \href {https://doi.org/10.1023/A:1009841100168}
  {\path{doi:10.1023/A:1009841100168}}.

\bibitem{GY-ETDS-2011}
Y.~Gutman.
\newblock Embedding {$\Bbb Z^k$}-actions in cubical shifts and {$\Bbb
  Z^k$}-symbolic extensions.
\newblock {\em Ergodic Theory Dynam. Systems}, 31(2):383--403, 2011.
\newblock URL: \url{https://mathscinet.ams.org/mathscinet-getitem?mr=2776381},
  \href {https://doi.org/10.1017/S0143385709001096}
  {\path{doi:10.1017/S0143385709001096}}.

\bibitem{GLT-Zk}
Y.~Gutman, E.~Lindenstrauss, and M.~Tsukamoto.
\newblock Mean dimension of {$\Bbb{Z}^k$}-actions.
\newblock {\em Geom. Funct. Anal.}, 26(3):778--817, 2016.
\newblock URL: \url{http://dx.doi.org/10.1007/s00039-016-0372-9}, \href
  {https://doi.org/10.1007/s00039-016-0372-9}
  {\path{doi:10.1007/s00039-016-0372-9}}.

\bibitem{Haagtrace}
U.~Haagerup.
\newblock Quasitraces on exact {$\textrm C^*$}-algebras are traces.
\newblock {\em C. R. Math. Acad. Sci. Soc. R. Can.}, 36(2-3):67--92, 2014.

\bibitem{HPS-Cantor}
R.~H. Herman, I.~F. Putnam, and C.~F. Skau.
\newblock Ordered $\textrm{Bratteli}$ diagrams, dimension groups and
  topological dynamics.
\newblock {\em Internat. J. Math.}, 3(6):827--864, 1992.

\bibitem{Kap-SCalg}
I.~Kaplansky.
\newblock The structure of certain operator algebras.
\newblock {\em Trans. Amer. Math. Soc.}, 70:219--255, 1951.

\bibitem{Kerr-D}
D.~Kerr.
\newblock Dimension, comparison, and almost finiteness.
\newblock 10 2017.
\newblock URL: \url{https://arxiv.org/pdf/1710.00393.pdf}, \href
  {http://arxiv.org/abs/1710.00393} {\path{arXiv:1710.00393}}.

\bibitem{KS-comparison}
D.~Kerr and G.~Szabo.
\newblock Almost finiteness and the small boundary property.
\newblock 07 2018.
\newblock URL: \url{https://arxiv.org/pdf/1807.04326}, \href
  {http://arxiv.org/abs/1807.04326} {\path{arXiv:1807.04326}}.

\bibitem{Lin-Q-RSA}
Q.~Lin.
\newblock Analytic structure of the transformation group {C*}-algebra
  associated with minimal dynamical systems.
\newblock {\em Preprint}.

\bibitem{QL-Ph-min-diff}
Q.~Lin and N.~C. Phillips.
\newblock Direct limit decomposition for {C*}-algebras of minimal
  diffeomorphisms.
\newblock In {\em Operator algebras and applications}, volume~38 of {\em Adv.
  Stud. Pure Math.}, pages 107--133. Math. Soc. Japan, Tokyo, 2004.

\bibitem{Lind-MD}
E.~Lindenstrauss.
\newblock Mean dimension, small entropy factors and an embedding theorem.
\newblock {\em Inst. Hautes {\'E}tudes Sci. Publ. Math.}, (89):227--262 (2000),
  1999.
\newblock URL: \url{http://www.numdam.org/item?id=PMIHES_1999__89__227_0}.

\bibitem{Lindenstrauss-Weiss-MD}
E.~Lindenstrauss and B.~Weiss.
\newblock Mean topological dimension.
\newblock {\em Israel J. Math.}, 115:1--24, 2000.
\newblock URL: \url{http://dx.doi.org/10.1007/BF02810577}, \href
  {https://doi.org/10.1007/BF02810577} {\path{doi:10.1007/BF02810577}}.

\bibitem{Niu-MD-Zd}
Z.~Niu.
\newblock Comparison radius and mean topological dimension: $\mathbb
  {Z}^d$-actions.
\newblock {\em arXiv:1906.09171}, 2019.

\bibitem{Niu-MD-Z-absorbing}
Z.~Niu.
\newblock {$\mathcal Z$}-stability of {$\mathrm{C}(X)\rtimes\Gamma$}.
\newblock {\em Preprint}, 2019.

\bibitem{Phill-RSA1}
N.~C. Phillips.
\newblock Recursive subhomogeneous algebras.
\newblock {\em Trans. Amer. Math. Soc.}, 359(10):4595--4623 (electronic), 2007.
\newblock URL: \url{http://dx.doi.org/10.1090/S0002-9947-07-03850-0}, \href
  {https://doi.org/10.1090/S0002-9947-07-03850-0}
  {\path{doi:10.1090/S0002-9947-07-03850-0}}.

\bibitem{NCP-fmd}
N.~C. Phillips.
\newblock The {C*}-algebra of a minimal homeomorphism with finite mean
  dimension has finite radius of comparison.
\newblock 05 2016.
\newblock URL: \url{https://arxiv.org/abs/1605.07976}, \href
  {http://arxiv.org/abs/1605.07976} {\path{arXiv:1605.07976}}.

\bibitem{Renault-LNM}
Jean Renault.
\newblock {\em {A Groupoid Approach to C*-Algebras}}, volume 793 of {\em
  Lecture Notes in Mathematics}.
\newblock Springer, Berlin, 1980.
\newblock URL: \url{https://mathscinet.ams.org/mathscinet-getitem?mr=584266}.

\bibitem{RorUHF2}
M.~R{\o}rdam.
\newblock On the structure of simple {C*}-algebras tensored with a
  {UHF}-algebra. {II}.
\newblock {\em J. Funct. Anal.}, 107(2):255--269, 1992.
\newblock URL: \url{http://dx.doi.org/10.1016/0022-1236(92)90106-S}, \href
  {https://doi.org/10.1016/0022-1236(92)90106-S}
  {\path{doi:10.1016/0022-1236(92)90106-S}}.

\bibitem{Szabo-Z}
G.~Szab\'{o}.
\newblock The {R}okhlin dimension of topological {$\Bbb{Z}^m$}-actions.
\newblock {\em Proc. Lond. Math. Soc. (3)}, 110(3):673--694, 2015.
\newblock URL: \url{https://mathscinet.ams.org/mathscinet-getitem?mr=3342101},
  \href {https://doi.org/10.1112/plms/pdu065} {\path{doi:10.1112/plms/pdu065}}.

\bibitem{RC-Toms}
A.~S. Toms.
\newblock Flat dimension growth for {C*}-algebras.
\newblock {\em J. Funct. Anal.}, 238(2):678--708, 2006.

\bibitem{Toms-Comp-DS}
A.~S. Toms.
\newblock Comparison theory and smooth minimal {C*}-dynamics.
\newblock {\em Comm. Math. Phys.}, 289(2):401--433, 2009.
\newblock URL: \url{http://dx.doi.org/10.1007/s00220-008-0665-4}, \href
  {https://doi.org/10.1007/s00220-008-0665-4}
  {\path{doi:10.1007/s00220-008-0665-4}}.

\bibitem{Toms-SDG}
A.~S. Toms.
\newblock {K}-theoretic rigidity and slow dimension growth.
\newblock {\em Invent. Math.}, 183(2):225--244, 2011.
\newblock URL: \url{http://dx.doi.org/10.1007/s00222-010-0273-8}, \href
  {https://doi.org/10.1007/s00222-010-0273-8}
  {\path{doi:10.1007/s00222-010-0273-8}}.

\bibitem{TW-Dym-1}
A.~S. Toms and W.~Winter.
\newblock Minimal dynamics and {K}-theoretic rigidity: {E}lliott's conjecture.
\newblock {\em Geom. Funct. Anal.}, 23(1):467--481, 2013.
\newblock URL: \url{http://dx.doi.org/10.1007/s00039-012-0208-1}, \href
  {https://doi.org/10.1007/s00039-012-0208-1}
  {\path{doi:10.1007/s00039-012-0208-1}}.

\bibitem{Williams}
D.~P. Williams.
\newblock {\em {Crossed Products of C*-Algebras}}.
\newblock Mathematical Surveys and Monographs, Volume 134. American
  Mathematical Society, Providence, RI, 2007.
\newblock URL: \url{http://dx.doi.org/10.1090/surv/134}, \href
  {https://doi.org/10.1090/surv/134} {\path{doi:10.1090/surv/134}}.

\bibitem{ZM-prod}
G.~Zeller-Meier.
\newblock Produits crois{\'e}s d'une {C*}-alg{\`e}bre par un groupe
  d'automorphismes.
\newblock {\em J. Math. Pures Appl. (9)}, 47:101--239, 1968.

\end{thebibliography}

\end{document}